\def\url@leostyle{%
  \@ifundefined{selectfont}{\def\UrlFont{\sf}}{\def\UrlFont{\small\ttfamily}}}
\numberwithin{equation}{section}
\theoremstyle{definition}
\newtheorem{prop}{Proposition}[section]
\newtheorem{proposition}[prop]{Proposition}
\newtheorem{theorem}[prop]{Theorem}
\newtheorem{lemma}[prop]{Lemma}
\newtheorem{corollary}[prop]{Corollary}
\newtheorem{defn}[prop]{Definition}
\newtheorem{example}[prop]{Example}
\newtheorem{notation}[prop]{Notation}
\newtheorem{remark}[prop]{Remark}
\newtheorem{image}[prop]{Figure} 
\newcommand{\nc}{\newcommand}
\nc{\DMO}{\DeclareMathOperator}	
\nc{\newnotation}{\nomenclature}
\nc{\wrap}{\cW}
\nc{\Cob}{\mathsf{Cob}}
\nc{\mul}{\mathsf{Mul}}
\nc{\fat}{\mathsf{fat}}
\nc{\cob}{\mathsf{Cob}}
\nc{\coh}{\mathsf{Coh}}
\nc{\idem}{\mathsf{Idem}}
\nc{\sets}{\mathsf{Sets}}
\nc{\near}{\mathsf{near}}
\nc{\sing}{\mathsf{Sing}}
\nc{\symp}{\mathsf{Symp}}
\nc{\perf}{\mathsf{Perf}}
\nc{\ssets}{\mathsf{sSets}}
\nc{\cmpct}{\mathsf{cmpct}}
\nc{\compact}{\mathsf{cmpct}}
\nc{\pwrap}{\mathsf{PWrap}}
\nc{\coder}{\mathsf{Coder}}
\nc{\bimod}{\mathsf{Bimod}}
\nc{\grmod}{\mathsf{GrMod}}
\nc{\spaces}{\mathsf{Spaces}}
\nc{\pwrms}{\mathsf{PWrFuk}_{M,S}}
\nc{\pwrmf}{\mathsf{PWrFuk}_{M,F}}
\nc{\pwrapmf}{\mathsf{PWrFuk}_{M,F}}
\nc{\fuk}{\mathsf{Fukaya}}
\nc{\infwr}{\mathsf{InfWr}}
\nc{\fukaya}{\mathsf{Fukaya}}
\nc{\autml}{\mathsf{Aut}_{M,\Lambda}}
\nc{\fukml}{\mathsf{Fukaya}_{M,\Lambda}}
\nc{\fukmle}{\mathsf{Fukaya}_{M,\Lambda,\epsilon}}
\nc{\fukmod}{\wrfukcompact(M)\modules}
\nc{\lag}{\mathsf{Lag}}
\nc{\lagm}{\lag_M}
\nc{\lago}{\lag^o}
\nc{\lagml}{\lag_{M,\Lambda}} 
\nc{\lagmle}{\lag_{M,\Lambda,\epsilon}}
\nc{\fun}{\mathsf{Fun}}
\nc{\vect}{\mathsf{Vect}}
\nc{\chain}{\mathsf{Chain}}
\nc{\wrfuk}{\mathsf{WrFukaya}}
\nc{\wrfukcompact}{\mathsf{WrFukaya}_{\mathsf{cmpct}}}
\nc{\pwrfuk}{\mathsf{PWrFukaya}}
\nc{\inffuk}{\mathsf{InfFuk}}
\nc{\pwrfukml}{\mathsf{PWrFukaya}_{M,\Lambda}}
\nc{\inffukml}{\mathsf{InfFuk}_{M,\Lambda}}
\nc{\nattrans}{\mathsf{NatTrans}}
\nc{\corres}{\mathsf{Corres}}
\nc{\fukep}{\fukaya_\Lambda(M,\epsilon)}
\nc{\fukepop}{\fukaya_\Lambda(M,\epsilon)^{\op}}
\nc{\lagep}{\lag_\Lambda(M,\epsilon)}
\DMO{\cyl}{cyl} 
\nc{\dbcoh}{D^b\mathsf{Coh}}
\nc{\corr}{\mathsf{Corr}}
\nc{\cat}{\mathsf{Cat}}
\nc{\Cat}{\mathsf{Cat}}
\nc{\ainfty}{\mathsf{A}_\infty}
\nc{\inftycat}{\mathcal{C}\!\operatorname{at}_\infty}
\nc{\Ainftycat}{\mathcal{C}\!\operatorname{at}_{A_\infty}}
\nc{\ainftycat}{\mathcal{C}\!\operatorname{at}_{A_\infty}}
\nc{\stablecat}{\mathcal{C}\!\operatorname{at}_\infty^{\Ex}}
\DMO{\im}{im}
\DMO{\inj}{inj}
\DMO{\fib}{fib}
\DMO{\conf}{Conf}
\DMO{\chains}{Chains}
\DMO{\cochains}{Cochains}
\DMO{\cone}{Cone}
\DMO{\ran}{Ran}
\DMO{\rot}{Rot}
\DMO{\leg}{Leg}
\DMO{\imm}{imm}
\DMO{\adj}{adj}
\DMO{\cube}{Cube}
\DMO{\back}{back}
\DMO{\front}{front}
\DMO{\flow}{Flow}
\DMO{\floer}{Floer}
\DMO{\maps}{Maps}
\DMO{\exact}{exact}
\DMO{\Decomp}{Decomp}
\DMO{\decomp}{Decomp}
\DMO{\collar}{collar}
\DMO{\yoneda}{Yoneda}
\DMO{\hamspace}{Ham}
\DMO{\sympspace}{Symp}
\DMO{\holomaps}{Holomaps}
\DMO{\comp}{Comp}
\DMO{\crit}{Crit}
\DMO{\test}{{test}}
\DMO{\sign}{sign}
\DMO{\topp}{top}
\DMO{\indx}{Index}
\DMO{\Break}{Break} 
\DMO{\zero}{zero} 
\DMO{\ob}{Ob}
\DMO{\gr}{Gr} 
\DMO{\Gr}{Gr} 
\DMO{\cl}{Cl} 
\DMO{\grlag}{GrLag}
\DMO{\Pin}{Pin}
\DMO{\Graph}{Graph}
\DMO{\pin}{Pin}
\DMO{\gap}{Gap}
\DMO{\Ex}{Ex}
\DMO{\id}{id}
\DMO{\End}{End}
\DMO{\sym}{Sym} 
\DMO{\aut}{Aut}
\DMO{\DK}{DK} 
\DMO{\poly}{poly} 
\DMO{\diff}{Diff}
\DMO{\coll}{coll}
\DMO{\dist}{dist} 
\DMO{\coker}{coker} 
\nc{\kernel}{\ker} 
\DMO{\sspan}{span}
\DMO{\hocolim}{hocolim}	
\DMO{\holim}{holim}
\DMO{\sk}{sk}
\DMO{\ho}{ho}
\DMO{\fin}{fin}
\DMO{\ret}{Ret}
\DMO{\ham}{Ham}
\DMO{\con}{con}
\DMO{\leaf}{leaf}
\DMO{\supp}{supp}
\DMO{\edge}{edge}
\DMO{\colim}{colim}
\DMO{\edges}{edges}
\DMO{\Image}{image}
\DMO{\roots}{roots}
\DMO{\height}{height}
\DMO{\finmod}{FinMod}
\DMO{\leaves}{leaves}
\DMO{\planar}{planar}
\DMO{\vertices}{vertices}
\nc{\lagg}{\lag^{\cG}}
\nc{\iso}{\mathsf{Iso}}
\nc{\Set}{\mathsf{Set}}
\nc{\ass}{\mathsf{ \bf Ass}}
\nc{\Mod}{\mathsf{Mod}}
\nc{\modules}{\mathsf{Mod}}
\nc{\sset}{\mathsf{sSet}}
\nc{\liou}{\mathsf{Liou}}
\nc{\poset}{\mathsf{Poset}}
\nc{\trno}{T^*\RR^n_{\geq 0}}
\nc{\spectra}{\mathsf{Spectra}}
\nc{\tensorfin}{\tensor^{\fin}}
\nc{\lagptg}{\lag_{pt,pt}^{\cG}}
\nc{\Fin}{\mathcal{F}\mathsf{in}}
\nc{\lagnl}{\lag_{N,\Lambda}}
\nc{\lagmlg}{\lag_{M,\Lambda}^{\cG}}
\nc{\lagsplit}{\lag^{\mathsf{split}}}
\nc{\lagktimes}{(\lag^{\dd k})^\times}
\nc{\lagplanar}{\lag^{\times,\planar}}
\nc{\smsh}{\wedge}
\nc{\un}{\underline}
\nc{\xto}{\xrightarrow}
\nc{\xra}{\xto}
\nc{\tensor}{\otimes}
\nc{\del}{\partial}
\nc{\dd}{\diamond}
\nc{\tri}{\triangle}
\nc{\bb}{\Box}
\nc{\into}{\hookrightarrow}
\nc{\onto}{\twoheadrightarrow}
\nc{\contains}{\supset}
\nc{\transverse}{\pitchfork}
\nc{\uncirc}{\underline{\circ}}
\nc{\Jbar}{\overline{J}}
\nc{\Fbar}{\overline{F}}
\nc{\delbar}{\overline{\del}}
\nc{\thetabar}{\overline{\theta}}
\nc{\omegabar}{\overline{\omega}}
\nc{\colldiff}{\diff^{\del}} 
\nc{\trbar}{\overline{T^*\RR}}
\nc{\tr}{T^*\RR}
\nc{\tsa}{Ts\cA}
\nc{\tsb}{Ts\cB}
\nc{\cmbar}{\overline{\cM}}
\nc{\crbar}{\overline{\cR}}
\nc{\vece}{ {\vec \epsilon}}	
\nc{\vecd}{ {\vec \delta}}
\nc{\ov}{\overline}
\DMO{\op}{op}
\nc{\opp}{ ^{\op}}
\nc{\hiro}{\textcolor{blue}}
\nc{\eqn}{\begin{equation}}
\nc{\eqnn}{\begin{equation}\nonumber}
\nc{\eqnd}{\end{equation}}
\nc{\enum}{\begin{enumerate}}
\nc{\enumd}{\end{enumerate}}
\def\cA{\mathcal A}\def\cB{\mathcal B}\def\cC{\mathcal C}\def\cD{\mathcal D}
\def\cG{\mathcal G}\def\cH{\mathcal H}
\def\cJ{\mathcal J}\def\cL{\mathcal L}
\def\cM{\mathcal M}\def\cP{\mathcal P}
\def\cR{\mathcal R}
\def\cW{\mathcal W}
\def\BB{\mathbb B}\def\CC{\mathbb C}
\def\RR{\mathbb R}\def\SS{\mathbb S}
\def\ZZ{\mathbb Z}
\title{The Fukaya category pairs with Lagrangian cobordisms}
\author{Hiro Lee Tanaka}
\begin{document}

\maketitle

\begin{abstract}
Fix a suitably convex, exact symplectic manifold M. We consider the stable $\infty$-category Lag(M) of non-compact Lagrangians whose (higher) morphisms are (higher) Lagrangian cobordisms between them. We show that this $\infty$-category pairs with the Fukaya category Fuk(M) of compact branes. In fact, we also show that there is a subcategory of Lag(M) which pairs with the wrapped Fukaya category of M. This is a first step in a project to enrich wrapped Fukaya categories over cobordism spectra. As a corollary, we show that cobordant compact branes are equivalent in the Fukaya category. We will also mention several other applications (without proof) of the $\infty$-categorical appraoch: One can realize Seidel's representation as a $\pi_0$-level consequence of a map of spaces; stable cobordism groups of non-compact branes map to Floer cohomology groups; some of Biran-Cornea's results can be recovered from the colored planar operad associated to the s-dot constructions of each category; and there is an $E_\infty$ map of spectra $\cL \to H\ZZ$ from exact Lagrangian cobordisms in Euclidean space to the integers.
\end{abstract}

\tableofcontents

 

\section{Introduction}
Fix an exact symplectic manifold $M$, and assume it is suitably convex at infinity (see Section~\ref{section.M}). This convexity gives a good restriction on what kinds of {\em non-compact} submanifolds to consider.

In previous work with David Nadler~\cite{nadler-tanaka}, we constructed from this data an $\infty$-category of Lagrangian cobordisms. We will call it $\lag_\Lambda(M)$ in the present work. The dependence on $\Lambda$ will be explained shortly.

The objects of $\lag_\Lambda(M)$ are certain Lagrangian submanifolds of $M \times T^*\RR^m$ for any $m\geq 0$. They are generally non-compact, but closed as a subset. A morphism between two objects is a certain kind of Lagrangian cobordism between them, and what makes $\lag_\Lambda(M)$ an $\infty$-category (as opposed to an ordinary category) is that we have specified what it means for morphisms to have homotopies between them, and for homotopies to have further homotopies between them. In our setting, a (higher) homotopy is given by a higher-dimensional Lagrangian cobordism between Lagrangian cobordisms. If one likes, one may think of such higher cobordisms as manifolds with corners, whose boundary edges, corners, and higher-codimension strata are collared by lower cobordisms.

As an example, if $L \subset M$ is a Lagrangian, the Lagrangian $L \times \RR \subset M \times T^*\RR$ is an example of the trivial cobordism, and serves as an identity morphism for $L$. The Lagrangian $L \times \RR^2 \subset M \times T^*\RR^2$ is a trivial higher cobordism; it is a trivial homotopy from the identity to itself. Though these examples don't show it, we emphasize that higher cobordisms may have non-trivial topology, and may be non-compact so long as they are eventually conical.

A key feature of the construction is that, in fact, for any specified subset $\Lambda \subset M$,  one can construct an $\infty$-category $\lag_\Lambda(M)$ whose cobordisms avoid $\Lambda$ in an appropriate way.\footnote{See Definition~\ref{defn.non-characteristic}.} In this paper, we consider two choices of $\Lambda$: When $\Lambda$ equals the skeleton $\sk(M)$ of $M$ (or a tubular neighborhood thereof), and when $\Lambda$ equals all of $M$ itself.
 
The main theorem of our previous work~\cite{nadler-tanaka} is that $\lag_\Lambda(M)$ is in fact a {\em stable} $\infty$-category for any $\Lambda$. This means that---even without performing any kind of formal completion---$\lag_\Lambda(M)$ enjoys rich algebraic properties. One can form direct sums of objects, take mapping cones and kernels of morphisms, and naturally construct a ``shift'' functor in much the same way that one can shift a chain complex up or down in grading. In fact, on objects, the shift functor for $\lag_\Lambda(M)$ is equivalent to the shift functor for $\ZZ$-graded Fukaya categories. Another consequence is that the homotopy category of $\lag_\Lambda(M)$ is naturally triangulated, while $\lag_\Lambda(M)$ itself is enriched over spectra (in the sense of stable homotopy theory).

Moreover, in~\cite{nadler-tanaka} we conjectured that this $\infty$-category of Lagrangian cobordisms is actually equivalent to the wrapped Fukaya category of $M$ after a change of coefficients:
	\eqn\label{eqn.conjecture}
	D^\pi\lag_{\sk(M)}(M) \tensor_{\cL} \ZZ \simeq D^\pi \wrap(M).
	\eqnd
Let me explain this conjecture. 
\enum
	\item
		First, $\wrap(M)$ is the wrapped Fukaya category of $M$. $D^\pi \wrap(M)$ is the pre-triangulated $A_\infty$-category one obtains by Karoubi-completing $\wrap(M)$ to make it well-behaved algebraically. As a result, one can take direct sums of objects, take mapping cones of morphisms, and split idempotent morphisms. (These are formal properties also enjoyed by categories of modules over a ring.) 
	\item
		Likewise, $D^\pi\lag_{\sk(M)}(M)$ is an idempotent completion of $\lag_{\sk(M)}(M)$. As we mentioned, $\lag_{\sk(M)}(M)$ already enjoys good algebraic properties, but there is no proof that its idempotents split, so we complete it.
	\item
		The conjecture further claims that each $\lag_{\sk(M)}(M)$ is an $\infty$-category which is linear over a ring spectrum called $\cL$. (Though this statement is only true in the world of spectrally enriched categories, a reader will not lose intuition by imagining that $\lag(M)$ is some dg category linear over some coefficient ring $\cL$ in the dg sense.) For the lefthand side of~(\ref{eqn.conjecture}) to make sense, one must also show that $\cL$ has an $E_\infty$ ring map\footnote{The reader will not lose too much intuition by viewing an $E_\infty$ ring map as a commutative ring map; just be aware that the former encodes many higher homotopies  {\em realizing} the respect between the commutativities of the domain and codomain rings. (This is for the same reason that an $A_\infty$ ring map requires higher homotopies realizing how associativity is respected.)} to the integers, so that one may tensor $\lag_{\sk(M)}(M)$ with $\ZZ$ to turn it into an $\infty$-category linear over $\ZZ$. We prove in~\cite{tanaka-symm} both these things. We emphasize that---just as $\wrap(M)$ is $\ZZ$-linear for many choices of $M$---the ring spectrum $\cL$ is independent of choice of $M$. It is a universal ring spectrum over which all of our theories are linear.
	\item 
		The conjecture is that after this change of coefficients, there is a natural equivalence\footnote{Here, ``natural,'' as with other parts of this paper, has a specific categorical meaning. It means the conjecture predicts a natural transformation between the assignments $(M \mapsto \lag(M))$ and $(M \mapsto \fuk(M)$. } between the ($\ZZ$-linearized) $\infty$-category of Lagrangian cobordisms, and the $\infty$-category given by the wrapped Fukaya category.
\enumd

This paper is a first in a series of papers that proves that (i) this conjecture even makes sense, and (ii) the conjecture is true in the case where $M$ is a point. This may sound silly, but this implies the result for when $M$ is $T^*\RR^m$ for any $m$, and this brings us plenty closer to proving the conjecture for whenever $M$ is a cotangent bundle by using a local-to-global property for both sides of the conjecture. (The local-to-global principle for the wrapped side has been conjectured by Kontsevich, and a proof has been announced by Ganatra-Pardon-Shende~\cite{shende-paris-talk}.) Moreover, the case of $M=pt$ studies precisely the space of exact, non-characteristic cobordisms in $T^*\RR^\infty$---a convex symplectic analogue of classical Pontrjagin-Thom theory. As a consequence, we'll see some geometric results along the way.

For more motivation for this conjecture, see Section~\ref{section.conjecture}.

\subsection{Main results}
We let $\wrap = \wrap(M)$ denote the wrapped Fukaya category of $M$. Let $\wrap\Mod$ denote the dg-category of contravariant functors from $\wrap$ to $\chain_\ZZ$. 

\begin{theorem}\label{theorem-wrapped}
There exists a functor of $\infty$-categories
	\eqnn
		\Xi: \lag_M(M) \to \wrap\Mod.
	\eqnd
If $L \subset M \times T^*\RR^m$ is an object of $\lag_M(M)$, then $\Xi(L)$ is the module
	\eqnn
		X \mapsto WF^*( X \times \RR^m, L).
	\eqnd
\end{theorem}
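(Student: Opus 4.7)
The plan is to build $\Xi$ in two stages: first define, for each $L \in \lag_M(M)$ with $L \subset M \times T^*\RR^m$, a right $A_\infty$-module $\Xi(L)$ over $\wrap$ whose underlying chain complex at $X$ is $WF^*(X \times \RR^m, L)$; then extend $\Xi$ to a functor of $\infty$-categories by pairing (higher) Lagrangian cobordisms in $\lag_M(M)$ with the non-compact test Lagrangians $X \times \RR^{m+k}$. The module structure on $\Xi(L)$ is the standard one: higher multiplication maps
\[
\mu^{1|d}: \Xi(L)(X_d) \otimes CW^*(X_{d-1}, X_d) \otimes \cdots \otimes CW^*(X_0, X_1) \to \Xi(L)(X_0)
\]
are defined by counting pseudoholomorphic disks in $M \times T^*\RR^m$ with one boundary on $L$, subsequent boundary arcs on $X_i \times \RR^m$, and with a suitable wrapping Hamiltonian perturbation that acts only in the $M$-direction (so that the $\RR^m$-factor is left alone). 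The $A_\infty$-relations follow from the usual boundary analysis of one-dimensional moduli spaces; the key geometric input is that the perturbation is compactly supported in the $T^*\RR^m$-direction so the curves remain in a region where the $\Lambda = M$ non-characteristic condition bounds $L$ away from $X \times \RR^m$ at infinity.

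Next, on $1$-morphisms: a morphism $L_0 \to L_1$ in $\lag_M(M)$ is a non-characteristic cobordism $K \subset M \times T^*\RR \times T^*\RR^m$ which agrees with $L_0 \times \RR_{\leq 0}$ and $L_1 \times \RR_{\geq 0}$ outside a compact set in the $T^*\RR$-direction. I would assign to such a $K$ the module map $\Xi(K): \Xi(L_0) \to \Xi(L_1)$ whose component at $X$ is the continuation-type map
\[
CW^*(X \times \RR^m, L_0) \to CW^*(X \times \RR^{m+1}, K) \simeq CW^*(X \times \RR^m, L_1)
\]
gotten by counting holomorphic strips in $M \times T^*\RR \times T^*\RR^m$ with one boundary on $K$ and the other on $X \times \RR^{m+1}$, and by using the standard ``trace'' identification of wrapped Floer cochains with a Lagrangian that is conical at $\pm \infty$ in the new $\RR$-factor, as in the $\lag_M \to \lag$ restriction story of \cite{nadler-tanaka}. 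On higher morphisms (higher cobordisms $K \subset M \times T^*\RR^{k} \times T^*\RR^m$) one iterates this construction; one does not need to compute anything new, one just uses $X \times \RR^{m+k}$ as the other boundary condition and reads off a chain of higher module homotopies from the same moduli spaces.

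To package all of this as an $\infty$-functor, I would leverage the simplicial/Kan-enriched presentation of $\lag_M(M)$ from \cite{nadler-tanaka}, in which an $n$-simplex is essentially a non-characteristic cobordism in $M \times T^*\RR^n \times T^*\RR^m$ collared by lower-dimensional cobordisms at the boundary strata. The assignment $K \mapsto$ (Floer-theoretic operation from $K$ against $X \times \RR^{m+n}$) then gives a map of simplicial sets from the Kan complex of $n$-morphisms in $\lag_M(M)$ (with source and target fixed) to the Dold-Kan simplicial set of the chain complex of module pre-morphisms between $\Xi(L_0)$ and $\Xi(L_1)$. The compatibility of boundary faces of moduli spaces with the collar structure of cobordisms supplies the simplicial identities; functoriality in $X$ then upgrades each $\Xi(L)$ from a chain complex to a $\wrap$-module and the compatibility of module structure with cobordism-induced maps.

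The main obstacle is compactness of the relevant moduli spaces in the non-compact setting, and this is where the non-characteristic condition with $\Lambda = M$ does its work. Because $X$ is wrapped (non-compact in $M$) and the cobordism $K$ is non-compact in the $\RR^{n+m}$-direction, one must simultaneously control escape in the $M$-direction (using convexity of $M$ at infinity and the wrapping Hamiltonian, as usual in wrapped Floer theory) and in the fiber directions of $T^*\RR^{n+m}$ (using that $K$ is eventually conical and non-characteristic against the zero section $X \times \RR^{n+m}$). The non-characteristic condition is precisely the hypothesis that at infinity in the cotangent direction, $K$ and $X \times \RR^{n+m}$ are separated, which prevents boundary bubbling and gives a maximum principle in the fiber coordinate. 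Once the resulting moduli spaces are shown to be compactified by broken strips and disk bubbles on a single Lagrangian (all excluded by exactness), the $A_\infty$ and simplicial relations are formal, and the construction of $\Xi$ follows by the standard Kan-enrichment machinery.
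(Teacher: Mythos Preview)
Your high-level plan---pair (higher) cobordisms against test branes of the form $X \times (\text{something in }T^*\RR^{m+k})$ and read off module maps from the resulting $A_\infty$ operations---is the paper's plan as well. But there is a concrete gap at the very first nontrivial step, where you pair a cobordism $K$ against $X \times \RR^{m+1}$.

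The zero section $\RR^{m+1} \subset T^*\RR^{m+1}$ is the wrong test brane. A cobordism $K$ from $L_0$ to $L_1$ is collared by $L_i \times \RR$ along the zero section at its ends, so $K$ and $X \times \RR^{m+1}$ intersect (non-transversely) along the entire collared region, not at isolated chords. Your claimed identification $CW^*(X \times \RR^{m+1}, K) \simeq CW^*(X \times \RR^m, L_1)$ is not correct, and without a controlled generator set there is no way to extract a map landing in $\Xi(L_1)$. The $\Lambda = M$ non-characteristic condition controls where $K$ goes toward $-\infty$ in the \emph{cotangent} direction; it says nothing about intersections at the zero-section level where your test brane lives.

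The paper repairs this with two ingredients you are missing. First, a construction $B(K)$ that bends the collared ends of $K$ down toward $-\infty$ in the cotangent direction (the ``cone tails''), turning $K$ into an honest conical brane. Second, the zero section is replaced by a \emph{staircase} of curves $\gamma_i \subset T^*F$ whose depth exceeds that of the cobordism (this is exactly where the non-characteristic condition is used) and whose width exceeds the cobordism's width. With these choices one obtains the crucial generator count
\[
CF^*(X \times \gamma, B(K)) \;\cong\; CF^*(X, L_0) \oplus CF^*(X, L_1)[-1],
\]
and the module map is the component $\pi_{[1]} \circ \mu^d \circ (\iota_0 \otimes \iota_{[1]}^{\otimes d-1})$. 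For an $N$-simplex the decomposition runs over all $0 \in I \subset [N]$, and verifying that these components satisfy the dg-nerve equation of Lemma~\ref{lemma.goal} is the technical heart of the argument (Theorem~\ref{theorem.cubes}), proved via a ``boundary-stripping'' trick and an explicit analysis of disk moduli for staircases (Lemma~\ref{lemma.fibers}). None of this structure is visible in your proposal.

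Two further points. Horizontal Gromov compactness in the $\RR^{m+k}$ direction is not a consequence of convexity or of the non-characteristic condition; the paper obtains it by a separate translation-invariance and rescaling (Uhlenbeck-type) argument that you have not supplied. And the assignment $\Xi$ does not respect all degeneracy maps, only $s_0$ on objects; promoting it to an honest functor of $\infty$-categories requires an additional rectification step (taken as a black box from \cite{tanaka-non-strict}), which your ``standard Kan-enrichment machinery'' does not address.
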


Now let $\wrap^\compact := \wrap^\compact(M)$ denote the full subcategory of $\wrap$ consisting of Lagrangians which are compact. 

\begin{theorem}\label{theorem-compact}
There exists a functor
	\eqnn
		\Xi: \lag_{\sk M} (M) \to \wrap^\compact\Mod
	\eqnd
with the same effect on objects.
\end{theorem}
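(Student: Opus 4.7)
The plan is to parallel the construction of the functor $\Xi$ in Theorem~\ref{theorem-wrapped}, observing that restricting the target from $\wrap\Mod$ to $\wrap^\compact\Mod$ allows us to relax the non-characteristic condition on $L$ from ``avoiding all of $M$'' to ``avoiding the skeleton $\sk M$''. Concretely, the formula on objects is to be identical: to $L \subset M \times T^*\RR^m$ we assign the presheaf
\[
	X \mapsto WF^*(X \times \RR^m, L),
\]
but this presheaf is now required to be defined only on compact test objects $X \in \wrap^\compact$. So the task splits into three parts: (i) show the pairing is well defined on objects under the weakened hypothesis; (ii) show it is functorial for morphisms and higher morphisms of $\lag_{\sk M}(M)$; (iii) assemble these into a single $\infty$-functor exactly as in Theorem~\ref{theorem-wrapped}.

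The central analytic input in the construction of $\Xi$ is a compactness statement for moduli spaces of holomorphic strips with boundary on $L$ and the test Lagrangian; the non-characteristic condition on $L$ with respect to $\Lambda$ is precisely what prevents such strips from escaping through $\Lambda$ at infinity. I would argue that for $\Lambda = \sk M$ and for compact $X$, the required confinement still holds, by combining two observations. First, because $X$ is compact, its image in $M$ is bounded, so a holomorphic strip cannot escape along the $X \times \RR^m$ boundary component in the $M$-direction; escape in the $T^*\RR^m$-direction is controlled identically as in Theorem~\ref{theorem-wrapped}. Second, after applying an inverse Liouville flow, any compact $X$ can be isotoped into an arbitrarily small neighborhood of $\sk M$ without changing $WF^*(X \times \RR^m, -)$, so strips with boundary on $X \times \RR^m$ have their $L$-boundary component constrained near $\sk M$, where the hypothesis that $L \in \lag_{\sk M}(M)$ supplies precisely the compactness and transversality one needs.

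The main obstacle lies in this compactness step and in verifying it continues to hold on parameterized moduli spaces used to define the higher structure. In particular, one must check that the invariance under the Liouville isotopy of $X$ is compatible with the simplicial/operadic data that $\lag_{\sk M}(M)$ carries --- that is, that families of cobordisms used to encode higher morphisms in $\lag_{\sk M}(M)$ can be paired with such isotoped test objects in a coherent way. Granted this, the construction of the module maps induced by cobordisms and of the higher coherences encoding the $\infty$-functor carries over essentially verbatim from Theorem~\ref{theorem-wrapped}, simply restricted to the subcategory of compact test objects; no new categorical ingredients beyond those already used to produce $\Xi$ in the wrapped case are required.
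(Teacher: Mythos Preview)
Your approach is correct and is essentially what the paper does: the construction of $\Xi$ for $\lag_{\sk M}(M)$ is carried out in parallel with (indeed, simultaneously with) Theorem~\ref{theorem-wrapped}, restricting the test objects to compact $X$ and using the Liouville flow to bring such $X$ within a fixed distance $e$ of the skeleton.

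One point of imprecision is worth flagging. You frame the role of the $\Lambda$-non-characteristic condition as a \emph{compactness} issue (``prevents strips from escaping through $\Lambda$ at infinity''). In the paper's treatment the condition instead controls the \emph{generators}: it guarantees that $B(Y) \cap (X \times \gamma)$ consists only of the expected intersections $X\cap Y_0$ and $X\cap Y_1$ (equation~(\ref{eqn.intersections}) and Remark~\ref{remark.e}). Gromov compactness is handled separately via the convexity and boundary-stripping arguments of Section~\ref{section.MTR}, which are insensitive to whether $\Lambda = M$ or $\Lambda = \sk M$. So your second observation (flow $X$ close to $\sk M$) is the right move, but for the right reason you should say: this ensures $X \times \gamma$ does not meet the portions of $B(Y)$ going to $-\infty$ in $F^\vee$, so the Floer complex decomposes as required.

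The paper also makes explicit a coherence step you leave implicit: rather than flowing each $X$ individually, one fixes $e>0$ once and for all, works with the full subcategory $\wrap^e_\compact$ of compact branes within $e$ of $\sk M$ and the subcategory $\lag^e_{\sk M}(M)$ of cobordisms avoiding $N_e(\sk M)$, and then invokes Lemmas~\ref{lemma.epsilon-wrap} and~\ref{lemma.epsilon-lag} to show these inclusions are equivalences. This resolves cleanly the worry you raise about compatibility with the higher simplicial data.
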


In what follows, we will simply write $\lag$ instead of $\lag_M(M)$ or $\lag_{\sk (M)} (M)$ when a statement is true for both categories. Likewise, we will simply write $\fuk\Mod$ rather than $\wrap\Mod$ or $\wrap^\compact\Mod$ when a statement is true for both categories.

Here is a sample application of the above functor. One can prove that any compact Lagrangian cobordism in $\lag(M)$ is an equivalence. By the Yoneda Lemma, we conclude:

\begin{corollary}
Fix $L_0, L_1 \in \wrap^\compact(M)$ and assume $Q$ is a compact Lagrangian cobordism from $L_0$ to $L_1$. Then $Q$ induces an equivalence $L_0 \simeq L_1$  in $\fuk(M)$.
\end{corollary}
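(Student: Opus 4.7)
\emph{Proof proposal.} The plan is to reduce everything to the internal statement in $\lag_{\sk(M)}(M)$, quoted in the text just before the corollary, that every compact Lagrangian cobordism represents an equivalence. Granting this, the corollary follows in two steps. First, because $\Xi\colon \lag_{\sk(M)}(M) \to \wrap^\compact\Mod$ is a functor of $\infty$-categories, it carries equivalences to equivalences, so $\Xi(Q)\colon \Xi(L_0) \to \Xi(L_1)$ is an equivalence of $\wrap^\compact$-modules. Second, by the formula for $\Xi$ in Theorem~\ref{theorem-compact}, and because $L_0, L_1 \in \wrap^\compact(M)$, the modules $\Xi(L_i) = X \mapsto WF^*(X, L_i)$ are (up to the appropriate contravariant convention, and taking $m = 0$) the Yoneda modules of $L_0$ and $L_1$. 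The Yoneda lemma then says that this equivalence of representable modules is induced by an equivalence of the representing objects, giving $L_0 \simeq L_1$ in $\fuk(M)$.

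The real content is the reduction. The approach I would take is to build an explicit homotopy inverse for $Q$ in $\lag_{\sk(M)}(M)$: let $\bar Q \subset M \times T^*\RR$ be obtained from $Q$ by reflecting the $\RR$-coordinate, so $\bar Q$ is a morphism $L_1 \to L_0$. Because $Q$ is compact, $\bar Q$ is a globally well-defined Lagrangian; non-characteristicity with respect to $\sk(M)$ is unaffected by the reflection because it is a local condition on the fibers of $Q$. I would then construct higher Lagrangian cobordisms in $M \times T^*\RR^2$ witnessing $\bar Q \circ Q \simeq \id_{L_0}$ and $Q \circ \bar Q \simeq \id_{L_1}$, via the standard ``capping disk'' trace obtained by rotating the composed cobordism through $180^\circ$ in the $T^*\RR^2$-plane so that the two matching ends close up. Compactness of $Q$ is exactly what allows this trace to remain compact, non-characteristic, and to satisfy the collaring conditions built into the definition of higher morphisms in $\lag$.

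An alternative route, more in the spirit of the stable $\infty$-category structure on $\lag_{\sk(M)}(M)$ established in~\cite{nadler-tanaka}, is to show directly that $\cone(Q) \simeq 0$: in the Biran--Cornea picture, the mapping cone of a Lagrangian cobordism is built from the ``extra'' ends of the cobordism beyond the source, and a compact cobordism with exactly the two ends $L_0$ and $L_1$ has no such extra ends, so the cone should be geometrically a ``capped-off'' (hence nullhomotopic) Lagrangian. In a stable $\infty$-category, $\cone(Q) \simeq 0$ is equivalent to $Q$ being an equivalence. The main obstacle in either route is verifying the geometric regularity required by the definitions of~\cite{nadler-tanaka}: for the inverse approach, that the candidate $2$-cobordism in $T^*\RR^2$ meets the full non-characteristic and collaring conditions at its corners and infinities; for the cone approach, that the cone computed abstractly inside $\lag_{\sk(M)}(M)$ is identified with the explicit capped-off Lagrangian and then recognised as the zero object. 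Neither of these should be substantively harder than the constructions already carried out in the Nadler--Tanaka framework, but both require careful bookkeeping of collars at the corners of the higher cobordisms.
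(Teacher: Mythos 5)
Your first paragraph is exactly the paper's argument: the key input---that a compact Lagrangian cobordism is an equivalence in $\lag(M)$---is taken as given (the paper merely asserts it, with the proof living in~\cite{nadler-tanaka}), and the corollary then follows because the functor $\Xi$ preserves equivalences and the Yoneda embedding is fully faithful. Your further sketches of how one might prove that input (the reflected inverse plus capping trace, or $\cone(Q)\simeq 0$ in the stable structure) go beyond what the paper does here and are not needed for the corollary as stated.
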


This result was independently obtained by Biran and Cornea in the (more general) monotone setting as well. It shows that any natural notion of characteristic classes associated to Lagrangian cobordism theory must also preserve Floer theory. Since any Hamiltonian isotopy gives rise to a Lagrangian cobordism of the above sort, this is another proof that Hamiltonian isotopies give rise to equivalences in the Fukaya category. 

\begin{remark}
Note that by the tensor-hom adjunction, one obtains a functor
	\eqnn
	\lag \times \fuk^{\op} \to \chain_\ZZ.
	\eqnd
This is what we mean by the ``pairing'' in the title of this paper. 
\end{remark}

\begin{remark}\label{remark.partial-wrapping}
At this point it is natural to wonder whether one can define an analogous pairing between $\lag_\Lambda(M)$ and some wrapped category that depends on $\Lambda$, where the two theorems above are extreme examples. The answer seems to be yes, but to the author's knowledge, this requires a set-up of partially wrapped categories that seems a little bit beyond the technology developed in Sylvan's thesis~\cite{sylvan-thesis}. So we leave this for future work. However, it is worth noting that changes in $\Lambda$ are compatible with Sylvan's stop-removal theorem. For example, if $\Lambda ' \subset \Lambda$, one can show that a natural inclusion
	\eqnn
		\lag_\Lambda(M) \to \lag_{\Lambda'}(M)
	\eqnd
inverts the class $W$ of cobordisms which avoid $\Lambda \setminus \Lambda'$ at {\em positive $\infty$}. In other words, one has an induced localization functor
	\eqnn
		\lag_\Lambda(M) \to \lag_\Lambda(M)[W^{-1}] \to \lag_{\Lambda'}(M)
	\eqnd
but we do not know whether the last arrow is an equivalence.
\end{remark}

\subsection{Relation to other works, and future directions}
As mentioned above, we view this paper as a first in a series. For some context, we briefly outline some future directions, indicating relations to other works:
\enum
	\item (Lifting the Seidel representation)
		The fact that we deal with {\em higher} cobordisms means that we can detect elements of $\pi_k$ in the space of Hamiltonian automorphisms of $M$. Concretely: Since Hamiltonians act by automorphisms of $\lag$ and of $\fuk$, one has a continuous map $\ham \to \aut\lag$ and $\ham \to \aut \fuk$. Of course, the based loop space (at the identity) of $\aut$ of a category has a map to the Hochschild cohomology of that category---hence, in the compact $M$ case, to quantum cohomology of $M$. This was utilized in the 1-categorical setting by Charette-Cornea~\cite{charette-cornea}, and one can utilize the $\infty$-categorical constructions here to give a higher lift of the Seidel representation, which induce maps from the higher homotopy groups of $\ham$ to quantum cohomology (and to the analogue of quantum cohomology for the wrapped case)~\cite{tanaka-seidel-rep}.
	\item (Exactness of $\Xi$)
		Another benefit of higher cobordisms is that---as proven in~\cite{nadler-tanaka}---cobordisms have their own mapping cones. While other works~\cite{biran-cornea, biran-cornea-2, mak-wu} see ex post facto that cobordisms induce mapping cones in the Fukaya category, higher cobordisms show that mapping cones exist a priori in the $\infty$-category of Lagrangian cobordisms. One can show that $\Xi$ in fact respects mapping cones (i.e., is an {\em exact} functor)~\cite{tanaka-exact}. This has at least two consequences:
			\enum
				\item 
					$\Xi$ induces a map of spectra between $\hom_\lag$ and $\hom_\fuk$. In particular, the higher cobordism groups of non-compact branes map to Floer cohomology groups additively.
				\item
					The construction in~\cite{biran-cornea, biran-cornea-2} can be recovered as a map at the level of the s-dot construction. (See below.)
			\enumd
	\item (Higher algebra of cobordisms)
		Finally, we prove in later work that when $M$ is a point, $\Xi$ lifts to a symmetric monoidal functor from $\lag(pt)$ to $\ZZ\Mod$~\cite{tanaka-symm}. This proof in fact gives an infinite-loop space model for the sphere spectrum and its ring structure, while also showing that the lefthand side of the conjecture~(\ref{eqn.conjecture}) makes sense.
\enumd

\begin{remark}[Other functors]
Of course, the present functor is only a pairing. We content ourselves with this because our immediate goals are to understand and construct the $E_\infty$ map $\cL \to H\ZZ$. There are in fact better functors that do not utilize test objects and staircases like this paper, but which require much more analytical work involving open-closed maps; this will be a subject of future work.
\end{remark}

\begin{remark}[Immersions]
One can construct an $(\infty,2)$-category of symplectic manifolds where $\hom(M_0,M_1) = \lag(\ov{M_0},M_1)$, provided one is content with immersed objects and immersed cobordisms. Note there are no technical obstructions to setting up such an $(\infty,2)$-category; this is contrast to the Fukaya side, where at present arbitrarily immersed branes do not yet seem to allow for a general enough Floer theory the author understands. Finally, if such a theory were to exist, it seems the conjecture~(\ref{eqn.conjecture}) is much more within reach, for the simple reason that one could perform a surgery along intersection points to create a morphism (i.e., a cobordism) in an immersed version of $\lag$. For work in this direction, which for compact $M$ utilizes the fully faithful embedding of Lagrangian correspondences into bimodules over Fukaya categories, see~\cite{mak-wu}.
\end{remark}

\begin{remark}[Non-exact setting]
The reader will note that very few of the methods employed in defining $\lag$, and in proving the existence of $\Xi$, actually rely on exactness. Here, as usual, exactness is only used to remove obstructions and guarantee Gromov compactness. In particular, we expect that Ritter-Smith's generalization of the wrapped category to the monotone setting~\cite{ritter-smith} will also apply to construct a functor $\lag \to \fuk(M)\Mod$, where one then must set $\Lambda = M$. 
\end{remark}

\begin{remark}[Other works]
For computations in the monotone setting, see Haug~\cite{haug}. Also see the work of Lara-Simone Suarez~\cite{suarez} for a generalization of~\cite{tanaka-h-cobordism}.
\end{remark}

\begin{remark}[Relation to Biran-Cornea]
Independently of~\cite{nadler-tanaka} and the present work, Biran-Cornea~\cite{biran-cornea-2} also observed algebraic structures present in Lagrangian cobordisms. Notably, \cite{biran-cornea-2} showed that a formally constructed category of Lagrangian cobordisms (with multiple ends) has a functor to a category consisting of diagrams of exact sequences (a.k.a. ``triangular decompositions'') in the Fukaya category. 

Though~\cite{biran-cornea-2} put it a different way, the formal structure on both the categories appearing in~\cite{biran-cornea-2} is that of a planar colored operad. Concretely: There is a color for every class of object, and the composition operations are parametrized over a plane (i.e., are planar) because all of the cobordisms in~\cite{biran-cornea-2} live over a plane, while all triangular decompositions are drawn on a sheet of paper (a plane). One can then straightforwardly show that the functor constructed in~\cite{biran-cornea-2} naturally defines a map of colored planar operads.

By work of Dyckerhoff-Kapranov~\cite{dyckerhoff-kapranov}, any exact functor between stable $\infty$-categories results in a map of colored planar operads, where each colored operad keeps track of simplices in the Waldhausen s-dot construction. So a natural guess is that $\Xi$ induces exactly the map of planar colored operads constructed by~\cite{biran-cornea-2}. In particular, the map in~\cite{biran-cornea, biran-cornea-2} from the 0th cobordism group to $K_0(\fuk)$ is precisely the functor on Grothendieck groups induced by the functor $\Xi$. The main obstacle to showing this is extending some of the analytical details for the wrapped category to the monotone setting as established in~\cite{ritter-smith}. This will be the subject of later work.
\end{remark}

\subsection{Some remarks on the conjecture (\ref{eqn.conjecture})}\label{section.conjecture}
\begin{remark}[Motivation]
It has long been anticipated that the wrapped category (and in fact, a putative {\em partially} wrapped category) should have a purely topological characterization. One important point of emphasis is that this topological characterization should involve no mention of holomorphic disks. Here are some example results and conjectures that indicate the spirit:
	\enum
		\item
			A theorem of Abouzaid that for $M = T^*Q$ a cotangent bundle with $Q$ connected and Spin, $D^\pi\wrap(M) \simeq \Omega Q \Mod$. That is, (the idempotent completion of) the wrapped Fukaya category is equivalent to representations of the based loop space. Note that, if one chooses the trivial locally constant cosheaf of stable categories on $Q$ where at each point of $Q$ one assigns modules over some base ring $R$, the global section of this cosheaf would indeed give rise to representations of $\Omega Q$ over the ring $R$.
		\item
			Nadler and Nadler-Zaslow's work on microlocal sheaves, and on wrapped versions thereof (characterized as perfect objects in an Ind-completion of microlocal sheaves)~\cite{nadler-wrapped-microlocal}. Here, because of the very nature of sheaves, Nadler's microlocal objects inherently have a local-to-global computation, meaning one need only understand local data and gluing data to compute the microlocal category of a whole. In all known examples, this category is equivalent to the wrapped Fukaya category, and a proof of this in general has been announced by Ganatra-Pardon-Shende~\cite{shende-paris-talk}.
		\item 
			Kontsevich's conjecture that there is a locally constant cosheaf of stable $\infty$-categories on any skeleton $\Lambda$, whose global sections recovers the partially wrapped Fukaya category of $M$ with respect to $\Lambda$. This would be a consequence of  Ganatra-Pardon-Shende.
		\item
			Dyckerhoff-Kapranov's work on topological Fukaya categories by taking colimits of categories on ribbon graphs (so by construction, this satisfies a local-to-global principle). As with Nadler's microlocal categories, with $\ZZ$ coefficients, this is equivalent to the wrapped category in known examples.
	\enumd
One observation about each of the ideas above is that specifying $\Omega Q\Mod$, or microlocal sheaves, or a locally constant cosheaf, is secondary to choosing a base ring $R$. This raises the question: Is there a notion of a Fukaya category over rings that are more general than $\ZZ$-linear rings? Of course, any unital ring in the usual sense is a $\ZZ$-algebra---so what could be more general? This question may invoke phrases like ``the field with one element'' to many, but we have a specific realm in mind: The realm of ring spectra, where the base ring is not $\ZZ$, but the sphere spectrum (over which $\ZZ$ itself is a ring).

Just as (a) the sphere spectrum can be recovered from the usual framed theory of cobordisms via Pontrjagin-Thom theory, and (b) the $\ZZ$-linearization of the sphere spectrum recovers usual homology over $\ZZ$, the conjecture (\ref{eqn.conjecture}) anticipates that (a) the geometry of non-compact exact Lagrangians in $T^*\RR^\infty$ specifies a ring spectrum $\cL$, and that (b) the $\ZZ$-linearization of the $\cL$-linear theory is wrapped Floer theory.

Of course, the most general ring in homotopy theory is the sphere spectrum. We know very little about the unit map $\SS \to \cL$.
\end{remark}

{\bf Acknowledgments.}
I'd like to thank Kevin Costello, John Francis, and David Nadler for guiding and supporting me during the preparation of this work, which is a part of my thesis. 
I would also like to thank
Paul Biran and Octav Cornea
for fruitful discussions at ETH Zurich and at IAS. 
Finally, I would like to thank 
Mohammed Abouzaid,
Denis Auroux, 
Nate Bottman,
Kenji Fukaya,
Sheel Ganatra, 
Yong-Geun Oh,
James Pascaleff, 
Tim Perutz,
Paul Seidel,
and
Zack Sylvan,
for helpful discussions, which may have long been forgotten,  on minutiae of this work.

This work was conducted while I was supported chiefly by an NSF Graduate Research Fellowship. At various points I was also supported by 
a Presidential Fellowship via Northwestern University's Office of the President, 
the Mathematical Sciences Research Institute, 
the Center for Geometry and Physics at Pohang,
Harvard University, and by the National Science Foundation under Award No. DMS-1400761. This research was also supported in part by Perimeter Institute for Theoretical Physics. Research at Perimeter Institute is supported by the Government of Canada through the Department of Innovation, Science and Economic Development and by the Province of Ontario through the Ministry of Research and Innovation. I would like to thank all these institutions for their support.

 

\section{Overview of the proof}\label{section.outline}

\begin{remark}[Prerequisites]
We assume the reader is familiar with the framework of $\infty$-categories, and of $A_\infty$-categories. No technical knowledge of either is used in this paper, with the exception of the result from~\cite{tanaka-non-strict}, which one can take as a black box. (See Section~\ref{section.degeneracy}.) 

We also assume the reader is familiar with either the telescoping (colimit) construction of wrapped cochains as in~\cite{abouzaid-seidel}, or with the construction of wrapped cochains using eventually quadratic Hamiltonians as in~\cite{abouzaid-geometric}.
\end{remark}

\begin{notation}
In what follows, we let $E = F=\RR$ be the standard Euclidean line, and $E^n = F^n = \RR^n$ standard Euclidean $n$-space. Since Euclidean space will appear in two different ways, we hope this notation will sort out the purpose of the different Euclidean coordinates. Roughly speaking:

{\em $E^n$ are the directions in which one {\em stabilizes} to create more room. The $F^N$ are the directions in which {\em cobordisms propagate}.}

We will let $E \subset T^*E$ denote the zero section, and we will identify $E^\vee$---the $\RR$-linear dual of $E$---with the cotangent fiber of $T^*E$ at the origin. So
	\eqnn
		E^\vee \cong T^*_0 E.
	\eqnd
Likewise for $F$, and $E^n$, and $F^N$. 
\end{notation}

We now explain the construction of the functor $\Xi$. To do this, recall first that $\lag(M)$ itself is constructed as a union. Namely, for each $n \in \ZZ_{\geq 0}$, one has an $\infty$-category $\lag^{\dd n}(M)$. Its objects are branes $Y$ in the exact symplectic manifold $M \times T^*E^n$. Its morphisms are Lagrangian submanifolds of $M \times T^*E^n \times T^*F$, with collaring conditions rendering them cobordisms. Its $N$-morphisms are Lagrangian submanifolds of $M \times T^*E^n \times T^*F^N$, again with collaring conditions rendering the branes higher cobordisms.

Now, there is a natural way to take branes $Y \subset M \times T^*E^n$ and produce branes in $M \times T^*E^{n+1}$: form the direct product with $E^\vee$. This likewise produces, for any cobordism inside $M \times T^*E^n \times T^*F^N$, a cobordism inside $M \times T^*E^{n+1} \times T^*F^N$. Because this process preserves all collaring conditions, it yields a functor
	\eqnn
		\times E^\vee: \lag^{\dd n}(M) \to \lag^{\dd n+1}(M).
	\eqnd
And $\lag(M)$, by definition, is the increasing union (or colimit) of this $\ZZ_{\geq0}$-indexed diagram.
	\eqnn
		\lag(M) := \bigcup_{n \geq 0} \lag^{\dd n}(M).
	\eqnd
So, by definition, to construct $\Xi$, it suffices to construct a functor $\lag^{\dd n}(M) \to \fun_{A_\infty}(\fuk(M)^{\op}, \chain_\ZZ)$ for each $n$, and then prove that the diagram
	\eqn\label{eqn.stabilization-criterion}
		\xymatrix{
			\lag^{\dd n}(M) \ar[r]^-{\Xi} \ar[d]_{ \times E^\vee} & \fun_{A_\infty}(\fuk(M)^{\op},\chain_\ZZ) \\
			\lag^{\dd n+1}(M) \ar[ur]_-{\Xi}
		}
	\eqnd
commutes.

We outline this construction now, by explaining the following:

\enum
	\item
		The construction of $\Xi$ on the objects and morphisms of $\lag^{\dd 0}$. The key here is explaining how one obtains higher natural transformations in $\fuk\Mod$ from higher cobordisms.
	\item
		The construction of $\Xi$ on $\lag^{\dd n}$ for $n \geq 1$, and why the diagram (\ref{eqn.stabilization-criterion}) commutes.
\enumd

We refer the reader to Section~\ref{section.geometry} for the details on the wrappings, along with proofs of transversality and compactnesss. We refer also to Section~\ref{section.algebra} for details on how one constructs a functor from an $\infty$-category to a dg-category.

\subsection{$\Xi$ on $\lag^{\dd 0}$}
When $n=0$, $\Xi$ is easily described on objects. To any brane $Y \in \ob \lag^{\dd 0}$, one assigns the module that $Y$ represents as an object of the Fukaya category.\footnote{Recall our convention that $\fuk$ is a stand-in for both $\wrap$ and $\wrap^\compact$; the distinction will not be relevant until we begin to discuss the kinds of cobordisms we allow in $\lag$.} That is,
	\eqnn
		Y \mapsto CW^*(-,Y) \in \fuk\Mod.
	\eqnd
Now let $Y_{01} \subset M \times T^*F$ be a Lagrangian cobordism from $Y_0$ to $Y_1$. One must now assign a map of modules---i.e., a natural transformation---from $CW^*(-,Y_0)$ to $CW^*(-,Y_1)$. To do this, one employs a construction which was developed in~\cite{nadler-tanaka} to turn cobordisms into objects of $\lag$; a variant using another language, but studying more or less the same moduli space of disks, was also developed independently by Biran and Cornea~\cite{biran-cornea}. We describe the construction in pictures here.

\begin{figure}
		\[
			\xy
			\xyimport(8,8)(0,0){\includegraphics[width=1.5in]{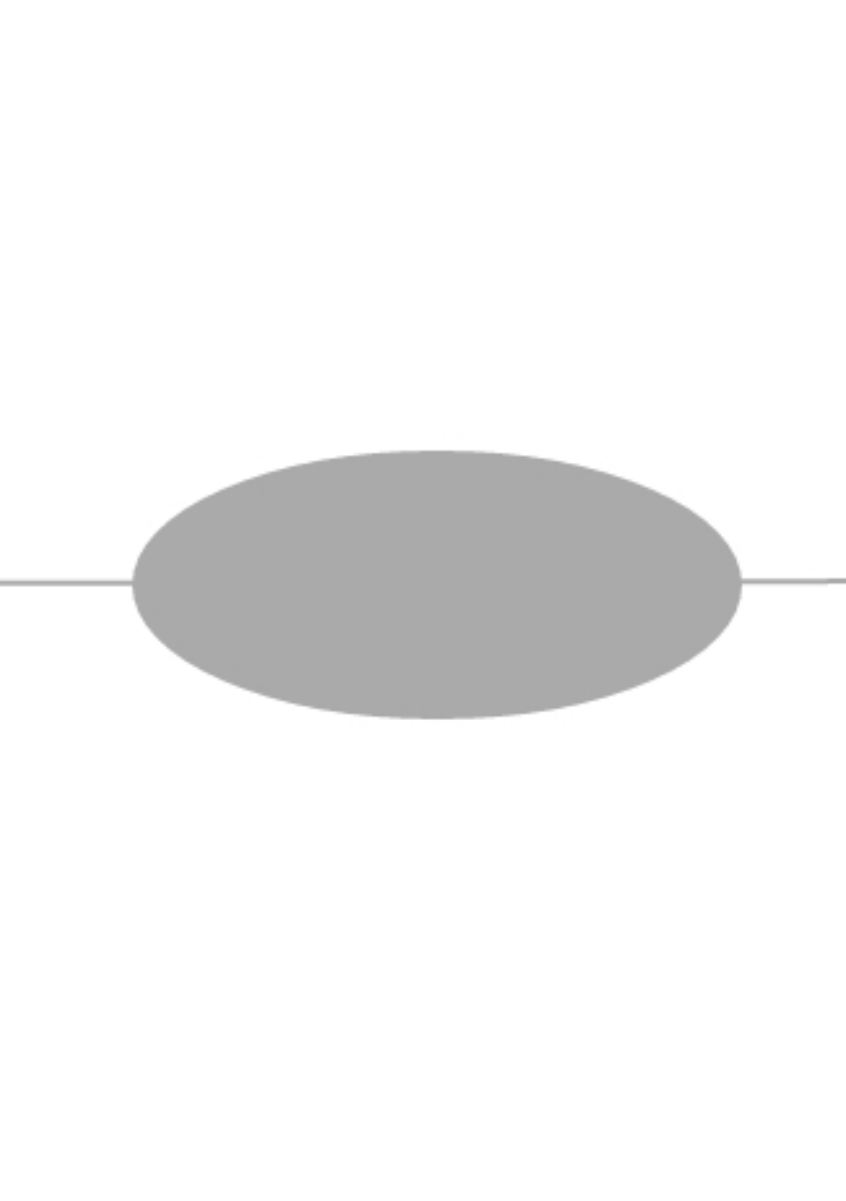}}
			,(4,0)*+{\text{(a)}}
			\endxy
			\qquad
			\xy
			\xyimport(8,8)(0,0){\includegraphics[width=1.5in]{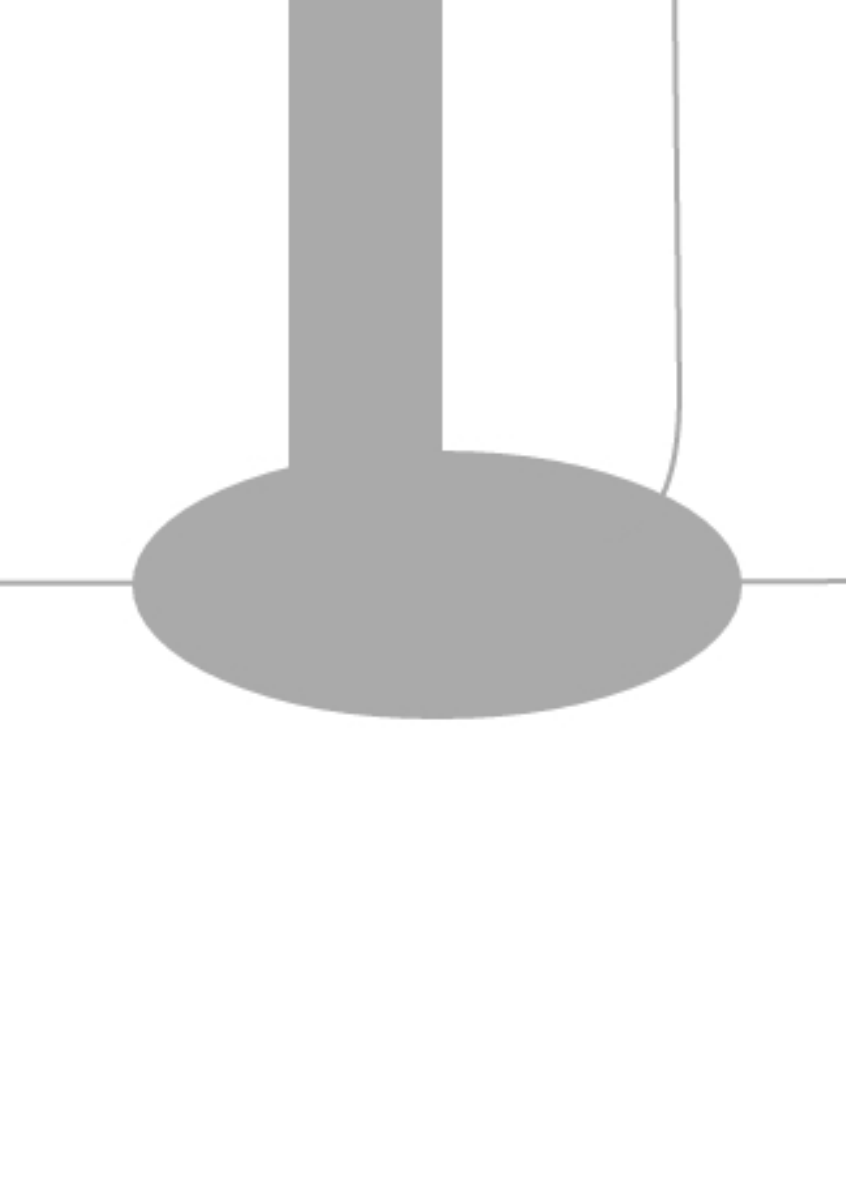}}
			,(4,0)*+{\text{(b)}}
			\endxy
			\qquad
			\xy
			\xyimport(8,8)(0,0){\includegraphics[width=1.5in]{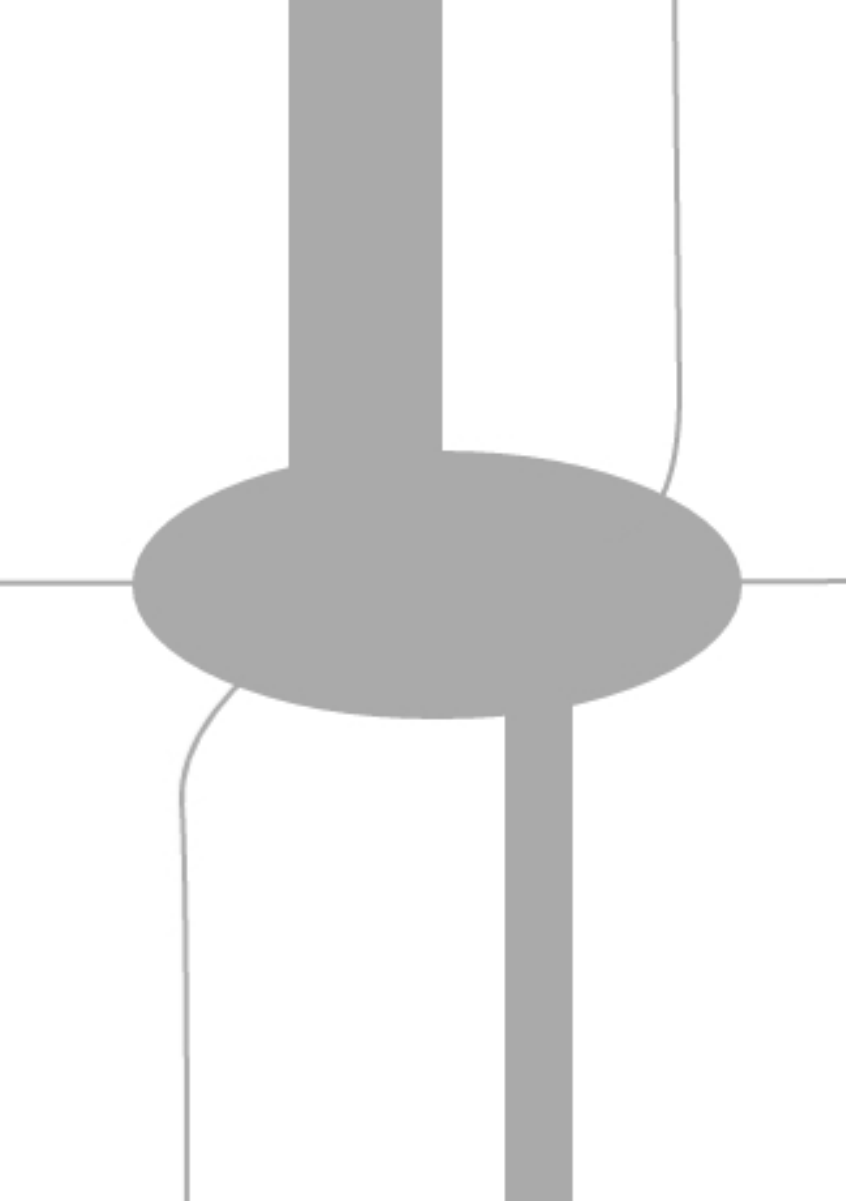}}
			,(4,0)*+{\text{(c)}}
			\endxy
			\]
\begin{image}\label{figure.cobordisms}
Three different cobordisms in $M \times T^*F$, depicted by the image of their projection to $T^*F$. (a) depicts a bounded cobordism, which has no unbounded component in the $F^\vee$ direction. (b) depicts a cobordism which only goes to $+\infty$ in the $F^\vee$ direction, while (c) depicts a cobordism with components going to both $+\infty$ and $-\infty$ in the $F^\vee$ direction.
\end{image}
\end{figure}

In Figure~\ref{figure.cobordisms} are depicted three typical cobordisms. We focus on types (b) and (c). In Figure~\ref{figure.cobordisms}(b), one sees a cobordism that does not appproach $-\infty$ in the $F^\vee$ direction. We say that the cobordism {\em avoids $M$}. Every morphism in $\lag_M(M)$ is of type (a) or (b). 

In Figure~\ref{figure.cobordisms}(c), the cobordism does go off to $-\infty$ in the $F^\vee$ direction. But let us assume that it does so in a way which never intersects a tubular neighborhood of $\sk(M) \subset M$. (See Definition~\ref{defn.non-characteristic} for details.) Every morphism in $\lag_{\sk(M)}(M)$ is of type (a), (b), or (c). 

By definition of a map of modules, one must now construct---for any sequence of objects $X_0,\ldots,X_k$ in $\fuk$---maps
	\eqn\label{eqn.module-operations}
		CW^*(X_k, Y_0) \tensor \ldots \tensor CW^*(X_0,X_1) \to CW^*(X_0,Y_1).
	\eqnd
One does this as follows: We first choose certain curves $\gamma_i \subset T^*F$, and then count holomorphic disks in $M \times T^*F$ with boundaries on the Lagrangians
	\eqnn
		B(Y), \qquad X_0 \times \gamma_0, \qquad \ldots, \qquad X_k \times \gamma_k.
	\eqnd
These are pictured in Figure~\ref{figure.cobordism-disk}. 

\begin{figure}
		\[
			\xy
			\xyimport(8,8)(0,0){\includegraphics[width=1.5in]{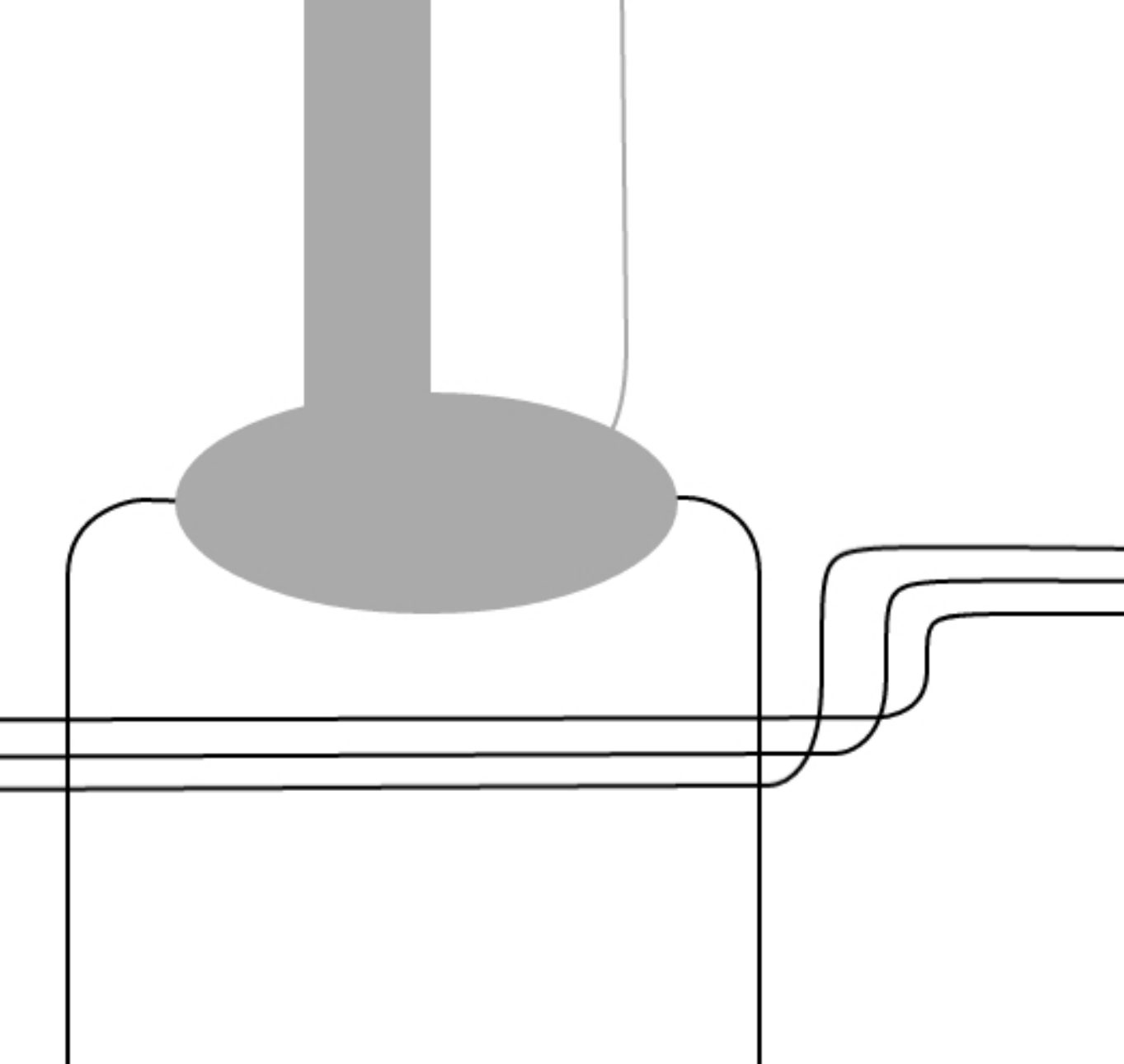}}
			,(2,0)*+{\text{(a)}}
			\endxy
			\qquad
			\xy
			\xyimport(8,8)(0,0){\includegraphics[width=1.5in]{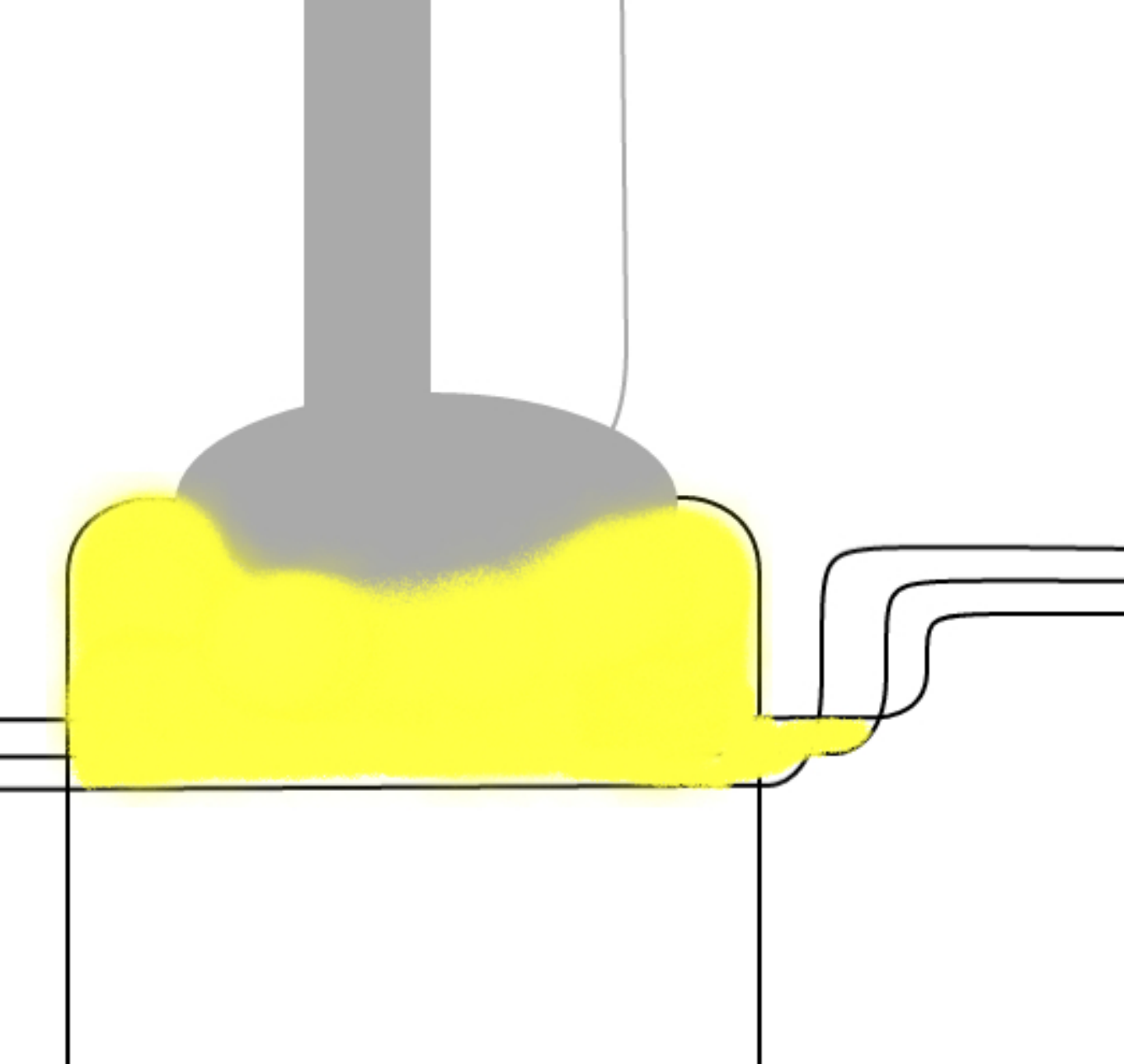}}
			,(2.5,0)*+{\text{(b)}}
			\endxy			
			\qquad
			\xy
			\xyimport(8,8)(0,0){\includegraphics[width=1.5in]{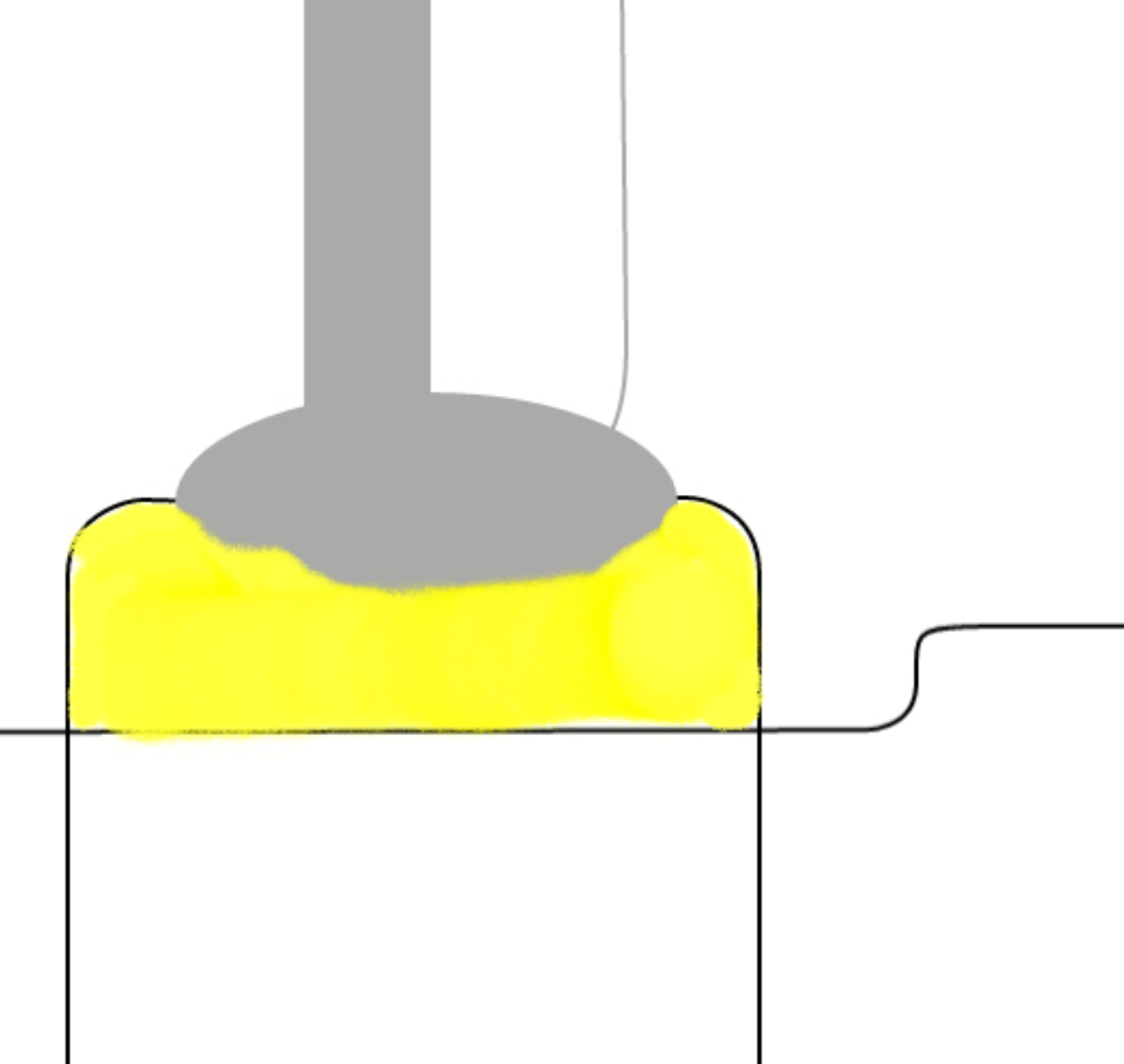}}
			,(2.5,0)*+{\text{(c)}}
			\endxy			
			\]
\begin{image}\label{figure.cobordism-disk}
In (a) is depicted $B(Y)$, together with branes $X_i \times \gamma_i$, all projected to $T^*F$. Note that $B(Y)$ is obtained from the cobordism $Y$ by ``dragging the tails of $Y$'' toward $-\infty$ in the $F^\vee$ direction. In yellow in (b) we depict the image of a typical disk with boundary on the cobordism and on the $X_i \times \gamma_i$, again projected to $T^*F$. (c) is the special case when we test against a single $X \times \gamma$.
\end{image}
\end{figure}

\begin{example}
As an example when $k = 0$, one has a chain map
	\eqn\label{eqn.k=0}
		CW^*(X, Y_0) \to CW^*(X,Y_1)
	\eqnd
simply by counting disks from $X \cap Y_0$ to $X \cap Y_1$ as depicted in Figure~\ref{figure.cobordism-disk}(c). 
\end{example}

It is the content of Theorem~\ref{theorem.edges} that the operations in~(\ref{eqn.module-operations}) indeed define maps of $A_\infty$ modules.

\begin{remark}\label{remark.e}
The crux behind the success of this construction is being able to enumerate the generators of $CW^*(X \times \gamma, B(Y))$. (See also~(\ref{eqn.intersections-N}) below.) We explain this here, as this is the main reason we must consider different cobordism categories when pairing against different Fukaya categories.

When pairing against the entire wrapped category $\wrap$, we know that $\Lambda = M$ implies our cobordisms must look like Figure~\ref{figure.cobordisms}(b)---there are no parts of $B(Y)$ which go off to $-\infty$ in the $F^\vee$ direction except for the bits collared by $Y_0$ and $Y_1$. As a consequence, one can always choose $\gamma$ to have negative enough $F^\vee$ coordinate to guarantee
	\eqn\label{eqn.intersections}
		B(Y) \cap X \times \gamma = X \cap Y_0 \bigcup X \cap Y_1.
	\eqnd
These intersection points, of course, generate the chain complexes in~(\ref{eqn.k=0}). Equation (\ref{eqn.intersections-N}) below is a generalization to the higher $N$ case.

In the case where $\Lambda = \sk(M)$, we fix a real number $e>0$ once and for all. Then we only construct $\Xi$ on those $X$ which are within $e$ of $\sk(M)$, and on those cobordisms which go off to $-\infty$ in the $F^\vee$ direction in a way that avoids an $e$-neighborhood of $\sk(M)$. This guarantees that, once again, one can choose $\gamma$ such that the intersection set is exactly as in (\ref{eqn.intersections}). 
\end{remark}

\begin{remark}
Our restrictions to such $X$, and to such cobordisms, are of no consequence----every object of $\wrap_\compact$ is equivalent to such an $X$ by flowing via the Liouville flow. Likewise, every cobordism in $\lag_{\sk(M)}(M)$ is equivalent to a cobordism as described above.
\end{remark}

We have described how one takes morphisms in $\lag^{\dd 0}$ and creates morphisms in $\wrap\Mod$ (or $\wrap_\compact\Mod$). Now what does one do to higher morphisms? Let $Y \subset M \times T^*F^N$ be an $N$-morphism. Again, we perform a construction $B(Y)$, and count holomorphic disks with boundary on
	\eqn\label{eqn.N-boundaries}
		B(Y), \qquad
		X_0 \times \gamma_0^N,\qquad
		 \ldots,\qquad
		 X_k \times \gamma_k^N.
	\eqnd
We detail the $B$ construction in Section~\ref{section.B}.

This time, let us be more careful with grading. If the higher cobordism $Y$ is collared by objects $Y_0,\ldots,Y_N$ of $\lag^{\dd0}$, then one can compute that
	\eqn\label{eqn.intersections-N}
		CW^*(X \times \gamma^N, B(Y))
		\cong
		\bigoplus_{0 \in I \subset [N]} CW^*(X, Y_{\max I})[ |I|-1]
	\eqnd
as a graded abelian group. Here, $I$ runs through all subsets of $[N] = \{0,\ldots,N\}$ containing $0$. It is the content of Theorem~\ref{theorem.cubes} that the disk counts with boundary on (\ref{eqn.N-boundaries}) indeed yield higher homotopies between natural transformations. 

\begin{example}
Consider the case $N=2$ See Figure~\ref{figure.B-2-simplex},  where $B(Y)$ is pictured roughly by its projection to the zero section $F^2 \subset T^*F^2$. We assume $k=0$ for simplicity. Then the cochain complex $CF^*(X \times \gamma^2, B(Y))$ can be pictured as follows:
	\eqnn
		\xymatrix{
			CW^*(X, Y_2)[-1] \ar[rr]^{\id[-1]} 
				&& CW^*(X,Y_2)[-2] \\
			CW^*(X, Y_0) \ar[u]^{\Xi(Y_{02})[-1]} \ar[rr]^{\Xi(Y_{01})[-1]}
				\ar[urr]^{\Xi(Y)[-1]}
				&& CW^*(X, Y_1)[-1] \ar[u]_{\Xi(Y_{12})[-1]}
		}
	\eqnd
Here, $Y_{ij}: Y_i \to Y_j$ are the cobordisms collaring $Y$ along its faces. Not pictured are the self-differentials of the complexes $CW^*(X_i,Y)$. The fact that the Floer differential squares to zero precisely exhibits that $\Xi(Y)$ is a homotopy between
	\eqnn
		\Xi(Y_{02}) \qquad \text{and} \qquad \Xi(Y_{12}) \circ \Xi(Y_{01}).
	\eqnd
\end{example}

\subsection{$\Xi$ on $\lag^{\dd n}$, and stabilization}
Now we explain how we construct $\Xi$ on $\lag^{\dd n}$. 

By definition, an object of $\lag^{\dd n}$ is a brane $Y \subset M \times T^*E^n$ which, aside from the usual brane structures, must satisfy the following condition:
	\eqn\label{eqn.compact-image}
	\text{
	{\em When $Y$ is projected to $E^n$, it has compact image.}
	}
	\eqnd
For instance, a brane of the form $Y' \times E^n$ with $Y' \subset M$ is not allowed. (On the other hand, a brane of the form $Y' \times (E^n)^\vee$ is allowed.)

Moreover, any object of $\lag^{\dd n}$ can be isotoped by a compactly supported Hamiltonian on $M \times T^*E^n$ so that
	\eqn\label{eqn.transverse}
	\mbox{
	{\em $Y$ is transverse to $M \times E^n$.}
	}
	\eqnd
(and hence to $M \times (E^n + p)$ for any $p \in (E^n)^\vee$ small enough). So it suffices to define $\Xi$ on the full subcategory of $\lag^{\dd n}$ spanned by objects satisfying (\ref{eqn.transverse}).

To do this, one chooses curves $\beta_0,\ldots,\beta_k \subset T^*E$ which are very close to the zero section $E$, and we count holomorphic disks with boundary on
	\eqnn
		Y, X_0 \times \beta_0^n,\ldots, X_k \times \beta_k^n.
	\eqnd
See Figure~\ref{figure.betas}, where we depict the cases $k=0$ and $k >0$, with $n=1$. Note that if we choose $\beta_i$ to be close enough to the zero section $E^n$, the condition (\ref{eqn.transverse}) guarantees we do not have to perturb in the $T^*E^n$ direction to assure that $Y$ is transverse to each of the $X_i \times \beta_i$---only a perturbation in the $M$ component is necessary.

When computing the Hamiltonian chords between these branes, we will utilize a Hamiltonian which vanishes near the zero section of $T^*E^n$---this has the effect of essentially wrapping only in the $M$ components. 

Now, using the same constructions as for $\lag^{\dd 0}$ for cobordisms, one obtains a functor
	\eqnn
		\lag^{\dd n} \to \wrap\Mod
	\eqnd
for all $n$.

The choice of Hamiltonian chords and almost complex structures also explains why the diagram (\ref{eqn.stabilization-criterion}) commutes: If $Y \in \ob \lag^{\dd 0}$ and $X \in \ob \wrap$, one has a natural isomorphism
	\eqnn
		CW^*(X,Y) \cong CW^*(X \times \beta, Y \times E^\vee).
	\eqnd
And in fact, with $\beta_0,\ldots,\beta_k$ chosen appropriately, the $A_\infty$ operations 
	\eqnn
		CW^*(X_k,Y) \tensor \ldots \tensor CW^*(X_0,X_1) \to CW^*(X_0,Y)
	\eqnd
are identical to the stabilized operations
	\eqnn
		CW^*(X_k \times \beta_k, Y \times E^\vee) \tensor \ldots \tensor CW^*(X_0 \times \beta_0, X_1 \times \beta_1) \to CW^*(X_0 \times \beta_0, Y \times E^\vee).
	\eqnd
More generally, if $Y \in \ob \lag^{\dd n}$, the same argument goes through replacing the objects $X$ with $X \times \beta^n$. This is the content of Lemma~\ref{lemma.stabilization}.

\begin{figure}
		\[
			\xy
			\xyimport(8,8)(0,0){\includegraphics[width=1.5in]{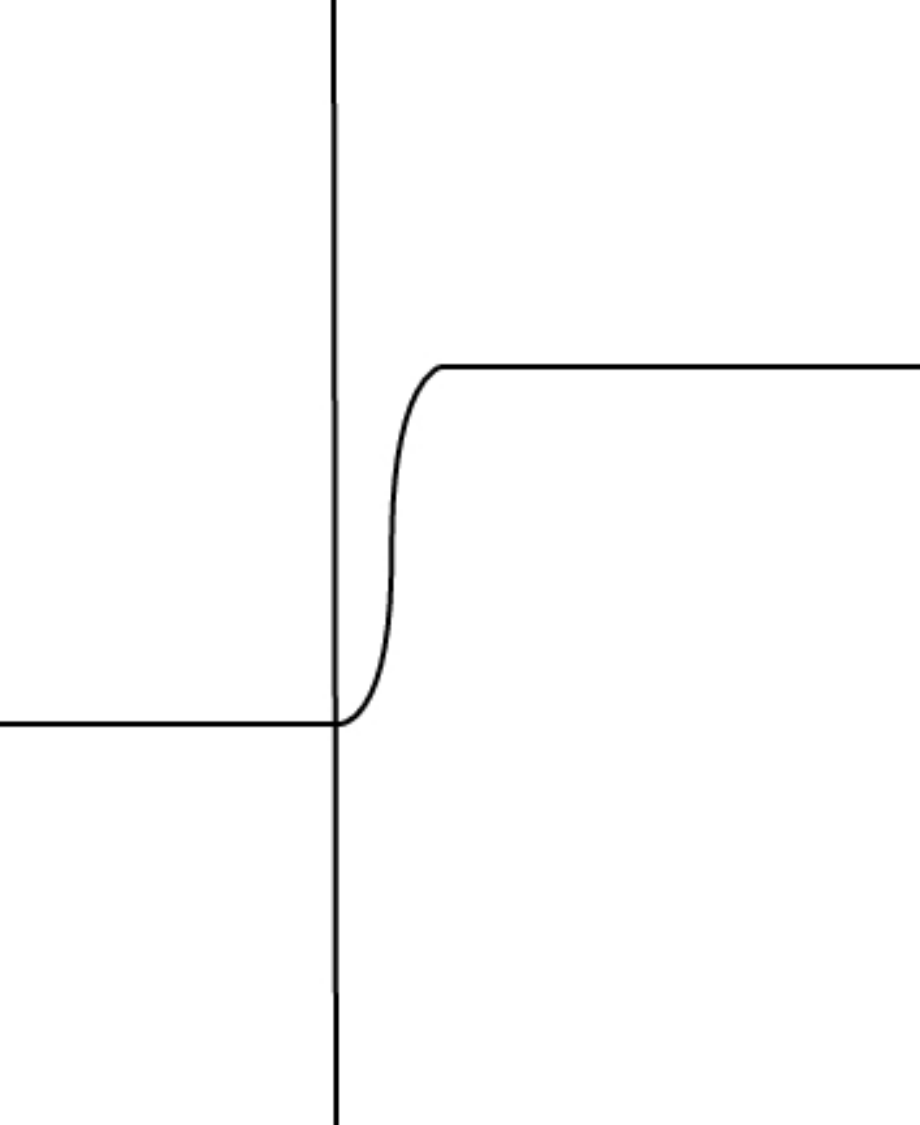}}
			,(2,0)*+{\text{(a)}}
			\endxy
			\qquad
			\xy
			\xyimport(8,8)(0,0){\includegraphics[width=1.5in]{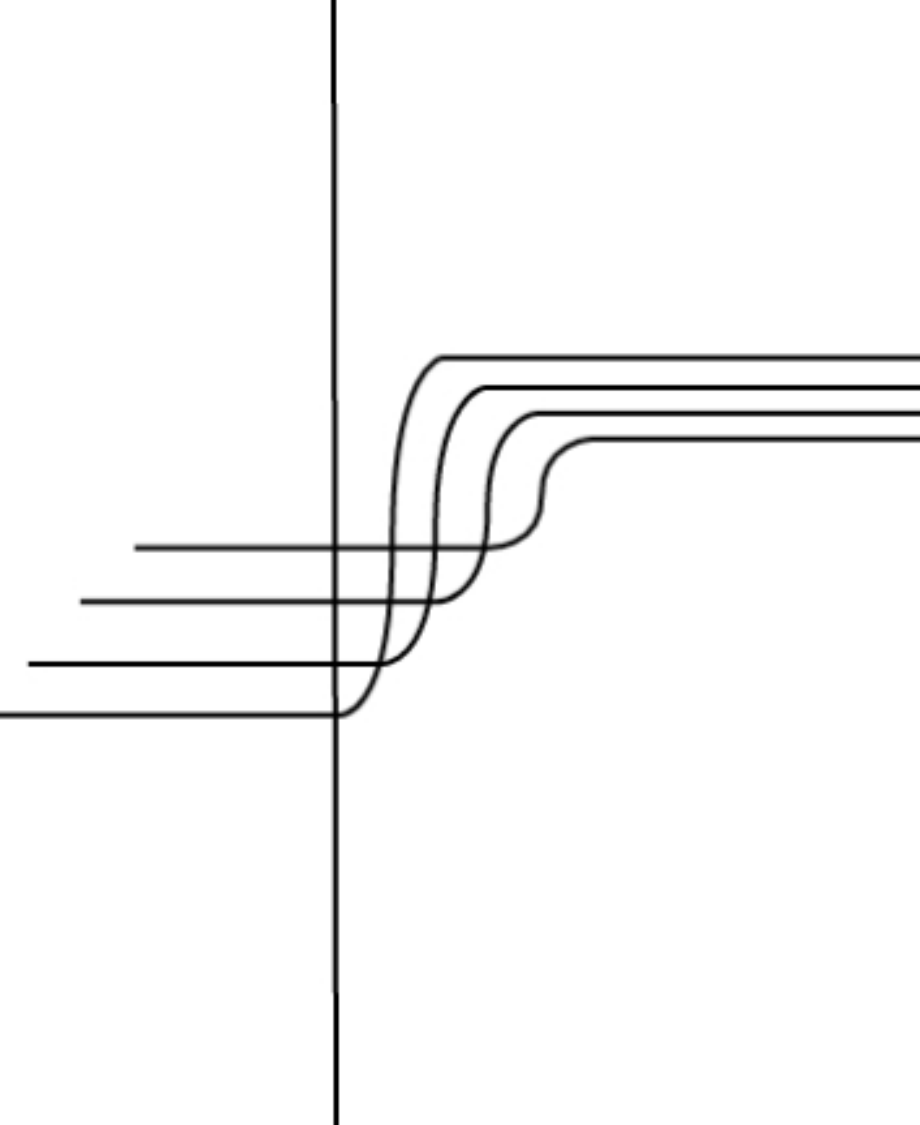}}
			,(4,0)*+{\text{(b)}}
			\endxy					
			\]
\begin{image}\label{figure.betas}
In (a) is depicted a picture of $Y \times E^\vee$---a vertical line---and an image of $X \times \beta$. Though the scale is difficult to convey, one should imagine that $\delta$ is a curve very close to the zero section $E \subset T^*E$. Clearly, with appropriate choice of Floer data, $CW^*(X \times \beta, Y \times F^\vee$) recovers $CW^*(X, Y)$. In (b) is depicted $Y\times F^\vee$ along with a collection of $X_i \times \beta$. This configuration of $\beta$ ensures that disks with boundary on these branes precisely recovers the disks with boundary on the (unstabilized) collection of branes $X_0, \ldots, X_k, Y$. 
\end{image}
\end{figure}

\begin{remark}
Some care needs to be taken in choosing the $\beta_i$ and computing the $A_\infty$ operations when one wishes to work with a Novikov parameter---i.e., when one wants to take stock of the areas of holomorphic disks. Then, obviously, one could choose the curves $\beta$ to bound tiny disks, or less tiny disks. As usual, one needs negative exponentials of the Novikov variable to be able to show that the $A_\infty$ operations are equivalent when changing the areas of these disks. In this paper, we of course work in the exact setting, so none of this matters---we leave this remark just as an indication of what we need to deal with in future works.
\end{remark}

\begin{remark}
Philosophically, the functor $\Xi$ can be described as follows: For every $N$-simplex of $\lag$, which is represented by a Lagrangian submanifold $Y \subset M \times T^*E^n \times T^*F^N$, we construct a brane $B(Y)$ . Then we putatively take the composite functor
	\eqn
		\xymatrix{
			\fuk(M)^{\op} \ar[rr]^-{\times E^n \times F^N} 
				&&\wrap(M \times T^*E^n \times T^*F^N)^{\op} \ar[rr]^-{\hom(-,B(Y))}
				&&\chain_\ZZ.
		}	
	\eqnd
The claim is that analyzing this $\mu^d$ terms of this module gives rise to the higher natural transformations between the vertices of $Y$. One {\em outcome} of our main theorems is that the particular formulae for this natural transformation may differ depending on how one chooses wrapping functions and other parameters, but that they are all  equivalent in a coherent way. We use the adjective ``putative'' because, as mentioned in Remark~\ref{remark.wrapping-non-compact}, we do not define here a general notion of wrapped Floer cohomology in the setting of symplectic manifolds with non-compact skeleta. Rather, to prove the theorem itself, we construct a concrete model---using $\beta$ and $\gamma$---of what the above (putative) diagram should output, and we do not construct all the $A_\infty$ operations of the wrapped category of $M \times T^*\RR^{k+N}$. One advantage of constructing only the putative module maps is that we do not need to compute Floer cohomology between different {\em cobordisms}, only between the branes we test against a single cobordism. For example, one never needs to wrap a cobordism. This simplifies our exposition considerably. 

However, we do point out that by applying the $B$ construction to all cobordisms, one can easily set up a full Floer theory where one can wrap, and define the wrapped Floer cohomology between, cobordisms. This is because, once one applies the $B$ construction, every cobordism becomes a brane for which we have define wrapped Floer cohomology using the methods of this paper.
\end{remark}

\subsection{Transversality and compactness for moduli spaces, and comparing $M$ to $M \times T^*\RR$}\label{section.overview-transversality}
In this paper, the usual issues of finding a large enough family of perturbation data to achieve transversality, while ensuring Gromov compactness, arise. We must also choose these perturbation data in a way where we can compare the moduli of disks in $M \times T^*\RR$ to that in $M$. So issues similar to the work in~\cite{seidel-lefschetz-i} arise. We resolve all these issues as follows:
 
\enum
	\item
		To achieve compactness, we use $H$ and $J$ which are, outside some compact region $K \subset E^n \times F^N$, translation-invariant. Specifically, outside of $T^*(E^n \times F^N)|_K$, $H$ and $J$ are invariant under the translation action of $E^n \times F^N$ (we stress this is only in the horizontal direction). We also employ $J$ which are direct sum almost-compex structures in well-chosen regions. The asymptotic behavior of $H$ is up to the framework the reader wants: In one framework, $H$ is quadratic outside of some neighborhood of the skeleton with bounded distance from the skeleton (note that when the skeleton is non-compact, this does not imply that $H$ is quadratic away from a compact set). This is the framework introduced in~\cite{abouzaid-geometric}. Or, one can choose a sequence of linear $H_i$ with increasing slope, as in the colimit definition of wrapped cochains. This is the original framework of~\cite{abouzaid-seidel}, and for many applications, the most versatile.
		
		Usual convexity arguments prove that families of disks must remain close to the skeleton (i.e., compactness in the ``vertical'' direction), while translation-invariance allows us to apply an Uhlenbeck-style argument to ensure Gromov compactness in the $E^n \times F^N$ direction (i.e., in the ``horizontal'' direction).
	\item
		That these choices allow for transversality is an application of results from~\cite{abouzaid-seidel} which show that, if one knows a priori that there are regions of $M \times T^*E^n \times F^N$ that necessarily contain the images of the boundaries of holomorphic disks, then it suffices to choose perturbation data that have freedom in these regions to obtain tranversailty. These regions are contained in $T^*(E^n \times F^N)|_K$.
	\item
		Finally, we must compare disks in $M$ to those in $M \times T^*E^n \times T^*F^N$. We only do this for certain configurations of curves in $\RR^2$, whose moduli of holomorphic disks admit good geometric descriptions. (The ``staircase configurations'' which we define in Definition~\ref{defn.staircases}.) The hardest part of this is analyzing the moduli space for non-convex configurations of curves, but we choose a convenient configuration (called a staircase configuration) and use the coarse moduli space of the compactified stack of holomorphic disks to prove the geometric results we need.
\enumd

\subsection{We we can be degenerate about degeneracies}\label{section.degeneracy}
We detail the algebraic preliminaries in Section~\ref{section.algebra}, but we want to emphasize one technical point. Our main theorems assert that there exists a functor $\lag \to \fuk\Mod$ with a certain effect on objects. Recall that a functor between $\infty$-categories $\cC \to \cD$ is a map of simplicial sets. By definition, this means that we must assign simplices of $\cC$ to simplices of $\cD$ in such a way that all face {\em and} degeneracy maps are respected. 

What we verify in this paper are two things: (i) All face maps are respected, and (ii) the degeneracy map $s_0$ is respected at the level of objects. (One can prove that all the $s_0$ maps are respected, for any dimension of simplex; but in fact, it seems that the other degeneracy maps are not respected.)

So why is this enough? There are philosophical reasons, then there is the technical proof of it. We refer the reader to~\cite{tanaka-non-strict} for both. The main result of loc. cit. states that so long as (i) and (ii) above are satisfied, $\Xi$ indeed defines a functor of $\infty$-categories. The only caveat is that one may have to modify $\Xi$ to a new assignment $\Xi'$, but such a modification exists functorially, and for any simplex $A$ of $\cC$, $\Xi(A)$ is always homotopic to $\Xi'(A)$, so no computation or content is lost. Further, one can always choose $\Xi'$ to equal $\Xi$ on objects, and the main theorems follow.

 

\section{Algebraic preliminaries}\label{section.algebra}
We assume that the reader is familiar with $A_\infty$-categories and $\infty$-categories. As usual, $[n]$ stands for the poset $\{0 < 1 < \ldots < n\}$.
We fix a base ring $k$, which will usually be $\ZZ$.

Let us clarify the statement of the main theorems. $\fukaya\Mod$ is a dg category. There is a standard way to render it an $\infty$-category: by taking its dg nerve $N(\fukaya\Mod)$. So by a functor from $\lag$ to $\fukaya\Mod$, we really mean a map of simplicial sets:
	\eqnn
	\Xi:
	\lag \to N(\fukaya\Mod).
	\eqnd
One upshot of this section is to arrive at Lemma~\ref{lemma.goal}. This Lemma states the formulas that $\Xi$ must satisfy to respect face maps.

\subsection{The simplicial nerve for modules}
\label{section.nerve}

\subsubsection{The dg nerve}
Recall that in~\cite{higher-algebra}, Lurie defines an $\infty$-category out of any dg category. While he uses homological grading, the same formula produces an $\infty$-category for a dg category with cohomological grading (i.e., when the differential has degree $+1$). We recall the construction here.

Throughout, $|K|$ denotes the cardinality of a finite set $K$, while $|f|$ denotes the degree of an element $f$ of a cochain complex.

\begin{defn}[The dg nerve]\label{defn.dg-nerve}
Let $C$ be a small dg-category, cohomologically graded. Then the {\em dg nerve} of $C$ is a simplicial set $N(C)$ defined as follows: An $N$-simplex is given by a collection
	\eqnn
		\left( (X_i)_{0 \leq i \leq N} , \{f_K\}_{K \subset [N], |K| \geq 2} \right).
	\eqnd
Here,
	\enum
		\item
			$(X_i) = (X_0,\ldots,X_N)$ is a sequence of objects of $C$,
		\item
			$K$ runs through all subsets of $[N] = \{0,1,\ldots,N\}$ of cardinality at least 2, and 
		\item
			Each $f_K$ is an element of the abelian group $\hom_C(X_{K_{\min}}, X_{K_{\max}})$ of degree $2-|K|$.
	\enumd
This data must satisfy the following condition:
	\begin{itemize}
		\item
			For each $K \subset [N]$ with $|K| \geq 2$, writing $K = \{i_0 < \ldots < i_{|K|-1}\}$, we have that
    	\eqn\label{eqn.dg-nerve}
    	d f_K
    			= 
    			\sum_{1 \leq j \leq |K|-2} (-1)^j f_{K \setminus \{i_j\}}
    			+ \sum_{K = K' \wedge K''} (-1)^{|K'|} f_{K''} \circ f_{K'}.
    	\eqnd
	\end{itemize}
Here, the notation $K = K' \wedge K''$ is shorthand---the summation is over all decompositions of $K$ into two subsets $K'$ and $K''$ such that (a) $K' \cap K'' = \{i\}$ is a set of exactly one element, and (b) $K''$ equals the set of all elements of $K$ greater than or equal to $i$, while $K'$ equals the set of all elements of $K$ less than or equal to $i$.

If $\alpha: [N] \to [N']$ is a morphism of posets, then the induced function $N(C)_{N'} \to N(C)_N$ takes an $N'$-simplex $( (X_i), (f_K) )$ to the $N$-simplex given by
	\eqnn
		( (X_{\alpha(i)})_{0 \leq i \leq N}, \{g_J\})
	\eqnd
where
	\eqnn
		g_J = 
			\begin{cases}
			f_{\alpha(J)} & \text{if $\alpha|_J$ is injective} \\
			\id_{X_j} & \text{if $J=\{i,i'\}$ with $\alpha(i)=\alpha(i')=j$} \\
			0 & \text{otherwise.}
			\end{cases}
	\eqnd
\end{defn}

\begin{remark}
The nerve construction was generalized to $A_\infty$ categories in our previous work~\cite{tanaka-thesis} and independently in Faonte's work~\cite{faonte,faonte-2}. These latter two sources in particular give a satisfying framework for nerves of $A_\infty$-categories. Discussions of units appear in~\cite{tanaka-thesis}. As one expects, the nerve has a characterization that is more transparent: The $N$-simplices of $N(C)$ consist of all $A_\infty$-functors from $k[\Delta^N]$ to $C$, where $k[\Delta^N]$ is a cofibrant $A_\infty$ category over $k$ modeling the $N$-simplex.
\end{remark}

\begin{example}
A 0-simplex of $N(C)$ is simply an object of $C$. A 1-simplex is a choice of two objects $X_0, X_1$, and a degree 0, closed morphism from $X_0$ to $X_1$. A 2-simplex is a choice of morphisms $f_{ij} : X_i \to X_j$, and a choice of a degree -1 element realizing a homotopy from $f_{02}$ to the composition $f_{12} \circ f_{01}$.
\end{example}

\subsubsection{The category of modules}
For any $A_\infty$ category $\cA$, the module category
	\eqnn
	\cA\Mod := \fun_{A_\infty}(\cA^{\op},\chain_\ZZ)
	\eqnd
is an $A_\infty$-category with $\mu^{k \geq 3} = 0$. We recall the definition here, which the reader may also find in~\cite{seidel-book}. 
In what follows, we abbreviate some signs by the symbols
	\eqnn
	\spadesuit_c = |a_1| + \ldots + |a_c| + c
	\eqnd
and
	\eqnn
	\heartsuit = |a_{c+1}| + \ldots + |a_{d-1}| + |x| - d + c + 1.
	\eqnd

{\bf Objects of $\cA\Mod$.} An object is a collection of data
	\eqnn
		( \cM(X)_{X \in \ob \cA}, (\mu_{\cM}^d)_{d \geq 1} )
	\eqnd
where each $\cM(X)$ is a graded vector space, and $\mu_{\cM}^d$ is a map
	\eqnn
	\cM(X_{d-1}) \tensor \cA(X_{d-2},X_{d-1}) \tensor \ldots \tensor \cA(X_0,X_1) \to \cM(X_0)
	\eqnd
of degree $2-d$.
If $\mu^d_\cA$ denotes the $A_\infty$ operations of $\cA$, these $\mu_{\cM}^d$ must satisfy
	\begin{align}
	0
	=&
	\sum_{b+c=d} (-1)^{\spadesuit_c}
		\mu_{\cM}^{1+c}(\mu_{\cM}^{b} (x, a_{d-1} , \ldots, a_{c+1}),a_c, \ldots, a_1) \nonumber
	\\
	&+ \sum_{\substack{a+b+c=d,\\ a>0}} (-1)^{\spadesuit_c} \mu_{\cM}^{a+1+c}(x,a_{d-1},\ldots,a_{b+c+1}, \mu^b_\cA(a_{b+c},\ldots,a_{c+1}),\ldots,a_1). \nonumber
	\end{align}
Given two objects $\cM_0, \cM_1$, a degree $|t|$ element of the graded abelian group $\cA\Mod(\cM_0,\cM_1)$ is a collection $t = (t^d)_{d \geq 1}$, where each $t^d$ is a map
	\eqnn
		t^d :
		\cM_0(X_{d-1}) \tensor \cA(X_{d-2},X_{d-1}) \tensor \ldots \tensor \cA(X_0,X_1)
		\to
		\cM_1(X_0)
	\eqnd
of degree $|t|-d+1$.

{\bf The operation $\mu^1$.} We let $(\mu^1 t)^d$ denote the $d$th component of $\mu^1 t$. It is given by
	\begin{align}
		(\mu^1 t)^d &(x, a_{d-1},\ldots,a_1)
			 \nonumber \\
			&=\sum_{b+c=d} (-1)^\heartsuit \mu_{\cM_1}^{1+c}(t^{b}(x, a_{d-1},\ldots,a_{c+1}), a_c, \ldots, a_1) \nonumber \\
			&+ \sum_{b+c=d} (-1)^\heartsuit t^{1+c}(\mu^{b}_{\cM_0} (x,a_{d-1},\ldots, a_{c+1}), a_c,\ldots,a_1)  \nonumber \\
			&+ \sum_{\substack{a+b+c=d,\\ a>0}} (-1)^\heartsuit t^{a+1+c} (x,a_{d-1},\ldots,\mu^b_\cA(a_{b+c},\ldots,a_{c+1}),a_c,\ldots,a_1)  . \label{eqn.mu1}
	\end{align}

{\bf The operation $\mu^2$.} Likewise, $(\mu^2(t_2, t_1))^d$ denotes the $d$th component. It is given by
	\begin{align}
		(\mu^2(t_2, t_1))^d&(x, a_{d-1},\ldots,a_1) \nonumber \\
			=& \sum_{b+c=d} (-1)^\heartsuit t_2^{1+c} (t_1^{b}(x,a_{d-1},\ldots,a_{c+1}),a_c,\ldots,a_1).
	\end{align}

The following is a straightforward outcome of the above definitions. The notation is meant to match the notation we'll use in proving Theorem~\ref{theorem.cubes}.

\begin{lemma}\label{lemma.goal}
If we have a collection $\Xi(Y_i)$ of modules for $\cA$, and for each $K \subset [N]$ we have an element
	\eqnn
	\Xi_{Y_K} \in \hom_{\cA\Mod}^{|K|-1}(\Xi(Y_{K_{\min}}), \Xi(Y_{K_{\max}}))
	\eqnd
then the collection $( (\Xi(Y_i)), (\Xi_{Y_K}) )$ is an $N$-simplex of $N(\cA\Mod)$ if and only if these satisfy the equation
	\begin{align}
			\sum_{b+c=d} &
				(-1)^\heartsuit \mu_{\Xi(Y_N)}^{1+c}(\Xi_Y^{b}(x, a_{d-1},\ldots,a_{c+1}), a_c, \ldots, a_1) \nonumber \\
			&+ \sum_{b+c=d} (-1)^\heartsuit \Xi_Y^{1+c}(\mu^{b}_{\Xi(Y_0)} (x,a_{d-1},\ldots, a_{c+1}), a_c,\ldots,a_1)  \nonumber \\
			&+ \sum_{\substack{a+b+c=d,\\ a>0}} (-1)^\heartsuit \Xi_Y^{a+1+c} (x,a_{d-1},\ldots,\mu^b_\cA(a_{b+c},\ldots,a_{c+1}),a_c,\ldots,a_1) \nonumber \\
			= &(-1)^{N+1} 
			[				 (-1)^j \Xi_{Y_{[N] \setminus j}}^d(x,a_{d-1},\ldots,a_1)\nonumber \\
				&+\sum_{[N] = K' \wedge K} \sum_{b+c=d} (-1)^\heartsuit \Xi_{Y_{K'}}^{1+c} (\Xi_{Y_{K}}^{b}(x,a_{d-1},\ldots,a_{c+1}),a_c,\ldots,a_1)
				]
				.
	\end{align}
Or, without using elements, and changing indexing, we must have
	\begin{align}
			0=&\sum_{b+c=d}
				(-1)^\clubsuit \mu_{\Xi(Y_N)}^{1+c}(\Xi_Y^{b}\tensor\id^{\tensor c}) \nonumber \\
			&+ \sum_{b+c=d} (-1)^\clubsuit \Xi_Y^{1+c}(\mu^{b}_{\Xi(Y_0)} \tensor \id^{\tensor c})  \nonumber \\
			&+ \sum_{\substack{a>0 \\ a+b+c = d} } 
				(-1)^\clubsuit \Xi_Y^{a+1+c} (\id^{\tensor a} \tensor \mu^b_\cA\tensor \id^{\tensor c}) \nonumber \\
			&+  \sum_{0<j<N} (-1)^\clubsuit \Xi_{Y_{[N] \setminus j}}^d\nonumber \\
				&+\sum_{[N] = K' \wedge K} \sum_{b+c=d} (-1)^\clubsuit \Xi_{Y_{K'}}^{1+c} (\Xi_{Y_{K}}^{b}\tensor\id^{\tensor c})]
				.\label{eqn.goal}
	\end{align}
\end{lemma}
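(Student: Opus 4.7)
The plan is to unpack Definition~\ref{defn.dg-nerve} in the specific case where the target dg category is $\cA\Mod$, and observe that the nerve condition becomes exactly equation~(\ref{eqn.goal}). No new geometric or homotopical input is needed beyond the $A_\infty$ operations on $\cA\Mod$ already written down in Section~\ref{section.nerve}; the content is a direct identification of terms, with the only real work being sign bookkeeping.

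First I would verify that the data described in the hypothesis are precisely the data of an $N$-simplex of the dg nerve. By Definition~\ref{defn.dg-nerve}, such a simplex is a sequence of objects $(\Xi(Y_i))_{0 \le i \le N}$ together with a morphism $\Xi_{Y_K}$ for every $K \subset [N]$ of cardinality at least $2$, whose degree is determined by $|K|$ (the difference between the convention there and the $|K|-1$ in the statement of the Lemma is absorbed into the sign $\clubsuit$). The defining identity~(\ref{eqn.dg-nerve}) then reads, for each such $K$,
\[
d\,\Xi_{Y_K} \;=\; \sum_{0 < j < |K|-1} (-1)^j\, \Xi_{Y_{K\setminus\{i_j\}}} \;+\; \sum_{K = K' \wedge K''} (-1)^{|K'|}\, \Xi_{Y_{K''}} \circ \Xi_{Y_{K'}}.
\]
I would focus on the top case $K = [N]$; the identities for smaller $K$ are the analogous Lemma applied to subsimplices, which is the same statement in lower dimension and therefore requires no separate argument.

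Next I would substitute the explicit formulas for the dg structure on $\cA\Mod$ recalled in Section~\ref{section.nerve}. Because $\cA\Mod$ has $\mu^{\ge 3} = 0$, the differential $d$ is just $\mu^1_{\cA\Mod}$ and the composition $\circ$ is just $\mu^2_{\cA\Mod}$. Reading off the $d$-th component of $\mu^1 \Xi_Y$ via~(\ref{eqn.mu1}) gives exactly the first three lines of the left-hand side of~(\ref{eqn.goal}): the post-composition with $\mu_{\Xi(Y_N)}$, the pre-composition with $\mu_{\Xi(Y_0)}$, and the insertions of $\mu^b_{\cA}$ into internal slots. The $d$-th component of $\mu^2(\Xi_{Y_{K''}}, \Xi_{Y_{K'}})$ is a single composition term, which matches the final line of~(\ref{eqn.goal}). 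The face term $\sum_j (-1)^j \Xi_{Y_{[N]\setminus j}}$ contributes the penultimate line. Every summand on both sides of~(\ref{eqn.goal}) is accounted for by exactly one term in this unpacking, and conversely.

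The main (and only) obstacle is the sign bookkeeping denoted $\clubsuit$. These signs combine the Koszul signs $\spadesuit_c$ and $\heartsuit$ from the definition of $\mu^1_{\cA\Mod}$, the combinatorial signs $(-1)^j$ and $(-1)^{|K'|}$ from the dg nerve identity, and the translation between the degree convention $|K|-1$ used in the statement and the degree $2-|K|$ used in Definition~\ref{defn.dg-nerve}. I would verify that these assemble consistently by direct computation, for example by fixing the sign $\clubsuit$ on a single representative term (say, the one with $b=d$, $c=0$ in the first line) and propagating to the rest using the Koszul rule. Once the signs agree on one term, the equivalence of the two identities is a formal consequence of the $A_\infty$ sign conventions, which completes both directions of the ``if and only if.''
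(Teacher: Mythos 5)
Your proposal is correct and is exactly the argument the paper intends: the paper offers no proof at all, introducing the lemma only with ``the following is a straightforward outcome of the above definitions,'' and the intended content is precisely your unpacking of the dg-nerve condition~(\ref{eqn.dg-nerve}) for $K=[N]$ using the formulas for $\mu^1$ and $\mu^2$ on $\cA\Mod$, with the signs left implicit just as the paper leaves $\clubsuit$ implicit. The only point worth flagging is that being an $N$-simplex requires~(\ref{eqn.dg-nerve}) for \emph{every} $K$ with $|K|\geq 2$, not just the top face, so the ``if'' direction tacitly assumes (as you note) that the equation is imposed on all subsimplices as well.
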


\begin{remark}
The signs $\clubsuit$ can be determined by the usual translation between ``functions evaluated on elements,'' and ``functions'' using the Koszul sign rule, but we do not write them out explicitly here. As we will see, all our proofs have signs induced by tautological Yoneda modules, so one can rest assured that the signs work out fine.
\end{remark}

 

\section{Geometric setup}\label{section.geometry}
Most of this section is standard, but we establish a trick we will use over and over again, called boundary-stripping, used first in the proof of Lemma~\ref{lemma.q>w}. This is the main tool we use to reduce our computations to combinatorics, and its validity relies heavily on the properties of analytic maps to $\CC$ (namely, the open mapping theorem).

\subsection{Pseudoholomorphic disks}

\begin{notation}
As usual, $S$ will denote the closed unit disk in $\CC$ missing $d+1$ boundary punctures for $d \geq 1$, with the induced conformal structure. At times, we will conflate $S$ with its conformal equivalence class. 

We will also let $\cR$ denote the moduli space of conformal structures on a disk with $(d+1)$ boundary points. (The dependence on $d$ is implicit in the notation, or will be made explicit as necessary.) When $d=1$, we will set $\cR = pt$ to be a single point (i.e., we take the coarse moduli space, rather than the stack $B\RR$). 
\end{notation}

We study smooth maps $u: S \to M \times T^*\RR$ (or $S \to M$ or $S \to T^*\RR$) satisfying Floer's equation
	\eqn\label{eqn.floer}
	(du - X_H \tensor \alpha)^{0,1} = 0
	\eqnd
whose dimension-zero counts give rise to the $A_\infty$ operations. 

Giving precise meaning to this equation amounts to specifying the kinds of $S$-dependent almost complex structures $J$ and (possibly $S$-dependent) Hamiltonian functions on $M \times T^*\RR$ or $M$ or $T^*\RR$ we utilize. As usual all this is coupled with a sub-closed one-form $\alpha$ on $S$, and chosen compatibly with gluing data specified by strip-like ends; on such ends, all the data becomes invariant under the translation action of strips. Details can be found in~\cite{abouzaid-seidel}. For now, recall that maps $u$ satisfying (\ref{eqn.floer}) extend to a continuous map on the obvious compactification $\ov S$, where one glues in a closed interval in place of the missing boundary punctures of $S$.

\subsection{Geometry of $M$}\label{section.M}
The interested reader can find more details in~\cite{abouzaid-seidel,abouzaid-geometric, ganatra, sylvan-thesis}.
	\begin{itemize}
		\item
			$M$ carries a 1-form $\theta_M$ whose deRham derivative $d\theta_M = \omega_M$ is symplectic. The Liouville vector field $X_\theta$, defined by 
				\eqn\label{eqn.liouville}
				\omega(X_\theta,-) = \theta
				\eqnd 
			must define a decomposition
				\eqn\label{eqn.decomposition}
				M = A \bigcup_{\del M} \del M \times [0,\infty) 
				\eqnd
			where $A \subset M$ is some compact subset, $\del M \subset M$ is some non-empty, codimension 1 submanifold, and $[0,\infty)$ parametrizes the flow of $\del M$ along $X_\theta$. This decomposition is required to make $(\del M,\theta_M|_{\del M})$ a contact manifold. Further, if $r$ parametrizes $[0,\infty)$, then $d\theta_M$ must agree with $\theta_M|_{\del M} \wedge e^rdr $ on $\del M \times [0,\infty)$; that is, this decomposition must realize $M \setminus A$ as a symplectization of $\del M$. 
			
			As an example, if $M = T^*Q$ with $\theta_M = pdq$, $A$ can be taken to be the unit cotangent disk bundle for some Riemannian metric on $M$. Then $\del M$ is the unit sphere bundle of the cotangent bundle, and the Liouville flow is proportional to dilating the cotangent bundle using the real vector space structure of the fibers.\footnote{Note if $Q$ is not compact, neither is $A$---so $T^*Q$ would not, strictly speaking, fit into the framework. This is why we take extra care in this paper with $T^*E^n \times T^*F^N$.}
			
		\item
			The flow of $-X_\theta$ collapses $M$ (after infinite time) to some limiting subset $\sk(M) \subset M$. We call this the skeleton of $M$. As an example, if $M = T^*Q$, $\sk(M)$ is the zero section.
		\item
			We have that $2c_1(TM) = 0$ for any choice of almost complex-structure as in Section~\ref{section.J}. This ``Calabi-Yau'' condition will allow us to define a $\ZZ$-grading on graded branes. We fix once and for all a trivialization
				\eqn\label{eqn.TM-trivialization}
				{\det}^2_\CC(TM) \cong M \times S^1.
				\eqnd
	\end{itemize}

\begin{remark}
Analytically, the role of (\ref{eqn.decomposition}) is to make $M$ convex in the following sense: Any holomorphic disk escaping $A$ must be contained entirely in some level set $\del M \times \{r\}$. See Section~\ref{section.M-analysis}.

On the other hand, the decomposition also serves a non-analytic role when defining Lagrangian cobordisms---since non-compact submanifolds can behave wildly, the decomposition allows us to define a broad and well-behaved class of submanifolds (eventually conical submanifolds) to use as our objects and morphisms as we will see in Section~\ref{section.branes}.
\end{remark}

\subsubsection{Branes}\label{section.branes}
By a {\em brane}, we will mean the tuple of data
	\eqn
	 (L, \alpha, f, P)
	\eqnd
as follows:
	\begin{itemize}
		\item
			$L$ is an embedded Lagrangian submanifold of $M$. Further, $L$ must eventually be conical, meaning that outside a compact set, $L$ must be invariant under the Liouville flow.  Finally, the image of $L$'s time-infinity negative Liouville flow (a subset of $\sk(M)$) must have compact closure.\footnote{This implies the condition ({eqn.compact-image}) we mentioned in Section~\ref{section.outline}.} A non-example is the zero section of $T^*\RR$, while an example is a cotangent fiber. 
		\item	
			$\alpha: L \to \RR$ is a lift of the usual phase map $L \to S^1$. Recall that this phase map depends on the trivialization~(\ref{eqn.TM-trivialization}) and is defined by the following diagram:
				\eqnn
					\xymatrix{
					 && GrLag(TM) \ar[rr]^-{\det^2 \, (\ref{eqn.TM-trivialization})} \ar[d] && S^1 \\
					L \ar[urr]^-{\text{tangent space}} \ar[rr] && M
					}
				\eqnd
			$GrLag(TM)$ is the Grassmanian of Lagrangians, and the map to $S^1$ is induced fiberwise by the map $\det^2: U(n)/O(n) \to S^1$. The choice of $\alpha$, analytically, allows us to define $\ZZ$-graded Floer cochain complexes. In terms of cobordisms, $\alpha$ guarantees that $\lag(M)$ is a stable $\infty$-category in which the shift functor is not periodic. (Equipping each $L$ with a $d$-fold cover of $S^1$ would result in a $d$-periodic stable $\infty$-category.)
		\item
			$f: L \to \RR$ is a smooth function such that $df = \theta_M|_L$, called a {\em primitive}. This implies that $f$ is locally constant outside a compact set. We will always assume that this locally constant value is 0.\footnote{One can always find an eventually linear Hamiltonian isotopy that moves $L$ to an $L'$ allowing for such a primitive.}
		\item
			$P$ is a relative Spin structure on $L$. This allows us to orient the moduli spaces of holomorphic disks, hence to define Floer cochains with $\ZZ$ coefficients. For cobordisms, this serves the role of changing the coefficient ring spectrum over which $\lag$ is linear.
	\end{itemize}

\subsubsection{Quadratic and linear Hamiltonians}
We will say that a Hamiltonian $H: M \to \RR$ is {\em eventually quadratic} if, for some $r_0 > 0$, the restriction of $H$ to $\del M \times [r_0,\infty)$ is proportional to $r^2$. Put in a less coordinate-dependent way, this means
	\eqn\label{eqn.quadratic}
	dH(X_\theta) = 2H.
	\eqnd
We fix an eventually quadratic Hamiltonian such that $H>0$ once and for all.

We will say $H$ is {\em eventually linear} if $H$ is proportional to $r$ on $\del M \times [r_0, \infty)$ for some $r_0 > 0$. This means that, on this region, $dH(X_\theta) = f$ for some smooth function $f: \del M \to \RR$. $H$ need not be positive.

Any flow by an eventually linear Hamiltonian gives rise to an equivalence in $\lag_{\sk(M)}$.  An example of an eventually linear Hamiltonian on $T^*Q$ is given by any vector field on a smooth manifold $Q$: The vector field defines a Hamiltonian on $T^*Q$ by pairing with covectors, and the projection map $T^*Q \to Q$ intertwines the Hamiltonian flow with the flow on $Q$.

\subsubsection{Almost complex structures}\label{section.J}
We will only consider almost complex structures which are compatible with $\omega$ and which are of contact type---that is,
	\eqn\label{eqn.contact-type}
	\theta \circ J = dr
	\eqnd
on $\del M \times [r_0',\infty)$ for some $r_0' >0$.

\subsubsection{Compactness and regularity for $M$.}\label{section.M-analysis}
The results below rely on the following:
	\begin{itemize}
		\item $M$ is exact and modeled on a symplectization outside a compact set,
		\item $H$ is eventually quadratic with respect to $r$, and $J$ is eventually of contact type,
		\item Each $L_i$ is exact and eventually conical, and 
		\item the primitives $f_i:L_i \to \RR$ can be chosen to equal zero outside a compact set.	
	\end{itemize}
	
\begin{proof}[Proof of Gromov compactness]
These assumptions combine to show that, by the maximum principle, if any pseudoholomorphic disk intersects $\del M \times \{r\}$, then it must be contained entirely in $\del M \times \{r\}$. Hence disks must remain below some height $r$ once boundary conditions are fixed. See for instance Section 7 of~\cite{abouzaid-seidel}. If in addition the interior of $M$ is compact, this bound is sufficient to prove Gromov compactness---i.e., to prove that the compactification of the moduli space of disks has the usual boundary strata necessary for the $A_\infty$ relations to hold. 
\end{proof}

As for regularity: Given the assumptions on $\gamma, H$, and $J$, one can prove various properties about the maps $u$ satisfying~(\ref{eqn.floer}) conforming to the ``similarity'' principle. (I.e., That maps $u$ satisfy properties similar to honest holomorphic maps.) For instance:

\enum
	\item
		Two distinct solutions can only agree on a discrete set of points. (Lemma 7.1 of~\cite{abouzaid-seidel}.)
	\item\label{item.discrete}
		If the Hamiltonian chords of $H$ are non-degenerate, there exists an open subset $U \subset S$ containing $\del S$ and its marked points at infinity on which the locus where
			$
			du = X \tensor \alpha
			$
		is a discrete subset of $U$. (Lemma 8.6 of~\cite{abouzaid-seidel}.).
\enumd

\begin{remark}
We say that $u$ is {\em trivial} at $z \in S$ if $du=X \tensor \alpha$ at $z$. We call $u$ itself trivial if $u$ is trivial on all of $S$. One can in fact show that $u$ is trivial only when (i) $d \gamma \neq 0$ and $u$ is constant, or (ii) $d \gamma = 0$ and $S \cong \RR \times [0,1]$ is the strip. These facts ultimately imply (\ref{item.discrete}) above.
\end{remark}

As usual, (\ref{item.discrete}) above implies regularity for generic perturbation data:

\begin{proof}[Proof of regularity for disks in $M$.]
Let $D$ be the linearized operator and $D^\ast$ its adjoint. Let $T$ be a purported non-trivial element of $\ker D^\ast$.  One finds a $z \in U$ for which $u$ is not trivial, and where $T$ is also non-zero---this is possible thanks to (\ref{item.discrete}). Then a well-chosen tangential perturbation $Z \in W^{1,p}$, supported near $z$, contradicts the non-triviality of $T$ as it must pair non-trivially with $T$, but $T$ was supposed to be in the kernel. In other words, $\ker D^\ast$ is zero; hence so is $\coker D$.
\end{proof}

\begin{remark}\label{remark.Z-small}
A key feature of this proof is that the set of possible choices of $Z$ can be restricted to be those $Z$ that are supported in some neighborhood of the boundary branes $L_i$ and the limiting Hamiltonian chords $x_i$---this is because the $z$ in the proof above can be chosen to be in an arbitrarily small neighborhood $U$ of $\del \ov S$, hence one can a priori fix small neighborhoods of the a priori prescribed boundary conditions of $u$, and utilize any $Z$ with support in these neighborhoods to have the proof go through without a hitch.
\end{remark}

\subsection{Geometry of $T^*\RR$}\label{section.T*R}
Here we introduce the key players (staircases) which will allow us to build maps and homotopies between bimodules out of cobordisms. 
The key lemma of the present section---Section~\ref{section.T*R}---is Lemma~\ref{lemma.fibers}, which will later allow us to relate disks in $M \times T^*\RR$ with those in $M$ itself.

We use $q$ to denote the coordinate of the zero section, and $p$ to denote the coordinate of the usual trivialization of $T^*\RR$ using the 1-form $dq$. We fix the 1-form $\theta_{T^*\RR} := pdq$ on $T^*\RR$. This is not a generic choice, as evidenced by the $\RR$-symmetry. The skeleton $\RR \subset T^*\RR$ is also not compact.

By and large, we will use the following structures on $T^*\RR$:

	\begin{itemize}
		\item
			We will always consider almost complex-structures on $T^*\RR$ which, along some specified region, agree with the standard structure
	\eqn \label{eqn.i}
		J_{T^*\RR}: \del_p \mapsto \del_q,
		\qquad
		\del_q \mapsto -\del_p.
	\eqnd
(We caution the reader that the coordinate transformation $x \mapsto q, y \mapsto p$ results in the opposite complex structure from that on $\CC$; this is the usual incompatibility between $T^*\RR$ and $\CC$.) This $J$ is also not of contact type; it does not satisfy (\ref{eqn.contact-type}). So this $J_{T^*\RR}$ will only be used within a bounded distance of the zero section of $\RR$.
	\item
		We choose $H$ such that $H(q,p) = H(p)$ only depends on the $p$ coordinate, is strictly positive, and equals $p^2$ outside some bounded interval in the $p$ coordinate. This translation invariance in the $q$ direction will allow us to apply the standard rescaling trick for Gromov compactness (zooming into produce non-constant holomorphic map to $\CC$ from $\CC P^1$). 
	\end{itemize}

\subsubsection{Tunnels}
We will make use of a particular class of curves $\gamma_i \subset T^*F$ which we call tunnel curves. 

To begin, we fix numbers $w>0$ and $\un{D}>\ov{D} >0$. $w$ is the width of a cobordism, and $\ov{D}$ is the depth of a cobordism. (See Definitions~\ref{defn.depth} and~\ref{defn.width}.)

We then choose a decreasing sequence of real numbers
	\eqnn
	(w_i)_{i \geq 0}^\infty, 
	\qquad
	w_i > w
	\eqnd
and an increasing sequence of real numbers
	\eqnn
	(D_i)_{i \geq 0}^\infty,
	\qquad
	\un{D} > D_i > \ov{D},
	\qquad
	(h_i)_{i \geq 0}^\infty, 
	\qquad
	h_i > -D_i,
	\qquad
	h:= \sup_i h_i < \infty.
	\eqnd
And we finally choose real numbers $\epsilon_i$ such that
	\eqnn
		0 < \epsilon_i < w_i - w_{i+1}.
	\eqnd

\begin{remark}
The $w_i$ stand for the width of a curve, the $D_i$ stand for the depth, and the $h_i$ stand for the height.
\end{remark}

\begin{defn}\label{defn.staircases}
A {\em tunnel curve} of depth $D_i$ and width $w_i$ is an embedded curve
	\eqnn
	\gamma_i \subset T^*F
	\eqnd
satisfying the following:
	\begin{itemize}
		\item Outside the box
			\eqnn
				[w_i - \epsilon_i, w_i] \times [-D_i, h_i]
			\eqnd
		$\gamma_i$ equals the set
			\eqnn
				(-\infty, w_i-\epsilon] \times \{-D_i\}
				\coprod
				[w_i,\infty) \times \{h_i\}.
			\eqnd
		Inside the box, we demand that $\gamma_i$ be equal to the image of some curve $\RR \to T^*F$ which has strictly positive derivative in the $F^\vee$ component. 
	\end{itemize}
One can give each $\gamma_i$ a standard brane structure, but we make no use of a primitive function $f$ on $\gamma_i$.
 
Given $(w_i)$, $(D_i)$, and $(h_i$ as above, we call the corresponding collection $(\gamma_i)_{i=0}^\infty$, or a finite collection $\gamma_0,\ldots,\gamma_{d-1}$,  a {\em staircase}.
\end{defn}

Note $\gamma_i$ and $\gamma_j$ have a unique intersection point for $i \neq j$. See Figure~\ref{figure.gamma-boxes}.

\begin{figure}
		\[
			\xy
			\xyimport(8,8)(0,0){\includegraphics[width=5in]{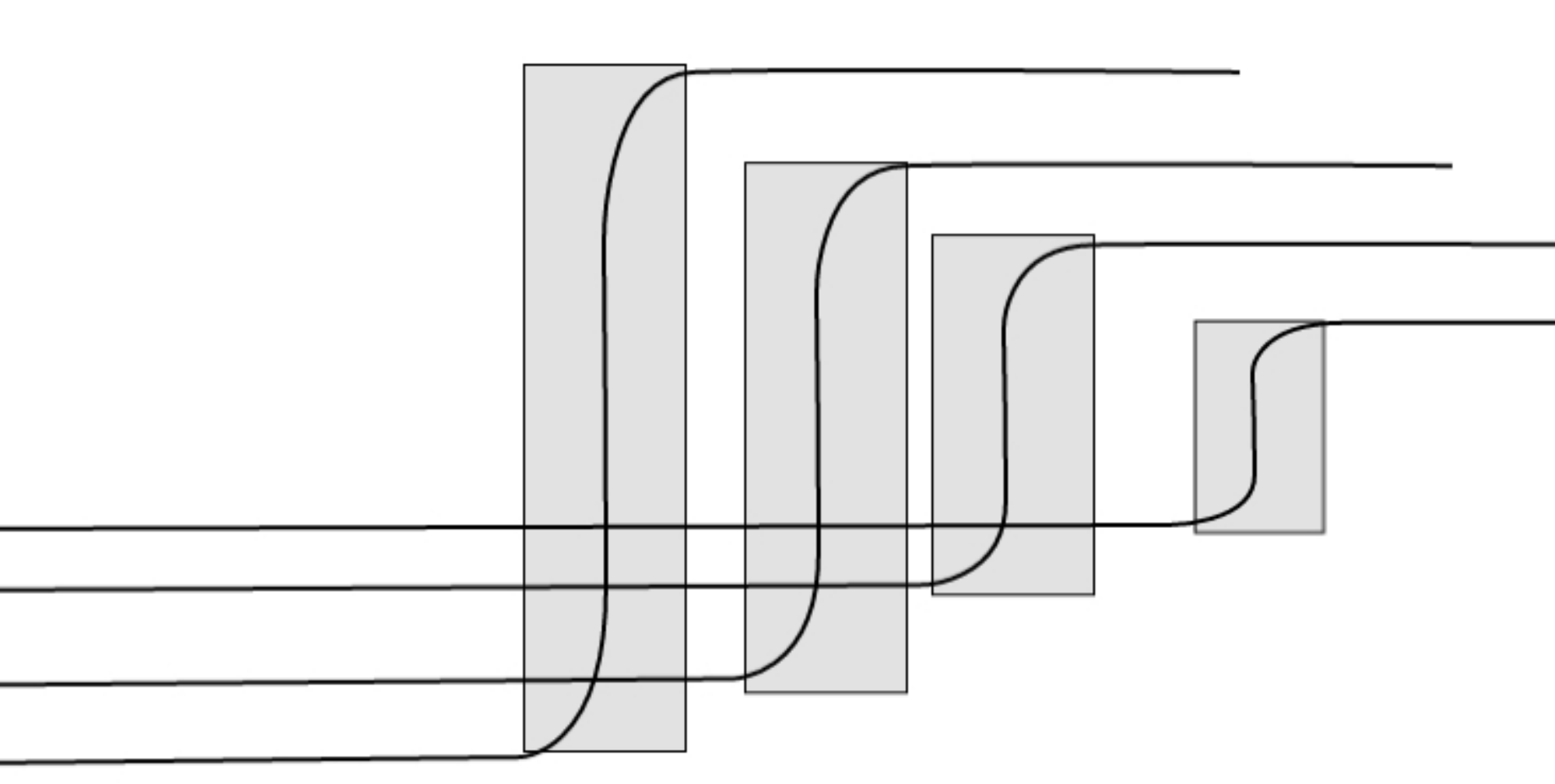}}
			,(8,4.4)*+{\gamma_0}
			,(8,5.7)*+{\gamma_1}
			,(7.5,6.5)*+{\gamma_2}
			,(6.4,7.5)*+{\gamma_3}
			\endxy
			\]
\begin{image}\label{figure.gamma-boxes}
An example of a staircase $\gamma_i$. The shaded boxes are the boxes $[w_i - \epsilon_i, w_i] \times [-D_i, h_i]$. 
\end{image}
\end{figure}

\begin{remark}[$\beta_i$ and $\gamma_i$]
In this paper, we will use $\beta_i$ for a staircase in $T^*E$, and $\gamma_i$ for a staircase in $T^*F$. The main difference is as follows: When defining $\beta_i$, we will always have a specific brane in $M \times T^*E$ in mind. So the parameters $\un D, \ov D, D_i, w_i, \epsilon_i, h_i$ are chosen relative to the brane in $M \times T^*E$, rather than relative to a cobordism in $M \times T^*F$.
\end{remark}

\subsubsection{Cone tails}\label{section.cone-tails}
Fix $w>0$. Consider an embedded smooth curve $c \subset T^*F$ as follows:
	\enum
		\item There exists some $\epsilon>0$ such that outside of the boxes
			\eqnn
				\BB = [-w+\epsilon, -w] \times [-\epsilon,\epsilon] \coprod [w-\epsilon, w] \times [-\epsilon,\epsilon]
			\eqnd
		$c$ equals
			\eqnn
				\{-w\} \times (-\infty, -\epsilon)
				\coprod
				\{w\} \times (-\infty,-\epsilon)
				\coprod
				(-w+\epsilon, w-\epsilon) \times \{0\}.
			\eqnd
		\item There exists a primitive $f: C \to \RR$ such that $df = pdq|_{c}$ and $f=0$ outside of $\BB$. 
	\enumd 
We will call $c$ a {\em cone tail}. Much of our combinatorics comes down to understanding holomorphic disks with boundary on $c$ and on a staircase. Note that $c \cap \gamma_i$ consists of two points. (The $w$ used here is the same $w$ used to define the $\gamma_i$.)

\begin{remark}
The purpose of giving $c$ some freedom in $\BB$ is to allow ourselves to find such an $f$.

The convenience of having such an $f$ is so that, when we later make $B(Y)$ out of a cobordism $Y$, we know two things:
\enum
	\item $B(Y)$ is eventually conical in $M \times T^*F$ (to ensure ``vertical'' Gromov compactness), and
	\item $B(Y)$ indeed looks like a product of (some region of) a cone tail with the branes that collar $Y$.
\enumd
The product of two eventually conical branes, famously, need not be eventually conical. This can be fixed by flowing the product of conical branes by some amount dictated by a choice of primitives on each factor of the product. Moreover, we need not flow where a primitive $f$ is equal to zero. See Definition~\ref{defn.product}.
\end{remark}

\begin{figure}
		\[
			\xy
			\xyimport(8,8)(0,0){\includegraphics[width=5in]{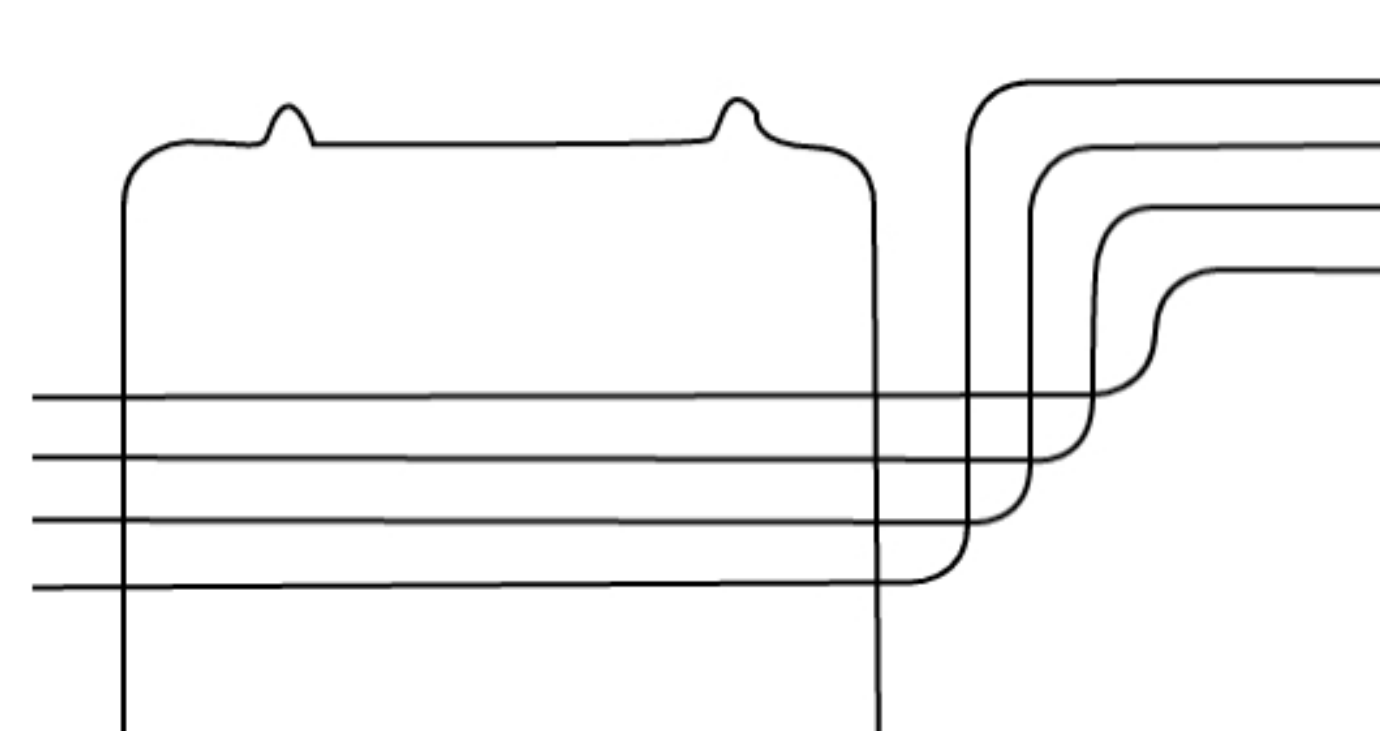}}
			,(2.8,6.7)*+{c}
			\endxy
			\]
\begin{image}\label{c-gamma.pdf}
Staircase curves and a conetail. 
\end{image}
\end{figure}

\begin{defn}[Regions defined by staircases]\label{defn.R} 
Fix a staircase and some $d \geq 1$. Fix also the vertical line $l_w = \{q=w\}$. 
Let $V$ the complement of $l_w \cup \bigcup_{0,\ldots,d-1} \gamma_i$. Let $V^o$ be the union of all its bounded components. We call $R := \ov{V^o}$, the closure of $V^o$, a {\em region bounded by the staircase}. The notation suppresses the dependence on the choices of $\gamma_i$.

Also, letting $V'$ be the complement of $l_{-w} \cup \bigcup_{0,\ldots,d-1} \gamma_i$, we let $R'$ equal $\ov{ (V')^o}$. We will also call this a region bounded by the staircase; the notation will be explicit so no ambiguity arises. 

Finally, letting $V''$ be the complement of $c \cup \bigcup_{0,\ldots,d-1} \gamma_i$, we define $R''$ similarly.

See Figure~\ref{figure.staircase-disks}.
\end{defn}

\subsubsection{The moduli of disks for staircases}
Many computations in life reduce to combinatorics; in this paper, the important computations reduce to the combinatorics of disks on staircases. Much of this is in the spirit of combinatorial Floer theory (i.e., Floer theory on real surfaces), see~\cite{abouzaid-higher-genus, seidel-book, de-silva-et-al}. When discussing disks $u: S \to T^*\RR$, we study honest holomorphic disks, so smooth maps $u$ with $J_{T^*\RR} \circ du = du \circ J_S$. The following will be important for us; it also proves Gromov compactness in the case of the $\gamma_i$ forming a staircase. See also Lemma~7.7 of~\cite{seidel-lefschetz-i} and Lemma~4.4.1 of~\cite{biran-cornea}.

\begin{proposition}\label{prop.bounded}
Let $C$ be a closed subset of $T^*\RR$. Suppose $u: S \to T^*\RR$ is a holomorphic map admitting a continuous extension $\ov{u} : \ov{S} \to T^*\RR$ such that $\ov{u}({\del \ov S}) \subset C$. Then $u$ has empty intersection with any unbounded connected component of $T^*\RR \setminus C$.
\end{proposition}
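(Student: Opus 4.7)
The plan is to argue by contradiction using the open mapping theorem for holomorphic maps into $T^*\RR$ (equipped with $J_{T^*\RR}$, identifying it with $\CC$ up to orientation), in the ``boundary-stripping'' spirit flagged at the start of the section. Suppose some unbounded connected component $U$ of $T^*\RR \setminus C$ meets $u(S)$, and pick a connected component $W$ of the open set $u^{-1}(U) \subset S$. Since $S$ is open in $\ov S$, so is $W$, and $\ov W$ is a compact subset of $\ov S$; set $\del W := \ov W \setminus W$.

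The first step is to check $\ov u(\del W) \subset C$, so that $\ov u(\ov W) = u(W) \cup \ov u(\del W)$ splits cleanly into a piece in $U$ and a piece in $C$. A point of $\del W$ either lies on $\del \ov S$---where $\ov u$ lands in $C$ by hypothesis---or lies in $S$ as a limit of points of $W$, in which case its image is in $\ov U \setminus U$; since the components of the open set $T^*\RR \setminus C$ are pairwise-disjoint open subsets, this frontier is contained in the closed set $C$. One also needs that $\del W$ is non-empty: otherwise $W$ would be clopen in the connected space $\ov S$, forcing $W = \ov S$, and then $\ov u(\del \ov S) \subset u(W) \subset U$ would contradict $\ov u(\del \ov S) \subset C$.

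Given this, $u|_W$ cannot be constant (a constant value would lie in both $U$ and $\ov u(\del W) \subset C$), so by the open mapping theorem $u(W)$ is open in $\CC$. The key step is to observe that $u(W)$ is also closed in $U$: since $\ov{u(W)} \subset \ov u(\ov W) \subset u(W) \cup C$ and $U \cap C = \emptyset$, we have $\ov{u(W)} \cap U \subset u(W)$. Thus $u(W)$ is a non-empty clopen subset of the connected set $U$, so $u(W) = U$. But $u(W) \subset \ov u(\ov W)$ is bounded while $U$ is unbounded, the desired contradiction. The main delicacy is the frontier analysis in the first step---verifying that interior boundary points of $W$ cannot escape into a different component of $T^*\RR \setminus C$---while everything else is a compact clopen-in-connected argument powered by the open mapping theorem.
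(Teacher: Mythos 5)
Your argument is correct and is essentially the paper's own first proof: show that $u(S)\cap U$ is open (open mapping theorem) and closed in $U$ (compactness of $\ov S$ plus the boundary condition), hence all of $U$, contradicting boundedness of the image. You simply execute the clopen argument more carefully (passing to a component $W$ of $u^{-1}(U)$ and checking the frontier), which is a welcome tightening but not a different route; the paper also sketches an alternative degree-theoretic argument that you do not use.
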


\begin{proof}[Proof of Proposition~\ref{prop.bounded}]
Let $U \subset T^*\RR \setminus C$ be an unbounded connected component. Since $\ov{S}$ is compact, any intersection that $u(S)$ has with $U$ has to be closed in $U$. The intersection must also be open by the open mapping theorem because $\ov S \setminus \del \ov S = int(S)$ is the open unit disk. Hence the intersection $U$ must be all of $U$, or empty---but $u(S)$ must have bounded image since it extends to $\ov u$, a map from a compact space. (This is essentially the same argument that appears in Remark~4.4.3 of~\cite{biran-cornea}.)

Another argument uses degree:
On any such $U$, $u$ must have constant degree, hence if $u(S) \cap U \neq \emptyset$, $u$ must have non-zero degree on all of $U$, implying it has unbounded image. (This is the argument used in~\cite{seidel-lefschetz-i}.)
\end{proof}

Fix a staircase
	\eqnn
	(\gamma_0, \gamma_1, \ldots, \gamma_{d-1}).
	\eqnd
We will let $\gamma_d = c$ be the cone tail curve for notational convenience. 
Fix $x_i \in \gamma_{i-1} \cap \gamma_i$ and $x_0 \in c \cap \gamma_0$ such that at most two of the $x_i$ have $q$-coordinate less than $w$. 
In this situation, there are three possible configurations of the $x_i$: 
	\begin{enumerate}
		\item\label{item.1} All $x_i$ have $q$-coordinate $q \geq w$.
		\item\label{item.2} $x_0$ and $x_{d}$ are the two points with $q<w$.
		\item\label{item.3} $x_0$ is the unique point with $q < w$.
	\end{enumerate}
See Figure~\ref{figure.staircase-disks}. We let $\cM(\gamma_i,x_i)$ denote the moduli space of holomorphic disks with the usual boundary conditions, where the ordering on the $\gamma_i$ and $x_i$ follow the usual conventions. By the Riemann mapping theorem, in each scenario the moduli space $\cM$ is non-empty. In fact, one can do better. 

\begin{lemma}\label{lemma.fibers}
Let $\cM(\gamma_i,x_i)$ denote the uncompactified moduli space of holomorphic disks. 
Then the map
	\eqnn
		\cM( (\gamma_i, x_i)) \to \cR
	\eqnd 
(to the moduli space of holomorphic structures on a disk with $d+1$ boundary marked points) is, using the same enumeration as above,
	\enum
		\item[\ref{item.1}, \ref{item.2}.] A diffeomorphism, or 
		\item[\ref{item.3}.] A surjective submersion with one-dimensional fibers.
	\enumd
\end{lemma}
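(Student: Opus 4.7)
The strategy is to combine Proposition~\ref{prop.bounded} with the Riemann mapping theorem. Since the maps $u : S \to T^*\RR$ under consideration are genuinely holomorphic, Proposition~\ref{prop.bounded} confines the image of each $u$ to the bounded components of $T^*\RR \setminus (c \cup \bigcup_i \gamma_i)$, so the first step is to determine the topological type of the region cut out by the boundary loop $u(\del S)$ in each case.

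In cases (1) and (2), tracing the loop $x_0 \to x_1 \to \cdots \to x_d \to x_0$ along the prescribed arcs of $c$ and the $\gamma_i$ shows that the loop is an embedded Jordan curve whose bounded side is a topological disk with exactly $d+1$ corners at the $x_i$. The arcs of $c$ and of each $\gamma_i$ between consecutive $x_i$ are smooth, so the only non-smooth points of the boundary are the transverse crossings at the $x_i$. The Riemann mapping theorem then furnishes, for every conformal structure on a disk with $d+1$ boundary marked points, a unique conformal parametrization of this Jordan domain matching the marked points to the $x_i$ in cyclic order. As the conformal structure varies, these parametrizations assemble into a smooth inverse to the forgetful map $\cM \to \cR$, which is therefore a diffeomorphism.

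In case (3), a new feature emerges when $d \geq 2$: the $\gamma_0$-arc of the loop, running from $x_0 = (-w,-D_0)$ rightward along $p = -D_0$ to $x_1$, passes through the point $(w,-D_0)$, which also lies on the $c$-arc (along the right vertical of $c$ at $q = w$, climbing upward from $x_d = (w, -D_{d-1})$ before wrapping around the U-shape). A direct check shows this is the unique self-intersection: the right vertical of $c$ meets the lower horizontal of $\gamma_i$ only at $(w,-D_i)$, but for $1 \leq i \leq d-2$ the $\gamma_i$-arc of the loop stays over $q$-values strictly exceeding $w$, and for $i = d-1$ the only point where the arc meets $c$ is the corner $x_d$ itself. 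The loop is therefore a figure-eight pinched at $(w,-D_0)$; any holomorphic disk realizing it can be built by Riemann-mapping each of the two lobes separately and gluing across a ``neck'' at the pinch, with the conformal modulus of the neck providing the one-dimensional fiber. The case $d = 1$, in which the loop is genuinely Jordan but $\cR = pt$ by convention, is handled analogously, with the extra parameter now being the $\RR$-translation of the strip.

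The main obstacle is to verify rigorously that this extra parameter is exactly one-dimensional and that $\cM \to \cR$ is a smooth submersion. I would address this via the explicit Riemann-mapping construction on each lobe combined with the implicit function theorem. Alternatively, a Fredholm dimension count shows that the nodal figure-eight boundary configuration contributes precisely one additional real parameter to the expected dimension, and a standard transversality argument upgrades the expected dimension to an honest submersion.
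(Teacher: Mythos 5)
Your geometric setup is sound---in particular your identification of the unique self-crossing of the boundary loop at $(w,-D_0)$ in case \ref{item.3} matches the picture underlying the paper's proof---but the central step in cases \ref{item.1} and \ref{item.2} rests on a misapplication of the Riemann mapping theorem. For a fixed Jordan domain with $d+1$ corners, a biholomorphism from the disk is unique only after prescribing \emph{three} boundary points; the preimages of the remaining $d-2$ corners are then \emph{determined} by the domain. So the embedded parametrization you construct realizes exactly one point of $\cR$, not all of them, and there is no ``smooth inverse'' assembled from embeddings. The disks lying over the other points of the $(d-2)$-dimensional space $\cR$ are not embeddings onto the Jordan domain: their boundaries fold back along the $\gamma_i$, producing slits, and it is the positions of these slits that sweep out $\cR$. (The caption of Figure~\ref{figure.staircase-disks} explicitly warns that the slits are not pictured.) The actual proof therefore has to (i) show $\cM\to\cR$ is a submersion by moving the slits, and (ii) establish that the degree is one, which the paper does by passing to the compactification and checking that a broken configuration re-opens in a unique way. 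Neither step is present in your argument.

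In case \ref{item.3} the ``glue two Riemann maps across a neck'' construction is also not the right model: elements of $\cM$ are holomorphic maps from a single smooth disk, not nodal curves, and the correct picture is again a slit of variable length emanating from the crossing point, whose complement is simply connected and can be uniformized. More importantly, your fallback argument (Fredholm index plus automatic regularity) only shows that $\cM$ is a smooth manifold of dimension $\dim\cR+1$; it gives neither that the forgetful map is a submersion nor that it is \emph{surjective}. Surjectivity is a genuine issue and is where the paper does real work: it first shows $f:\cM\to\cR$ is a submersion by moving slits, then argues that if some $y\in\cR$ were missed, the continuous function $\dist(f(-),y)$ on the compact space $\ov{\cM}$ (built from a pseudometric on $\ov{\cR}/\del\ov{\cR}$) would attain its minimum at an interior point of $\cM$, contradicting the openness of the image of a submersion. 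Some argument of this kind---or an explicit construction of a disk over every conformal structure---is needed to close your proof.
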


\begin{figure}
		\[
			\xy
			\xyimport(8,8)(0,0){\includegraphics[width=1.5in]{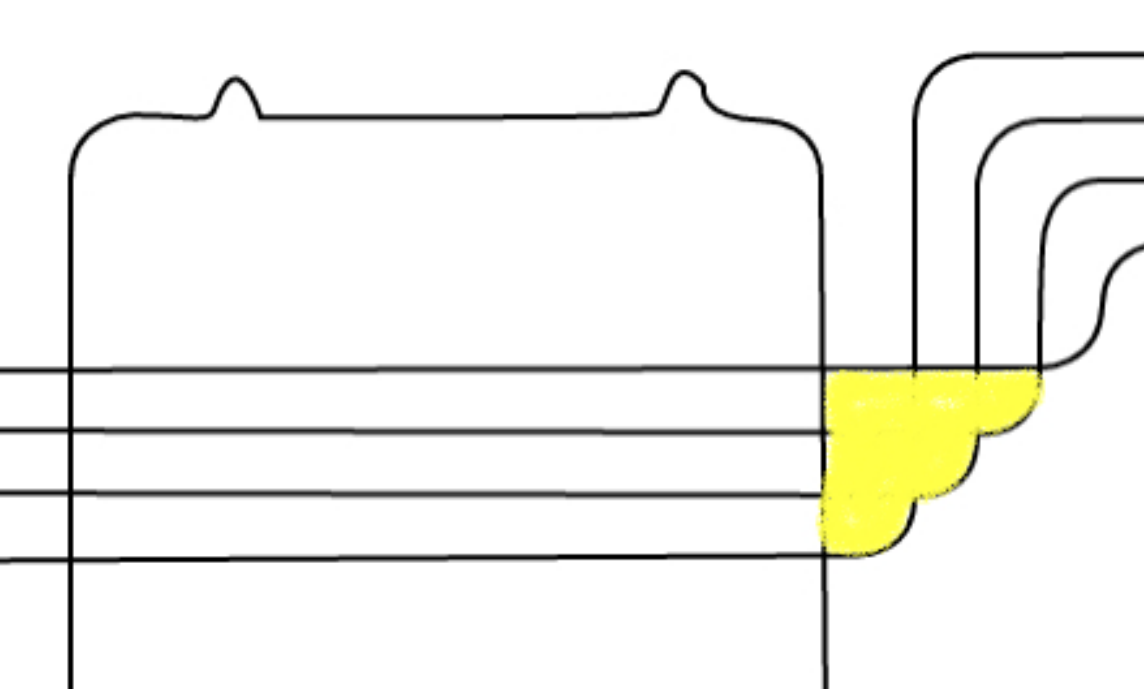}}
			,(3,0)*+{\ref{item.1}.}
			\endxy
			\qquad
			\xy
			\xyimport(8,8)(0,0){\includegraphics[width=1.5in]{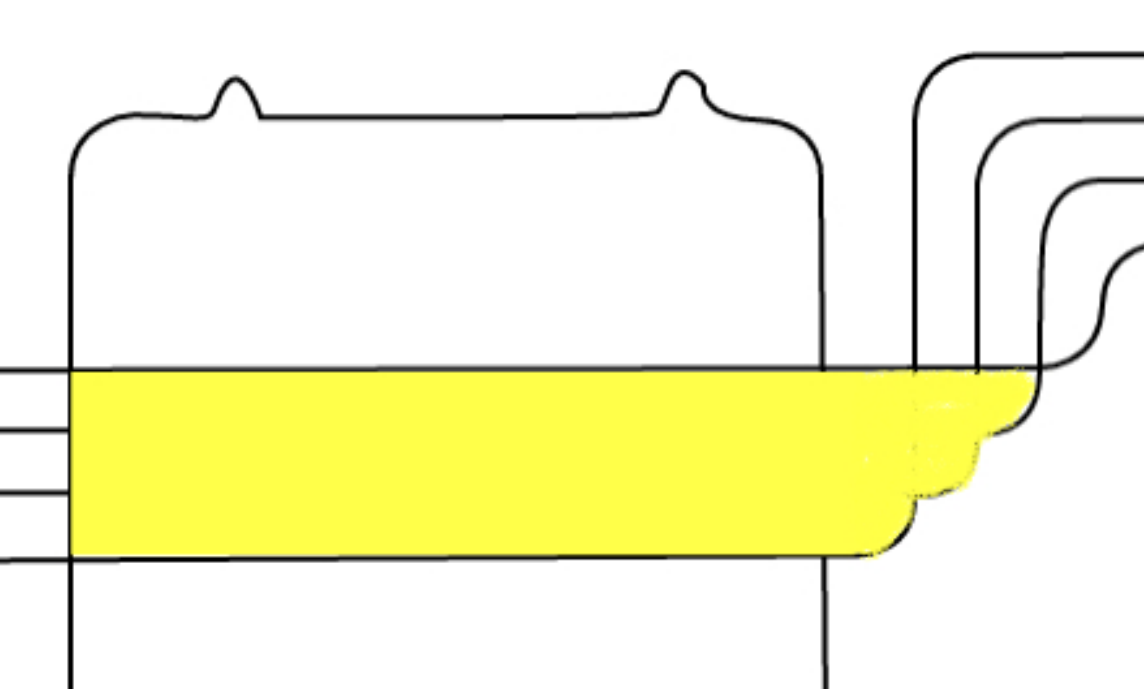}}
			,(3,0)*+{\ref{item.2}.}
			\endxy
			\qquad
			\xy
			\xyimport(8,8)(0,0){\includegraphics[width=1.5in]{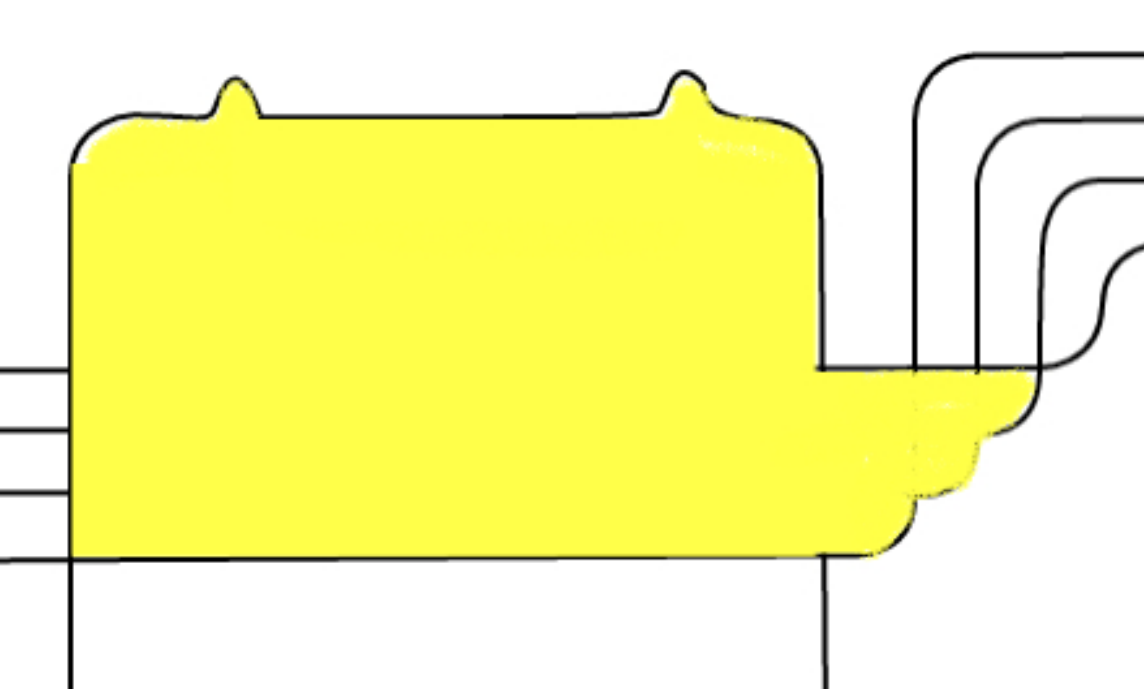}}
			,(3,0)*+{\ref{item.3}.}
			\endxy
			\]
\begin{image}\label{figure.staircase-disks}
Holomorphic curves with boundary conditions as given in Lemma~\ref{lemma.fibers}, in yellow. Blue indicates the boundary marked points. Not pictured are the possible {\em slits}, where the endpoints of slits are places where the boundaries of disks have a critical point. The yellow regions in~\ref{item.1}. and~\ref{item.2}. and~\ref{item.3}. respectively depict the regions $R$ and $R'$ and $R''$ from Definition~\ref{defn.R}.
\end{image}
\end{figure}

We begin with a useful warm-up:

\begin{lemma}\label{lemma.q>w}
Fix a staircase with tunnel curves $\gamma_0,\ldots,\gamma_{d-1}$ and fix a kernel tail $c$. Let $u: S \to T^*\RR$ be a holomorphic disk. Then with boundary conditions using the same enumeration as above, we have that
	\enum
		\item[\ref{item.1}.] The image of $u$ is contained in the region $R$ bounded by the staircase. (See Definition~\ref{defn.R}.)
		\item[\ref{item.2}.]  The image of $u$ is contained in the region $R'$ bounded by the staircase.
		\item[\ref{item.3}.]  The image of $u$ is contained in the region $R''$ bounded by the staircase.
	\enumd
\end{lemma}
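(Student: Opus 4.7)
My plan is to apply Proposition~\ref{prop.bounded} in each of the three cases, with a closed set $C$ chosen so that the bounded components of $T^*\RR \setminus C$ comprise exactly the $V^o$ whose closure is the region ($R$, $R'$, or $R''$) of Definition~\ref{defn.R}, and so that $\ov u(\del \ov S) \subset C$.

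In case~\ref{item.1}, every $x_i$ has $q \geq w$; in particular the two corners $x_0 = (w,-D_0)$ and $x_d = (w,-D_{d-1})$ lying on $c$ both sit on the right leg $\{q=w,\, p\leq -\epsilon\}$ of $c$, which is contained in $l_w$. The boundary arc of $\ov u$ along $c$ joining $x_d$ to $x_0$ therefore lies in $l_w$, while the remaining boundary arcs lie in $\bigcup_i \gamma_i$. I take $C := l_w \cup \bigcup_{i=0}^{d-1} \gamma_i$, which is precisely the set used in Definition~\ref{defn.R} to define $V^o$ and hence $R$. The analogous choices for cases~\ref{item.2} and~\ref{item.3} are $C := l_{-w} \cup \bigcup_i \gamma_i$ (where both corners on $c$ lie on the left leg $\{q=-w\}$, contained in $l_{-w}$) and $C := c \cup \bigcup_i \gamma_i$ (where the corners lie on opposite legs, so the boundary arc on $c$ must also cover the horizontal top of $c$).

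Proposition~\ref{prop.bounded} then gives $u(S) \cap U = \emptyset$ for every unbounded component $U$ of $T^*\RR \setminus C$, so $u(S) \subset V^o \cup C$. To upgrade this to $u(S) \subset \ov{V^o}$, observe that $C \setminus \del V^o$ consists of ``tail'' portions of $C$---half-lines of the $\gamma_i$ going to $q = \pm\infty$ at $p = -D_i$ and $p = h_i$, rays of $l_{\pm w}$ outside the relevant window, and (in cases~\ref{item.1} and~\ref{item.2}) the unused leg of $c$---each of which locally borders only unbounded components of $T^*\RR \setminus C$. Since $u$ is holomorphic and $T^*\RR \cong \CC$, the open mapping theorem forbids $u(\mathring{S})$ from meeting any such tail without also meeting the adjacent unbounded component, contradicting what was just established. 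Hence $u(\mathring{S}) \subset \ov{V^o}$, and since $\ov u(\del \ov S) \subset \del V^o \subset \ov{V^o}$ by the preceding arc analysis, we conclude $u(S) \subset \ov{V^o}$, which is the required region in each case.

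The delicate point is the arc analysis: I must verify case by case that the boundary arc of $\ov u$ along $c$ lies in the expected portion of $c$ (and hence of $C$), rather than wrapping around $c$ in a longer homotopy class---for example, in case~\ref{item.1}, that the arc does not traverse the right leg up, across the horizontal, down the left leg, and back. This reduces to a combinatorial check using the cyclic order of the marked points $x_0,\ldots,x_d$ on $\del \ov S$ together with the case-specific location of $x_0$ and $x_d$ on $c$, and it is the only place where the hypothesis ``at most two of the $x_i$ have $q < w$'' is explicitly used.
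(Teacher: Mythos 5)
Your overall strategy---Proposition~\ref{prop.bounded} applied to the closed sets $l_{\pm w}\cup\bigcup_i\gamma_i$ or $c\cup\bigcup_i\gamma_i$ from Definition~\ref{defn.R}---is the right one and matches the endgame of the paper's proof. But the step you defer as ``the delicate point'' is not a peripheral verification: it is the actual content of the lemma, and the method you propose for it does not work. To apply Proposition~\ref{prop.bounded} with $C=l_w\cup\bigcup_i\gamma_i$ in case~\ref{item.1} you must first know that the boundary arc of $\ov u$ lying on $c$ stays on the right leg $\{q=w\}$ (and likewise that the arcs on the $\gamma_i$ do not wander onto the portions of the $\gamma_i$ with $q<w$; those portions are contained in your $C$, but not in $\del V^o$, so your final inclusion $\ov u(\del\ov S)\subset\del V^o$ also depends on this). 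The cyclic order of the marked points on $\del\ov S$ cannot deliver this: it only tells you which Lagrangian each boundary arc maps into, and the image of the arc on $c$ is merely a compact connected subset of $c$ containing $x_d$ and $x_0$---nothing combinatorial prevents it from running up the right leg, across the horizontal top of $c$, and back down.

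What actually rules this out is analytic: if $u|_{\del S}$ passes through a point of one of the boundary curves with nonvanishing derivative in a given direction, then the equation $J\circ du=du\circ J$ forces the interior of $u$ to enter the region on a determined side of that curve; if that side lies in an unbounded component of the complement of the current closed set, Proposition~\ref{prop.bounded} gives a contradiction. This must be applied iteratively, starting from the deepest curve $\gamma_{d-1}$ and, at each stage, stripping away the portion of the previous curve that the boundary has just been shown to avoid---this is exactly the paper's ``boundary-stripping'' induction (Remark~\ref{remark.boundary-stripping}). Your proof needs that argument inserted where you currently invoke ``a combinatorial check''; once it is in place, the remainder of your write-up (the interior upgrade via the open mapping theorem and the identification of the bounded components with $R$, $R'$, $R''$) goes through.
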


\begin{remark}[Boundary-stripping]\label{remark.boundary-stripping}
We will reuse the strategy in the following proof over and over again. We call it {\em boundary-stripping} for brevity. We use it to show that (even without using convexity---just by knowing the boundary conditions) certain solutions $\tilde u: S \to M \times T^*\RR$ to (\ref{eqn.floer}) have image fully contained in a region where $J$ splits as a direct sum $J_M \oplus J_{T^*\RR}$. 

Why the name? the proof begins by choosing some closed set $C_0$ on which to apply Proposition~\ref{prop.bounded}, then removing exterior curves from $C_{i-1}$ to obtain a new closed set $C_i$, and we repeat the process using Proposition~\ref{prop.bounded}. The removal of exterior curves is the ``boundary-stripping.'' 
\end{remark}

\begin{proof}[Proof of Lemma~\ref{lemma.q>w}.]
We begin with case \ref{item.1}.
We want to use Proposition~\ref{prop.bounded} where $C$ is the union of the $\gamma_i$ and $c$. To use the proposition, we just need to show that $u(\del S)$ does not intersect any point with $q<w$. So we proceed by assuming the opposite. By the boundary conditions of $U$, $u(\del S)$ has some point such that i. the point has image with $q<w$, and ii. $u|{\del S}$ has positive $\del_q$ derivative. Such a point cannot have image along the deepest curve $\gamma_{d-1}$, as then by the equation $J\circ du = du \circ J$ on $\del S$, $u$ would intersect the open region lying below $\gamma_{d-1}$. 

Setting $C_0 = C$, we can form $C_i$ by removing from $C_{i-1}$ the portion of $\gamma_{i-1}$ on which $u|{\del S}$ has no image. Then the same argument as above shows $u|_{\del S}$ can have no image on the part of $\gamma_i$ with $q<w$. 

By induction downward to $\gamma_{d-2}$ and also $c$, a point satisfying i. and ii. cannot have image along any $\gamma_i$, nor on $c$. In other words, no point on $\del S$ is taken by $u$ to a point with $q<w$, and we can finally use $C_{final} = \{q = w\} \bigcup \gamma_i$ to see that no point of $S$ is taken by $u$ to the region with $q<w$. 

The same arguments show that $u|_{\del S}$ cannot have image along any of the vertical portions of the $\gamma_i$ above $R$, and in turn shows that $u$ has image contained in $R$.

The proof for $R'$ and $R''$ are similar so we omit it.
\end{proof}

\begin{proof}[Proof of Lemma~\ref{lemma.fibers}, cases~\ref{item.1} and~\ref{item.2}.]
We begin with cases \ref{item.1} and \ref{item.2}. Since the latter and the former have the same combinatorics, we just prove \ref{item.1}. 
There are at least two proofs. One uses the degeneration argument Seidel uses in Lemma~7.5 of~\cite{seidel-lefschetz-i}. The other is as follows: 

First consider the case $d+1 = 3$. Since the image of any $u: S \to T^*\RR$ must be bounded, the image of $u$ must be contained in the obvious 3-gon given by the boundary conditions. Then that $\cM = pt = \cR$ is classical; it essentially follows from the Riemann mapping theorem, and the result can be found in other combinatorial Floer theory works such as~\cite{de-silva-et-al, abouzaid-higher-genus}. Now we prove the other cases by induction. Assuming true for all $d' +1 \leq d$, the Riemann mapping theorem tells us that $\cM$ is at least non-empty. And given any $u \in \cM$, it is easy to show that $\cM \to \cR$ is a submersion at $u$: the usual immersed pictures allow one to nudge the singular values of $u|_{\del S}$ along the boundary curves $\gamma_i$ (sometimes referred to as ``moving the slits''). Knowing that $\cM \to \cR$ admits a compactification $\ov{\cM} \to \ov{\cR}$, by induction one obtains a proper fibration which is degree one over the boundary strata. (Here we are using the explicit knowledge of the fact that the ``broken heart'' picture indeed corresponds to boundary degenerations of holomorphic disks; see~\cite{de-silva-et-al, abouzaid-higher-genus}.) Now we must show that the fibers have cardinality 1, and this one can do near the boundary of $\cM$: Given a fixed degenerate disk $u \in \ov \cM$ in a codimension one stratum, there is only one slit along which one can re-open $u$ to obtain a disk in the interior of $\cM$. Hence the cardinality of the fibration $\cM \to \cR$ does not change moving from $\del \cR$ into its interior. For an equivalent argument, see also Lemmas~4.22 and~4.23 in Pascaleff~\cite{pascaleff}.
\end{proof}

Before we go to case~\ref{item.3} of Lemma~\ref{lemma.fibers}, we begin with some generalities.

If $\cR = \cR_d$ is the moduli space of conformal structures on holomorphic disks with $d+1$ boundary marked points, it admits a closure $\ov \cR$ which is neither compact nor Hausdorff---it is obtained by attaching moduli spaces of semistable nodal disks. (One can also think of it as the coarse moduli space associated to the topological stack of broken holomorphic disks, defined on the usual site of topological spaces.) The details are not important in this paper, but we give a brief description in Remark~\ref{remark.coarse-moduli} below. In our argument to prove Lemma~\ref{lemma.fibers}, we use only the fact that (i) the interior $\cR \subset \ov \cR$ is open and is diffeomorphic to Euclidean space, and (ii) the quotient $\ov{\cR} / \del \ov{\cR}$ can be identified with the one-point compactification of $\cR$. Importantly, we thus see that the quotient admits a pseudometric 
	\eqnn
	\dist: \ov{\cR} / \del \ov{\cR} \times \ov{\cR} / \del \ov{\cR} \to \RR \cup \{\infty\}
	\eqnd
which is smooth when restricted to $\cR$, continuous, and satisfies
	\eqnn
		\dist(x,y) = 
			\begin{cases}
				\infty & \text{if either $x$ or $y$ is the point at infinity} \\
				< \infty & \text{$x$ and $y$ are in $\cR$.}
			\end{cases}
	\eqnd
Now, the compactification $\ov \cM$ of holomorphic disks $u: S \to M$ admits a continuous map $\ov f: \ov \cM \to \ov \cR$ satisfying the following property: 
	\eqnn
	f := \ov f|_{\cM} : \cM \to \cR
	\eqnd
is smooth, and $\ov f^{-1} (\del \ov \cR) = \del (\ov \cM).$  In terms of disks, this means that the compactification of $\cM$ only involves degenerating a sequence $u_i: S_i \to M$, $u_i \in \cM$, in such a way that the limit of the $u_i$ always defines a tuple of maps $(v_1,\ldots,v_k)$ for $k \geq 2$. Put a stronger way, if one stratifies the compactification $\ov \cM$ by $k \in \ZZ_{\geq 1}$, where $k$ counts the number of components of a semistable nodal disk, and likewise for $\ov \cR$, then the map $\ov \cM \to \ov \cR$ respects this stratification. (The $k=1$ stratum is the interior $\cM \subset \ov \cM$.)

Note that this strategy will avoid precisely defining a ``smooth manifold with corners'' structure on $\ov \cM$ (which is a delicate question), and will rely only on the smooth structure on the interior $\cM \subset \ov \cM$. 

\begin{proof}[Proof of Lemma~\ref{lemma.fibers}, case~\ref{item.3}.]
First, let us show that the map of boundary-less spaces $f: \cM \to \cR$ is a submersion. We do this by the same method as before: We explicitly use the slits of holomorphic disks (immersed along some of its boundary) to parametrize the moduli of marked points on a disk modulo automorphisms. Given some $u: S \to T^*\RR$ in $\cM$, one can move slits in $T^*\RR$ in every allowable direction to obtain another holomorphic map $u: S' \to T^*\RR$ from a different conformal structure $S'$.

Now we show it is a surjection. Assume not, and fix $y \in \cR$ which is not in the image of $f: \cM \to \cR$. Then the composite function
	\eqnn
	\dist(f(-),y): \ov \cM \to \ov \cR \to\ov{\cR} / \del \ov{\cR} \to \RR \cup \{\infty\}
	\eqnd
is continuous. Since $\ov \cM$ is compact, the function achieves a minimum somewhere. Knowing $\cM$ (hence $f(\cM)$) is non-empty, the minimum is achieved at some $x \in \cM$ such that $\dist(f(x),y) < \infty$. But we know that $f$ is a submersion, so there is some open neighborhood of $f(x)$ in $\cM$ which is contained in the image of $f$---this contradicts the minimality of $\dist(f(x),y)$. Hence $y$ could not exist, meaning $f$ was a surjection to begin with.

Since $f$ is a submersion, a simple index count shows that the fibers of $f$ are indeed 1-dimensional.
\end{proof}

\begin{remark}\label{remark.coarse-moduli}
Fix an integer $d \geq 1$. When one considers the Gromov compactification $\ov \cM = \ov \cM_d$ of the moduli of pseudoholomorphic maps with $d+1$ marked points, there is a natural (non-compact, non-Hausdorff) base space $\ov \cR$ to which $\ov \cM$ maps. For instance, suppose that $\cM$ is some (high-dimensional) moduli space of holomorphic disks. Then we know that $\ov \cM$, as a topological manifold with corners, is a compact space with codimension $k$ boundary strata given by $(k+1)$-tuples of holomorphic disks $u_i: S_i \to M$; this tuple is what one usually calls a ``broken disk.'' Since there are obvious automorphisms when some $S_i$ is a strip, the natural recipient of a map $\ov\cM\to \ov\cR$ would hence be a stack, also stratified by the combinatorics of broken strips and disks, but we won't define such a thing here. Instead, we define a non-compact, non-Hausdorff topological space as a colimit
	\eqnn
	\ldots \to (\ov \cR)^k \to (\ov \cR)^{k+1} \to \ldots
	\eqnd
given by attaching a point for the nodal disks at each stage. We don't elaborate too much on this construction because our proof of Lemma~\ref{lemma.fibers} part~\ref{item.3}. only needs the quotient of $\ov \cR$ by its boundary anyway, but here is a brief description:

As a set, $(\ov \cR)^k$ is the set of all degenerations of a disk with $(d+1)$ boundary marked points, with at most $k$ components. (Each component must have at least two boundary marked points.) Inductively, one endows $(\ov \cR)^k$ with the coarsest topology so that (1) $(\ov \cR)^{k-1} \into (\ov \cR)^k$ is an open inclusion, (2) the boundary strata have the direct product topology (well-defined by induction), and (3) each boundary stratum (where each stratum is indexed by the combinatorial data of a broken tree/disk, rather than by codimension) is a closed subset of $\ov \cR^{k}$. Importantly, we forget the $\RR$ action on strips---this topology treats the breaking of strips as simply passing to a formal boundary (hence the non-Hausdorff property).
\end{remark}

\begin{example}
As an example, let $d=2$. This means we consider the moduli space of disks with 3 marked boundary points.

As we know, $\cR = (\ov \cR)^1$ is just a single point---up to equivalence, there is only one conformal structure on a disk with three marked boundary points. When such a disk degenerates to have one more nodal component, we have three possible degenerations, given by attaching a strip at any of the three marked points. Let us call these degenerations $a_0, a_1, a_2$, and let $b$ be the unique point of $\cR$. Then 
	\eqnn
	(\ov \cR)^2
		=
		\{b,a_0,a_1,a_2\}
	\eqnd
is a four-element set, topologized so that the closed sets are generated by $\emptyset, (\ov \cR)^1$, $\{a_0\}$, $\{a_1\}$, and $\{a_2\}$. Note that $\cR = \{b\}$ is not a closed subset, but is open.
\end{example}

\begin{example}
Consider the case $d =1$, so we consider $\ov \cR$ to be the closed moduli space of broken strips. As a set, $\ov \cR$ is in bijection with $\ZZ_{\geq 1}$, and $\cR$ is homeomorphic to $\ZZ_{\geq 1}$ with the poset topology---a set $U$ is open if and only if $x \in U \implies y \in U$ whenever $x \geq y$. (I.e., every open set is upward closed, and vice versa.) Note $\ov \cR$ is in fact a non-unital monoid in the category of stratified spaces.
\end{example}

\subsection{Geometry of products}
Given two exact symplectic manifolds $M_1$ and $M_2$, their direct product is exact via the 1-form $\theta_{M_1 \times M_2} = \theta_{M_1} \oplus \theta_{M_2}$. Hence one can ask to endow $M_1 \times M_2$ with compatible Hamiltonians, almost-complex structures, and so on. In the next sections we make precise what kinds of $J$ and $H$ we use to do Floer theory in manifolds of the form $M \times T^*\RR^N$ with $\theta_{T^*\RR^N} = \sum p_i dq_i$. 

\begin{remark}
This $\theta$ is again not a generic choice. For instance, any Reeb orbit in $\del M_1$ induces a 1-parameter family of Reeb orbits in $M_1 \times T^*\RR$ living above the zero section of $T^*\RR$.
\end{remark}

Given branes $L_1 \subset M_1, L_2 \subset M_2$, even if each $L_i$ is eventually conical, their product need not be. This will not bother us, as Gromov compactness for $M \times T^*\RR$ will be proven using more standard complex analysis. However, note that there is a fix to non-conicalness in general:

First, choose a Hamiltonian $H_i: M_i \to \RR$ such that 
	\enum
	\item the associated Hamiltonian vector field $X_i$ eventually agrees with the Reeb vector field along $\del M_i$. This means, for instance, that if $r_i$ is the conical coordinate for $M_i$, $H_i$ is equal to $r_i$ for $\del M \times \RR_{\geq R}$, $R$ large enough.
	\item $H_i$ equals zero where $r_i \leq R'$ for some $R' \in \RR_{\geq 0}$, and along $A$. We want to choose $r_i$ so that: When $H_i$ is restricted to $L_i$, it equals zero on the interior of $L_i$, then increases to the function $r_i$ along some conical part of $L_i$, then finally equals $r_i$ far enough away from $A \subset M$.
	\enumd
We let $\Phi_{t_i}^{X_i}$ denote the flow along $X_i$ for time $t_i$. 

\begin{defn}\label{defn.product}
Given branes $L_i$ with primitives $f_i$, we let $L_1 \tensor L_2$ denote the embedding 
	\eqnn
	L_1 \times L_2 \to M_1 \times M_2,
	\qquad
	(x_1,x_2) \mapsto (\Phi_{-f_2(x_2)}^{X_1} (x_1), \Phi_{-f_1(x_1)}^{X_2}(x_2)).
	\eqnd
with the dependence on choice of $H_i$ implicit in the notation $\tensor$.
\end{defn}

We leave as an exercise to the reader that this indeed defines a brane inside $M_1 \times M_2$ with induced Liouville structure $\theta_1 \oplus \theta_2$. The assumption $f_i|_{\del L_i} = 0$ is essential---both to prove $L_1 \tensor L_2$ is indeed embedded, and to prove it is conical. This is in fact the beginnings of the tensor product we will use in~\cite{tanaka-symm} to show that $\lag(M)$ is linear over $\cL$ for any $M$. (See (~\ref{eqn.conjecture}).)

\subsubsection{Compactness and regularity for $M \times T^*\RR$}\label{section.MTR}
Throughout, we fix some number $T \in \RR_{>0}$. We demand that it be larger than $\un{D}$, so we have $D < \ov D < \un{D} < T$.

We utilize Floer and perturbation data which are, outside some compact region of $T^*\RR$, invariant under the translation symmetry of $T^*\RR$ (i.e., depends only on $p$ for $(q,p)$ large enough). Specifically:

{\em The Hamiltonians.} We demand that any Hamiltonian $H: M \times T^*\RR \to \RR$ 
	\enum
		\item
			Is eventually quadratic on $M \times T^*\RR$. This means that, outside of some finite radius tubular neighborhood of $\sk(M \times T^*\RR \cong \sk(M) \times \sk(T^*\RR)$, $H = r^2 + p^2$. Here, $r$ is the conical coordinate of $M$ in the region where $M$ can be identified with a symplectization, and $p$ is the cotangent coordinate of $T^*\RR$.  
		\item
			Outside $M \times T^*[-a, a]$ for some $a>0$, $H$ is independent of the $q$ variable, and
		\item
			Along the set $M \times \RR \times [-\un{D},-\ov{D}]$, $H$ has no dependence on the $T^*\RR$ variable.\footnote{This guarantees that if $\tilde u$ satisfies (\ref{eqn.floer}) but has projection in $\RR \times [-\un{D},-\ov{D}]$, then $u = \pi_{T^*\RR} \circ u$ satisfies the ordinary holomorphic equation.} 
		\enumd
We let $\cH = \cH(M \times T^*\RR)$ denote the space of such $H$. 

{\em The almost-complex structures.} On the one hand, having a direct sum almost complex structure $J_M \oplus J_{T^*\RR}$ is advantageous as it allows us to enumerate disks easily. On the other hand, this rarely achieves transversality, and does not guarantee compactness as $J_{T^*\RR}$ is certainly not of contact type with respect to $pdq$. So we specify a family $\cJ$ of almost complex structures that display the best of both worlds---the family depends on the choice of parameters $w, D, T$. On first glance, the conditions may seem too stringent to achieve transversality, but we can deduce a priori that any pseudoholomorphic disk $u$ solving (\ref{eqn.floer}) must stay within particular regions wherein we can actually deduce transversality. See Figure~\ref{figure.J-regions}.

\begin{figure}
		\[
			\xy
			\xyimport(8,8)(0,0){\includegraphics[width=2in]{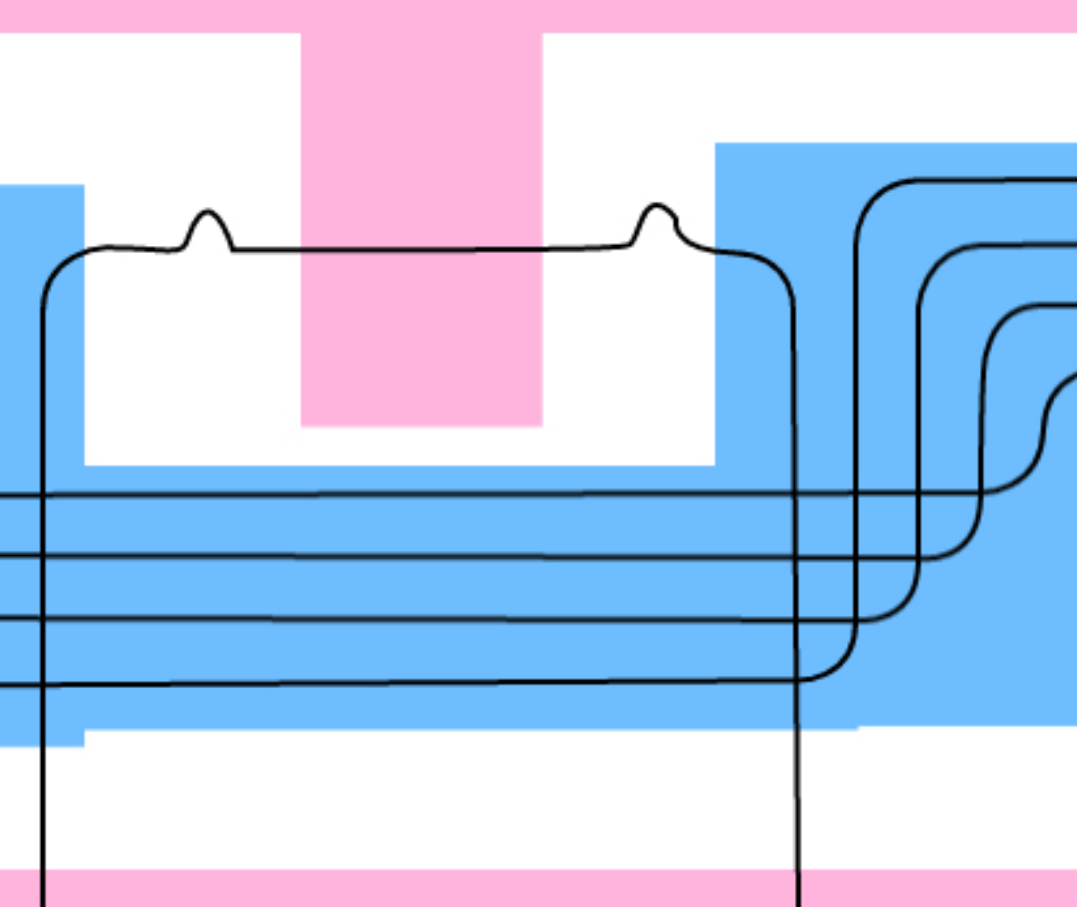}}
			\endxy
			\qquad
			\xy
			\xyimport(8,8)(0,0){\includegraphics[width=2in]{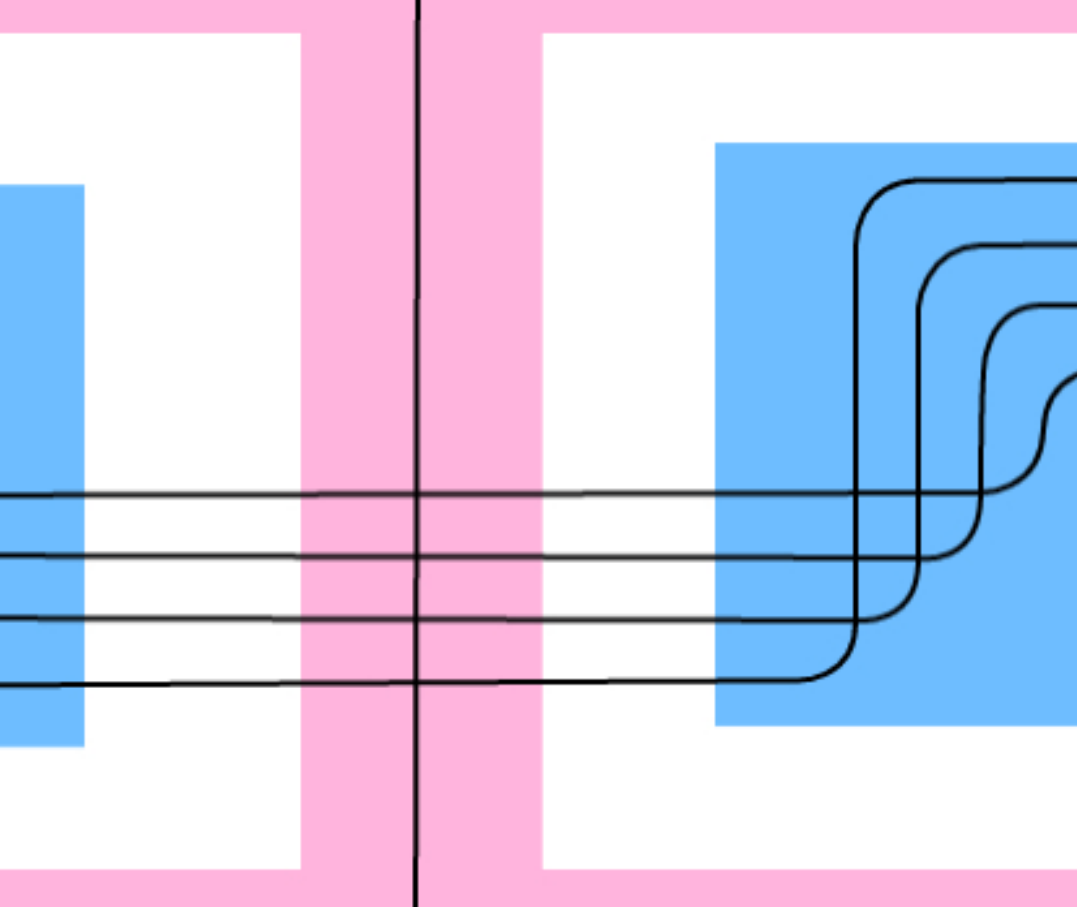}}
			\endxy
			\]
\begin{image}\label{figure.J-regions}
The almost-complex structures when we pair against $B(Y) \subset M \times T^*F$ (on the left image), and against an arbitrary brane $Y \subset M \times T^*E$ (on the right image), with $\pi_{T^*\RR}(B)$ assumed inside the vertical pink column. In the pink region, one has an almost complex structure of eventually contact type. In the blue region, we demand it equal a direct sum $J_M \oplus T^*\RR$ where $J_M$ is some eventually contact type almost-complex structure on $M$, and $J_{T^*\RR}$ is the standard almost-complex structure (\ref{eqn.i}) on $T^*\RR$.
\end{image}
\end{figure}

In what follows, we again make the distinction between $E$ and $F$. This distinction is for readability, and we will write $T^*\RR$ (rather than $T^*E$ or $T^*F$) when the distinction does not matter.

We let $\cJ_F$ consist of all almost complex-structures on $M \times T^*\RR$ such that
	\enum
		\item 
			On the subset
  			\eqnn
  			M \times \left( [-w, w] \times [-\ov D, \infty) 
  			\bigcup
  			F\times [A, \infty)
				\bigcup
				F \times (-\infty,-A] \right)
				\subset M \times F \times F^\vee
  			\eqnd
			$J$ is an arbitrary almost complex structure which is eventually of contact type (with respect to $\theta_M \oplus pdq$) and compatible with $\omega$. Here, $A$ is some large positive number.
		\item
			On the subsets
				\eqnn
					M \times (-\infty,\infty) \times [-\un{D}, -\ov{D}]
					\subset M \times F \times F^\vee
				\eqnd
			and
				\eqnn
					M \times (w, \infty) \times [-\un D, h] \subset M \times F \times F^\vee
				\eqnd
			$J$ is a direct sum $J_M \oplus J_{T^*\RR}$ where $J_M$ is some eventually contact type almost-complex structure on $M$ compatible with $\omega$, and $J_{T^*\RR}$ is the standard almost-complex structure on $T^*\RR$ (\ref{eqn.i}). Recall also that $h = \sup h_i$, where the $h_i$ are the height of the staircase curves.
		\item
			For some $a>0$, we demand that $J$ is independent of the $q$ variable outside of $M \times T^*[-a, a]$. This need not be the same $a$ that appears in our constraints for $\cH$, but we will not lose any generality in demanding that they be equal.
	\enumd
This is the kind of $J$ we allow when we pair $X_i \times \gamma_i$ against $B(Y)$ for $Y$ a cobordism---hence the $F$ notation.

On the other hand, when one of the boundary conditions is an arbitrary conical brane $Y \subset M \times T^*E$, and assuming $Y$ is contained in\footnote{This is assumption~(\ref{eqn.compact-image}), and the interval $[-w,w]$ can be taken to be $K$ in the notation of 1. in Section~\ref{section.overview-transversality}. }  $M \times T^*[-w,w]$, we let $\cJ_E$ consist of the space of $J$ satisfying:
	\enum
		\item
			$J$ is eventually conical (with respect to $\theta_M \oplus pdq$) and
		\item
			On the region 
				\eqnn
				\left( (-\infty, -w+\epsilon] \coprod (w-\epsilon, \infty) \right)
				\times [-\un D , - \ov D] \subset E \times E^\vee
				\eqnd
			$J$ is a direct sum $J_M \oplus J_{T^*\RR}$ where $J_{T^*\RR}$ is the standard (\ref{eqn.i}) structure, and $J_M$ is some eventually conical structure on $M$. 
	\enumd

Up to conformal equivalence on the boundary of the moduli space of disks with conformal structures, one can as usual choose universal and consistent perturbation data respecting these conditions.

Throughout this section, we only consider moduli spaces of disks with boundary conditions on some fixed brane $B \subset M \times T^*F$, and branes of the form $X_i \times \gamma_i$ where $\gamma_i$ are a staircase with $i=0,\ldots,d-1$ and $X_i \subset M$ are branes within $e$ of $\Lambda$\footnote{Here, as usual, $\Lambda$ is either equal to $M$ itself, or is equal to the skeleton of $M$.}. We treat $B(Y)$ as the $d$th brane when ordering the boundary of $S$. We also assume that $B$ is contained in $M \times [-w,w] \times \infty$, and that it is eventually conical as a submanifold of $M \times T^*F$. The arguments for $\cJ_E$ almost-complex structures are included where they differ from the arguments for $\cJ_F$.

\begin{proof}[Proof of compactness]
As usual, $q$ denotes the $F$ coordinate, and $p$ denotes the $F^\vee$ (i.e., cotangent) coordinate. $\pi_\RR: T^*\RR \to \RR$ denotes the projection.

First note that, if the boundary of any disk $\tilde u: S \to M \times T^*\RR$ is to have very large $p$ coordinate, it must do so along the brane $B$, and not along $X_i \times \gamma_i$.

\begin{figure}
		\[
			\xy
			\xyimport(8,8)(0,0){\includegraphics[width=2in]{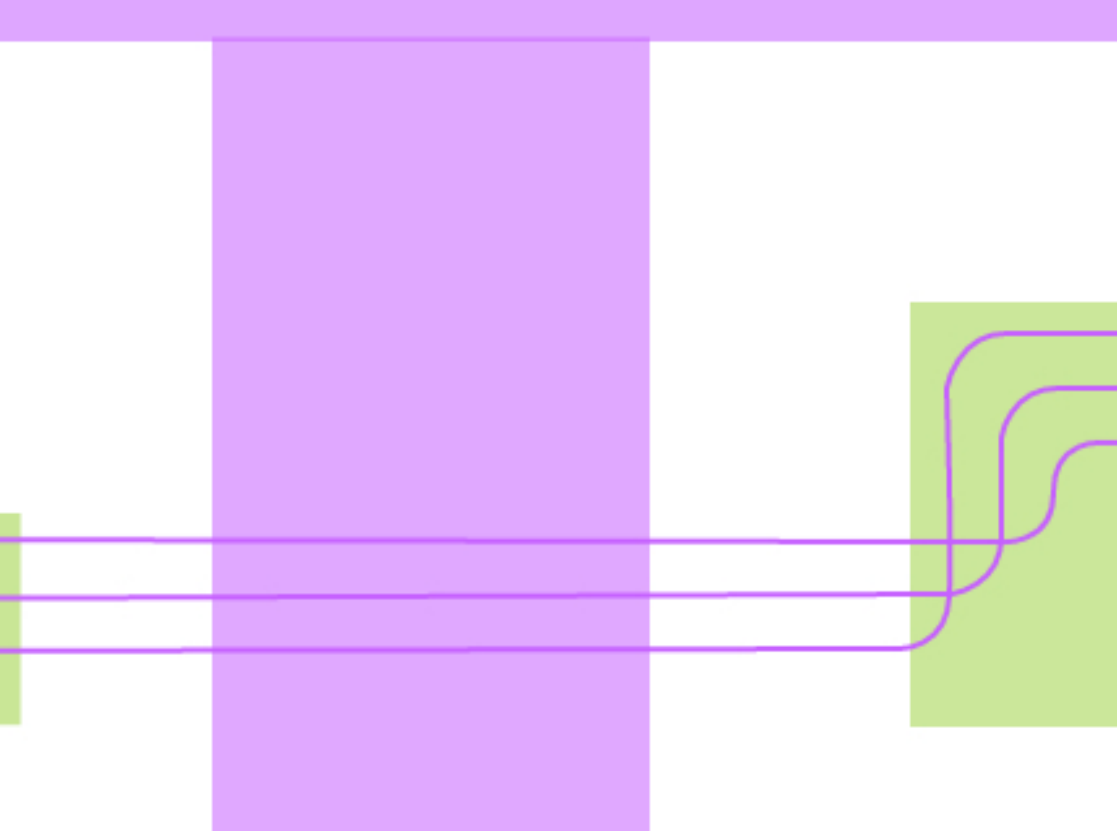}}
			\endxy
			\qquad
			\xy
			\xyimport(8,8)(0,0){\includegraphics[width=2in]{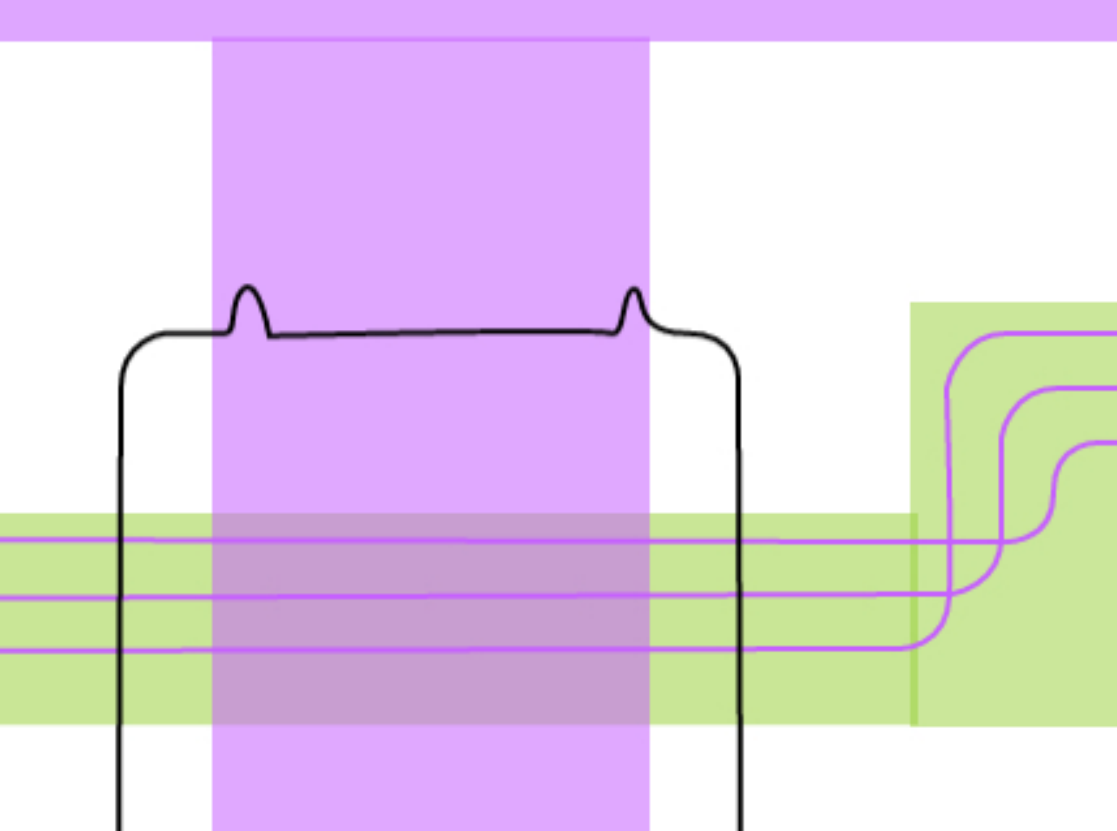}}
			\endxy
		\]
\begin{image}\label{figure.C}
Indicated in purple is the closed subset $C \subset T^*\RR$ from (\ref{eqn.C}). (Note the staircase curves are purple, too.) In green are regions where the almost complex structure on $M \times T^*\RR$ is a direct sum $J_M \oplus J_{T^*\RR}$. The first figure applies when we are doing Floer theory in $M \times T^*E$, while the latter when we are doing Floer theory in $M \times T^*F$.
\end{image}
\end{figure}

But anything with large $p$ coordinate falls into the regime where $J$ is of contact type. So now we can apply Section~7.3 of~\cite{abouzaid-seidel}, where the details for the compactness argument in our Section~\ref{section.M-analysis} are carried out. The same convexity argument applies here to show that all disks satisfying (\ref{eqn.floer}) must live inside some finite radius of the skeleton $\Lambda_M \times \RR \subset M \times T^*\RR$, and in particular, inside $N_\delta(\Lambda_M) \times N_\delta(\RR)$, where $N_\delta$ is some bounded neighborhood of radius $\delta$. ($\delta$ may be large, but it is finite). 

So now let us show that every disk with prescribed boundary conditions must be bounded in the $q$ direction. Given a disk $\widetilde{u_i}: S \to M \times T^*\RR$ , let $u_i$ denote $\pi_{T^*\RR} \circ \widetilde{u_i}$. Using Proposition~\ref{prop.bounded} again with $C$ equal to 
	\eqn\label{eqn.C}
		C = \RR_q \times [T,\infty) \bigcup [-w+\delta,w-\delta] \times \RR_p  \bigcup (\bigcup_{0 \leq i \leq d-1} \gamma_i)
	\eqnd
for some choice of small $\delta>e> 0$.
See Figure~\ref{figure.C}. We see that $u_i(\del S)$ must have bounded $q$ coordinate. Hence if we have a sequence of disks  $\widetilde u_i$ escaping in the $q$ coordinate, the derivatives $||du_i||_\infty$ must tend to infinity.  Choose $z_i \in S$ to be points of maximal derivative for $u_i$. Fixing a given $\rho$, one can then look at neighborhoods $D_{i,\rho} \subset S$ of $z_i$ of radius $\rho / ||du_i||$. For large enough $i$, this must be in the interior of $S$, and we can define a map 
	\eqn
	\xymatrix{
	v_i: B_0(\rho) \ar[rrr]^-{( \bullet + z_i) / ||du_i||_\infty}_-{\cong} 
		&&&D_{i,\rho} \ar[r]^-{u_i}
		&  T^*\RR \ar[rrr]^-{+C-\pi_\RR(u_i(z_i))}
		&&& T^*\RR
	}
	\eqnd
so $\pi_\RR(v_i(0)) = c$ for some fixed $c$ independent of $i$. ($B_0(\rho)$ is the ball of radius $\rho$ centered at $0 \in \CC$.) The eventual $\RR$-invariance of the perturbation data guarantees that $v_i$ still satisfies equation~(\ref{eqn.floer}). In fact, since rescaling by $||du_i||_\infty$ scales $\alpha$, in the limit $(dv_i-X\tensor \alpha)^{0,1} \to (dv_i)^{0,1}$, and after choosing ever-expanding $\rho$, one obtains an honest holomorphic map from $\CC P^1$ to $T^*\RR\cong \overline{\CC}$ which is non-constant. This is a contradiction. This riff on the usual argument (going back at least in some form to Uhlenbeck) thus proves that any $u$ must remain inside $N_\delta(\Lambda_M) \times N_\delta([-a,a])$ for some finite $a \in \RR$. Since $\Lambda_M$ is assumed compact, we have Gromov compactness.
\end{proof}

\begin{proof}[Proof of regularity]
Given $\tilde u:  S \to M \times T^*\RR$, again set $u = \pi_{T^*\RR} \circ \tilde u$. Let us first assume we are considering disks only with boundary on $X_i \times \gamma_i$ (so $B$ is not one of the boundary conditions). Again (and this is a key point) using Proposition~\ref{prop.bounded}, and the same choice of $C$ as (\ref{eqn.C}), we see that 
	\eqn
	\text{$u$ has image where $J=J_M \oplus J_{T^*\RR}$}
	\eqnd
where $J_{T^*\RR}$ is the usual almost-complex structure (\ref{eqn.i}).

So we can apply the following, which we learned in~\cite{seidel-lefschetz-i,seidel-book} and~\cite{de-silva-et-al}. We refer to it as automatic regularity following~\cite{seidel-book}. Namely, one first considers the linearized operator for holomorphic (in the non-perturbed sense) disks $u: S \to T^*\RR$ with boundary on $\gamma_i$. Then one can show that the linearized Cauchy-Riemann operator $D$ is injective whenever $\text{index}(D) = \dim\ker D - \dim \coker D \leq 0$ and surjective when $\text{index}(D) \geq 0$. (See Lemma~12.4 of~\cite{de-silva-et-al}.)

By the direct sum assumption, we have a diagram of short exact sequences comprised of the usual domains and codomains of the linearized Floer operator:
	\eqn\label{eqn.regular-SES}
		\xymatrix{
		0  \ar[r] & W^{1,p}_{T^*\RR} \ar[r] \ar[d]^D & W^{1,p}_{M \times T^*\RR} \ar[d] \ar[r]& \ar[r] W^{1,p}_{M} \ar[d] \ar[r] & 0  \\
		0  \ar[r] & L^p_{T^*\RR} \ar[r]  & L^p_{M \times T^*\RR}  \ar[r]& \ar[r] L^p_{M}  \ar[r] & 0 .
		}
	\eqnd
By the assumption that our almost-complex structure on the $M$ component is generic, the right vertical arrow is a surjection, while by $\text{index}(D)\geq 0$, we deduce that the left (and hence middle) vertical arrow is also a surjection. 

So now let us consider the case when one of the boundary branes is $B(Y) \subset M \times T^*F$. As we saw in Remark~\ref{remark.Z-small}, we can guarantee regularity by simply allowing for perturbations in a small neighborhood near $u(\del S) \cap B(Y)$. On the other hand, by the boundary conditions, we know this neighborhood must non-trivially intersect the region $M \times [-w, w] \times [-D, \infty)$. This is precisely the region on which we allow our tangential perturbations $\{Z\}$ to have freedom when we do Floer theory with $B(Y)$.

Finally, in the case $B \subset M \times T^*E$ is an arbitrary conical brane, and not necessarily of the form $B(Y)$ for some cobordism $Y$, $J$ need not be a direct sum inside $M \times T^*(-w+\epsilon,w-\epsilon]$ so the same argument as in Remark~\ref{remark.Z-small} achieves transversality.
\end{proof}

\subsubsection{Compactness and regularity for $M \times T^*E^n \times T^*F^N$}\label{section.compact-cube}
This general case is similar to the previous section so we will be brief.

\begin{notation}
When we write $F_i$ above, we mean the $i$th component of some direct product $F^N$. Likewise, $E_i$ is the $i$th component of $E^n$. So for example, we have
	\eqnn
		F_1 \times F_2 \times \ldots F_N = F^N.
	\eqnd
Likewise, if an interval has a subscript
	\eqnn
		[-\un D, \ov D]_{F_i^\vee}
	\eqnd
we mean that this is an interval in the line $F_i^\vee$.
\end{notation}

We will again consider the following boundary branes: $X_i \times (\beta_i)^n \times (\gamma_i)^N$ for $i=0,\ldots,d-1$, and a conical $B \subset M \times T^*E^n$ or $B(Y) \subset M \times T^*E^n \times T^*F^N$ being the $d$th brane. We again choose almost complex structures such that:
	\enum
		\item When $B=B(Y)$, for every $1 \leq i \leq N$, we demand that
			\eqn\label{eqn.cube-J-split}
				J = J_{M \times T^*E^n \times T^*F^{N \setminus\{i\}}}
					\oplus J_{T^*F_i}
			\eqnd
		on the region
			\eqnn
			(M \times T^*E^n \times T^*F^{N\setminus\{i\}})\times (F_i \times [-\un{D},-\ov{D}]_{F_i^\vee}).
			\eqnd
		Here, 
			 	\begin{itemize}
					\item
						 $F^*F^{N\setminus\{i\}}$ denotes	the factor of $T^*F^N$ with the $i$th coordinate forgotten. 
				 	\item
						$J_{M \times T^*\RR^{m+N\setminus\{i\}}}$ is an eventually contact-type almost complex structure on $M \times T^*E^n \times T^*F^{N \setminus\{i\}}$. 
					\item
						$J_{T^*F_i}$ is the standard almost-complex structure (\ref{eqn.i}) on $T^*F_i$.
					\end{itemize}
			And of course, $J$ should be an eventually contact-type almost complex structure on $M \times T^*E^n \times T^*F^N$ itself.  Note that $M \times T^*E^n$ has an almost-complex structure as we now specify:
		\item When $B \subset M \times T^*E^n$, we assume $B$ is contained in the region $M \times T^*[-w,w]^n$.\footnote{The cube $[-w,w]^n$ is a proxy for the compact image in (\ref{eqn.compact-image}), or the $K$ mentioned in 1. of Section~\ref{section.overview-transversality}.}
		  Then we ask that $J$ be some eventually contact-type almost-complex structure on $M \times T^*E^n$. Finally, $J$ must satisfy the following: For any $1 \leq j \leq m$, on the region of $M \times T^*E^n$ whose projection to $T^*E_j$ has image
			\eqnn
			\left( (-\infty, -w-\epsilon] \coprod (w+\epsilon, \infty) \right) \times [-\un{D} , -\ov{D}]  
			\subset E_j \times E_j^\vee \cong T^*E_{j}
			\eqnd
		we ask that $J$ equal a direct sum $J_{M \times T^*E^{n-1}} \oplus (J_{T^*E_j})$ where $J_{T^*E_j}$ is the standard almost complex structure (\ref{eqn.i}) on $T^*E_j$.
	\enumd
All of these should be, outside a bounded region, translation invariant under the action of $E^n$ or of $E^n \times F^N$. 

\begin{remark}
A consequence of (\ref{eqn.cube-J-split}) is that on
	\eqnn
		M \times \left( F \times [-\un{D},\ov{D}]\right)^N,
	\eqnd
the almost complex structure splits
	\eqnn
		J = J_M \oplus (J_{T^*F})^N
	\eqnd
where $J_M$ is an arbitrary almost complex structure eventually of contact type, and $J_{T^*F}$ is the standard one (\ref{eqn.i}).
\end{remark}

Given this simple setup, compactness and regularity follow straightforwardly: We project onto the various factors $T^*E_i$ and $T^*F_i$, and re-apply the arguments for $M \times T^*\RR$.

\subsection{Some fine print (Dependence on $\cH, D, T, w$)}

\begin{remark}[Varying Hamiltonians]
The reader may worry that the class of Hamiltonians considered is not wide enough to achieve generic behavior---for instance, that $H$ is demanded to have no dependence on $T^*\RR$ in a large region may make it impossible to render chords between $B \subset M \times T^*\RR$ and $X_i \times \gamma_i$ non-degenerate. 

The first remark to make is that we never define a morphism between a brane $L \subset M \times T^*E^n \times T^*F^N$ and itself---the fact that we are considering modules over $\fuk(M)$ eliminates the need to define endormorphisms except when $n=N=0$, where $H_M$ can be chosen as generally as one wants within the usual class of eventually quadratic wrapping Hamiltonians. Indeed, because we still have freedom in how $H$ behaves in the $M$ component, we do not have to worry about defining $A_\infty$ operations taking place only in the manifold $M$; in particular, $\wrap$ is defined fine as usual. 

Second, we technically only define $\Xi$ on a subcategory of $\lag$---the subcategory consisting of those branes and cobordisms that are in general position with respect to $X_i$ and $X_i \times \beta_i^n \times \gamma_i^N$. Since an eventually linear Hamiltonian isotopy can easily be chosen between any brane (or cobordism) $Y$ not in general position to a $Y'$ that is in general position, the inclusion of this subcategory is in fact essentially surjective and fully faithful. So $\Xi$ is defined on all of $\lag$ via this equivalence.

Finally, note that if we were not to have freedom in our choice of $H_M$ in the $M$ component, one could still use the categorical trick in the previous paragraph, but this would mean we must restrict ourselves to defining $\Xi$ to have image in the category of modules over a countable collection of $X_i \in \fuk$. This is because finding $Y'$ which is generic with respect to a countable collection $(X_i \times \beta_i^n \times \gamma_i^N)_{i, n, N}$ is possible, but an uncountable collection is potentially impossible.
\end{remark}

\begin{remark}[Conical at $R$, or eventually conical]
All our branes are chosen to be eventually conical---in particular, there is no fixed $R$ and $A$ such that we require our branes to be collared along $\del M \times [R,\infty) \subset \del M \times [0,\infty) \cong M \setminus A$. Naively following our ``eventually collared'' definition actually leads to some problems: For instance, the space of eventually conical almost complex structures is not closed under sequences, as one can choose a sequence of $J_i$ that are conical further and further away from the skeleton. The obvious workaround/definition is to fix $A$, and to choose a Fukaya category $\wrap_R$ and a cobordism category $\lag_R$ all of whose objects and morphisms are demanded to be conical past collar distance $R$ from $A$. (This is equivalent to the usual definitions, which a priori chooses $M^{in} = A$ and $R=0$.) For $M$ with a compact skeleton, the inclusions
	\eqnn
	\wrap_R \to \wrap_{R'}, \qquad
	R \leq R'
	\eqnd
are essentially surjective after a finite $R$, and by changing the behavior of $H$ in the interior of $M$ (since $H_{R'}$ is collared further out than $H_R$), fully faithful as well. Thus the directed limit defines $\wrap \simeq \hocolim_{R \geq 0} \wrap_R$ unambiguously, and its objects and morphisms are precisely those given by the ``eventually conical'' definition.

The same remark applies for $\lag$: The sequence $(\lag_R)_{R>0}$ of objects and (higher) cobordisms collared past collar-distance $R$ from $A \times E^n \times F^N$ has honest union $\lag$ as a simplicial set. Of course, each map $\lag_R \to \lag_{R'}$ is a levelwise injection, hence a cofibration, and the hocolim is given by the honest union. In fact, one can even show that, if $\Lambda \subset M$ is compact, then $\lag_R \to \lag_{R'}$ is an equivalence for large enough $R$. This is because one can always find a cobordism to ``shrink'' a brane or cobordism collared past $R'$ to one collared along $R$.

So we lose no generality, in either the Fukaya or cobordism setting, to consider ``eventually conical'' objects and morphisms, rather than fixing an $R$ and $A$ once and for all. The same remark applies in defining the $a$ used to make $H$ and $J$ eventually translation-invariant (and to make $H$ have no $E^n \times F^N$-dependence outside $T^*(E^n \times F^N)|_K$). 
\end{remark}

\begin{remark}[Choices of $T, D$, and smoothing corners]
Usually one has to ``smooth corners'' to define a wrapped category for a product $M \times M'$, because $\del (M \times M') = (\del M \times M')\bigcup_{\del M \times \del M'} (M \times \del M')$ may not be smooth. Regardless of the smoothing, one has a Liouville completion equivalent to $M \times M'$. So the same strategy as the previous remark shows that the wrapped Fukaya category one obtains is unchanged.

One can think of $T$ and $\un{D}$ as data one needs to incorporate to choose a smoothing of $M \times T^*\RR$'s boundary. This is because choosing a smooth boundary also involves choosing which bit of $T^*\RR$ to think of as ``on the interior,'' and which bit as ``the conical end.'' 

Finally, the same remarks as before show that we can easily take the limit as $T \to \infty$ and $D, \ov{D}, \underline{D} \to \infty$ to incorporate all cobordisms and objects in $\lag$. This is how we end up with the functor $\Xi: \lag \to \fuk\Mod$. 

Note that the ability to take $D, \ov D, \un D$ larger and larger is especially important because there may be higher cobordisms that realize a higher morphism between cobordisms with different values of $D, \un D, \ov D$. 

Note also that the module maps associated to staircases $\gamma_i$ with depth at least $D$ are equivalent to the module maps associated to staircases $\gamma'_i$ with depth at least $D'$ by usual continuation map methods. We omit the details---essentially, by the same boundary-stripping arguments, one can see that the holomorphic disks behave as though the Hamiltonian isotopies taking $\gamma_i$ to $\gamma_i'$ are compactly supported in the relevant regions.
\end{remark}

\begin{remark}\label{remark.wrapping-non-compact}
The paper~\cite{abouzaid-seidel} (which introduced the wrapped category) and the subsequent paper~\cite{abouzaid-geometric} (which proposed an alternative construction) only define $WF$ when $M$ has a compact skeleton. In particular, $M \times T^*E^n$ does not fit into the framework present in these papers, as the standard Liouville form $pdq$ does not have a compact skeleton. The main obstacle is developing some ``horizontal Gromov compactness'' to ensure that disks cannot escape in the $E^n$ directions, as opposed to the cotangent directions of $E^n$. We have worked around this by defining $WF$ and its input perturbation data in a way where horizontal compactness is manifest by the maximum principle for $T^*E^n \cong \overline{\CC^m}$. Our definition for $WF$ when $m \geq 1$ is hence not {\em identical} to the definitions in loc. cit., but it is at least defined, and certainly satisfies the properties one would expect of $WF$. 
\end{remark}

 

\section{Definitions of $\lag_\Lambda(M)$ and $\wrap$}
Much of the data we discussed in the previous section will be irrelevant in defining $\lag_\Lambda(M)$. Namely, while we will care about the properties and structures on $M$ and $Y$, we will not need any mention of $H$ or $J$, which will only enter the picture when discussing Floer theory.

\subsection{Cubes}
Though the definition of an $\infty$-category is inherently simplicial, we will utilize (two!) cubical pictures in this paper to think about cobordisms. Let us describe how the reader can translate between the two cubical pictures and the usual simplicial structures.

The first picture is presented in Figure~\ref{figure.3-simplex}, which depicts how to think of an $n$-simplex for $n = 3$. For general $n$, one considers the set $[n] = \{0,1,\ldots, n\}$. Then we let $\cP_n$ denote the collection of all subsets of $[n]$ containing both $0$ and $n$. There is a natural bijection between the elements of $\cP_n$ and the vertices of a cube $I^{n-1}$. We let $C(n)$ denote the product $I^{n-1} \times [0,n]$, a rectangular prism of height $n$. On the edge $\{P\} \times [0,n]$ above a vertex corresponding to $P \in \cP_n$,  we put a marked point at height $i$ for each $i \in P$.

Note that the injective maps $\del_i: [n-1] \to [n]$, defined as those maps that do not hit $i \in [n]$, induce maps of cubes $C(n-1) \to C(n)$ in the obvious way; so if any object lives over $C(n)$, one can pull it back to an object living over $C(n-1)$. This defines the face maps for $\lag_\Lambda^0(M)$, where an $n$-simplex is a Lagrangian living over $C(n)$ allowing for smooth pullbacks along these faces of $C(n)$\footnote{A cobordism must also satisfy a non-characteristic condition, see Definition~\ref{defn.non-characteristic}.}.  This collaring is also inspired by the simplicial nerve construction; see Remark~\ref{remark.simplicial-nerve}.

\begin{figure}
		\[
			\xy
			\xyimport(8,8)(0,0){\includegraphics[width=2in]{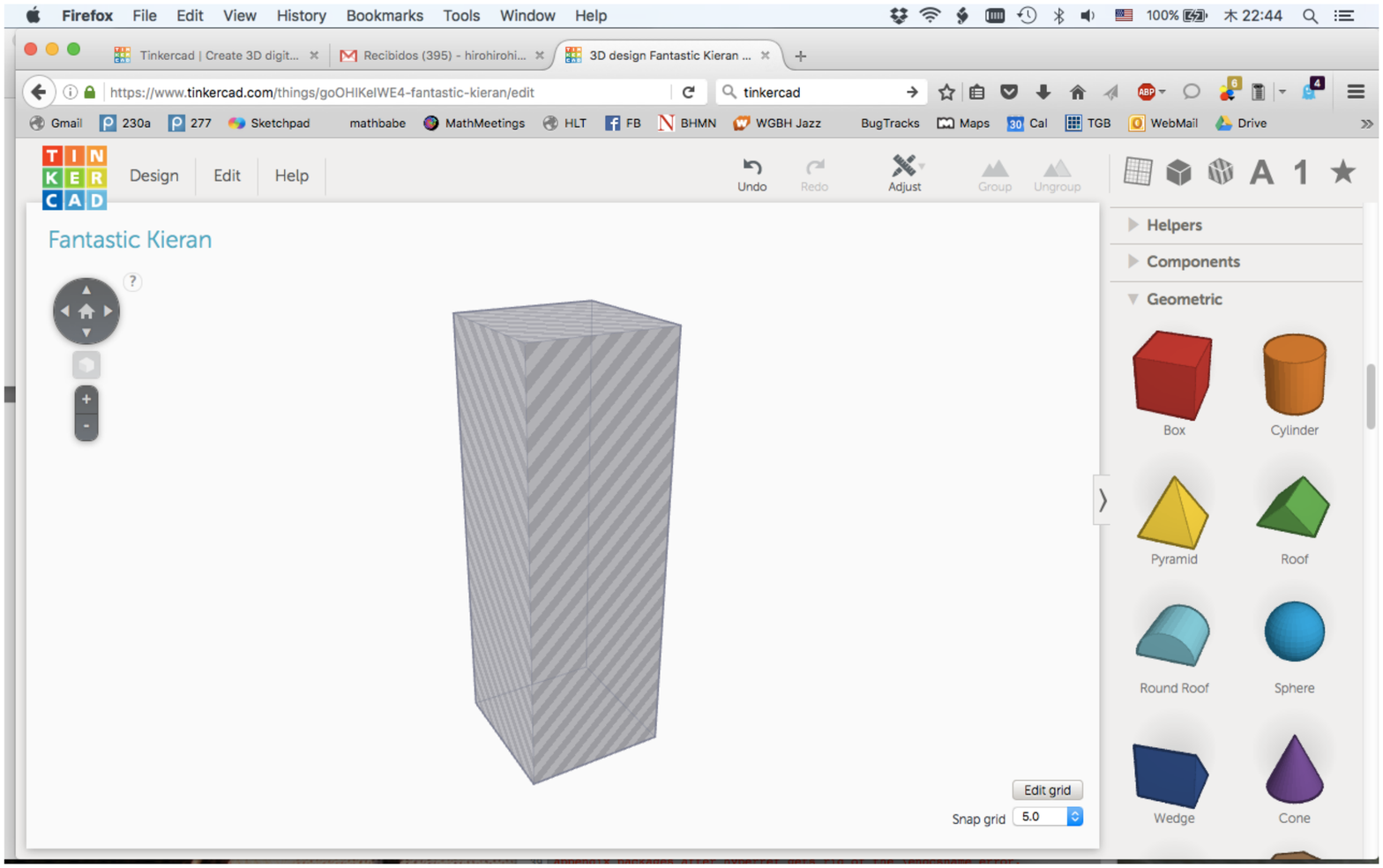}}
			,(1,1.5)*+{0}
			,(3,0)*+{0}
			,(7,1)*+{0}
			,(4.8,2)*+{0}
			,(3,3)*+{\bullet 1}
			,(7,3.5)*+{\bullet 1}
			,(7.2,6)*+{\bullet 2}
			,(4.8,6.5)*+{\bullet 2}
			,(0.3,7.8)*+{3}
			,(2.8,7.3)*+{3}
			,(7,7.5)*+{3}
			,(4.4,8)*+{3}
			\endxy
			\xy
			\xyimport(8,8)(0,0){\includegraphics[width=2in]{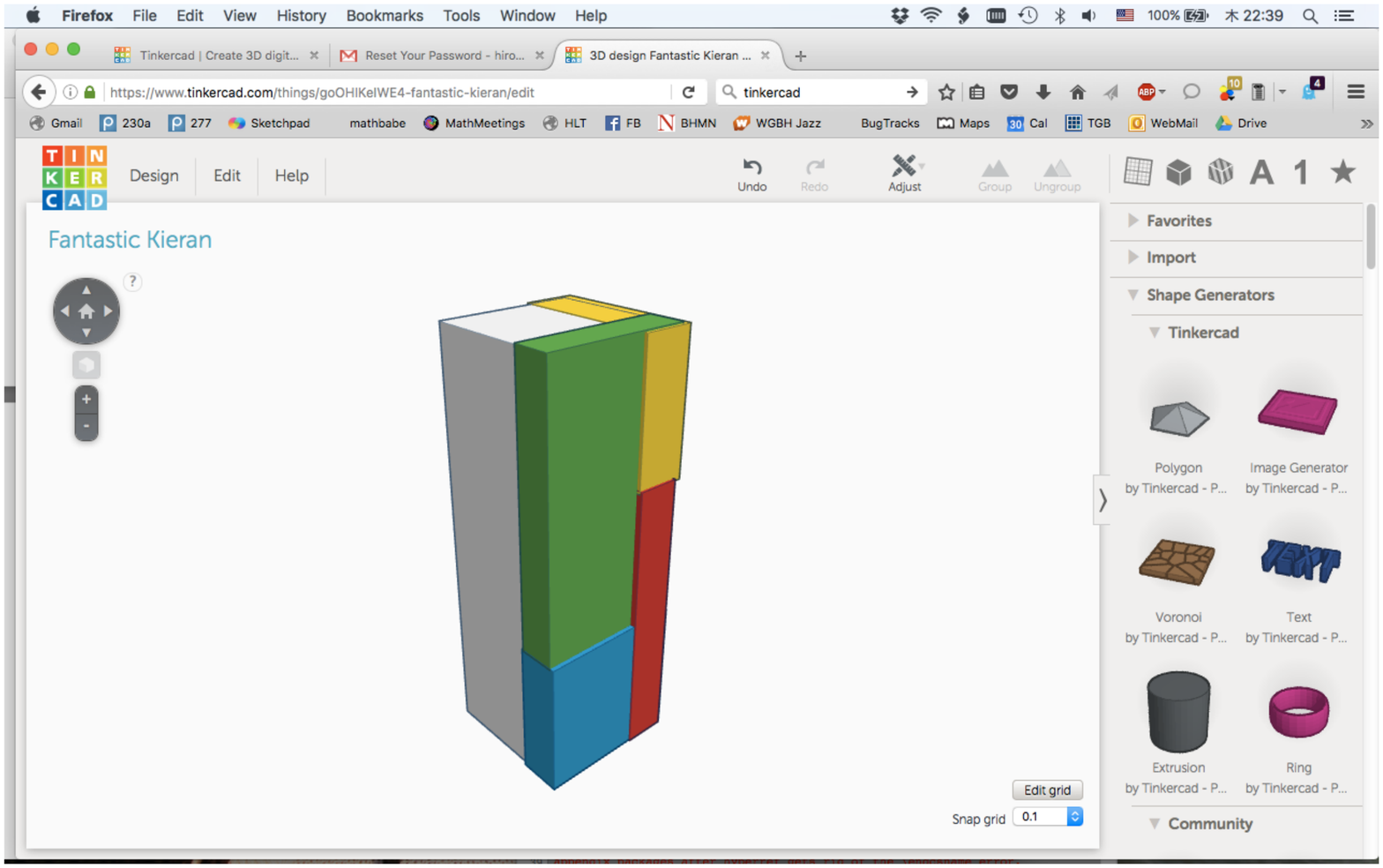}}
			,(1,1.5)*+{0}
			,(3.5,0)*+{0}
			,(6.6,1)*+{0}
			,(3.5,2.2)*+{\bullet 1}
			,(6.3,2.7)*+{\bullet 1}
			,(6.7,5)*+{\bullet 2}
			,(0.3,7.5)*+{3}
			,(3.5,7)*+{3}
			,(7,7.4)*+{3}
			,(4.4,7.8)*+{3}
			\endxy
			\xy
			\xyimport(8,8)(0,0){\includegraphics[width=2in]{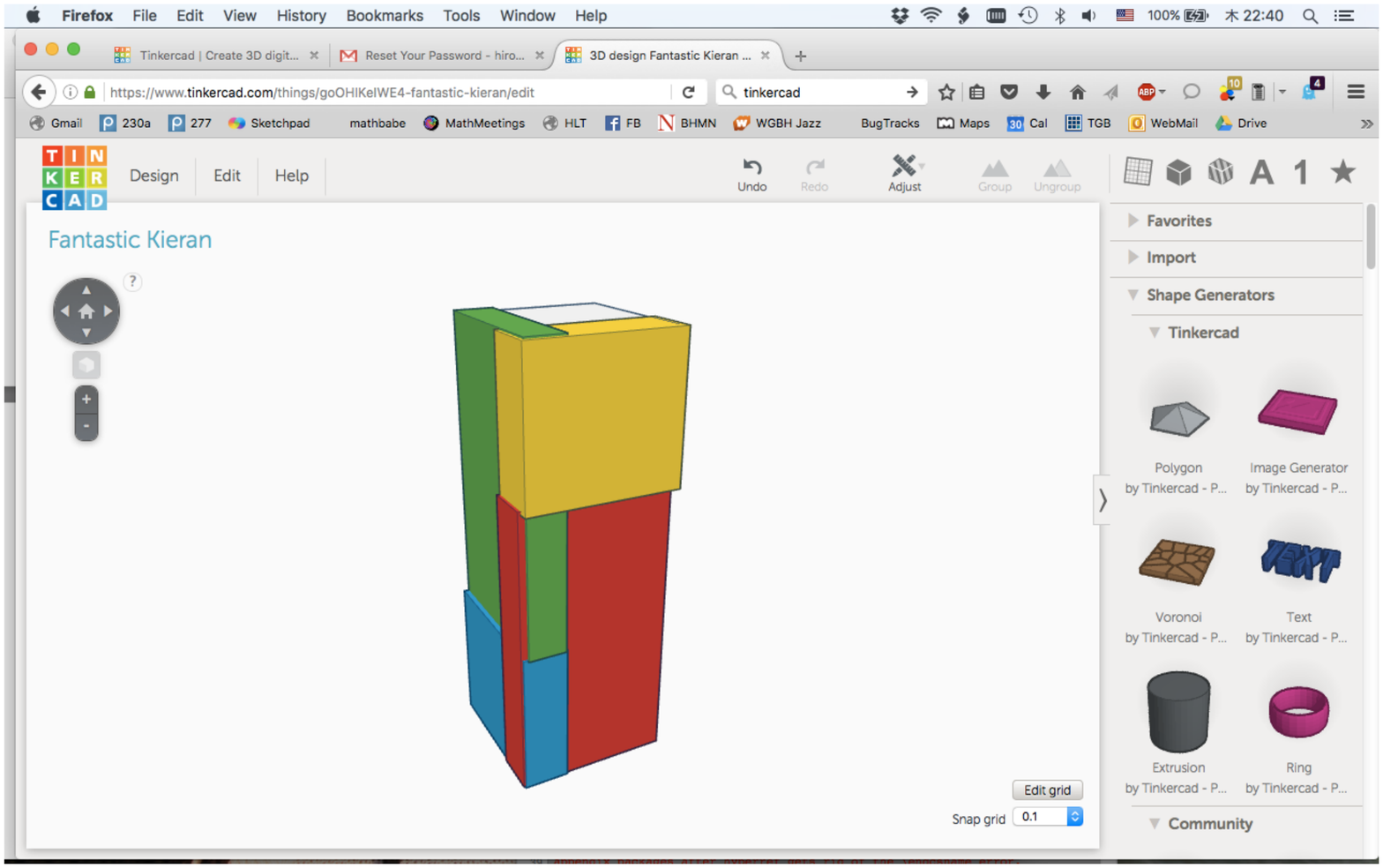}}
			,(1,1.5)*+{0}
			,(2.7,0.1)*+{0}
			,(6.4,1)*+{0}
			,(1.2,3.2)*+{\bullet 1}
			,(3,2.3)*+{\bullet 1}
			,(2.9,4.5)*+{\bullet 2}
			,(6.6,4.9)*+{\bullet 2}
			,(0.5,7.7)*+{3}
			,(3,7.2)*+{3}
			,(7,7.5)*+{3}
			,(4.4,7.8)*+{3}
			\endxy
			\]
\begin{image}\label{figure.3-simplex}
The image of a 3-simplex $Y \subset M \times T^*F^3$, projected to $F^3$ and color-coded, viewed from three angles. The grey cube to the left shows where the cube's faces are collared by $Y_i$. In the blue region, $Y$ is collared by $Y_{01} \times [0,1]_{q_2}$. In the green region, $Y$ is collared by $Y_{123}$. In the red region, $Y$ is collared by $Y_{012}$. In the yellow region, $Y$ is collared by $Y_{23} \times [0,1]_{q_1}$. Note the collaring is compatible with the regions where colors overlap.
\end{image}
\end{figure}

The second picture is presented in Figure~\ref{figure.stout-cube}. This is the cubical picture that results by applying the $B$ construction on an $n$-simplex of $\lag_\Lambda^0(M)$, see Definition~\ref{defn.B}. Note that the cube $I^n$ has vertices in bijection with the collection of all $P \subset [n]$ containing $0$. Call this collection $\cP_0(n)$. There is a natural map $\cP_0(n) \to [n]$ sending $P$ to its maximal element, and this $\max(P)$ label the vertex of $I^n$ corresponding to $P$. Again, there are natural maps from $\cP_0[n-1] \to \cP_0[n]$ induced by the $\del_i$, which for $\del_0$ involves adjoining the element $0$. Applying $B$ to any simplex of $\lag_\Lambda(M)$ results in a Lagrangian living over $I^n$ collared by the $\max$ map, and with faces compatible with the $\del_i$ faces.

\begin{figure}
		\[
			\xy
			\xyimport(8,8)(0,0){\includegraphics[width=3in]{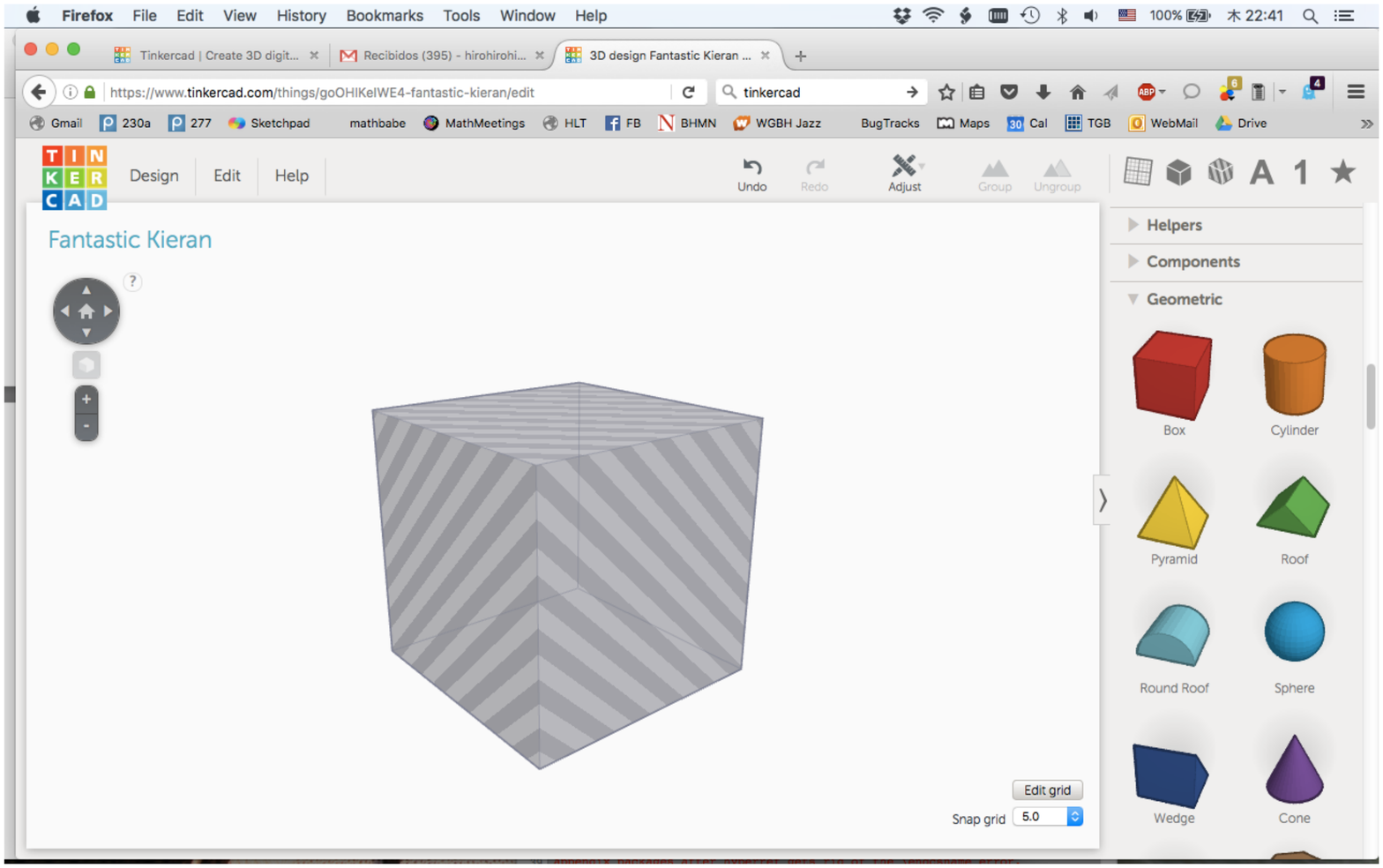}}
			,(0.7,2.7)*+{0}
			,(3.2,0)*+{1}
			,(7,2)*+{2}
			,(4.2,3.8)*+{2}
			,(0.1,7.2)*+{3}
			,(4,8)*+{3}
			,(3.2,6.5)*+{3}
			,(7,7)*+{3}
			\endxy
			\]
\begin{image}\label{figure.stout-cube}
A cube whose vertices are labeled by the $\max$ map. This is how $B(Y)$ will be collared for $Y$ a 3-simplex. (See Figure~\ref{figure.B-3-simplex}.)
\end{image}
\end{figure}

\begin{remark}\label{remark.simplicial-nerve}
This first picture comes from a simplicial nerve construction. Briefly, the construction goes as follows:

For any $i\leq j \in [n]$, we let $P_{i,j}$ denote the collection of all subsets of $n$ whose minimal element is $i$ and whose maximal element is $j$. $P_{i,j}$ is a poset ordered by inclusion, hence a category, hence it makes sense to think of its nerve $N(P_{i,j})$, a simplicial set. We define $D^n$ to be a category enriched in simplicial sets, with objects $\ob D^n = [n]$, and $\hom_{D^n}(i,j) = N(P_{i,j})$. The composition $N(P_{i,j}) \times N(P_{j,k}) \to N(P_{i,k})$ is given by taking the union of subsets of $[n]$. Then, the cube $C(n)$ described above, with marked points as prescribed, is a picture of the space of morphisms from $0$ to $n$ in $D^n$, where each morphism is pictured as an edge, and each higher simplex is pictured as a higher-dimensional cube.

Thus, imagining for a moment that a cobordism category might be presented as a category enriched in simplicial sets, a functor from $D^n$ to this cobordism category is precisely given by a single cobordism living above $C(n)$, collared as indicated by the marked points.
\end{remark}

\subsection{Collaring and notation}\label{section.cube-notation}
The idea of collaring will be important for us. Collaring allows us to perform gluing constructions, and it will also allow us to analyze Floer moduli spaces more easily. (It is easy to characterize disks with boundary on product branes.) So we set some standing notation.

Throughout, we fix an integer $N \geq 1$, real numbers $w_i > 0$, and a prism
	\eqnn
	\cP = [0,w_1] \times [0,w_2] \times \ldots \times [0,w_{N-1}] \times [0,N].
	\eqnd

\begin{remark}
By scaling and translating in a rectilinear way, we will sometimes deal with prisms 
	\eqnn
	[-w_1,w_1] \times \ldots \times [-w_N, w_N] \subset F^N.
	\eqnd
This will make it easier to conform with our notations for $c$ and $\gamma_i$ from Section~\ref{section.geometry}.
\end{remark}

\begin{defn}[Collaring and $\del_i, \del_i^{\back}$]\label{defn.collared}
Given any Lagrangian
	\eqnn
	Y \subset M \times T^*\cP
	\eqnd
we say that it is {\em collared} if the following holds: 
	\begin{enumerate}
		\item
			For every $0 < i < N$, there is an $\epsilon_i>0$ for which
				\eqnn
					Y|_{[0,\epsilon_i]_{q_i}} 
					=
					\del_i^{\front} Y \times [0,\epsilon_i)_{q_i}
				\eqnd
			for some Lagrangian
				\eqnn
					\del_i^{\front} Y \subset M \times T^*F^{[N] \setminus\{0,i\}}.
				\eqnd
			Here, $[0,\epsilon_i)_{q_i}$ denote thes zero section of the cotangent bundles $T^*[0,\epsilon_i)_{q_i}$ in the $i$th component of $T^*F^N$. Also, as usual, the restriction notation means
				\eqnn
					Y|_{[0,\epsilon_i]_{q_i}}
					:=
					Y \cap \left(M \times T^*F^{[N] \setminus\{0,i\}} \times T^*[0,\epsilon_i]_{q_i}\right)
				\eqnd
		\item
			For every $0<i<N$, there is an $\epsilon_i>0$ for which
				\eqnn
					Y|_{[w_i-\epsilon_i, w_i]}
					=
					\del_i^{\back} Y \times [w_i-\epsilon_i,w_i]_{q_i}
				\eqnd
			for some  Lagrangian
				\eqnn
					\del_i^{\back} Y \subset M \times T^*F^{[N] \setminus\{0,i\}}.
				\eqnd	
	\end{enumerate}
\end{defn}

\begin{remark}
Of course, since the $\del_i^{\back} Y$ and $\del_i^{\front}Y$ are uniquely determined, the above also defines the notation $\del_i^{\front}$ and $\del_i^{\back}$. Sometimes, out of sloth, we will write
	\eqnn
	\del_i Y := \del_i^{\front} Y
	\eqnd
but we will never leave off the word ``back'' from our notation.
\end{remark}

Given an integer $0<i<N$, the $q$ coordinates of $\del_i Y$ are given by	
	\eqnn
		(q_1,\ldots,\widehat{q_i},\ldots,q_N)
	\eqnd
with $q_i$ omitted. But it will also be useful to think of $\del_i Y$ as living in another coordinate system.

\begin{defn}[$d_i$ of a brane] 
We write
	\eqnn
		d_i^{\front} Y \subset M \times T^*F^{N-1}
	\eqnd
to denote the pullback of $\del_i^{\front} Y$ along the map induced by the composition
	\eqnn
	F^{[N-1] \setminus \{0\}} \to F^{[N]\setminus\{0,i\}} \xra{q_i = 0} F^{[N] \setminus\{0\}}
	\eqnd
	\eqnn
	(q_1,\ldots,q_{N-1}) \mapsto (q_1, \ldots,q_{i-1}, 0,q_{i+1},\ldots,q_{N-1},q_i).
	\eqnd
(In other words, the $q_i$ coordinate is sent to the $q_N$ coordinate.) Likewise, we write
	\eqnn
		d_i^{\back}Y \subset M \times T^*F^{N-1}
	\eqnd
to denote the pullback of $\del_i^{\back} Y$ along the map induced by
	\eqnn
		F^{[N-1] \setminus\{0\}} \to F^{[N] \setminus\{0,i\}} \xra{w_i} F^{[N] \setminus \{0\}},
		\eqnd
		\eqnn
		(q_1,\ldots,q_{N-1}) \mapsto (q_1,\ldots,q_{i-1},w_i,q_{i+1},\ldots,q_{N-1}, q_i).
	\eqnd
\end{defn}

\subsection{Definition of $\lag_\Lambda(M)$}
\begin{defn}\label{defn.non-characteristic}
Let $M$ be an exact symplectic manifold with the data of $\theta$, satisfying the conditions of Section~\ref{section.M}. Then:
	\begin{itemize}
		\item
			A 0-simplex of $\lag_\Lambda^n(M)$ is a brane $Y \subset M \times T^*E^n$ satisfying the conditions of Section~\ref{section.branes}
		\item
			A 1-simplex of $\lag_\Lambda^n(M)$ is a Lagrangian $Y \subset M \times T^*E^n \times T^*F$ with all structures $\alpha_Y, f_Y, P_Y$ as in Section~\ref{section.branes}, satisfying the following constraints:
				\begin{itemize}
					\item 
						(Collaring.) There are real numbers $t_0\leq t_1 \in F$ such that 
							\eqn
							Y|_{t \leq t_0} = Y_0 \times (-\infty,t_0],
							\qquad
							Y|_{t \geq t_1} = Y_1 \times [t_1,\infty).
							\eqnd
						These equalities are as branes, so that the structures $\alpha_Y, f_Y, P_Y$ agree with those structures on $Y_i \times F$, where $F$ is given the trivial structures.
					\item
						($\Lambda$-non-characteristic.) There is some positive number $T \in \RR \cong F^\vee$ such that the intersection of $Y$ with
							\eqn
							\left( N_e(\Lambda) \times F \times (-\infty,-T] \right)  \subset M \times F \times F^\vee = M \times T^*F
							\eqnd
						is empty. In words, this means that if $Y$ ever goes off to $-\infty$ in the cotangent direction of $F$, it must do so in a way that is bounded away from $\Lambda$. Note that this is the same $e$ as we spoke of in Section~\ref{section.outline}, and in Remark~\ref{remark.e}. As before, $N_e(\Lambda)$ is an $e$-neighborhood of $\Lambda$. 
				\end{itemize}
		\item
			An $N$-simplex of $\lag_\Lambda^n(M)$ is a Lagrangian $Y \subset M \times T^*E^n \times T^*F^N$, with the structures $\alpha_Y, f_Y, P_Y$ as above, satisfying:
				\begin{itemize}
					\item (Collaring.) $Y$ is collared as in Definition~\ref{defn.collared}. Further, we demand that each $d_i^{front} Y$ is an $(N-1)$-simplex. Finally, the back faces $d_i^{back}$ must also satisfy the following condition:
						\eqn\label{eqn.back-leq-i}
							\del_i^{back}(Y)|_{(F_1)_{\leq i}} = d_{\{0,\ldots,i\}} Y
						\eqnd
					and
						\eqn\label{eqn.back-geq-i}
							\del_i^{back}(Y)_{(F_1)_{\geq i}} = d_{\{i,\ldots,N\}}(Y).
						\eqnd
					See Remark~\ref{remark.back-collaring} below.
					\item	($\Lambda$-non-characteristic.) There is some positive number $T \in \RR \cong F_1^\vee$ such that the intersection of $Y$ with
							\eqn
							\left( N_e(\Lambda) \times F \times (-\infty,-T] \right) \times F^{N-1}  \subset M \times F \times F^\vee \times T^*F^{N-1}
							\eqnd
						is empty. 
				\end{itemize}
	\end{itemize}
\end{defn}

\begin{remark}\label{remark.back-collaring}
We expound on the collaring for the back faces of $Y$. Given the brane collaring $Y$ along the back $i$th face, one can look at the portion of $\del_i^{\back} (Y)$ at time $\leq i$. On the other hand, one can take successive face maps of $Y$ $d_N, d_{N-1}, \ldots, d_{i+1}$ to obtain a brane collared by objects $Y_0$ through $Y_i$. (~\ref{eqn.back-leq-i}) requires these two branes to be the same. Likewise, one could look at the portion of $\del_i^{\back}(Y)$ at time $\geq i$, or take successive face maps $d_0, \ldots, d_0 Y$. (\ref{eqn.back-geq-i}) says these two are equal. This back-collaring ensures, among other things, that there is no extra ``face data'' that one needs to keep track of other than the obvious faces of $Y$.
\end{remark}

\begin{remark}[$\Lambda$-non-characteristic for $n>0$]
Being $\Lambda$-non-characteristic for $\lag^{\dd n}_\Lambda(M)$ is the same thing as being $(\Lambda \times F^n)$-non-characteristic for $\lag^{\dd 0}_{\Lambda times F^n}(M \times T^*F^n).$
\end{remark}

\begin{defn}[Depth of a cobordism]\label{defn.depth}
Let $Y \subset M \times T^*E^n \times T^*F^N$ be a (higher) cobordism. Its {\em depth} is the minimal $T$ in the non-characteristic condition. 
\end{defn}

\begin{defn}[Width]\label{defn.width}
Its {\em width} is the minimal value of $t_1 - t_0$ among all the $t_0, t_1$ satisfying the collaring condition.
\end{defn}

\begin{remark}
Later, we will define a new Lagrangian $B(Y)$ out of the data of $Y$. When $\gamma$ is a tunnel curve of depth greater than the depth of $Y$, one can enumerate the intersections of $B(Y) \cap X \times \gamma$ as in (\ref{eqn.intersections-N}).
\end{remark}

\begin{defn}\label{defn.Y_I}
Let $Y$ be an $N$-simplex. Then for any subset $J \subset [N]$, we let $Y_J$ denote the $(|J|-1)$-simplex given by the $J$th face of the cobordism given by $Y$. Specifically, recall that the vertices of the cube $I^{n-1}$ are in bijection with subsets of $[N]$ containing $0$ and $N$. Then consider the face $\del_J I^{n-1}$ of $I^{n-1}$ spanned by those subsets contained in $J$. Then $Y_J$ is the Lagrangian cobordism collaring the prism
	\eqnn
	\del_J I^{n-1} \times [J_{\min} , J_{\max}].
	\eqnd
When $i \in [N]$, we will also write $Y_i$ to mean the object (i.e., brane) collaring the $i$th vertex of $Y$.
\end{defn}

\subsection{$\lag$ does not depend on $e$}
For the following lemma and its proof only, let us make explicit the dependence on $e$ by writing
	\eqnn
	\lag^e(M)
	\eqnd
to denote the sub-$\infty$-category of $\lag(M)$ whose morphisms all avoid $N_e(\Lambda)$.

\begin{lemma}\label{lemma.epsilon-lag}
For any choice of $e$ small enough, the $\infty$-categories $\lag(M)$ are all equivalent. More precisely, for any $e < e'$ small enough, the inclusion
	\eqn
	\lag^e(M) \to 
	\lag^{e'}(M)
	\eqnd
is an equivalence.
\end{lemma}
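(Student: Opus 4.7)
The plan is to exhibit that any cobordism (or higher cobordism) in the less restrictive category can be ``pushed off'' the offending tubular neighborhood of $\Lambda$ by an eventually linear Hamiltonian flow on $M$, and that such flows induce equivalences in $\lag$. This will show the inclusion is essentially surjective at every simplicial level; since fully faithfulness on mapping spaces follows by the same argument applied to higher simplices, we obtain an equivalence of $\infty$-categories.

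First, I would pick an eventually linear Hamiltonian $H : M \to \RR$ whose flow $\Phi_t$ agrees with the positive Liouville flow on a neighborhood of $\Lambda$ large enough to contain $N_{e'}(\Lambda)$. The hypothesis that $e$ and $e'$ are small enough is used here: it guarantees that $N_{e'}(\Lambda)$ sits inside the region where $H$ really behaves like the Liouville Hamiltonian, so that $\Phi_t$ uniformly expands distances to $\Lambda$ for $t>0$. I then pull $H$ back to $M \times T^*E^n \times T^*F^N$ along the projection; the resulting flow $\widetilde\Phi_t$ is trivial in the $T^*E^n \times T^*F^N$ factors and therefore automatically respects every collaring and face condition of an $N$-simplex.

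Next, given an $N$-simplex $Y$ of finite depth $T$ in the source of the inclusion, I would build a trace $(N+1)$-simplex $\widetilde Y$ living over an extra $T^*F$ factor, defined by flowing $Y$ via $\widetilde\Phi_s$ and reparametrizing $s$ so that the flow is constant near the two endpoints; this produces the required collaring by $Y$ and $\widetilde\Phi_t(Y)$. For $t$ large enough, $\widetilde\Phi_t(Y)$ satisfies the strictly stronger non-characteristic condition, i.e.\ lies in the target of the inclusion, and $\widetilde Y$ exhibits $Y$ as equivalent to $\widetilde\Phi_t(Y)$. Running this construction at every simplicial degree, and assembling the resulting traces coherently (for instance by using a single $H$ once and for all and letting $t$ depend only on the depth and ambient dimension of each simplex), produces a homotopy inverse to the inclusion up to the kind of coherent natural equivalences familiar from the $\infty$-categorical theory of Dwyer--Kan equivalences.

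The main obstacle is verifying that the trace cobordism $\widetilde Y$ remains $\Lambda$-non-characteristic throughout the isotopy, not merely at its endpoints; this is precisely what forces the hypothesis ``for $e$ small enough.'' A secondary technical point is assembling the simplex-by-simplex traces into an honest map of simplicial sets that is inverse, up to coherent homotopy, to the inclusion; but this is a standard manipulation once one has uniform control on the Hamiltonian $H$ and its flow time as functions of depth, so it should not create a serious difficulty.
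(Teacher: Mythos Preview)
The paper does not actually supply a proof of this lemma; it is stated and then the text moves directly on to examples. So there is nothing to compare against, and I will just evaluate your argument on its own merits.

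Your overall strategy---produce a Hamiltonian isotopy pushing the offending part of a cobordism further from $\Lambda$, then use its suspension as an equivalence in $\lag$---is exactly the kind of mechanism the paper has in mind (see the remark that any eventually linear Hamiltonian flow gives an equivalence in $\lag_{\sk(M)}$). The problem is the specific Hamiltonian you posit. You ask for an eventually linear $H:M\to\RR$ whose flow ``agrees with the positive Liouville flow on a neighborhood of $\Lambda$'' and hence ``uniformly expands distances to $\Lambda$.'' No such Hamiltonian exists: the Liouville vector field $X_\theta$ satisfies $\cL_{X_\theta}\omega=\omega\neq 0$, so its flow is conformally symplectic, not Hamiltonian. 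More concretely, in a Weinstein neighborhood $T^*\Lambda$ of the skeleton, any Hamiltonian flow preserves the symplectic volume $\omega^n$; a flow that uniformly increases the fiber norm $|p|$ on a full neighborhood of the zero section would strictly increase the volume of small disk bundles, which is impossible. So the very first step---choosing $H$---fails.

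This is not merely a technicality you can patch by perturbing $H$: the property you actually use downstream is the uniform radial displacement from $\Lambda$, and that is precisely what a Hamiltonian on $M$ alone cannot deliver. Any correct argument has to exploit more than a Hamiltonian on the $M$ factor. Two natural repairs: (i) work on the full ambient manifold $M\times T^*F^N$ and use a Hamiltonian supported where $p_1\ll 0$, so you only have to displace the conical tail of $Y$ (which already misses $N_e(\Lambda)$) out of the slightly larger $N_{e'}(\Lambda)$---this is a compact displacement problem in the symplectization region, and can be arranged with an eventually linear Hamiltonian whose suspension is itself non-characteristic; or (ii) argue via the Liouville rescaling of the entire product (which takes Lagrangians to Lagrangians even though it is not Hamiltonian), and construct the requisite higher cobordism directly rather than as a suspension. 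Either route requires real work that your proposal does not yet contain. (A side remark: the direction of the inclusion as written in the lemma looks reversed---avoiding a larger neighborhood is the more restrictive condition---but this does not affect the content of what needs to be proved.)
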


\begin{example}
The cobordism $Y \times F \subset M \times T^*F$ is the identity cobordism. We will see in Proposition~\ref{prop.id} below that it does indeed induce the identity natural transformation from $CW^*(-,Y)$ to itself.
\end{example}

\begin{example}
Fix a curve $s \subset T^*F$ which, outside of a compact set, equals 
	\eqnn
		(-\infty, -w] \times \{0\} \coprod \{w\} \times [0,\infty) \subset F \times F^\vee \cong T^*F.
	\eqnd
Assume further that $s$ can be equipped with a primitive which identically equals zero outside the same compact set. Then for any $Y \in \ob \lag^n$, $Y \times s$ is the zero morphism from $Y$ to the empty manifold (which is the zero object of $\lag$). For obvious reasons, $\Xi$ sends this to the zero natural transformation.
\end{example}

\begin{example}
Let $H_t$ be a possibly time-dependent Hamiltonian on $M \times T^*E^n$. Assume it is compactly supported in time, so that $H_t = 0$ for $t<<0$ and $t>>0$. Then for any object $Y \in \lag^{\dd n}$, the {\em Hamiltonian suspension} is a Lagrangian cobordism given by
	\eqnn
		(\phi^H_t(y), t, H_t(\phi^H_t(y))) \in (M \times T^*E^n) \times F \times F^\vee. 
	\eqnd
If $H_t$ is eventually linear, one can verify that this cobordism is eventually conical as a submanifold of $M \times T^*E^n \times T^*F$. 

Finally, if $H_t$ itself is compactly supported, then the suspension is bounded in the $F^\vee$ coordinate. It was proven in~\cite{nadler-tanaka} that such a cobordism is always an equivalence in $\lag$.
\end{example}

\subsection{The wrapped Fukaya category}\label{section.wrapped}
We assume the reader is familiar with the foundational papers---~\cite{abouzaid-seidel} where a telescoping construction was used, and~\cite{abouzaid-geometric} where eventually quadratic wrapping was used. As far as we know, there is no proof in the literature that both definitions agree, though they do produce equivalent answers in known examples. Regardless, we do not specify the model we use in the present work, as both models fit into our framework. The only thing to be said is that, indeed, the monotone homotopy technique utilized in defining the continuation maps (in the telescoping construction) still work with our choices of $\cH$ and $\cJ$.

We will also make use of two results:

\subsubsection{$\wrap$ does not depend on choice of compact region}
We first discuss the case when $M^{in}$ (and hence $\Lambda$) can be chosen compact.

One might notice a slight difference between the description of $M$ presented in Section~\ref{section.M}, and that presented in most other works, including~\cite{abouzaid-geometric}. In most other works, one chooses a specific compact set $M^{in} \subset M$ whose boundary is a contact manifold denoted $\del M$; one then takes $M$ to be the completion of $M^{in}$ given by attaching a symplectization of $\del M$ to $M^{in}$. In contrast, we begin with $M$ itself, which could be the completion of many choices of $M^{in}$.

This is not a purely artificial difference---for instance, after fixing a wrapping Hamiltonian $H$, one needs to specify the class $\cJ$ of almost-complex structures compatible with $H$ and the exact structure  of $M$. We would like to make sure that the collection $\cJ$ is a complete metric space. But the sequence of eventually conical $J$ is not a complete metric space---one can construct a sequence of eventually conical almost-complex structures that does not converge to an eventually conical one (by letting $J_i$ be conical outside bigger and bigger sets). One solution is to fix a compact set $M^{in}$ to control the behavior of $J \in \cJ$ uniformly, demanding that they be conical outside $M^{in}$. Then one has the usual compactness and transversality results, thereby defining a wrapped Fukaya category $\wrap(M^{in})$ which a priori depends on the choice of $M^{in}$.

But this choice of $M^{in}$ is not so important. Let $M^{in}_i$ be an increasing and exhausting sequence of compact regions in $M$, equipped with a Liouville equivalence between $M$ and the completion of $M^{in}_i$. Then the induced functor
	\eqnn
	\wrap(M^{in}_i) \to \wrap(M^{in}_{j}),
	\qquad
	i\leq j
	\eqnd
is essentially surjective, as one can flow $M^{in}_{j}$ inward to collar its branes with respect to $M^{in}_i$. (Hence any object in $\wrap(M^{in}_j)$ is equivalent to an object from $\wrap(M^{in}_i)$.) The functor is also fully faithful, as the class of perturbation data allowed in $\wrap(M^{in}_i)$ are automatically allowed in $\wrap(M^{in}_{j})$. Thus one has a sequential diagram of equivalences
	\eqnn
	\ldots \to \wrap(M^{in}_i) \to \wrap(M^{in}_{i+1}) \to \ldots
	\eqnd
whose (homotopy) colimit is a single, well-defined $A_\infty$ category equivalent to any $A_\infty$-category in the sequence. Note also that when $\Lambda \subset M$ is compact, the poset of all possible $M^{in}$ (ordered by inclusion) has a cofinal poset given by any increasing, exhausting sequence. Thus ``the'' wrapped category of $M$ (as opposed to that of $M^{in}$) is defined unambiguously. This is the approach we take.

\subsubsection{$\wrap_\compact$ does not depend on $e$}
For the following lemma and its proof only, let us make explicit the dependence on $e$ by writing
	\eqnn
	\wrap_\compact^e(M)
	\eqnd
to denote the full subcategory of $\wrap(M)$ given by those compact branes that are within $e$ of $\Lambda$.

The following is obvious by flowing via the Liouville flow:

\begin{lemma}\label{lemma.epsilon-wrap}
For any choice of $e$, the $A_\infty$-categories $\wrap_\compact(M)$ are all equivalent. More precisely, for any $e < e'$, the inclusion
	\eqn
	\wrap_\compact^e(M) \to 
	\wrap_\compact^{e'}(M)
	\eqnd
is an equivalence.
\end{lemma}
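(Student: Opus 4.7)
The plan is to verify that the inclusion functor $\iota: \wrap_\compact^e(M) \to \wrap_\compact^{e'}(M)$ is both essentially surjective and fully faithful, from which the equivalence follows by the standard criterion for $A_\infty$-functors.

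For essential surjectivity, I would take any compact brane $L \in \wrap_\compact^{e'}(M)$, so $L$ lies within an $e'$-neighborhood of $\Lambda$. Since $\Lambda$ is obtained as the limit set of the negative Liouville flow on $M$ (and since $L$ is compact), there exists some time $t \geq 0$ such that the image $\phi_{-t}^{X_\theta}(L)$ lies within $N_e(\Lambda)$. Because $X_\theta$ is generated by an eventually linear Hamiltonian (cf.\ the discussion of eventually linear Hamiltonians after Section~\ref{section.branes}), the Liouville flow is a Hamiltonian isotopy in the relevant sense, and therefore $\phi_{-t}^{X_\theta}(L)$ is equivalent to $L$ in $\wrap_\compact^{e'}(M)$. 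Hence every object of the larger category is equivalent to one coming from $\wrap_\compact^e(M)$.

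For full faithfulness, the argument is parallel to the one given in the preceding subsection for the independence of $\wrap(M)$ on the choice of compact region $M^{in}$: the class of Floer data $(H, J)$ used to compute morphisms in $\wrap_\compact^e(M)$ is, by construction, a subspace of the class used in $\wrap_\compact^{e'}(M)$, and any choice made for the smaller category is admissible in the larger one. Since the morphism complexes $CW^*(L_0, L_1)$ for $L_0, L_1 \in \wrap_\compact^e(M)$ are computed using the same branes and the same (admissible) Floer data in both categories, the induced map on hom complexes is the identity at the cochain level, and hence a quasi-isomorphism.

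The only step that requires any care is the essential surjectivity, and the only potential subtlety there is checking that the Liouville isotopy yields a morphism in $\wrap_\compact^{e'}(M)$ rather than merely a Hamiltonian equivalence at a set-theoretic level. This follows because the Liouville flow is generated by an eventually linear Hamiltonian, which by the discussion in Section~\ref{section.branes} gives rise to an equivalence in the wrapped category (the standard continuation map argument). Combining these two observations gives the lemma.
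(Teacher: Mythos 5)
Your proposal matches the paper's approach: the paper offers no written proof beyond the one-line remark that the statement is ``obvious by flowing via the Liouville flow,'' and your essential-surjectivity argument is exactly that flow, while full faithfulness is automatic since both categories are by definition \emph{full} subcategories of the same ambient $\wrap(M)$ (your discussion of Floer data is harmless but unnecessary). One small correction: $X_\theta$ is not itself generated by any Hamiltonian (its flow rescales $\omega$, and $\theta$ is not closed), so the justification should instead be that the restriction of the negative Liouville flow to an exact brane is an exact Lagrangian isotopy, which is realized by an ambient (eventually linear) Hamiltonian isotopy and hence gives an equivalence in the wrapped category via the usual continuation maps.
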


 
\section{The $B$ construction}\label{section.B}

As usual, if $S$ is a set, we let $F^S$ denote the set of all functions $S \to F$. In particular, note there is an identification
	\eqnn
		F^N = F^{[N] \setminus \{0\}}.
	\eqnd
Also, if one replaces $M$ by $M \times T^*E^n$, nothing in this section changes. So for simplices in $\lag^{\dd n}$, the reader should just replace every instance of $M$ with ``$M \times T^*E^n$'' to define the $B$ construction, which acts by the identity on the $T^*E^n$ component.

\subsection{Definition}
Throughout the definition, we fix tiny, positive numbers $\epsilon$ and $\delta_{\collar}$. We also fix some integer $N>0$; throughout, $i$ refers to some integer between $0$ and $N$. We also fix numbers $w_i > 0$.

For fixed $0<i<N$, consider the open rectangle
	\eqnn
		U_i :=
			\{
			(\alpha,r) \, | \, \text{ $1 < \alpha < 2$ and $0 < r < i - 2\epsilon)$}
			\}
		\subset F^2
		.
	\eqnd
Fix also an open embedding
	\eqnn
		\phi: U_i \into F^2,
		\qquad
		\phi = (\phi_1,\phi_2)
	\eqnd
satisfying the following conditions:
	\begin{enumerate}
		\item\label{item.B1}
			We have
				\eqnn
  				\phi(\alpha,r)
  				=
  				\begin{cases}
						(\alpha,\epsilon+r) & \alpha \leq 1 + \delta_{\collar} \\
						(w_i+r+\epsilon, 2-\alpha) & \alpha \geq 2 - \delta_{\collar}
					\end{cases}.
				\eqnd
		\item
			$
				{ \frac {\del \phi_1} {\del \alpha}} > 0 
			$ 
			when $\alpha < 2 - \delta_{\collar}$,
		\item
			$ {\frac {\del \phi_2} {\del \alpha}} < 0$ when $\alpha > 1 + \delta_{\collar}$, and
		\item
			${\frac {\del \phi_1} {\del r}}, {\frac {\del \phi_2}{\del r}} >0$.
	\end{enumerate}
See Figure~\ref{figure.phi}.

\begin{figure}
		\[
			\xy
			\xyimport(8,8)(0,0){\includegraphics[width=4in]{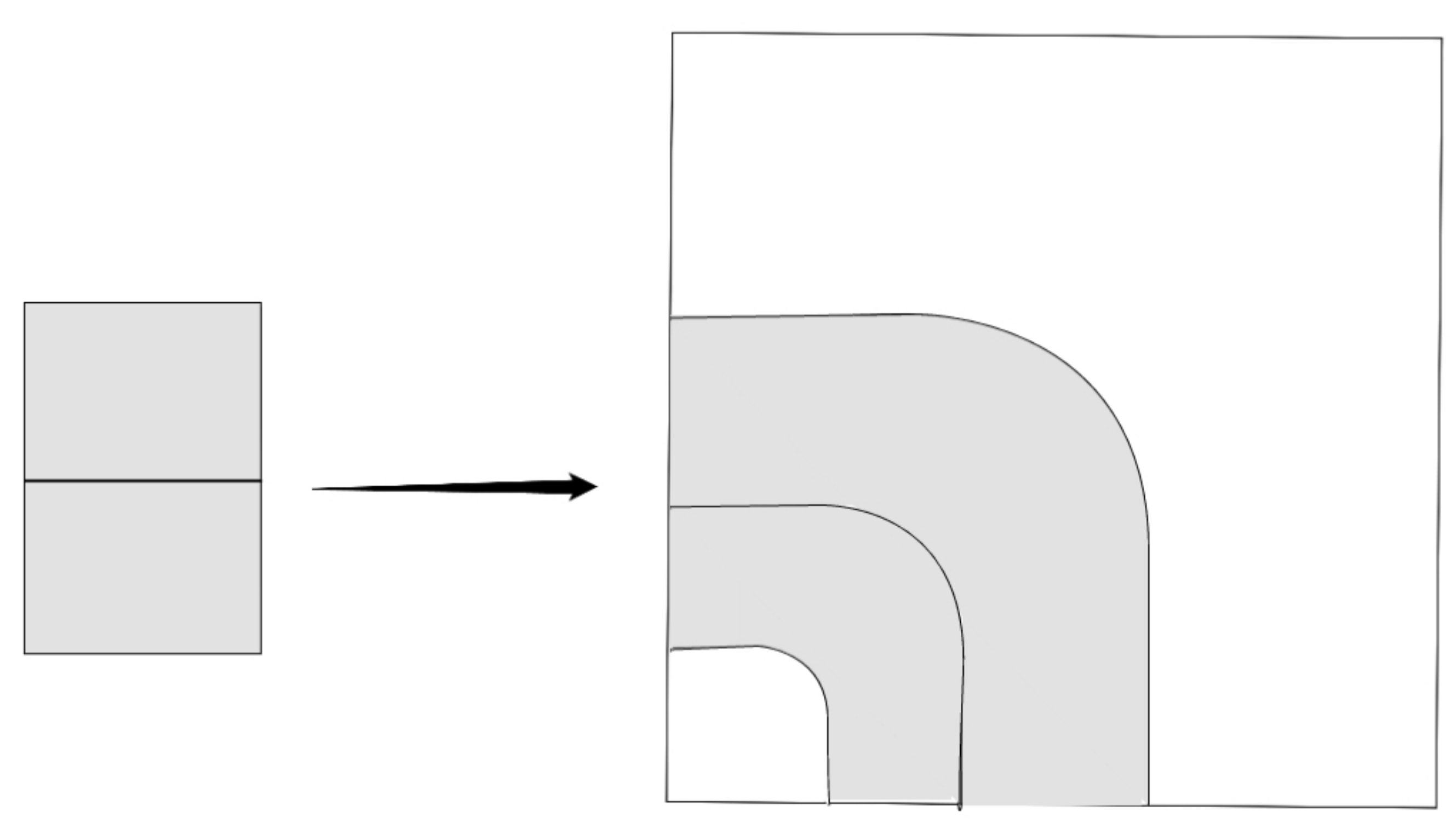}}
			,(1.6,1.7)*+{\alpha}
			,(0,5.4)*+{r}
			,(8,0)*+{q_i}
			,(3.4,8)*+{q_N}
			,(2.4,3)*+{\phi}
			,(4.2,1)*+{V_i^{small}}
			,(7.3,6.8)*+{V_i^{big}}
			\endxy
			\]
\begin{image}\label{figure.phi}
The open embedding $\phi: U_i \to F^2$.
The horizontal lines on the  left rectangle indicate lines of constant $r$. They are mapped to the curves inside the rectangle on the right.
\end{image}
\end{figure}

\begin{remark}
There is a different map $\phi$ for each choice of $0<i<N$. We suppress the dependence from the notation.
\end{remark}

Note that the image $\phi(U_i)$ divides the open square $(w_i,w_i+i) \times (0,i) \subset F^2$ into two connected regions: $V_i^{big}$ and $V_i^{small}$, where $V_i^{small} \subset F^2$ is the region intersecting a small neighborhood of the point $(w_i,0) \in F^2$. 

Given a brane $Y \subset M \times T^*F^N$, let $\del_i Y$ denote the brane collaring the face at $q_i = 1$. Consider the Lagrangian
	\eqnn
		\del_i Y \times (1,2) \subset (M \times T^*F^{[N] \setminus\{0,i\}}) \times T^*(1,2).
	\eqnd
The open embedding $\phi$ induces an open embedding of symplectic manifolds
	\eqnn
		\Phi: (M \times T^*F^{[N] \setminus\{0,i,N\}}) \times T^*(0,i-2\epsilon) \times T^*(1,2) \into M \times T^*F^N
	\eqnd
as follows: $\Phi$ is the identity on the $M$ component. On the other factors, one applies the symplectic embedding induced by the open embedding
	\eqn\label{eqn.Phi}
		(q_1,\ldots,\widehat{q_i},\ldots,q_{N-1},r, \alpha)
		\mapsto
		\left(q_1,\ldots,q_{i-1},\phi_1(\alpha,r),q_{i+1},\ldots,q_{N-1},\phi_2(\alpha,r)\right).
	\eqnd
Now we construct a new manifold $B_i(Y) \subset M \times T^*F^N$ by the union
	\begin{align}
		B_i(Y) \label{eqn.B_i}
			:=
			&Y \\
			&\bigcup
			\left(
				V_i^{small} \times (\del_i Y)|_{q_N = 0}
			\right) \label{eqn.B1}
			\\
			&\bigcup
			\Phi(\del_i Y \times (1,2))\label{eqn.B2}
			\\
			&
			\bigcup
			\left(
				V_i^{big} \times (\del _i Y)|_{q_N = i}
			\right)\label{eqn.B3}
			\\
			&
			\bigcup
			\left(
				[w_i,w_i+i]_{q_i} \times (\del_i Y)|_{[i,N]_{q_N}}
				\right).\label{eqn.B4}
	\end{align}
Let us parse this formula. First, the portion of $B_i(Y)$ with $q_i \leq 1$ is identical to the brane $Y$ with which we began. The meat is in the later lines of the formula.

(\ref{eqn.B1}) says to take $V_i^{small}$, which is an open subset of the zero section of $F_i \times F_N$, and take its direct product with whatever is collaring $Y$ along where $q_N=0$ and $q_i=w_i$. For instance, if $Y$ represents an $N$-simplex in $\lag^{\dd 0}$, $(\del_i Y)|_{q_N=0}$ is simply a copy of $Y_0 \times F^{[N] \setminus\{0,N,i\}}$---that is, a direct product of the $0$th object $Y_0$ along with the zero section in the remaining directions.

(\ref{eqn.B2}) says to apply $\Phi$ to the brane obtained by taking $\del_i Y$ and extending it along the zero section in the $\alpha$ direction.

(\ref{eqn.B3}) says to consider the direct product of $V_i^{big} \subset F_i \times F_N$ with whatever collars $Y$ along $q_i=0$ and $q_N=i$. If $Y$ is an $N$-simplex, this simply means to take the product of $Y_i$ with the zero section---here, $Y_i$ is the $i$th object collaring $Y$.

(\ref{eqn.B4}) says to take whatever collars $Y$ along the face $q_i=w_i$ with $q_N$ in the interval $[i,N]$, and then extend it along the zero section of $T^*[w_i,w_i+i]$ in the $q_i$ direction. If $Y$ is an $N$-simplex, this $(\del_i Y)_{[i,N]_{q_N}}$ would be given by the brane
	\eqnn
		F^{ \{1,\ldots,i-1\}} \times \del_{0}\ldots\del_0 \del_0 Y.
	\eqnd
I.e., one takes the 0th face of $Y$, then the $0$th face of that, $i$ times, and then takes its direct product with the zero section in the first $(i-1)$ coordinates.

\begin{defn}\label{defn.B'}
Given an $N$-simplex $Y$, which we think of as living over the prism
	\eqnn
	[0,1]^{N-1} \times [0,N] \subset F^N,
	\eqnd
we let $B'(Y)$ equal the Lagrangian
	\eqnn
		B'(Y) := B_{N-1}( \ldots (B_2(B_1(Y))) \ldots ).
	\eqnd
This is a cube living over the prism
	\eqnn
		[0,1]_{q_1} \times [0,2]_{q_2} \times \ldots [0,N]_{q_N} \subset F^N.
	\eqnd
Note that when we apply $B_i$, $w_i = 1$. 
See Figures~\ref{figure.B-2-simplex} and~\ref{figure.B-3-simplex}. \end{defn}

Now we define $B$ itself. Note that if $Y$ is an $N$-simplex, then by condition~\ref{item.B1} of $\phi$, we know that $B'(Y)$ is collared along each of its faces. From hereon, to conform to the notation and coordinates set in Section~\ref{section.geometry}, we will scale and translate the prism
	\eqnn
		[0,1] \times [0,2] \times \ldots [0,N] \subset F^N
	\eqnd
so that $B'(Y)$ is now contained above
	\eqnn
		[-w_1,w_1] \times \ldots \times [-w_N,w_N] \subset F^N.
	\eqnd
Let $c_i \subset T^*F_i$ be the curve defined in Section~\ref{section.cone-tails}, whose tails we called cone tails, where we take $w_i$ to replace the $w$ in that section. Then we define $B(Y)$ by ``bending'' $B'(Y)$ in the way dictated by the brane
	\eqnn
	c_1 \times \ldots \times c_N \subset T^*\left(
		[-w_1,w_1] \times \ldots \times [-w_N,w_N]
	\right).
	\eqnd
To be more precise, we know that along the faces, $B'(Y)$ is collared in such a way that for any subset $0 \not \in K \subset [N]$, 
	\eqnn
		B'(Y) |_{q_i \in [-w_i,-w_i+\epsilon], i \in K} = \del_K B'(Y) \times \prod_{i \in K} [-w_i,-w_i+\epsilon].
	\eqnd
That is, $B'(Y)$ is a product with the zero section in the $F^K$ directions. Then we can replace this portion of $B'(Y)$ with the product brane
	\eqnn
	\del_K B'(Y) \times \prod_{i \in K} (c_i)|_{[-w_i,-w_i+\epsilon]}
	\eqnd
where for each $i \in K$, we take the portion of the cone tail $c_i$ living above the interval $[-w_i, -w_i + \epsilon]$ in $F_i$. Here, $\epsilon$ is chosen so that $c_i$ equals the zero section near $-w_i + \epsilon$, so this gluing is smooth. Likewise, we know that
	\eqnn
		B'(Y) |_{q_i \in [w_i-\epsilon,w_i], i \in K} = \del_K^{\back} B'(Y) \times \prod_{i \in K} [w_i-\epsilon,w_i].
	\eqnd
So we can again replace this portion of $B'(Y)$ by gluing in a copy of 
	\eqnn
	\del_K^{\back} B'(Y) \times \prod_{i \in K} (c_i)|_{[w_i-\epsilon,w_i]}.
	\eqnd
Summing up:

\begin{defn}[B]\label{defn.B}
Given $Y$ an $N$-simplex, we let
	\eqnn
	B(Y) \subset M \times T^*\left(
		\prod_{i=1,\ldots,N} [-w_i,w_i]
	\right)
	\eqnd
denote the brane whose underlying Lagrangian submanifold is given by the set
	\begin{align}
	B(Y) :=&
		B'(Y)|_{\prod_{i=1,\ldots,N}[-w_i+\epsilon,w_i-\epsilon]}
		\\ &
		\bigcup
		\left(
			\bigcup_{0 \not \in K \subset [N]}
			\del_K^{\back} B'(Y) \times \prod_{i \in K} (c_i)|_{[w_i-\epsilon,w_i]}
		\right)
		\\ &\bigcup
		\left(
			\bigcup_{0 \not \in K \subset [N]}
			\del_K B'(Y) \times \prod_{i \in K} (c_i)|_{[-w_i, -w_i+\epsilon]}
		\right).
	\end{align}
\end{defn}

\begin{figure}
		\[
			\xy
			\xyimport(8,8)(0,0){\includegraphics[width=3in]{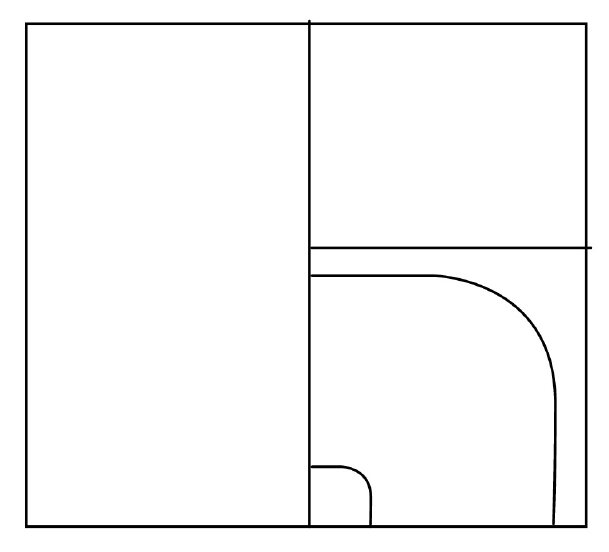}}
			,(4.4,0.8)*+{Y_0}
			,(7.3,4)*+{Y_1}
			,(5.8,6)*+{Y_{12} \times [0,1]_{q_1}}
			,(0,0)*+{Y_0}
			,(8,0)*+{Y_1}
			,(8,8)*+{Y_2}
			,(0,8)*+{Y_2}
			,(5.4,3)*+{Y_{01} \text{ rotated} }
			,(2.4,4)*+{Y}
			\endxy
			\]
\begin{image}\label{figure.B-2-simplex}
An image of $B'(Y)$ for $Y$ a 2-simplex. This is a drawing in $F^2$, with the $F^\vee$ components omitted. 
\end{image}
\end{figure}

\begin{figure}
		\[
			\xy
			\xyimport(8,8)(0,0){\includegraphics[width=2in]{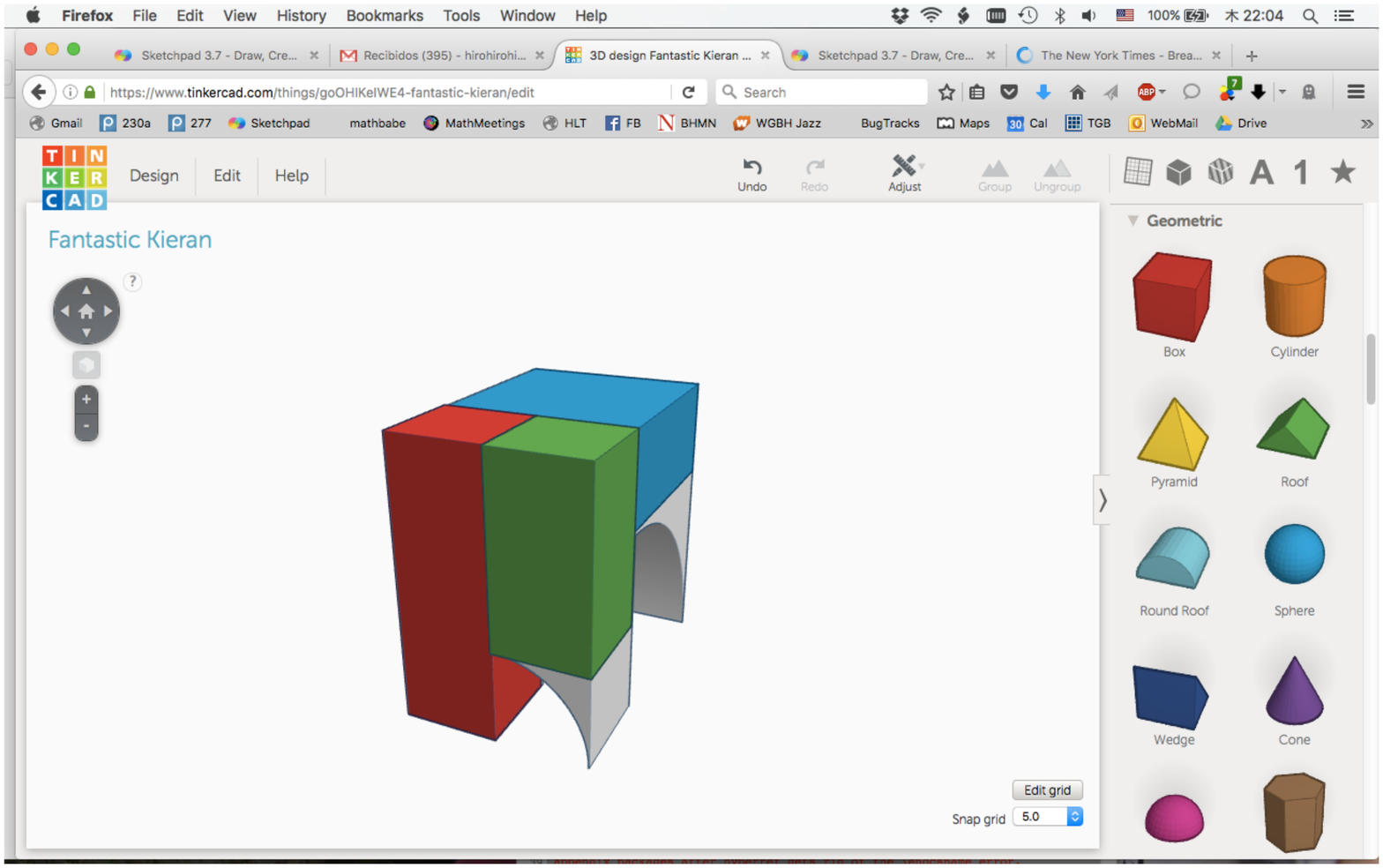}}
			,(4,1)*+{\Phi(Y_{01})}
			,(7,3)*+{\Phi(Y_{12})}
			\endxy
			\xy
			\xyimport(8,8)(0,0){\includegraphics[width=2in]{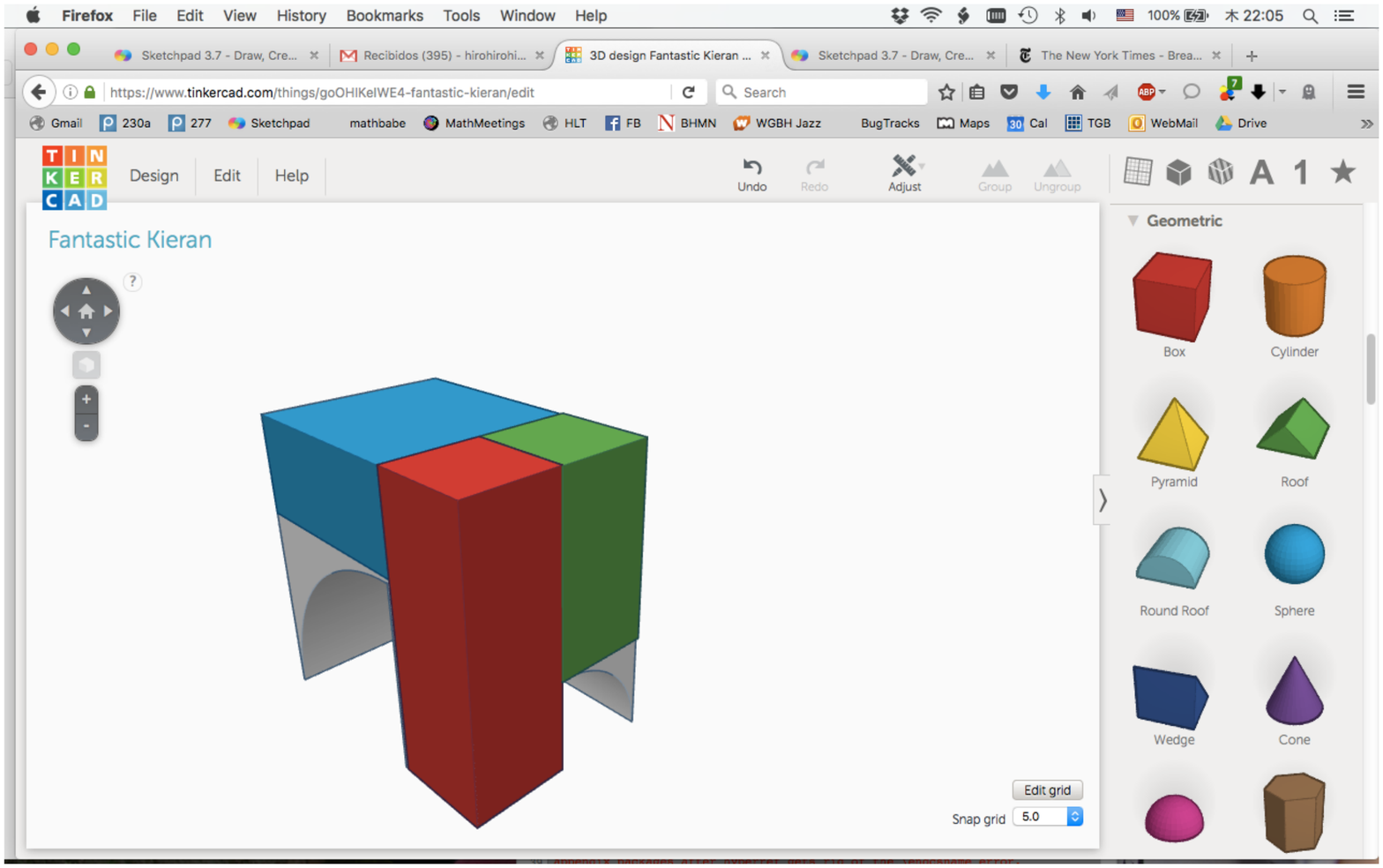}}
			\endxy
			\xy
			\xyimport(8,8)(0,0){\includegraphics[width=2in]{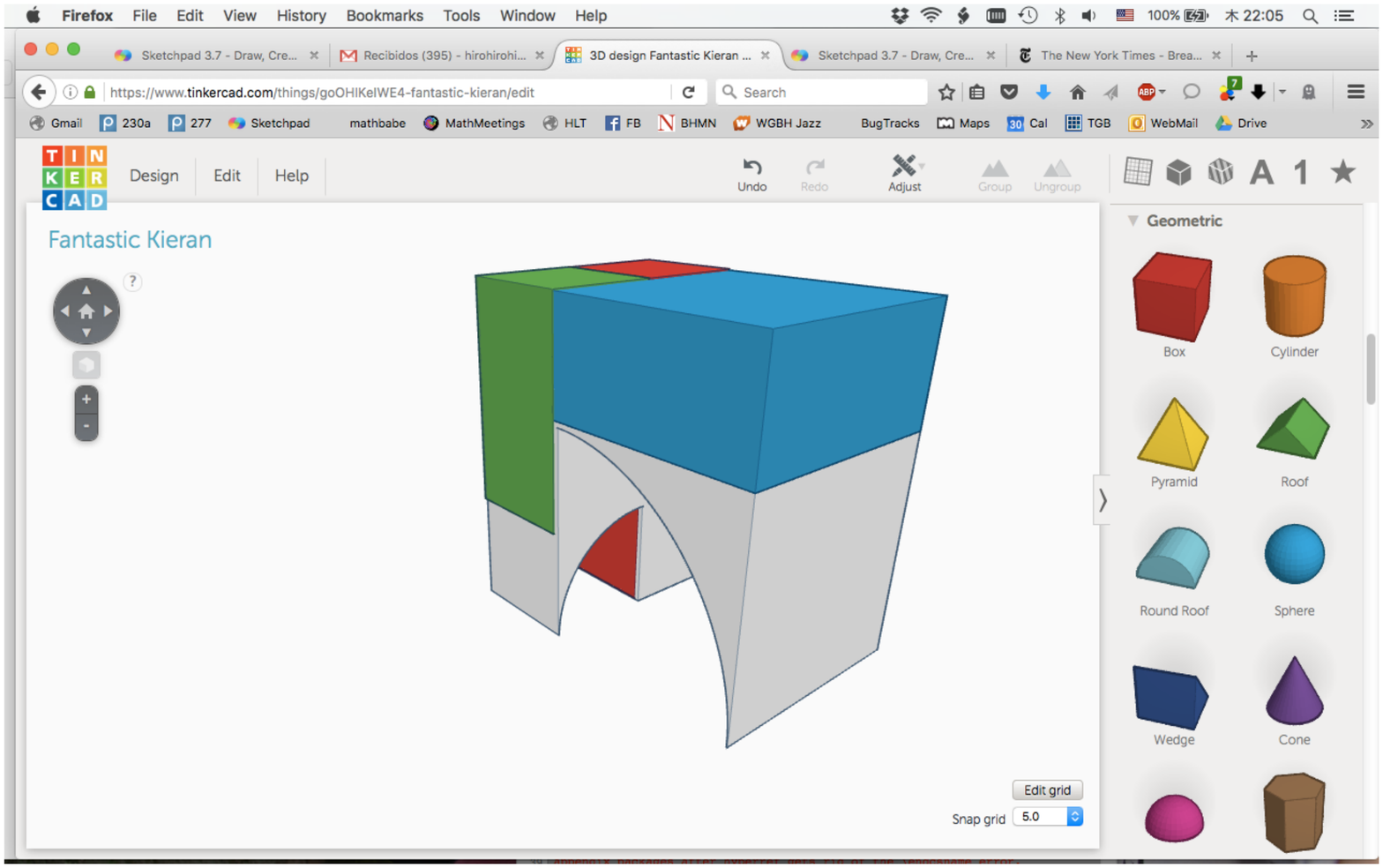}}
			\endxy
			\]
\begin{image}\label{figure.B-3-simplex}
An image of $B'(Y)$ for $Y$ a 3-simplex, from three different angles. The images of the $\Phi$ are carved out to improve visibility. The red prism is the original $Y$, unaltered. The top of $B'(Y)$ (the face shaded by three different colors) is where $q_N = q_3 = 3$, and is collared by $Y_3 \times F^2_{q_1,q_2}$. The green cube is a copy of $Y_{123} \times [1,2]_{q_1}$. The blue cube is a copy of $Y_{23} \times [0,2]_{q_1}\times [1,2]_{q_2}$. 
\end{image}
\end{figure}

\subsection{Faces determined by $0 \in K \subset [N]$}
Fix a subset $ K \subset N$. 
We let $\ov K$ be the set of all $j \in [N]$ such that $\min(K) \leq j \leq \max(K)$. 

\begin{defn}
We say $K$ is {\em consecutive} if and only if $\ov K = K$.
\end{defn}

\begin{defn}\label{defn.K'}
Given $K \subset [N]$, we let
	\eqn
	K' = \{
		 \text{$j \in [N]$ such that $j \geq \max(K)$.}
	\}	
	\eqnd
\end{defn}

\begin{defn}[Faces of $B(Y)$]
Now let $K \subset [N]$ be a subset.
We let the {\em front $K$th face of $B(Y)$}
	\eqnn
	\del^{\front}_K B(Y)
	\eqnd
to be the portion of $B(Y)$ collared by the face where $q_j = -w$ for all $0 \neq j \not \in K$.

We finally let the {\em back $K$th face of $B(Y)$}
		\eqnn
		\del^{\back}_K B(Y)
		\eqnd
defined to be the portion of $Y$ collared by the face where $q_j = w$ for all $0 \neq j \in K$.
\end{defn}

In terms of the simplicial set $\lag(M)$: Fix an $N$-simplex $Y$. Given a subset $0 \in K \subset [N]$, we let $Y_K$ denote the $N'$-simplex determined by the natural order-preserving embedding
	\eqnn
	\rho: [N'] \to [N]
	\eqnd
with image $K$. Here, $N' = \#K - 1$. 

\begin{lemma}\label{lemma.B-faces}
For all $0 \in K \subset [N]$, we have that
	\eqn\label{eqn.BY_K}
	\del^{\front}_K B(Y) = B(Y_K).
	\eqnd
	
For $|K| < N$, we have that
	\eqn\label{eqn.BY_K'}
		\del^{\back}_K B(Y)
		=
		c^{\ov K \setminus K} \times B(Y_{K'})
	\eqnd
where $K'$ is as in Definition~\ref{defn.K'}.
\end{lemma}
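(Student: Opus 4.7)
The plan is to prove both identities by induction on $N$, tracking the iterated construction $B'(Y) = B_{N-1}(\cdots B_1(Y)\cdots)$ through each face restriction and then verifying that the cone-tail bending of Definition~\ref{defn.B} respects faces. The base case $N=1$ is immediate, since no $B_i$ operation is applied and the bending acts on each factor independently. For the inductive step, the key observation is that each $B_i(Y)$ is defined as an explicit union of five pieces (\ref{eqn.B_i})--(\ref{eqn.B4}), each of which admits a transparent description under restriction to a face $q_j = \pm w_j$.

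For the front face identity (\ref{eqn.BY_K}), fix $0 \in K \subset [N]$ and restrict $B'(Y)$ to the locus where $q_j$ sits in the collar of $-w_j$ for every $j \in [N] \setminus K$. By condition~\ref{item.B1} on $\phi$, the image $\Phi(\del_i Y \times (1,2))$ in (\ref{eqn.B2}) is a strict product near the front, so the union defining $B_i$ restricts either to the corresponding union defining $B_i(Y_K)$ (when $i \in K$) or disappears entirely (when $i \notin K$, since the pinned coordinate lies outside the support of the attached pieces). Combining this with the tautological identification $\del_i Y_K = \del_i Y$ for $i \in K$ and invoking induction gives $\del^{\front}_K B'(Y) = B'(Y_K)$; the cone-tail gluing of Definition~\ref{defn.B} commutes with this restriction, yielding (\ref{eqn.BY_K}).

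For the back face identity (\ref{eqn.BY_K'}), restrict instead to $q_j$ near $w_j$ for every $0 \neq j \in K$. The extension piece (\ref{eqn.B4}) in $B_i$ is the crucial contributor: for each $i \in \ov K \setminus K$, it extends the brane along $[w_i, w_i + i]$ in the $q_i$ direction, and this extension becomes a cone-tail factor $c_i$ after the bending step of Definition~\ref{defn.B}. For the remaining directions, restriction peels off to reveal exactly the face simplex $Y_{K'}$ with $K' = \{j \geq \max K\}$. Assembling the contributions yields $c^{\ov K \setminus K} \times B(Y_{K'})$, as required.

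The main obstacle is the bookkeeping required to verify that the $\Phi$-images of (\ref{eqn.B2}) restrict cleanly on both face loci; without condition~\ref{item.B1} that $\phi$ equals an appropriate translation on the collars $\alpha \leq 1 + \delta_{\collar}$ and $\alpha \geq 2 - \delta_{\collar}$, one would see non-product contributions at the boundary that obstruct the clean identities above. The same collaring, together with cone tails $c_i$ whose primitives vanish outside a small box, is precisely what upgrades the identities to strict equalities of Lagrangian submanifolds rather than mere Hamiltonian equivalences.
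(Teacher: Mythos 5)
Your proposal is correct and follows essentially the same route as the paper's appendix: a direct combinatorial verification that tracks each of the pieces (\ref{eqn.B_i})--(\ref{eqn.B4}) of a single $B_i$ through the front and back face restrictions (using condition~\ref{item.B1} on $\phi$ to get product behavior on the collars), and then composes over $i=1,\ldots,N-1$ before checking compatibility with the cone-tail bending. The paper packages the same argument as a case analysis for one $B_i$ at a time (Lemma~\ref{lemma.B1} and its corollary) followed by iterated application, rather than as an induction on $N$, but the content is identical.
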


We relegate the proof to the appendix, as it is purely combinatorial.
 

\section{The functor $\Xi$ on $\lag^{\dd 0}$ and stabilization}
Whenever $Y_i$ is an object of $\lag^{\dd 0}$, we let $\Xi(Y_i)$ denote the module $WF^*(-,Y_i)$. So the structure maps for the module $\mu^d_{\Xi(Y_i)}$ are precisely the maps $\mu^d_M$ used in the wrapped Fukaya category of $M$.

\subsection{On edges}
Let $Y = Y_{01}$ be an edge of $\lag^{\dd 0}$. We now define an element 
	\eqnn
	\Xi_Y \in \hom_{\fuk\Mod}(\Xi(Y_0),\Xi(Y_1)).
	\eqnd

Let $\gamma_0,\ldots,\gamma_{d-1}$ be branes forming a staircase, wider and deeper than $Y$. Let $X_0,\ldots, X_{d-1}$ a collection of objets in $\wrap^\compact$. Setting $X^\dd_i = X_i \times \gamma_i$, one can define operations $\mu^d$
	\eqnn
	CF^*(X^{\dd}_{d-1}, Y) \tensor CF^*(X^{\dd}_{d-2},X^{\dd}_{d-1})
					\tensor \ldots \tensor CF^*(X^{\dd}_0, X^{\dd}_1)
					\to CF^*(X_0^{\dd}, Y)
	\eqnd
of degree $2-d$ 
by counting (with signs) points in the zero-dimensional moduli space of $u:S \to M \times T^*F$ satisfying (\ref{eqn.floer}). By the compactness and regularity results of Section~\ref{section.MTR}, these $\mu^d$ satisfy the $A_\infty$ relations as usual. 

\begin{notation}
Here, $CF^*$ denote the Floer complex determined by the Floer and perturbation data we specified in Section~\ref{section.geometry}. As we mentioned in Section~\ref{section.wrapped}, we leave it to the reader to apply the telescoping definition, or quadratic-near-infinity definition, of the Hamiltonians we use in the $M$ component.
\end{notation}

We set some notation. Note that as graded vector spaces, we have isomorphisms
	\eqnn
		\iota_{[1]}: WF^*(X_{i},X_j) \cong CF^*(X_{i}^{\dd}, X_{j}^{\dd}) : \pi_{[1]}	\eqnd
(the subscript $[1]$ will conform to later notation) and
	\eqn\label{eqn.edge-decomposition}
		CF^*(X_i^{\dd}, B(Y)) \cong CF^*(X_i, Y_0) \oplus CF^*(X_i, Y_1)[-1].
	\eqnd
(The grading shift is computed by keeping track of the slope of the cone tail $c$.) 

\begin{remark}
When $\fuk = \wrap^\compact$, (\ref{eqn.edge-decomposition}) uses the fact that $e$ was chosen to be the same for both $\lag$ and for $\wrap^\compact$---this ensures that $X_i \times \gamma_i$ has no intersection with $B(Y)$ other than where $B(Y)$ is collared by the cone tail.\footnote{The potential for further intersection is where $\gamma_i$ passes below $Y$, but both the depth of the $\gamma_i$ and the $e$ are chosen so that there is no intersection in $(-w_Y,w_Y)$.} When $\fuk = \wrap$, (\ref{eqn.edge-decomposition}) holds because our cobordisms $Y$ have no intersection with $F^\vee_{\leq -D}$ for some $D$, while the $\gamma_i$ are to have depth greater than $D$. 
\end{remark}

We let $\iota_0$ denote the inclusion
	\eqnn
	CF^*(X_i, Y_0) \into CF^*(X_i^{\dd}, Y)
	\eqnd
into the first summand, and let $\iota_{[1]}$ (by abuse of notation) denote the inclusion of the second summand:
	\eqnn
	\iota_{[1]} :
	CF^*(X_i, Y_1) \into CF^*(X_i^{\dd}, Y).
	\eqnd
This $\iota_{[1]}$ is a degree -1 map.
Likewise, we let $\pi_0$ and $\pi_{[1]}$ denote the projections to these summands; $\pi_{[1]}$ is a degree 1 map.

Note that we have
	\eqn\label{eqn.id-edge}
	\id_{CF^*(X_i^{\dd}, Y)} = \iota_0 \pi_0 + \iota_{[1]}\pi_{[1]}
	\qquad
	\text{and}
	\qquad
	\id_{CF^*(X_i^{\dd}, X_j^{\dd})} = \iota_{[1]} \pi_{[1]}.
	\eqnd

\begin{defn}
For all $d \geq 1$, we let
	\eqnn
	\Xi^{d}_{Y} := \pi_{[1]} \circ \mu^{d} \circ ( \iota_0 \tensor \iota_{[1]}^{\tensor d-1} ).
	\eqnd
It is a map
	\eqnn
	\Xi(Y_0)(X_{d}) \tensor CF^*(X_{d-1}, X_{d}) \tensor \ldots \tensor CF^*(X_0,X_1) \to \Xi(Y_1)(X_0)
	\eqnd
of degree $-d+1$. 
Geometrically, $\Xi^d_Y$ counts holomorphic disks in $M \times T^*F$ whose boundary points $x_0,\ldots,x_{d-1}$ have $q$ coordinate $\geq w$, while $x_d$ has $q=-w$. 
\end{defn}

\begin{theorem}\label{theorem.edges}
Fix an edge $Y$ of $\lag^{\dd 0}$. The maps $(\Xi^d_Y)_{d \geq 1}$ define a closed element of $\hom_{\fuk\Mod}(\Xi(Y_0), \Xi(Y_1))$. In other words, using the notation of Definition~\ref{defn.dg-nerve}, the collection
	\eqnn
	\Xi(Y) := ( (\Xi(Y_i)), (\Xi_{Y})).
	\eqnd
is an edge in $N(\wrap^\compact\Mod)$.
\end{theorem}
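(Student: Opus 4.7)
\emph{Plan of proof.} The goal is to verify the closure relation $\mu^1(\Xi_Y) = 0$ in $\hom_{\fuk\Mod}(\Xi(Y_0), \Xi(Y_1))$, i.e.\ equation~(\ref{eqn.mu1}) for $(t^d) = (\Xi_Y^d)$ with $\cM_0 = \Xi(Y_0)$, $\cM_1 = \Xi(Y_1)$. The strategy is to deduce this from the ordinary $A_\infty$-relations of $\wrap(M \times T^*F)$ applied to the sequence of branes $X_0^{\dd}, \ldots, X_{d-1}^{\dd}, B(Y)$, and then use the decomposition~(\ref{eqn.edge-decomposition}) together with the projector/inclusion $\pi_{[1]} \otimes \iota_0$ to cut out the $\Xi_Y$ components.

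First I would note that by the compactness and transversality results of Section~\ref{section.MTR}, $\mu^d$ on the branes $X_0^{\dd}, \ldots, X_{d-1}^{\dd}, B(Y)$ satisfies the usual quadratic $A_\infty$-relations. Then, expand these relations by inserting $\id = \iota_0 \pi_0 + \iota_{[1]} \pi_{[1]}$ at every intermediate generator of $CF^*(X_i^{\dd}, B(Y))$, as in~(\ref{eqn.id-edge}), and pre/post-compose with $\iota_0 \otimes \iota_{[1]}^{\otimes (d-1)}$ and $\pi_{[1]}$ on the outer legs. This yields a sum of composed operations indexed by the summand assignment at each intermediate intersection with $B(Y)$, and I claim that this sum reproduces precisely the three families of terms in~(\ref{eqn.mu1}).

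The identification of each term uses the boundary-stripping technique (Remark~\ref{remark.boundary-stripping}) combined with Lemma~\ref{lemma.fibers}. (i) Broken configurations whose inner disk has boundary only on $X_i^{\dd}$'s give the third family $\Xi_Y^{a+1+c}(\id^a \otimes \mu_\cA^b \otimes \id^c)$: by case~(\ref{item.1}) of Lemma~\ref{lemma.fibers}, the inner disk's projection to $T^*F$ is confined to the region $R$ where $J = J_M \oplus J_{T^*F}$, so such disks stand in bijection with disks in $M$ bounded by the $X_i$. (ii) Broken configurations whose inner disk carries boundary on $B(Y)$ with the $B(Y)$-input and intermediate output both lying in the $Y_0$-summand correspond, via case~(\ref{item.2}) of Lemma~\ref{lemma.fibers} and boundary-stripping, to disks in $M$ with one leg on $Y_0$, giving the Yoneda structure map $\mu_{\Xi(Y_0)}^b$ and hence the second family. (iii) Symmetrically, when the intermediate output and outer $B(Y)$-input both lie in the $Y_1$-summand, one recovers $\mu_{\Xi(Y_1)}^{1+c}(\Xi_Y^b \otimes \id^c)$. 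The ``mixed'' configurations---where intermediate summand assignments fall outside these three patterns---are excluded because the corresponding projected boundary conditions in $T^*F$ violate the combinatorial constraints of Lemma~\ref{lemma.fibers}, so the relevant moduli are empty. Signs come from the Koszul rule applied to the $\iota$'s and $\pi$'s, inherited from the signs of the $A_\infty$-relations in $\wrap(M \times T^*F)$.

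\emph{Main obstacle.} The principal difficulty is the precise matching of summand-assignments of intermediate generators with the three families of terms in~(\ref{eqn.mu1}), including the cancellation (or vanishing) of mixed contributions. This rests on using Lemma~\ref{lemma.fibers} to translate between disks in $M \times T^*F$ and disks in $M$, and on checking that the direct-sum structure of $J$ in the relevant region of $T^*F$ (together with the translation-invariance of $\cH, \cJ$ built into Section~\ref{section.MTR}) indeed forces disks to respect the summand decomposition. A secondary bookkeeping task is ensuring the signs produced by the projector/inclusion composition agree with those of equation~(\ref{eqn.mu1}); this follows since $\Xi(Y_0)$ and $\Xi(Y_1)$ are Yoneda modules, so the relevant signs are dictated by the Yoneda embedding and need not be computed by hand.
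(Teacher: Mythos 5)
Your proposal is correct and follows essentially the same route as the paper: the paper packages your geometric identifications into Lemma~\ref{lemma.collapse} (proved by boundary-stripping plus Lemma~\ref{lemma.fibers}) and then runs exactly the algebraic expansion you describe, inserting $\id = \iota_0\pi_0 + \iota_{[1]}\pi_{[1]}$ into the $A_\infty$-relation and sorting terms by which summand the intermediate $B(Y)$-generator lands in. The only cosmetic difference is that for an edge ($N=1$) there are no genuinely ``mixed'' configurations to exclude---the two-term identity decomposition already exhausts all cases, each matching one family of terms in~(\ref{eqn.mu1}).
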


Let us first deal with a special case:

\begin{proposition}\label{prop.id}
Suppose $Y = \id_{Y_0}$ is the identity cobordism. Then
	\eqnn
	\Xi^1_Y = \id,
	\qquad
	\Xi^{\geq 2}_Y = 0.
	\eqnd
\end{proposition}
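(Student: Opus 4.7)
The plan is to exploit the literal product structure $B(\id_{Y_0}) = Y_0 \times c$ together with a choice of Floer data---permitted by the $\cJ_F$ constraints of Section~\ref{section.MTR}---that splits as $J = J_M \oplus J_{T^*F}$ and $H = H_M + H_F$ on the entire region bounded by the staircase and the cone tail. First I would apply boundary-stripping (Remark~\ref{remark.boundary-stripping}, Lemma~\ref{lemma.q>w}) to any disk $\tilde u \colon S \to M \times T^*F$ contributing to $\Xi^d_Y$, forcing its image into the bounded region $R''$ of Definition~\ref{defn.R}. On $R''$ the Floer equation splits, so $\tilde u = (u_M, u_F)$ decomposes as a pair of honest Floer solutions: $u_M \colon S \to M$ satisfies Floer's equation with boundary on $X_0,\ldots,X_{d-1},Y_0$, while $u_F \colon S \to T^*F$ is a genuinely holomorphic disk with boundary on $\gamma_0,\ldots,\gamma_{d-1},c$.

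For $d=1$, $\gamma_0 \cap c$ consists of two transverse points: $x_-$ at $q \approx -w$ (labeling the summand $\iota_0$) and $x_+$ at $q \approx w$ (labeling $\iota_{[1]}$). Lemma~\ref{lemma.fibers} identifies the moduli of bigons from $x_-$ to $x_+$ with a single $\RR$-orbit; since this bigon is non-constant, the diagonal $\RR$-action on $\cM_M \times \cM_{T^*F}$ is free. The virtual dimension count $\dim \cM_M + \dim \cM_{T^*F} - 1 = 0$ then forces $\dim \cM_M = 0$, and exactness of $M$ (an area/action argument) restricts the $M$-factor to the constant strip at each Hamiltonian chord $c_M$ from $X_0$ to $Y_0$. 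Each chord thus contributes $\pm 1$ to $\Xi^1_Y(c_M)$ via (constant $M$-strip)$\times$(unique $T^*F$-bigon), identifying $\Xi^1_Y$ with $\id$ on $CF^*(X_0,Y_0)$ (up to the sign conventions dictated by the spin structures).

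For $d \geq 2$, the key input is Lemma~\ref{lemma.fibers}(\ref{item.3}): the forgetful map $\cM_{T^*F} \to \cR$ is a surjective submersion with one-dimensional fibers. Via the split structure, $\cM_{M \times T^*F} = \cM_M \times_{\cR} \cM_{T^*F}$, so any element of $\cM_{M \times T^*F}$ lies in a positive-dimensional family obtained by varying the $T^*F$-factor within its fiber over $\cR$. Consequently, the virtual $0$-dimensional stratum of $\cM_{M \times T^*F}$ is empty for every choice of asymptotic Hamiltonian chords, and $\Xi^d_Y = 0$.

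The main subtlety I expect is ensuring that the split choice of Floer data still achieves generic regularity for $u_M$ in the presence of the product boundary condition $Y_0 \times c$. By Remark~\ref{remark.Z-small}, the tangential perturbations needed can be confined to small neighborhoods of the boundary branes and limiting chords and arranged in the $M$-component only, so transversality is attained without disturbing the splitting on $R''$; the combinatorial dimension count above then goes through unchanged.
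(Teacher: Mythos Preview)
Your proposal is correct and follows essentially the same route as the paper: split $J = J_M \oplus J_{T^*F}$ on the relevant region, boundary-strip into $R''$, then use Lemma~\ref{lemma.fibers} case~\ref{item.3}.\ to see that the $T^*F$ component always carries a one-dimensional excess, forcing $\Xi^{\geq 2}_Y = 0$, while for $d=1$ the unique $T^*F$ bigon pairs with the constant $M$-strip at each chord to give the identity. The only cosmetic difference is in the regularity justification: the paper invokes the short exact sequence~(\ref{eqn.regular-SES}) (automatic regularity on the $T^*F$ factor plus genericity of $J_M$), whereas you appeal to Remark~\ref{remark.Z-small}; both are valid and already set up in Section~\ref{section.MTR}.
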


\begin{proof}[Proof of Proposition~\ref{prop.id}.]
In this case, we are computing solutions $\tilde u: S \to M \times T^*F$ solving the perturbed Floer equation~(\ref{eqn.floer}) with boundary on $Y_0 \times c$ and $X_i \times \gamma_i$. Since $Y_0 \times F$ has no depth, we can choose our almost complex structure on $F_q \times [-\un{D}, T]$ to split as a direct sum $J_M \oplus J_{T^*F}$ and regularity holds by considering~(\ref{eqn.regular-SES}) again. 

Then, the boundary-stripping argument shows any $\tilde u$ must project to the region $R''$ (see Definition~\ref{defn.R} and Figure~\ref{figure.staircase-disks}, 3.). 

When $d=1$, the result is standard---it is a Kunneth theorem for $X_0 \times \gamma_0$ and $Y_0 \times c$. Since any non-trivial strip comes in a 1-dimensional moduli, the only strips we count in $\Xi^1$ must be trivial in one component, and non-trivial in the other. In particular, $\Xi^1$ counts disks that are trivial in the $M$ direction and non-constant in the $T^*F$ component---there is one such disk for each Hamiltonian chord from $X_0$ to $Y_0$, hence we recover the identity map from $CF^*(X_0,Y_0)$ to itself.

For $d \geq 2$: Let $u = \pi_{T^*F} \circ \tilde u$. Here, 
	\eqnn
		\pi_{T^*F} : M \times T^*F \to T^*F
	\eqnd
is the projection. By Lemma~\ref{lemma.fibers}, any such $u$ comes in a one-dimensional family even after fixing the conformal structure $S$ of the domain. Since we have a direct sum perturbation datum, we know that
	\eqnn
	\tilde u = (u_M, u)
	\eqnd
has components $u_M = \pi_M \circ \tilde u$ and $u$ which are individually solutions to the perturbed Floer equation~(\ref{eqn.floer}). (Of course, $u$ is an honest holomorphic disk.) This shows that $\tilde u$ cannot come in a 0-dimensional family, as any choice has 1-parameter freedom in the $u$ factor. This shows $\Xi^d = 0$ for $d \geq 2$.
\end{proof}

The proof of Theorem~\ref{theorem.edges} relies on the following lemma, which says that disks with certain boundary conditions on $M \times T^*F$ recover disks in $M$:

\begin{lemma}[Disks reduce]\label{lemma.collapse}
Let $\mu^m_M$ denote the usual $A_\infty$ operations of $\wrap(M)$, while $\mu^m$ comes from counting disks in $M \times T^*F$. 
We have that
	\eqn\label{eqn.edge-recover}
	\pi_{[1]} \circ \mu^b \circ (\iota_{[1]})^{\tensor b} = \mu^b_M
	\eqnd
and
	\eqn\label{eqn.edge-recover-plus}
	\pi_{0} \circ \mu^{b} \circ (\iota_{0} \tensor \iota_{[1]}^{\tensor b-1}) = \mu^{b}_M.
	\eqnd
\end{lemma}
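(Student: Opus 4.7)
The plan is to apply the boundary-stripping technique of Lemma~\ref{lemma.q>w} to reduce each count of disks in $M \times T^*F$ to a count of disks in $M$. First I would identify which configuration of Lemma~\ref{lemma.fibers} we are in: the boundary condition $(\iota_{[1]})^{\tensor b}$ in~(\ref{eqn.edge-recover}) places all intersection points at $q \geq w$ (configuration~\ref{item.1}), while the condition $\iota_0 \tensor \iota_{[1]}^{\tensor b-1}$ together with the projection $\pi_0$ in~(\ref{eqn.edge-recover-plus}) places exactly two of the marked points (the input at $X_{b-1} \cap Y_0$ and the output at $X_0 \cap Y_0$) at $q < w$ with all other marked points at $q \geq w$ (configuration~\ref{item.2}). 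In either case, boundary-stripping applied to $u := \pi_{T^*F} \circ \tilde u$ shows that the image of $u$ is contained in the region $R$ (respectively $R'$) of Definition~\ref{defn.R}.

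Next I would use the fact that throughout $R$ and $R'$, our perturbation data on $M \times T^*F$ were chosen (Section~\ref{section.MTR}) so that $J = J_M \oplus J_{T^*F}$ is a direct sum and $H$ has no $T^*F$ dependence. Consequently the perturbed Floer equation~(\ref{eqn.floer}) decouples: writing $\tilde u = (u_M, u)$, the $M$-component $u_M$ satisfies the perturbed Floer equation with boundary on the branes $X_0, \ldots, X_{b-1}$ and either $Y_0$ or $Y_1$ (depending on which of $\iota_0, \pi_0$ appear), while $u$ is an honest $J_{T^*F}$-holomorphic disk with boundary on $\gamma_0, \ldots, \gamma_{b-1}$ (and on the cone tail $c$, in the case of~(\ref{eqn.edge-recover-plus})), with marked points exactly at the intersections prescribed above.

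Now I would invoke Lemma~\ref{lemma.fibers}: in both configurations~\ref{item.1} and~\ref{item.2}, the forgetful map $\cM((\gamma_i, x_i)) \to \cR$ is a diffeomorphism. Therefore for each conformal structure on the domain $S$, there is a unique (up to reparametrization) $T^*F$-component $u$, and so the projection from the full moduli space of $\tilde u$ onto the moduli space of pairs $(S, u_M)$ is a diffeomorphism. Cutting down to zero-dimensional strata, the signed count of $\tilde u$ thus coincides with the signed count of $u_M$, which is by definition $\mu^b_M$.

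The main obstacle will be the sign bookkeeping. One must verify that the product orientation on $\cM_M \times \cM_{T^*F}$ (together with the canonical identification of the latter factor with $\cR$) is compatible with the grading shift $[-1]$ appearing on the $Y_1$-summand in~(\ref{eqn.edge-decomposition}), which itself comes from the Maslov grading contribution of the cone tail $c$. The geometric identification of disks above is clean; the signs require a careful but routine application of coherent orientations, using that the inclusions $\iota_0, \iota_{[1]}$ and projections $\pi_0, \pi_{[1]}$ are defined using exactly the orientation conventions that make the Kunneth-type splitting compatible with the product orientation on moduli. Once this is checked, both equalities~(\ref{eqn.edge-recover}) and~(\ref{eqn.edge-recover-plus}) follow.
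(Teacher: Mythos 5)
Your proposal is correct and follows essentially the same route as the paper: boundary-stripping confines $u = \pi_{T^*F}\circ\tilde u$ to the region $R$ (resp.\ $R'$), where the split perturbation data decouple Floer's equation, and then cases~\ref{item.1} and~\ref{item.2} of Lemma~\ref{lemma.fibers} identify $\cM$ with the transverse fiber product $\cM_M \times_{\cR} \cM_{T^*F} \cong \cM_M$. Your extra attention to orientations is reasonable but goes beyond what the paper records, which defers all sign issues to the tautological Yoneda-module conventions.
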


\begin{proof}[Proof of Theorem~\ref{theorem.edges} assuming Lemma~\ref{lemma.collapse}.]
We know the operations $\mu^m$ satisfy the $A_\infty$ relations. This means that for some coherent system of signs $\clubsuit$, we have
	\eqn\label{eqn.Aoo-functions}
		\sum_{a + b + c = d} (-1)^{\clubsuit} \mu^{a+1+c}(\id^{\tensor a} \tensor \mu^b \tensor \id^{\tensor c}) = 0.
	\eqnd
In particular, we can pre-compose the above with $\iota_0 \tensor (\iota_{[1]})^{\tensor d-1}$ and post-compose with $\pi_{[1]}$ to still equal zero. Let us analyze the summation one summand at a time. When $a>0$, we have that
	\begin{align}
	&\pi_{[1]}\circ \mu^{a+1+c}(\id^{\tensor a} \tensor \mu^b \tensor \id^{\tensor c}) \circ \iota_0 \tensor \iota_{[1]}^{\tensor d-1} \nonumber \\
		&= \pi_{[1]}\circ \mu^{a+1+c}(\iota_0 \tensor \iota_{[1]}^{\tensor a-1} \tensor (\iota_{[1]} \circ \mu^b_M) \tensor \iota_{[1]}^{\tensor c}) \nonumber \\
		&= \Xi_Y^{a+1+c}(\id^{\tensor a} \tensor\mu^b_M \tensor \id^{\tensor c}) \label{eqn.edge-2}.
	\end{align}
We used (\ref{eqn.id-edge}) and (\ref{eqn.edge-recover}), and then the definition of $\Xi_Y$.

While when $a=0$, we have:
	\begin{align}
	&\pi_{[1]}\circ \mu^{1+c}(\mu^b \tensor \id^{\tensor c}) \circ (\iota_0 \tensor \iota_{[1]}^{\tensor b+c-1}) \nonumber \\
		&= \pi_{[1]}\circ \mu^{1+c}( (\iota_{[1]}\pi_{[1]} +  \iota_0\pi_0) \mu^b \tensor \id^{\tensor c}) \circ (\iota_0 \tensor \iota_{[1]}^{\tensor b+c-1}) \nonumber \\ 
		&= \pi_{[1]}\circ \mu^{1+c}((\iota_{[1]}\circ \Xi^b_Y) \tensor \iota_{[1]}^{\tensor c})
				+\pi_{[1]}\circ \mu^{1+c}((\iota_0\pi_0) \mu^b \tensor \id^{\tensor c}) \circ (\iota_0 \tensor \iota_{[1]}^{\tensor b+c-1})\nonumber \\ 
		&= \mu^{1+c}_M(\Xi^b_Y \tensor \id^{\tensor c})
				+\pi_{[1]}\circ \mu^{1+c}((\iota_0\pi_0) \mu^b \tensor \id^{\tensor c}) \circ (\iota_0 \tensor \iota_{[1]}^{\tensor b+c-1})\nonumber \\ 
		&= \mu^{1+c}_M(\Xi^b_Y \tensor \id^{\tensor c})
				+\pi_{[1]}\circ \mu^{1+c}((\iota_0 \circ \mu^b_M) \tensor \iota_{[1]}^{\tensor c})\nonumber  \\ 
		&= \mu^{1+c}_M(\Xi^b_Y \tensor \id^{\tensor c})
				+\Xi_Y^{1+c}(\mu^b_M \tensor \id^{\tensor c})\label{eqn.edge-1} 
	\end{align}
Here we used, in order,  
	(\ref{eqn.id-edge}),  
	the definition of $\Xi_Y$, 
	(\ref{eqn.edge-recover}),  
	(\ref{eqn.edge-recover-plus}),  
	and then the definition of $\Xi_Y$.

Plugging (\ref{eqn.edge-1}) and (\ref{eqn.edge-2}) into (\ref{eqn.Aoo-functions}), we obtain
	\begin{align}
			&\sum_{b+c=d}
				(-1)^\clubsuit \mu_{\Xi(Y_1)}^{1+c}(\Xi_Y^{b}\tensor\id^{\tensor c}) \nonumber \\
			&+ \sum_{b+c=d} (-1)^\clubsuit \Xi_Y^{1+c}(\mu^{b}_{\Xi(Y_0)} \tensor \id^{\tensor c})  \nonumber \\
			&+ \sum_{a>0, a+b+c = d} (-1)^\clubsuit \Xi_Y^{a+1+c} (\id^{\tensor a} \tensor \mu^b_\cA\tensor \id^{\tensor c}) \nonumber \\
			&= 0. \nonumber
	\end{align}
This is precisely the equation from  Lemma~\ref{lemma.goal} with $N=1$. Note that we have used that $\mu^m_{\Xi(Y_i)} = \mu^m_M$ when the output point of $\mu^m_M$ is in $CF^*(-,Y_i)$. 
\end{proof}

\begin{proof}[Proof of Lemma~\ref{lemma.collapse}.]
The first assertion of the Lemma counts disks $\tilde u: S \to M \times T^*F$ with boundary punctures all sent to points $x_i \in M \times T^*F$ with $q \geq w$. The second assertion counts disks with boundary punctures sent to $x_i$ such that $x_0$ and $x_b$ have $q=-w$, with all other $x_i$ having $q\geq w$. As before, we let $u = \pi_{T^*F} \circ \tilde u$. We have to show that there is a bijection between disks $\tilde u$, and disks $u_M: S \to M$ with boundary conditions given by $\pi_M(x_i)$. 

We first claim that any $u$ must have image contained in the strip $F_q \times [-\un{D}, -\ov{D}] \subset T^*F$. One does this by boundary-stripping, first showing that $u$ has image contained entirely outside the region $[-w, w] \times [-D_Y,\infty)$, hence contained entirely inside the regions depicted in 1. and 2. of Figure~\ref{figure.staircase-disks}.  Thus, while a priori the Floer equation and boundary conditions allowed for more arbitrary disks, all the $\tilde u$ are actually in bijection with disks with boundary conditions where $Y$ is replaced by $Y_0 \times \{q=w\}$ (for (\ref{eqn.edge-recover}) of the Lemma) and by $Y_1 \times \{q=-w\}$ (for (\ref{eqn.edge-recover-plus}).

Then we are in the cases \ref{item.1} and \ref{item.2} of Lemma~\ref{lemma.fibers}. The same argument as in Seidel~\cite{seidel-lefschetz-i}, Lemma~7.3, shows the bijection of moduli spaces. Here is another proof: 

Let $\cM_M$ denote the moduli space of $u_M: S \to M$ with the relevant boundary conditions $X_i$ (and the chords between them),  $\cM_{T^*F}$ the moduli space of $u: S \to T^*F$ with the given boundary conditions $(\gamma_i, \pi_{T^*F}(x_i))$, and let $\cM$ denote the moduli space of $\tilde u$. Then one can write
	\eqnn
	\cM \cong \cM_M \times_{\cR} \cM_{T^*F}
	\eqnd
where the fiber product is over the forgetful map $(u,S) \mapsto S \in \cR$. (Recall we are using the direct sum almost-complex structure---hence a holomorphic map to $\cM$ is a pair of holomorphic maps to $M$, and to $T^*F$.) In general, this fiber product need not be smooth, but by Lemma~\ref{lemma.fibers}, the map $\cM_{T^*F} \to \cR$ is a diffeomorphism, hence the fiber product is a transverse fiber product. This completes the proof.
\end{proof}

\subsection{For higher simplices}
Let $Y \subset M \times T^*F^N$ be an $N$-simplex of $\lag^{\dd 0}$. As usual we center $Y$ so we can speak of its depth $D$ and its width $w$. We fix a staircase configuration $\gamma_0,\ldots,\gamma_{d-1}$ compatible with this $w$ and $D$. Given $X_i \in \ob \fuk$, let $X_i^{\dd} = X_i \times (\gamma_i)^N \subset M \times T^*F^N$. 

Recall from \ref{section.B} that one can think of $B(Y)$ as living over a cube. The vertices of the cube are in bijection with subsets $0 \in K \subset [N]$. Hence the intersection
	\eqnn
	B(Y) \cap X^{\dd}_i
	\eqnd
can be indexed to give an isomorphism of graded abelian groups (but not of chain complexes):
	\eqnn
	CF^*(B(Y), X_i^{\dd})
	\cong
	\bigoplus_{0 \in K \subset [N]}
		CF^*(X_i, Y_{ K_{\max}})[1 - \# K].
	\eqnd
\begin{remark}
Again, the $\Lambda$-non-characteristic condition for $Y$ ensures that there are no further intersections away from the cone tails of $B(Y)$. Also, a simple slope computation shows the grading shifts above.
\end{remark}
We define inclusion maps $\iota_K$ and projection maps $\pi_K$ of degrees $1-\#K$ and $\# K-1$, respectively. Likewise, since all the intersections of $X^\dd_i$ and $X^{\dd}_j$ occur where the cubical region is indexed by $K=[N]$, we let
	\eqnn
		\iota_{[N]} : CF^*(X_i,X_j) \cong CF^*(X_i^\dd, X_j^\dd) : \pi_{[N[}
	\eqnd
denote the isomorphisms.
Obviously,
	\eqn\label{eqn.id-cube}
	\id_{CF^*(B(Y),X_i^{\dd})} = \sum_{0 \in K \subset [N]} \iota_K \circ \pi_K.
	\eqnd

As in the previous section, we have $A_\infty$ maps $\mu^d$ 
	\eqnn
	CF^*(X_{d-1}^{\dd}, B(Y)) \tensor CF^*(X_{d-2}^{\dd}, X_{d-1}^{\dd}) \tensor \ldots \tensor CF^*(X_{0}^{\dd}, X_1^{\dd})
	\to
	CF^*(X_0^{\dd}, B(Y))
	\eqnd
of degree 2-d, 
satisfying the $A_\infty$ relations. 

\begin{defn}
Let $Y$ be an $N$-simplex of $\lag^{\dd 0}$. Then we define $\Xi^d_Y$ to be the linear map
	\eqnn
	\Xi^d_Y := \pi_{[N]} \circ \mu^d \circ (\iota_0 \tensor \iota_{[N]}^{\tensor d-1}).
	\eqnd
\end{defn}

The goal of the remainder of this section is to prove:

\begin{theorem}\label{theorem.cubes}
Let $Y$ be an $N$-simplex of $\lag^{\dd 0}$. Then in the notation of Definition~\ref{defn.dg-nerve}, consider the collection
	\eqnn
	\Xi(Y) := ( (\Xi(Y_i))_{i \in [N]}, (\Xi_{B(Y_K)})_{K \subset [N]}).
	\eqnd
This is an $N$-simplex of $N(\fuk\Mod)$.
\end{theorem}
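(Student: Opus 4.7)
The plan is to verify, via Lemma~\ref{lemma.goal}, that the collection $((\Xi(Y_i))_{i \in [N]}, (\Xi_{Y_K})_{K \subset [N]})$ satisfies equation~(\ref{eqn.goal}). My starting point is the $A_\infty$ relation for the operations $\mu^d$ counting holomorphic disks with boundary on $(X_0^{\dd}, \ldots, X_{d-1}^{\dd}, B(Y))$ in $M \times T^*F^N$, using the geometric setup of Section~\ref{section.compact-cube}. Pre-composing with $\iota_0 \tensor \iota_{[N]}^{\tensor d-1}$ and post-composing with $\pi_{[N]}$ turns this relation into the template for~(\ref{eqn.goal}), once one understands how $\mu$ decomposes along the cubical stratification of $B(Y)$ recorded by the identity~(\ref{eqn.id-cube}).

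The key analytic input is a cubical upgrade of Lemma~\ref{lemma.collapse}: for every $0 \in K \subset [N]$,
\[
\pi_K \circ \mu^b \circ (\iota_0 \tensor \iota_{[N]}^{\tensor b-1}) = \Xi_{Y_K}^b,
\]
with the convention $\Xi_{Y_{\{0\}}} = \mu_{\Xi(Y_0)}$; and more generally $\pi_{K'} \circ \mu^{1+c} \circ (\iota_K \tensor \iota_{[N]}^{\tensor c})$ computes $\Xi$ on the face of $Y$ traversed between the $K$- and $K'$-corners of $B(Y)$, reducing to $\mu_{\Xi(Y_N)}^{1+c}$ in the diagonal $K = K' = [N]$ case. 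The proof uses four ingredients, each a coordinate-wise upgrade of the edge case: the boundary-stripping argument (Lemma~\ref{lemma.q>w}) applied in each $T^*F_i$ factor, which confines the projection $\pi_{T^*F_i} \circ \tilde u$ to the region bounded by the staircase and cone tail in that factor; the direct-sum structure of $J$ on these regions from Section~\ref{section.compact-cube}; Lemma~\ref{lemma.fibers} applied componentwise to identify the $T^*F^N$-part of the moduli as a transverse fiber product over $\cR$; and induction on $N$ using the collaring identities from Lemma~\ref{lemma.B-faces}.

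Granting the Key Lemma, I process the $A_\infty$ relation exactly as in the proof of Theorem~\ref{theorem.edges}. The $a > 0$ summands feed into the middle-slot $\mu_\cA$ terms of~(\ref{eqn.goal}) via the identity $\pi_{[N]} \circ \mu^b \circ \iota_{[N]}^{\tensor b} = \mu_\cA^b$. For the $a = 0$ summands, I insert the identity decomposition~(\ref{eqn.id-cube}) between the outer and inner operations and sum over $0 \in K \subset [N]$. The extremal cases $K = \{0\}$ and $K = [N]$ deliver the $\Xi_Y \circ \mu_{\Xi(Y_0)}$ and $\mu_{\Xi(Y_N)} \circ \Xi_Y$ terms, while the consecutive subsets $K = \{0, 1, \ldots, j\}$ with $0 < j < N$ produce precisely the composition summands indexed by $[N] = K' \wedge K$ in~(\ref{eqn.goal}).

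The main obstacle will be accounting for the non-consecutive subsets $0 \in K \subset [N]$: each such $K$ contributes a formal composition through a corner not arising from a wedge decomposition of $[N]$, and these contributions must reassemble into the face terms $\Xi_{Y_{[N] \setminus j}}^d$. The strategy is to group such $K$'s by the intermediate vertex $j \in (0,N)$ they skip and to invoke the inductive hypothesis: each lower-dimensional face $Y_L$ with $L \subsetneq [N]$ already satisfies its own dg nerve relation as an $(|L|-1)$-simplex of $N(\fuk\Mod)$, and applying those lower relations telescopes the sum of non-consecutive contributions onto the single face term $\Xi_{Y_{[N] \setminus j}}^d$, matching Definition~\ref{defn.dg-nerve} up to signs. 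Signs throughout are inherited from the tautological Yoneda embedding of $\lag$ into its own modules, so sign verification reduces to checking consistency with Definition~\ref{defn.dg-nerve}.
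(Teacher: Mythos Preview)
Your overall architecture is right, and your Key Lemma $\pi_K \circ \mu^b \circ (\iota_0 \tensor \iota_{[N]}^{\tensor b-1}) = \Xi_{Y_K}^b$ is exactly the paper's Lemma~\ref{lemma.Xi_K}. Your handling of the $a > 0$ terms and of the consecutive-$K$ contributions also matches Lemma~\ref{lemma.a>0} and the consecutive case of Lemma~\ref{lemma.Xi^1+c}.

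The gap is in your treatment of the non-consecutive $K$. You propose to group them by skipped vertex and telescope via the inductive hypothesis (lower-dimensional dg nerve relations for the faces $Y_L$). This mechanism is not available: the outer operations $\pi_{[N]} \circ \mu^{1+c} \circ (\iota_K \tensor \iota_{[N]}^{\tensor c})$ for non-consecutive $K$ are not instances of $\Xi_{Y_L}$ for any face $L$, so there is no lower dg nerve relation to feed them into. What actually happens is direct and geometric. By Lemma~\ref{lemma.B-faces} (specifically~(\ref{eqn.BY_K'})), the back $K$th face of $B(Y)$ for non-consecutive $K$ factors as $c^{\ov K \setminus K} \times B(Y_{K'})$, i.e.\ it carries \emph{trivial} cone-tail factors in every skipped coordinate $j \in \ov K \setminus K$. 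Boundary-stripping confines the $T^*F_j$-projection of any contributing disk to the region $R''$, where $J$ splits off a standard $J_{T^*F_j}$; then Lemma~\ref{lemma.fibers} case~\ref{item.3} shows the $T^*F_j$-moduli is one-dimensional over $\cR$, so by the argument of Proposition~\ref{prop.id} the zero-dimensional count vanishes whenever $1 + c \geq 2$, and equals $\id$ when $1 + c = 1$ and there is exactly one such $j$. Consequently the face term $\Xi_{Y_{[N] \setminus j}}^d$ arises \emph{solely} from the single pair $(K,c) = ([N] \setminus \{j\}, 0)$, where the outer $\mu^1$ collapses to the identity; all other non-consecutive contributions are zero outright. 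There is no telescoping sum to perform, and no induction on $N$ is needed. The missing ingredient in your outline is precisely this vanishing lemma (Lemma~\ref{lemma.Xi^1+c} in the paper), whose proof rests on the back-face identity~(\ref{eqn.BY_K'}) rather than on the front-face identity you invoke.
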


\begin{remark}
As in the previous section, note the difference in subscript and input. $\Xi_Y$ denotes a linear map, while $\Xi(Y)$ denotes the simplex that $\Xi$ assigns.
\end{remark}

\begin{remark}
As usual, given $0 \in K \subset [N]$, we have let $Y_K$ denote the morphism collaring the face of $Y$ spanned by the subsets of $K$. $Y_i$ denotes the objects by which $Y$ is collared.
\end{remark}

To prove the theorem, we will show that the $A_\infty$ relation (\ref{eqn.Aoo-functions}) recovers the requisite relation (\ref{eqn.goal}) found in Lemma~\ref{lemma.goal}. So we state a few lemmas, and then prove the theorem through straight algebra. We then present the proofs of the lemmas, where the geometry is used.

Recall that $K \subset [N]$ is called {\em consecutive} if it does not skip any elements---that if, for all $i, j \in K$ with $i<j$, we must have that $i < k < j \implies k \in K$.

We begin with the $a>0$ term of the $A_\infty$ relations.

\begin{lemma}\label{lemma.a>0}
If $a>0$, we have
	\eqnn
		\pi_{[N]} \circ \mu^{a+1+c} \circ (\iota_{[0]} \tensor \iota_{[N]}^{\tensor a-1} \tensor \mu^b \tensor \iota_{[N]}^{\tensor c})
		=
		\Xi^{a+1+c}_Y (\id^{\tensor a} \tensor \mu^b_M \tensor \iota_{[N]}^{\tensor c}).
	\eqnd
\end{lemma}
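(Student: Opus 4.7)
The plan is to reduce Lemma~\ref{lemma.a>0} to a single \emph{middle-operation} identity, namely
\eqnn
\mu^b \circ \iota_{[N]}^{\tensor b} \;=\; \iota_{[N]} \circ \mu^b_M,
\eqnd
which is the higher-$N$ analogue of equation~(\ref{eqn.edge-recover}) of Lemma~\ref{lemma.collapse}. Once this is established, one plugs it directly into the left-hand side of the Lemma: the $\iota_{[N]}^{\tensor a-1} \tensor \mu^b \tensor \iota_{[N]}^{\tensor c}$ becomes $\iota_{[N]}^{\tensor a-1} \tensor (\iota_{[N]} \mu^b_M) \tensor \iota_{[N]}^{\tensor c}$, and then by the definition $\Xi^{a+1+c}_Y = \pi_{[N]} \circ \mu^{a+1+c} \circ (\iota_0 \tensor \iota_{[N]}^{\tensor a+c})$ the identity is immediate. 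So all the real work is in proving the middle-operation identity.

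To prove the middle-operation identity, I would enumerate the relevant solutions: the left-hand side counts rigid $\tilde u: S \to M \times T^*F^N$ satisfying~(\ref{eqn.floer}) with boundary on the product branes $X_0^{\dd}, \ldots, X_b^{\dd}$ and with all $b+1$ asymptotic chords sitting in the $K=[N]$ component of the intersection decomposition --- that is, the chord coordinates in each $T^*F_j$ factor have $q_j \geq w$. I would then run the boundary-stripping trick of Lemma~\ref{lemma.q>w} one factor at a time: for each $j$, apply Proposition~\ref{prop.bounded} to the projection $\pi_{T^*F_j} \circ \tilde u$ with the closed set $C$ built from the staircase and the vertical line $\{q_j = w\}$; this forces $\pi_{T^*F_j}(\tilde u(S))$ to lie in the staircase-bounded region $R_j$ of Definition~\ref{defn.R}. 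Simultaneously across all $j$, the image of $\tilde u$ must therefore lie inside $M \times \prod_j R_j$, a region on which the class $\cJ_F$ of Section~\ref{section.compact-cube} demands $J$ to split as a direct sum $J_M \oplus \bigoplus_j J_{T^*F_j}$ (using the iterated splitting property (\ref{eqn.cube-J-split})).

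On this direct-sum region, $\tilde u$ factors as a genuine product $(u_M, u_1, \ldots, u_N)$, where $u_M : S \to M$ is a Floer solution with boundary on $X_0, \ldots, X_b$ (the chords, by construction, are the correct ones from the staircase identification) and each $u_j : S \to T^*F_j$ is an honest holomorphic disk with boundary on $\gamma_0, \ldots, \gamma_b$ of type~\ref{item.1} in Lemma~\ref{lemma.fibers}. That lemma asserts each forgetful map $\cM_{T^*F_j} \to \cR$ is a diffeomorphism, so the moduli space sits as a transverse fibre product
\eqnn
\cM \;\cong\; \cM_M \times_{\cR} \cM_{T^*F_1} \times_{\cR} \cdots \times_{\cR} \cM_{T^*F_N}
\eqnd
which projects diffeomorphically onto $\cM_M$. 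A signed count of the zero-dimensional component of $\cM$ therefore recovers $\mu^b_M$, with signs inherited (just as in Lemma~\ref{lemma.collapse}) from the tautological Yoneda-module bookkeeping so that $\iota_{[N]}$ is an isomorphism of graded modules respecting operations.

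The main obstacle I expect is making the boundary-stripping argument work \emph{simultaneously} in every $T^*F_j$ factor: one has to check there is no interference between factors when confining $\tilde u$ to $M \times \prod_j R_j$, which relies essentially on the direct-sum region being defined factor-by-factor in $\cJ_F$ so that Proposition~\ref{prop.bounded} can be applied to each projection independently. The only other point of care is the transversality of the iterated fibre product with base $\cR$, but since the factor maps $\cM_{T^*F_j} \to \cR$ are all diffeomorphisms the fibre product collapses cleanly, and transversality of $\cM$ itself is already arranged by the perturbation data chosen in Section~\ref{section.compact-cube}.
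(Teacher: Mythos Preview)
Your proposal is correct and follows essentially the same route as the paper: isolate the inner $\mu^b$ (which, since $a>0$, has boundary only on the $X_i^{\dd}$), confine each projection $u_j$ to the staircase region $R$ via boundary-stripping, invoke Lemma~\ref{lemma.fibers} case~\ref{item.1} to see that each $u_j$ is determined by the conformal structure, and conclude the bijection $\tilde u \leftrightarrow u_M$. The only minor refinement is in the choice of closed set for boundary-stripping: the paper uses the larger set $C_0$ of~(\ref{eqn.C_0}), which includes the horizontal strips $F_q \times \big((-\infty,-\un D]\cup[-\ov D,\infty)\big)$, precisely so that the unbounded complementary regions lie inside the strip where~(\ref{eqn.cube-J-split}) guarantees $J$ splits off $J_{T^*F_j}$---this is what makes $u_j$ genuinely holomorphic there and licenses the open-mapping argument of Proposition~\ref{prop.bounded}. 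Your set ``staircase plus vertical line'' alone has unbounded complementary regions that escape the split locus, so you should enlarge it as the paper does; once you do, your argument and the paper's coincide.
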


\begin{lemma}\label{lemma.Xi_K}
For all $0 \in K \subset [N]$, we have
	\eqn
		\pi_K \mu^b \circ (\iota_0 \tensor \iota_{[N]}^{\tensor b-1}) = \Xi^b_{Y_{K}}.
	\eqnd
\end{lemma}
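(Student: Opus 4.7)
The plan is to iterate the ``disks reduce'' argument of Lemma~\ref{lemma.collapse}, collapsing one direction $j \in [N]\setminus K$ at a time. The structural identification comes from Lemma~\ref{lemma.B-faces}: near the face $\{q_j = -w_j : j \in [N]\setminus K\}$, the brane $B(Y)$ is collared by $B(Y_K)$ times cone tails $c_j|_{[-w_j,-w_j+\epsilon]}$ in the collapsed directions.

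First I would unpack the setup. In each direction $j \in [N]\setminus K$, both the output $x_0$ (in the $K$-summand) and the input $x_b$ placed by $\iota_0$ (at the $\{0\}$-summand) lie on the ``down'' face of $B(Y)$; their $T^*F_j$-projections sit on the left branch of $c_j$ at $q_j \approx -w_j$. The middle inputs $x_1, \ldots, x_{b-1}$ are $\gamma_i$-$\gamma_j$ intersections and so have $q_j > w$ in every direction. For any holomorphic disk $u$ contributing to the LHS, I would apply boundary-stripping (Remark~\ref{remark.boundary-stripping}, Proposition~\ref{prop.bounded}) to each projection $\pi_{T^*F_j} \circ u$ with $j \in [N]\setminus K$. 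The direct-sum splitting of $J$ on the slab $F_j \times [-\un D, -\ov D]$ imposed in (\ref{eqn.cube-J-split}) makes the projection genuinely holomorphic there; together with the boundary data above, it is pinned to the staircase pocket $R'$ of Definition~\ref{defn.R}, and the associated moduli $\cM_{T^*F_j} \to \cR$ is a diffeomorphism by Lemma~\ref{lemma.fibers}~\ref{item.2}.

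Because the splittings in different $j$-directions hold on mutually disjoint coordinate slabs, boundary-stripping can be applied jointly in all $j \in [N]\setminus K$, confining $u$ to the product region where $B(Y) = B(Y_K) \times \prod_{j \in [N]\setminus K} c_j|_{[-w_j,-w_j+\epsilon]}$ and the Floer data split as direct sums. Consequently $u$ decomposes as a tuple $(u_K, (u_j)_{j \in [N]\setminus K})$, and the moduli becomes a transverse fibered product
\begin{equation*}
\cM \;\cong\; \cM_{M \times T^*F^{|K|-1}} \;\times_\cR\; \prod_{j \in [N]\setminus K} \cM_{T^*F_j}.
\end{equation*}
Each $\cM_{T^*F_j} \to \cR$ being a diffeomorphism, this fibered product collapses to a bijection $\cM \leftrightarrow \cM_{M \times T^*F^{|K|-1}}$, with matching signs and orientations by the product structure of the Floer data, just as in the proof of Lemma~\ref{lemma.collapse}. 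The right-hand moduli counts disks with boundary on $B(Y_K)$ and $X_i \times (\gamma_i)^{|K|-1}$, with output projected to the $[|K|-1]$-summand of $B(Y_K)$; by definition this is $\Xi^b_{Y_K}$, and the grading shifts $[1-\#K]$ cohere on both sides.

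The main obstacle is organizing the simultaneous boundary-stripping: each application requires a priori control on the disk's image in the slab where $J$ splits, and this must be bootstrapped by iterating the argument direction by direction. Sign-tracking is routine, as both sides arise from Koszul rules applied to tautological Yoneda modules (cf.\ the remark after Lemma~\ref{lemma.goal}).
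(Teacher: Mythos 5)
Your proposal is correct and follows essentially the same route as the paper's proof: boundary-stripping the projections $u_j = \pi_{T^*F_j}\circ \tilde u$ for $j \notin K$ into the staircase pocket where Lemma~\ref{lemma.fibers}, case~\ref{item.2} makes $\cM_{T^*F_j} \to \cR$ a diffeomorphism, so that the disk is determined by its remaining components with boundary on $\del_K^{\front}B(Y) = B(Y_K)$, giving the bijection with the moduli defining $\Xi^b_{Y_K}$. The only cosmetic difference is that you phrase the collapse as a transverse fibered product over $\cR$ (as in the alternative proof of Lemma~\ref{lemma.collapse}), while the paper states the same fact as unique determination of each $u_j$ by the conformal structure.
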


\begin{lemma}\label{lemma.Xi^1+c}
Fix $0 \in K \subset [N]$. Then
	\eqnn
		\pi_{[N]} \circ \mu^{1+c} (\iota_K \tensor \iota_{[N]}^{\tensor c})
		=
		\begin{cases}
			\mu^{1+c}_M & K = [N] \\
			\Xi^{1+c}_{K'} & \text{$K$ consecutive}, |K| \leq N \\
			\id					& \text{$K$ not consecutive}, |K| = N, c=0\\
			0 					& \text{$K$ not consecutive}, |K| = N, c > 0 \\
			0						& \text{$K$ not consecutive}, |K| < N.
		\end{cases}
	\eqnd
Here, when $K$ is consecutive, $K' \subset [N]$ is the set of all $k'$ such that $k' \geq \max(K)$. Note also that the first line ($K= [N]$) is actually a special case of the second line, though we separate the two cases for clarity in the proof.
\end{lemma}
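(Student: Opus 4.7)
The plan is to count pseudoholomorphic disks $\tilde u: S \to M \times T^*F^N$ with $1+c$ inputs and one output, where the ``first'' input attached to $B(Y)$ sits at the $K$-vertex of the intersection cube, the output sits at the $[N]$-vertex, and the $c$ intermediate inputs sit at the $[N]$-vertex of each $X_j^\dd \cap X_{j+1}^\dd$ intersection (forced by the staircase geometry). In coordinates, the $T^*F_i$-projection of the input puncture has $q_i \geq w_i$ when $i\in K$ and $q_i \leq -w_i$ when $i \notin K$, while the output and intermediates always have $q_i \geq w_i$. I will apply boundary-stripping (Remark~\ref{remark.boundary-stripping}) direction by direction, using the direct-sum form of $J$ on the relevant regions specified in Section~\ref{section.compact-cube}, to force each such $\tilde u$ to split as a product of an honestly holomorphic disk $u_M$ in $M$ and honestly holomorphic disks $u_i$ in each $T^*F_i$.

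Then for each $i\in K$, Lemma~\ref{lemma.fibers} case~\ref{item.1} shows the $T^*F_i$-factor is rigid over $\cR$ (a single disk); for each $i\notin K$, Lemma~\ref{lemma.fibers} case~\ref{item.3} produces a one-parameter family of disks fibering over $\cR$. The total moduli is a transverse fiber product over $\cR$, so each free factor raises the dimension of the moduli by one.

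For $K=[N]$ no factor is free, and iterating Lemma~\ref{lemma.collapse} in each $T^*F_i$-direction reduces the count exactly to $\mu^{1+c}_M$. For $K$ consecutive with $\max K = k < N$, the $N-k$ free factors recombine with the $M$-component---together with the identifications from Lemma~\ref{lemma.B-faces} (particularly $\del^{\front}_K B(Y) = B(Y_K)$ and the cone-tail form of $\del^{\back} B(Y)$)---into precisely the defining data of $\Xi^{1+c}_{Y_{K'}}$ for the face $Y_{K'}$, which is an $(N-k)$-simplex and whose own $B$-construction lives over a cube of exactly the matching dimension. For $K = [N]\setminus\{i_0\}$ with $c=0$, both labels $K_{\max}$ and $[N]_{\max}$ equal $N$, so the $M$-component is a Kunneth-trivial strip on $Y_N$, and the single case~\ref{item.3} factor in direction $i_0$ supplies an $\RR$-translation family whose Kunneth reduction, exactly as in the proof of Proposition~\ref{prop.id}, yields $\id_{CF^*(X_0,Y_N)}$.

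The hard part will be the two ``zero'' cases. For $K=[N]\setminus\{i_0\}$ with $c>0$, the single case~\ref{item.3} factor fiber-products with the $0$-dimensional $M$-component over the $(c-1)$-dimensional $\cR$, yielding a moduli of positive dimension with no rigid points, hence a vanishing count. For $K$ not consecutive with $|K|<N$, at least two directions $i\notin K$ are free, and these two or more independent one-parameter families again force positive overall dimension. The main obstacle is verifying that the multiple one-parameter families remain genuinely unbroken under the transverse fiber product over $\cR$ and that no hidden coincidences produce isolated solutions; I expect this to reduce, via the splitting form of $J$ and the explicit ``moving slit'' description of the case~\ref{item.3} fibers from Lemma~\ref{lemma.fibers}, to an index count that is robust under the allowed perturbations on $M \times T^*F^N$.
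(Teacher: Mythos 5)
Your overall strategy---boundary-stripping, then using Lemma~\ref{lemma.fibers} to rigidify or free up the $T^*F_i$-components, then identifying the residual boundary condition via Lemma~\ref{lemma.B-faces}---is the paper's strategy, but your central splitting claim is wrong as stated, and the error propagates into the two cases that carry the content. You assert that every $\tilde u$ splits into an honest holomorphic disk in $M$ together with an honest holomorphic disk in \emph{each} $T^*F_i$. That is only true in the directions $i\in K$, where boundary-stripping confines $u_i$ to the region $R$ on which $J$ is required to split off $J_{T^*F_i}$. In a direction $i>\max(K)$ the disk must cross from $q_i\le -w$ to $q_i\ge w$, so it passes through $M\times[-w,w]\times[-\ov{D},\infty)$, where $J$ is an arbitrary contact-type structure and where the boundary brane $\del^{\back}_K B(Y)=B(Y_{K'})$ is not a product with a factor in $T^*F_i$; the components $u_M$ and $(u_i)_{i>\max(K)}$ therefore never decouple, and your ``free one-parameter factors that recombine'' is not a construction. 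If each such $u_i$ really were an independent holomorphic disk in $T^*F_i$ moving in a one-parameter family, the moduli space would carry excess dimension $N-\max(K)$ and the rigid count would vanish, rather than equal $\Xi^{1+c}_{Y_{K'}}$. The correct move (the paper's) is to strip off only the $K$-directions and recognize the remaining \emph{joint} map $\tilde u_{K^C}=(u_M,u_{\max(K)+1},\dots,u_N)$, with boundary on $B(Y_{K'})$ via (\ref{eqn.BY_K'}), as literally a disk in the moduli space defining $\Xi^{1+c}_{Y_{K'}}$.

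The same confusion affects the vanishing cases. For $K$ not consecutive with $|K|<N$, the directions that genuinely decouple with a one-dimensional fiber are only those $j\in\ov{K}\setminus K$: there the back face is a product $c^{\ov{K}\setminus K}\times B(Y_{K'})$, so a direct-sum $J$ is admissible and regular via (\ref{eqn.regular-SES}). There need not be two such directions (take $K=\{0,2\}\subset[3]$: only $j=1$ is a gap, while $3\in K'$ does not decouple). One free direction does suffice, but your dimension count then fails exactly when $1+c=1$: a one-parameter family of strips is rigid after quotienting by the $\RR$-translation, so ``positive overall dimension'' does not give vanishing. The paper handles this subcase by a separate argument (a nonconstant $u_j$ forces the complementary component of $\tilde u_{K^C}$ to be trivial, contradicting the boundary conditions), and you would need something of that kind; your closing appeal to ``an index count robust under perturbations'' does not identify this issue. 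The remaining cases ($K=[N]$, and $K=[N]\setminus\{i_0\}$ via reduction to Proposition~\ref{prop.id}) are handled correctly.
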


\begin{corollary}\label{cor.1}
By combining Lemmas~\ref{lemma.Xi_K} and~\ref{lemma.Xi^1+c}, we have:
	\begin{align}
	\sum_{0 \in K \subset [N]} (-1)^{\clubsuit} \pi_{[N]} \mu^1 \circ \iota_K \circ (\pi_K \circ \mu^d \circ (\iota_0 \tensor \iota_{[N]}^{\tensor d-1}))
	=& (-1)^\clubsuit \mu^1_{\Xi(Y_N)} \Xi^d_Y\nonumber\\
	&+\sum_{0<j<N} (-1)^\clubsuit \Xi^d_{Y_{[N] \setminus \{j\}}} \nonumber\\
	&+ \sum_{\text{$K$ consecutive}} (-1)^\clubsuit \Xi^1_{K'}\circ \Xi^d_K\nonumber
	\end{align}
where as before, $K' \subset [N]$ is the set consisting of all $k'$ such that $k' \geq \max(K)$. Note we have used the fact that $\mu^1_M = \mu^1_{\Xi(Y_N)}$ in the first term.
\end{corollary}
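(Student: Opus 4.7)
The plan is purely an organizational exercise combining Lemma~\ref{lemma.Xi_K} with Lemma~\ref{lemma.Xi^1+c}. Each summand on the left-hand side has outer factor $\pi_{[N]}\circ\mu^1\circ\iota_K$ (i.e.\ the $c=0$ case of Lemma~\ref{lemma.Xi^1+c}) and inner factor $\pi_K\circ\mu^d\circ(\iota_0\tensor\iota_{[N]}^{\tensor d-1})$, which by Lemma~\ref{lemma.Xi_K} equals $\Xi^d_{Y_K}$. So after that substitution the sum takes the form
\[
\sum_{0\in K\subset[N]} (-1)^{\clubsuit}\,(\pi_{[N]}\mu^1\iota_K) \circ \Xi^d_{Y_K},
\]
and the remaining work is to evaluate the outer factor case by case.

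I would stratify the sum over $K$ into the four disjoint classes that appear in Lemma~\ref{lemma.Xi^1+c} with $c=0$. When $K=[N]$, the outer factor equals $\mu^1_M=\mu^1_{\Xi(Y_N)}$, so the summand is $\mu^1_{\Xi(Y_N)}\circ\Xi^d_Y$, producing the first term on the right-hand side. When $K$ is consecutive with $|K|<N$, the outer factor equals $\Xi^1_{Y_{K'}}$, producing the summand $\Xi^1_{K'}\circ\Xi^d_K$ and hence the third sum. When $K$ is non-consecutive with $|K|=N$, necessarily $K=[N]\setminus\{j\}$ for a unique $0<j<N$, and the outer factor is the identity, so the summand reads $\Xi^d_{Y_{[N]\setminus\{j\}}}$, producing the middle sum. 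When $K$ is non-consecutive with $|K|<N$, the outer factor vanishes and the summand drops out.

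Since these four classes exhaust $\{K : 0\in K\subset [N]\}$ disjointly and exhaustively, collecting the surviving summands gives precisely the three terms of the displayed equation. The one point worth a brief verification is that if $K=\{0,1,\ldots,k\}$ is a consecutive subset containing $0$, then $K'=\{k,k+1,\ldots,N\}$ is also consecutive, so $Y_{K'}$ is a bona fide $(N-k)$-simplex of $\lag^{\dd 0}$ and $\Xi^1_{Y_{K'}}$ has already been defined in the preceding subsection. The signs $\clubsuit$ are inherited slot by slot from the $A_\infty$ relation used to derive the input lemmas; as remarked after Lemma~\ref{lemma.goal}, every sign in this story comes from the Koszul rule applied to tautological Yoneda modules, so no further sign bookkeeping is required. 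In particular there is no analytic content in this corollary—every piece of geometry has already been absorbed into the two lemmas being combined, and the whole argument is a case split.
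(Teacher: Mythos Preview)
Your proof is correct and matches the paper's approach exactly; the paper in fact gives no separate proof, stating the corollary as an immediate consequence of the two lemmas, and your case split is precisely how one reads it off. One small slip: in your second class you write ``$K$ consecutive with $|K|<N$'' where the lemma says $|K|\leq N$ (equivalently $K\neq [N]$, since $|[N]|=N+1$); as written your four classes miss the consecutive subset $K=\{0,1,\dots,N-1\}$, which has $|K|=N$ and should land in the third sum with $K'=\{N-1,N\}$.
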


\begin{corollary}\label{cor.2}
By Lemmas~\ref{lemma.Xi_K} and~\ref{lemma.Xi^1+c}, we have
	\begin{align}
		&\sum_{
				\substack{
					a=0 \\
					c > 0 \\
					b+c = d\\
					\text{$K$ consecutive}
          }
    }
		(-1)^{\clubsuit} \pi_{[N]} \mu^{1+c}  \circ (\iota_K \tensor \iota_{[N]}^{\tensor c}) \circ ( (\pi_K \circ  \mu^b \circ (\iota_{0} \tensor \iota_{[N]}^{\tensor b-1})) \tensor \id^{\tensor c}) \nonumber \\
		&= (-1)^{\clubsuit} \mu_{\Xi(Y_N)}^{1+c}(\Xi^b_Y \tensor \id^{\tensor c})
		+ \Xi^{1+c}_K (\mu^b_M \tensor \id^{\tensor c})
		+ \sum_{\substack{K \neq \{0\} \\ \text{$K$ consecutive}}}
			\Xi^{1+c}_{K'} (\Xi^b_K \tensor \id^{\tensor c})\nonumber.
	\end{align}
\end{corollary}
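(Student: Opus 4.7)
The proof is a direct consequence of the two preceding lemmas, obtained by applying them in succession to the inner and outer factors and then partitioning the summation according to the three qualitatively distinct cases $K=[N]$, $K=\{0\}$, and $\{0\} \subsetneq K \subsetneq [N]$.

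First, I apply Lemma~\ref{lemma.Xi_K} to the inner factor, which for any $0 \in K \subset [N]$ gives
\[
\pi_K \circ \mu^b \circ (\iota_0 \tensor \iota_{[N]}^{\tensor b-1}) = \Xi^b_{Y_K}.
\]
The degenerate case $K=\{0\}$ is to be read with the convention that $Y_{\{0\}}$ is the $0$-simplex $Y_0$, so $\Xi^b_{Y_{\{0\}}}$ is interpreted as the module operation $\mu^b_{\Xi(Y_0)}$, which by definition coincides with the wrapped $A_\infty$ operation $\mu^b_M$.

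Next, I apply Lemma~\ref{lemma.Xi^1+c} to the outer factor. Because the summation ranges only over \emph{consecutive} $K$, only the first two cases of that lemma occur: $K = [N]$ produces $\mu^{1+c}_M = \mu^{1+c}_{\Xi(Y_N)}$, while every other consecutive $K \subsetneq [N]$ produces $\Xi^{1+c}_{Y_{K'}}$, with $K' = \{k' \in [N] : k' \geq \max K\}$. Substituting both identifications into the left-hand side reduces the proof to straight algebra.

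Splitting the resulting sum by the three cases then produces precisely the right-hand side: the case $K = [N]$ contributes $\mu^{1+c}_{\Xi(Y_N)}(\Xi^b_Y \tensor \id^{\tensor c})$, which is the first term; the case $K = \{0\}$, for which $K' = [N]$ and $\Xi^b_{Y_{\{0\}}} = \mu^b_M$, contributes $\Xi^{1+c}_Y(\mu^b_M \tensor \id^{\tensor c})$, matching the second term (with the subscript $K$ there read as shorthand for $Y_{K'} = Y$); and the remaining consecutive $K$ with $\{0\} \subsetneq K \subsetneq [N]$ assemble into the third summation. No geometric input enters at this stage beyond what has already been packaged into Lemmas~\ref{lemma.Xi_K} and~\ref{lemma.Xi^1+c}; the only real care required is in bookkeeping the signs $\clubsuit$ and in verifying that the degenerate boundary cases $K \in \{\{0\}, [N]\}$ are treated consistently with the Yoneda convention relating $\Xi$ of a $0$-simplex to the $A_\infty$ structure of $\wrap(M)$.
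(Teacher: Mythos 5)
Your proposal is correct and is exactly the argument the paper intends: the corollary is stated without proof precisely because it is the immediate combination of Lemmas~\ref{lemma.Xi_K} and~\ref{lemma.Xi^1+c} followed by sorting the consecutive $K$ into the three cases $K=[N]$, $K=\{0\}$, and $\{0\}\subsetneq K\subsetneq[N]$. You also correctly resolve the paper's slightly abusive notation in the second and third terms (reading $\Xi^{1+c}_K$ in the $K=\{0\}$ case as $\Xi^{1+c}_{Y_{K'}}=\Xi^{1+c}_Y$, and $\Xi^b_{Y_{\{0\}}}$ as $\mu^b_{\Xi(Y_0)}=\mu^b_M$), which is needed to match the target equation of Lemma~\ref{lemma.goal}.
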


\begin{corollary}\label{cor.3}
Lemma~\ref{lemma.Xi^1+c} tells us
	\eqnn
	0 = \sum_{\substack{a=0 \\ c>0 \\ \text{$K$ not consecutive}}}
		(-1)^{\clubsuit} \pi_{[N]} \mu^{1+c}  \circ (\iota_K \tensor \iota_{[N]}^{\tensor c}) \circ \left( (\pi_K \circ  \mu^b \circ (\iota_0 \tensor \iota_{[N]}^{\tensor b-1})) \tensor \id^{\tensor c}\right).
	\eqnd
\end{corollary}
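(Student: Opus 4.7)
The plan is to deduce Corollary~\ref{cor.3} as an immediate termwise consequence of Lemma~\ref{lemma.Xi^1+c}; there is essentially no content in this corollary beyond unpacking what the lemma already says.

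Concretely, I fix an arbitrary summand on the right-hand side. Such a summand is indexed by data $(b,c,K)$ with $a = 0$, $b + c = d$, $c > 0$, and $0 \in K \subset [N]$ with $K$ non-consecutive. Rewriting the summand as
\[
  \pi_{[N]} \circ \mu^{1+c}\bigl(\iota_K \tensor \iota_{[N]}^{\tensor c}\bigr) \;\circ\; \Bigl( \bigl(\pi_K \circ \mu^b \circ (\iota_0 \tensor \iota_{[N]}^{\tensor b-1})\bigr) \tensor \id^{\tensor c} \Bigr),
\]
I observe that the left factor $\pi_{[N]} \circ \mu^{1+c}(\iota_K \tensor \iota_{[N]}^{\tensor c})$ is precisely one of the expressions evaluated in Lemma~\ref{lemma.Xi^1+c}. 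For non-consecutive $K$, that lemma records three cases: (i) $|K| = N$ with $c = 0$ gives the identity; (ii) $|K| = N$ with $c > 0$ gives zero; (iii) $|K| < N$ (for any $c$) gives zero. Since the summation restricts to $c > 0$, only cases (ii) and (iii) can occur, and in both the left factor vanishes. Hence each summand is zero, and the sum is termwise zero.

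There is no main obstacle inside the corollary itself; all the geometric work lives in Lemma~\ref{lemma.Xi^1+c}, whose proof is where the staircase and cone-tail geometry of Section~\ref{section.T*R} together with the $B$-construction of Section~\ref{section.B} conspire to identify which cubical vertices of $B(Y)$ can serve as intermediate labels for a holomorphic disk whose output lies over the vertex indexed by $[N]$. Once that lemma is in hand, the corollary is pure bookkeeping on the index set of the sum, and its role in the overall proof of Theorem~\ref{theorem.cubes} is simply to certify that the non-consecutive, $c > 0$ part of the $a = 0$ slice of the $A_\infty$ relation contributes nothing and so does not need to be absorbed into any of the terms appearing in Lemma~\ref{lemma.goal}.
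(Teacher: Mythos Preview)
Your proposal is correct and matches the paper's own treatment: the corollary is stated as an immediate consequence of Lemma~\ref{lemma.Xi^1+c}, and your termwise argument---that for non-consecutive $K$ with $c>0$ only the vanishing cases of the lemma apply---is exactly the intended reasoning.
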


\begin{corollary}\label{cor.4}
By Lemma~\ref{lemma.a>0} and the definition of $\Xi_Y$, 
		\begin{align}
			\sum_{\substack{a > 0 \\ a+b+c=d}} &(-1)^{\clubsuit} \pi_{[N]} \mu^{a+1+c} (\iota_0 \tensor \iota_{[N]}^{\tensor a-1} \tensor (\mu^b \circ \iota_{[N]}^{\tensor b}) \tensor \iota_{[N]}^{\tensor c}) \nonumber\\
			&= \sum_{\substack{a>0, \\ a+b+c = d}} \Xi^{a+1+c}_Y (\id^{\tensor a} \tensor \mu^b_M \tensor \id^{\tensor c}).\nonumber
		\end{align}
\end{corollary}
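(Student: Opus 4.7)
The plan is to apply Lemma~\ref{lemma.a>0} summand by summand, after one preparatory observation: the higher-$N$ analogue of equation~(\ref{eqn.edge-recover}) from Lemma~\ref{lemma.collapse}, namely
\begin{equation}\nonumber
\mu^b \circ \iota_{[N]}^{\tensor b} \;=\; \iota_{[N]} \circ \mu^b_M.
\end{equation}
This identity says that disks in $M \times T^*F^N$ with all boundary conditions on the stabilized branes $X_i^{\dd}$ and all Hamiltonian chord asymptotics lying in the $K=[N]$ summand are in moduli-preserving bijection with disks in $M$ on the corresponding $X_i$. The proof is the same boundary-stripping argument used in Lemma~\ref{lemma.collapse}, applied factorwise in each $T^*F_j$: the staircases $\gamma_j$ in each coordinate force $u$'s projection to $T^*F^N$ into a product of convex regions (case~\ref{item.1} of Lemma~\ref{lemma.fibers}), where $J$ splits as a direct sum and $\cM_{T^*F_j}\to \cR$ is a diffeomorphism, so the cotangent factors contribute nothing to the moduli count.

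Granting this, the remainder is straightforward algebra. For each $a>0$, $b,c$ with $a+b+c=d$, we compute
\begin{align}
& \pi_{[N]}\circ \mu^{a+1+c} \circ \bigl(\iota_0 \tensor \iota_{[N]}^{\tensor a-1} \tensor (\mu^b \circ \iota_{[N]}^{\tensor b}) \tensor \iota_{[N]}^{\tensor c}\bigr) \nonumber \\
&\quad =\; \pi_{[N]}\circ \mu^{a+1+c} \circ \bigl(\iota_0 \tensor \iota_{[N]}^{\tensor a-1} \tensor (\iota_{[N]}\circ \mu^b_M) \tensor \iota_{[N]}^{\tensor c}\bigr) \nonumber \\
&\quad =\; \pi_{[N]}\circ \mu^{a+1+c} \circ \bigl(\iota_0 \tensor \iota_{[N]}^{\tensor a+c}\bigr) \circ \bigl(\id^{\tensor a} \tensor \mu^b_M \tensor \id^{\tensor c}\bigr) \nonumber \\
&\quad =\; \Xi^{a+1+c}_Y \circ \bigl(\id^{\tensor a} \tensor \mu^b_M \tensor \id^{\tensor c}\bigr). \nonumber
\end{align}
The first step is the preparatory identity; the second pulls $\mu^b_M$ out past the inclusion maps (legal since the $\iota$'s are injections of summands and commute with tensor slots to their left and right); the third invokes the definition $\Xi^{d}_Y = \pi_{[N]}\circ \mu^{d}\circ(\iota_0 \tensor \iota_{[N]}^{\tensor d-1})$ with $d=a+1+c$.

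Summing over all $(a,b,c)$ with $a>0$ and $a+b+c=d$, and carrying the signs $(-1)^{\clubsuit}$ inherited from the ambient $A_\infty$ relation~(\ref{eqn.Aoo-functions}) through the equality (each step being a substitution of equals, the signs are preserved), one obtains the corollary. There is no serious obstacle: all of the geometry is already packaged into Lemma~\ref{lemma.a>0} (equivalently, the higher-$N$ disks-reduce identity above), and Corollary~\ref{cor.4} is simply the assembly needed to match the $a>0$ line of equation~(\ref{eqn.goal}) of Lemma~\ref{lemma.goal}. The only bookkeeping to watch is that the $\iota_{[N]}$ appearing in the definition of $\Xi^{a+1+c}_Y$ absorb exactly the $\iota_{[N]}$'s decorating the $a-1$ intermediate slots together with the $\iota_{[N]}$ produced on the right-hand side of the disks-reduce identity, leaving precisely the tuple $\iota_0 \tensor \iota_{[N]}^{\tensor a+c}$ required by the definition.
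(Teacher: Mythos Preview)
Your proof is correct and follows essentially the same approach as the paper. The paper gives no separate proof of this corollary beyond the tag ``By Lemma~\ref{lemma.a>0} and the definition of $\Xi_Y$,'' and you have spelled out precisely that implication: the preparatory identity $\mu^b \circ \iota_{[N]}^{\tensor b} = \iota_{[N]} \circ \mu^b_M$ is exactly the content established in the proof of Lemma~\ref{lemma.a>0} (and is the $K=[N]$ case of Lemma~\ref{lemma.Xi^1+c}), and the remaining algebra is the unwinding of the definition of $\Xi_Y^{a+1+c}$.
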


\begin{proof}[Proof of Theorem~\ref{theorem.cubes} assuming lemmas.]
Let us write out the expression
	\eqnn
	\pi_N \circ \mu^{a+b+c} (\id^{\tensor a} \tensor \mu^b \tensor \id^{\tensor c}) \circ (\iota_0 \tensor \iota_{[N]}^{\tensor d}).
	\eqnd
Using (\ref{eqn.id-cube}) for the $a=0$ case, we can group these terms as follows:
	\begin{align}
	0 = 
		& \sum_{0 \in K \subset [N]} (-1)^{\clubsuit} \pi_{[N]} \mu^1 \circ \iota_K \circ (\pi_K \circ \mu^d \circ (\iota_0 \tensor \iota_{[N]}^{\tensor d-1})) \label{proof.1}\\ 
		&\sum_{
				\substack{
					a=0 \\
					c > 0 \\
					b+c = d\\
					\text{$K$ consecutive}
          }
    }
		(-1)^{\clubsuit} \pi_{[N]} \mu^{1+c}  \circ (\iota_K \tensor \iota_{[N]}^{\tensor c}) \circ ( (\pi_K \circ  \mu^b \circ (\iota_{0} \tensor \iota_{[N]}^{\tensor b-1})) \tensor \id^{\tensor c})  \label{proof.2}\\ 
		&+ \sum_{\substack{a=0 \\ c>0 \\ \text{$K$ not consecutive}}}
		(-1)^{\clubsuit} \pi_{[N]} \mu^{1+c}  \circ (\iota_K \tensor \iota_{[N]}^{\tensor c}) \circ \left( (\pi_K \circ  \mu^b \circ (\iota_0 \tensor \iota_{[N]}^{\tensor b-1})) \tensor \id^{\tensor c}\right) \label{proof.3}\\ 
			&+\sum_{\substack{a > 0 \\ a+b+c=d}} (-1)^{\clubsuit} \pi_{[N]} \mu^{a+1+c} (\iota_0 \tensor \iota_{[N]}^{\tensor a-1} \tensor (\mu^b \circ \iota_{[N]}^{\tensor b}) \tensor \iota_{[N]}^{\tensor c})  \label{proof.4}
	\end{align}	
The expression in line number (\ref{proof.1}) is simplified by Corollary~\ref{cor.1}, in line (\ref{proof.2}) is simplified by Corollary~\ref{cor.2}, and so forth in order. (We have arranged the order of the corollaries to match with order of the line numbers above.) Thus using all the corollaries, we arrive at
	\begin{align}
			0=&\sum_{b+c=d}
				(-1)^\clubsuit \mu_{\Xi(Y_N)}^{1+c}(\Xi_Y^{b}\tensor\id^{\tensor c}) \nonumber \\
			&+ \sum_{b+c=d} (-1)^\clubsuit \Xi_Y^{1+c}(\mu^{b}_{\Xi(Y_0)} \tensor \id^{\tensor c})  \nonumber \\
			&+ \sum_{a>0, a+b+c = d} (-1)^\clubsuit \Xi_Y^{a+1+c} (\id^{\tensor a} \tensor \mu^b_\cA\tensor \id^{\tensor c}) \nonumber \\
			&+  \sum_{0<j<N} (-1)^\clubsuit \Xi_{Y_{[N] \setminus j}}^d\nonumber \\
				&+\sum_{J = J'' \wedge J'} \sum_{b+c=d} (-1)^\clubsuit \Xi_{Y_{J''}}^{1+c} (\Xi_{Y_{J'}}^{b}\tensor\id^{\tensor c})]
				.\nonumber
	\end{align}
which is precisely Equation~(\ref{eqn.goal}). The theorem is proven.
\end{proof}

\subsection{Proof of Lemmas}
\begin{proof}[Proof of Lemma~\ref{lemma.a>0}.]
Note that for $a>0$, $\mu^b$ counts disks
	\eqnn
	\tilde u: S \to M \times T^*F^N
	\eqnd
such that
	\eqnn
		\tilde u(\del S) \subset \bigcup_{c \leq i \leq b+c} X_i^{\dd}
		=
		\bigcup_{c \leq i \leq b+c} X_i \times \gamma_i^N.
	\eqnd
In particular, all Hamiltonian chords involved have $F^N_q$ coordinate with $q_j \geq w$ for all $1 \leq j \leq N$.

For any $1 \leq j \leq N$, consider the component 
	\eqn\label{eqn.u_j}
		u_j := \pi_{T^*F_j} \circ \tilde u : S \to T^*F_j
	\eqnd
to the $j$th factor. We claim $u_j$ has image where $J$ splits as a direct sum $J_M \oplus (J_{T^*F})^N$ with $J_{T^*F}$ the standard almost-complex structure (\ref{eqn.i}). To see this, one applies boundary-stripping (see Remark~\ref{remark.boundary-stripping}) beginning with
	\eqn\label{eqn.C_0}
		C_0 = F_q \times \left( (-\infty, -\un{D}] \coprod [-\ov{D},\infty) \right) \bigcup \left(\bigcup_{0 \leq i < b} \gamma_i \right) \bigcup c.
	\eqnd
See Figure~\ref{figure.C-a>0}. Importantly, the open regions in the complement of $C_0$ are all regions where $J$ splits as a direct sum $J_{M \times T^*F^{N \setminus \{j\}}} \oplus J_{T^*F_j}$. By stripping away the portion of $c$ with $q_j < w$, we find that $u_j(S)$ is fully contained in $[w,\infty) \times [-\un{D} , -\ov{D}]$, and hence---by boundary-stripping again--- in the region $R$ bounded by the staircase (see Definition~\ref{defn.R}).

\begin{figure}
		\[
			\xy
			\xyimport(8,8)(0,0){\includegraphics[width=4in]{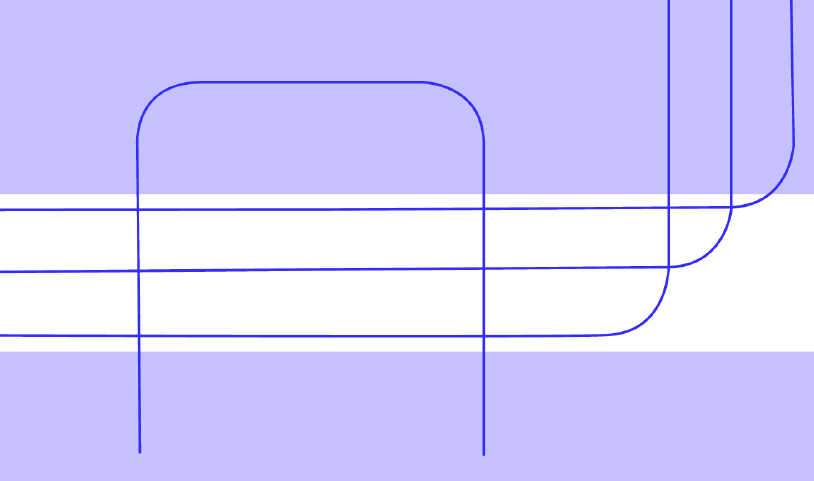}}
			\endxy
			\]
\begin{image}\label{figure.C-a>0}
The $C_0$ used in the proof of Lemma~\ref{lemma.a>0}, indicated in blue. (The curves themselves are also blue---they are part of $C_0$.)
\end{image}
\end{figure}

Since the choice of $j$ was arbitrary, it follows that $\tilde u$ has image in $M \times \left( [w,\infty) \times [-\un{D},-\ov{D}]\right)^N$. Here, the almost complex structure is indeed required to split as $J_M \oplus (J_{T^*F})^N$, so each of the components of
	\eqnn
	\tilde u = (u_M, u_1,\ldots,u_N)
	\eqnd
satisfies the perturbed Floer equation (\ref{eqn.floer}) (which of course is the honest holomorphic curve equation in $T^*F_j$). Since each $u_j$ is uniquely determined given $S$ (Lemma~\ref{lemma.fibers}), the assignment $\tilde u \mapsto u_M$ defines a bijection
	\eqnn
	\{ \tilde u\} \cong \{u_M\}.
	\eqnd
\end{proof}

\begin{proof}[Proof of Lemma~\ref{lemma.Xi_K}.]
Note that the expression $\pi_K \mu^b \circ (\iota_0 \tensor \iota_{[N]}^{\tensor b-1})$ counts disks whose boundary Hamiltonian chords $x_0,\ldots,x_b$ in $M \times T^*F^N$ project to points in $T^*F^N$ whose $j$th $q$-coordiantes are given by
	\begin{align}
		q_j(x_i) & \geq w \qquad ( i \in 1, \ldots, b-1) \nonumber \\
		q_j(x_b) & \leq w \nonumber \\
		q_j(x_0) & \begin{cases} \geq w & j \in K \\ \leq -w & j \not \in K \end{cases} \nonumber.
	\end{align}
Now fix $j \not \in K$ and consider the component $u_j = \pi_{T^*F_j} \circ \tilde u$. We claim that $u_j$ has image as depicted in \ref{item.2} of Figure~\ref{figure.staircase-disks}---in other words, it has image contained in the region $R'$ bounded by the staircase. (See Definition~\ref{defn.R}.) To see this, we apply boundary-stripping starting with the same $C_0$ as in (\ref{eqn.C_0}). (Note we are using assumption (\ref{eqn.cube-J-split}) here.) We omit the details. We conclude $u_j$ is then uniquely determined by the conformal structure $S$ by Lemma~\ref{lemma.fibers}. This means the map
	\eqn\label{eqn.u-components}
		\tilde u = (u_M, u_1,\ldots, u_N)
	\eqnd
is completely determined by its ``non-$K$'' components. Moreover, since each $u_j$ for $j \not \in K$ is contained in $R''$, its boundary along $Y$ actually reduces to the portion of $Y$ collared along $q_j = - w$. So the ``non-$K$'' component has boundary on $\del^{\front}_K B(Y)$.  Now we are in the home stretch.

Let $N'= \# K - 1$ and consider the natural order-preserving embedding
	\eqnn
	\rho: [N'] \to [N]
	\eqnd
with image $K$. Define a map
	\eqnn
		\tilde u_K : S \to M \times T^*F^{N'}
	\eqnd
by setting the components of $\tilde u_K$ to be
	\eqnn
		\tilde u_K :=		(u_M, u_{\rho(1)}, \ldots, u_{\rho(N')}).
	\eqnd
(We are using the notation from (\ref{eqn.u-components}). Then $\tilde u_K$ is a disk satisfying the perturbed Floer equation (\ref{eqn.floer}) with boundary conditions as follows:
	\begin{itemize}
		\item
			The $i$th brane is $X_i \times \gamma_i^{N'}$ for $0 \leq i < b$.
		\item
			The $b$th brane is 
				\eqnn
				\del^{\front}_K B(Y)=B(Y_K)
				\eqnd
			by~(\ref{eqn.BY_K}).
		\item
			The $i$th marked point is sent to a Hamiltonian chord $y_i $ in $M \times T^*F^{N'}$ such that for all $j \in 1, \ldots, N'$, we have
				\eqnn
					q_j(y_0),\ldots,q_j(y_{N'}) \geq w,
					\qquad
					q_j(y_b) \leq -w.
				\eqnd
	\end{itemize}
This means $\tilde u_K$ is precisely a disk appearing in the operation $\Xi^b_K$. Since the map $\tilde u \mapsto \tilde u_K$ establishes a bijection of moduli spaces (given a $\tilde u_K$, one has a unique lift to $\tilde u$ by the given $S$), we are finished. 
\end{proof}

\begin{proof}[Proof of Lemma~\ref{lemma.Xi^1+c}]
We proceed case by case:

{\bf The case $K = [N]$.} This case counts disks $\tilde u: S \to M \times T^*F^N$ for which all boundary Hamiltonian chords lie above points in $T^*F^N$ with $q_j\geq w$ for all $1 \leq j \leq N$. By considering $u_j = \pi_{T^*F_j} \circ \tilde u$, one can apply the boundary-stripping trick with the same $C$ as in Lemma~\ref{lemma.q>w} to show that each $u_j$ has image contained in the region $R$ bounded by the staircase $(\gamma_i)_{0\leq i \leq c}$. This proves $\tilde u$ has image in the region $M \times \left( [w,\infty) \times [-\un{D},-\ov{D}]\right)^N$ where $J = J_M \oplus (J_{T^*F})^N$, and by Lemma~\ref{lemma.fibers}, the map
	\eqn
	\tilde u = (u_M, u_1,\ldots,u_N),
	\eqnd
is determined completely by $u_M= \pi_M \circ \tilde u: S \to M$ once one fixes (the conformal structure of) $S$. Hence the assignment $\tilde u \mapsto u_M$ defines a bijection of moduli spaces proving the claim.

{\bf The case $K$ consecutive, $|K| \leq N$.} For $k \in K$, the same argument as the case $K = [N]$ shows that the component $u_{k}:S \to T^*F_k$ is uniquely determined given a conformal structure $S$. Since the data of $(u_k)_{k \in K}$ is redundant, we only consider the ``non-$K$'' component
	\eqnn
	\tilde u_{K^C} = (u_M, u_{\max(K)+1}, \ldots, u_N): S \to M \times T^*F^{[N] \setminus K}.
	\eqnd
Since each $u_k$ for $k \in K$ lands in region $R$, the almost complex structure used near the image of $\tilde u$ splits as a direct sum
	\eqnn
	J = J_{M \times T^*F^{[N] \setminus K}} \oplus (J_{T^*F})^{K \setminus \{0\}}.
	\eqnd
Hence $\tilde u_{K^C}$ is a solution to the perturbed Floer equation~(\ref{eqn.floer}). Moreover, we can determine its boundary conditions: 
Note that the original $\tilde u$ had boundary on $Y$ where $q_{k} \geq w$ for $k \in K$. This means the boundary of $\tilde u$ along $Y$ reduces to the back $K$th face. By~(\ref{eqn.BY_K'}), the back $K$th face is
	\eqnn
	\del_K^{\back} Y = B(Y_{K'})
	\eqnd
where $K' \subset [N]$ is the set of all $k'$ such that $k' \geq \max(K)$. (By the assumption that $K$ is consecutive, $\ov K \setminus K$ is empty.) In summary, the boundary conditions are:
	\begin{itemize}
		\item
			Branes $X_i \times \gamma_i^{[N] \setminus K}$ for $0\leq i \leq c$, and the $(c+1)$st boundary brane is given by
				\eqnn
				\del^{\back}_K Y = B(Y_{K'}).
				\eqnd
		\item
			The $0$th marked point is sent to a Hamiltonian chord $y_0$ in $M \times T^*F^{[N] \setminus K}$ lying above a point in $T^*F^{[N] \setminus K}$ with $q_j \leq - w$ for all $j$. The 1st marked point is sent to a Hamiltonian chord $y_1$ lying above a point in $T^*F_j$ with $q_j \geq w$. 
	\end{itemize}
In other words, $\tilde u_{K^C}$ is exactly a point in the moduli space of disks determining the operation $\Xi^{1+c}_{Y_{K'}}$. Further, any $\tilde u_{K^C}$ lifts to a $\tilde u: S \to M \times T^*F^N$ by declaring the $K$ components to be given by the unique $(u_k)_{k \in K}$ given by Lemma~\ref{lemma.fibers}. Thus the assignment $\tilde u \mapsto \tilde u_{K^C}$ is a bijection on the zero-dimensional components of moduli spaces, and the proof is complete.

{\bf The case $K$ not consecutive, $|K| = N, c = 0$.} Proceeding as in the previous two cases, we again consider the ``non-$K$'' component
	\eqnn
	(u_M, u_{j}) =: \tilde u_j.
	\eqnd
Here $j$ is the unique element of $[N] \setminus K$.\footnote{Unlike the previous case, the components $u_M$ and $u_j$ need not be solutions to the perturbed Floer's equation~(\ref{eqn.floer}) in each component since $J_{M \times T^*F_j}$ need not split---we have no control on the image of $u_M$ or $u_{j}$.} Since $\tilde u$ has image where we have a splitting
	\eqnn
	J = (J_{M \times T^*F_j}) \oplus (J_{T^*F})^{N \setminus \{j\}}
	\eqnd
we see that $\tilde u_j$ does describe a map $S \to M \times T^*F_j$ satisfying the perturbed Floer's equation~(\ref{eqn.floer}).

Performing the same reasoning as in the previous case about where $\tilde u$ has boundary, we deduce that $\tilde u_j$ satisfies boundary conditions given by:
	\begin{itemize}
		\item
			Two branes, the 0th being $X_0 \times \gamma_0$, and the 1st being
				\eqnn
				\del^{\back}_K Y = Y_N \times c \subset M \times T^*F_j
				\eqnd
			by~(\ref{eqn.BY_K'}), because $K' = \{N\}$.
		\item
			The $0$th marked point is sent to a Hamiltonian chord $y_0$ in $M \times T^*F_j$ lying above a point in $T^*F_j$ with $q_j \leq - w$. The 1st marked point is sent to a Hamiltonian chord $y_1$ lying above a point in $T^*F_j$ with $q_j \geq w$. 
	\end{itemize}
In other words, we have reduced ourselves to the case $\Xi^1_{\id_{Y_N}}$, which we covered in Proposition~\ref{prop.id}: Counting $\tilde u_j$ is precisely computing the identity map from $CF^*(X_0, Y_N)$ to itself. We are finished because the assignment $\tilde u \mapsto \tilde u_j$ is a bijection of moduli spaces. 

{\bf The case $K$ not consecutive, $|K| = N, c > 0$.} We repeat the exact same argument as the $c=0$ case and reduce to computing $\Xi^{1+c}_{\id_{Y_N}}$. Proposition~\ref{prop.id} shows that this vanishes.

{\bf The case $K$ not consecutive, $|K| < N$.} 
For any $0 \neq k \in K$, the boundary conditions for $\tilde u$ and the boundary-stripping trick show that $u_k = \pi_{T^*F_k} \circ \tilde u$ is contained in the region $R$ bounded by the staircase. By Lemma~\ref{lemma.fibers} case~\ref{item.1}., $u_k$ for any $k \in K$ is completely determined by choice of conformal structure $S$ on the domain. So the meat is in understanding the map
	\eqnn
		\tilde u_{K^C} := (u_M, (u_j)_{j \not \in K}) : S \to M \times T^*F^{[N] \setminus K}.
	\eqnd
What are the boundary conditions for $\tilde u_{K^C}$? All the curves $u_k$ with $0 \neq k \in K$ have image contained in the region $R$. This means their $(c+1)$st boundary brane can be reduced to a copy of $(l_w)_k = \{q_k = w\} \subset T^*F_k$ times the face of $Y$ collared at $q_k = w$. By definition of $\del^{\back}_K Y$, this means $\tilde u$ itself has boundary on
	\eqnn
			\del^{\back}_K Y \times	\prod_{0 \neq k \in K} (l_w)_k \subset (M \times T^*F^{K \setminus \{0\}}) \times T^*F^{[N] \setminus K} .
	\eqnd
Taking out the $[N]\setminus K$ factor, we see that $\tilde u_K$ has boundary on
	\eqn\label{eqn.u_K-boundary}
	\del^{\back}_{K} Y = c^{\ov K \setminus K} \times B(Y_{K'}).
	\eqnd
Here, $K'$ is given in Definition~\ref{defn.K'}, and the equality follows from~(\ref{eqn.BY_K'}).

So now let us fix $j \in \ov K \setminus K$ (which is guaranteed to exist because $K$ is not consecutive). Then we are considering the $\mu^{1+c}$ operation with boundary branes of the form
	\eqnn
		(X_i \times \gamma_i^{[N] - K - \{j\})}) \times \gamma_i
		\qquad
		\text{and}
		\qquad
		Y' \times c 
		\qquad
		\subset
		(M \times T^*F^{[N] - K - \{j\}}) \times T^*F_j
	\eqnd
Moreover, one can show that $u_j = \pi_{T^*F_j} \circ \tilde u_K$ has image contained in $R''$ by another boundary-stripping argument. Since the boundary conditions are a direct product of branes, and one of the factors is a brane in $T^*F$, we can use the short exact sequence~(\ref{eqn.regular-SES}) to conclude that the direct sum almost complex structure 
	\eqnn
		J = J_{M \times T^*F^{[N] - K - \{j\}}} \oplus J_{T^*F_j}
	\eqnd
is regular. So we specify this almost-complex structure. In other words, $u_j$ always comes in one-dimensional families for any fixed conformal structure $S$ on the domain (by Lemma~\ref{lemma.fibers}). Hence so does $\tilde u_K$. This proves the claim of this case for $1+c \geq 2$.

When $1+c = 1$, we note that if $u_j$ is non-constant, then the remaining component 
	\eqnn
	\pi_{M \times T^*F^{[N] - K - \{j\}}}  \circ \tilde u_K
	\eqnd
of $\tilde u_K$ must be  trivial for $\tilde u_K$ to be in a 0-dimensional component of moduli space. But this means $\tilde u_K$ cannot satisfy the requisite boundary conditions! So $\tilde u_K$ must come in a higher-dimensional family (after modding out by the $F$ action), hence the dimension-0 disk-count is zero. This proves the claim for $1+c = 1$.
\end{proof}

\subsection{Degeneracies}
By definition $\Xi$ respects face maps; but as it turns out, it may not respect all degeneracy maps $s_i$. All we know so far is that the diagram
	\eqn\label{eqn.deg-commute}
		\xymatrix{
			\lag(M)_0 \ar[r]^-{\Xi} \ar[d]_{s_0}
			&	\wrap_\compact(M)\Mod_0 \ar[d]^{s_0} \\
			\lag(M)_1 \ar[r]^-{\Xi}
			& \wrap_\compact(M)\Mod_1^{s_0}
		}
	\eqnd
commutes, by Proposition~\ref{prop.id}. Put another way, all identity morphisms are respected by $\Xi$. At this point, we ought to be happy: Whether a functor between unital ($\infty$-)categories respects units is a property, and not additional structure, so one knows that we can formally find an honest functor from $\lag$ to $\wrap_\compact\Mod$. 

We make this concrete in~\cite{tanaka-non-strict}. There we prove that any assignment of simplices $\Xi$ satisfying (\ref{eqn.deg-commute}) is homotopic to an actual functor $\Xi'$. Moreover, $\Xi'$ has the following property: For any simplex $Y$ of the domain, $\Xi(C)$ and $\Xi'(C)$ are homotopic in the target.

This completes the proof that $\Xi$ defines a functor on $\lag^{\dd 0}$.

\subsection{Stabilization}
So far we have defined a functor
	\eqnn
		\lag^{\dd 0} \to \fuk\Mod.
	\eqnd
Now, for every $i$, we must construct a commutative diagram
	\eqnn
		\xymatrix{
			\lag^{\dd i+1} \ar[r]^{\Xi} & \fuk\Mod\\
			\lag^{\dd i} \ar[u]^{\times E^\vee} \ar[ur]_{\Xi}
		}
	\eqnd
(which we already saw in (\ref{eqn.stabilization-criterion}).)
Then by the definition of $\lag := \bigcup_{i \geq 0} \lag^{\dd i}$ as a colimit, we have defined the map $\Xi$. Recall from the definition of $\lag$ that the vertical arrow in the above diagram is given by taking the direct product of a brane/cobordism in $\lag^{\dd i}$ with the vertical fiber at the origin of $T^*E$, which we write as $E^\vee = T_0^* E$ for brevity.

\begin{remark}
We define $\Xi$ on the full subcategory of those $Y \subset M \times T^*E^n$ which are transverse to $M \times E^n \subset M \times T^*E^n$.\footnote{Recall from (\ref{eqn.transverse}), and the surrounding discussion, that the inclusion of this full subcategory is essentially surjective, as a compactly supported Hamiltonian isotopy of an object can be chosen arrange for this transversality condition.}
\end{remark}

\begin{defn}[$\Xi$ on $\lag^{\dd n}$.]
For every object $Y \in \ob \lag^{\dd n}$, choose a collection of staircase curves $\beta_i$ whose heights and depths are very close to 0. (This ensures that $Y$ is transverse to $M \times \beta_i$ for each $i$.)

\begin{itemize}
	\item
		On objects, we define $\Xi(Y)$ to be the module
			\eqnn
				CF^*(- \times \beta^n, Y).
			\eqnd
		Let us elaborate. On any tuple of objects $X_0,\ldots,X_k$, the operation
			\eqnn
				CF^*(X_k \times \beta_k^n, Y) \tensor \ldots \tensor WF^*(X_0,X_1) \to CF^*(X_0 \times \beta_0^n, Y)
			\eqnd
		is obtained by (i) counting holomorphic disks with boundary on
			\eqnn
				Y, X_k \times \beta_k^n, \ldots, X_0 \times \beta_0^n
			\eqnd
		and (ii) utilizing the natural isomorphism of cochain complexes
			\eqnn
				CF^*(X_i \times \gamma_i, X_j \times \gamma_j) \cong WF^*(X_i, X_j),
				\qquad
				i\neq j.
			\eqnd
		Note that absorbed into this are the usual continuation map equivalences that show $CF^*(X \times \beta_i^n, Y)$ is equivalent to $CF^*(X \times \beta_j^n, Y)$.\footnote{Note that the $X$ is the same here, but the $i$ and $j$ are not.}
	\item
		On (higher) morphisms, we employ the exact same methods as for $\lag^{\dd 0}$. Namely, given a cobordism $Y \subset M \times T^*E^n \times T^*F^N$, we pair it against branes of the form
			\eqnn
				(X_i \times \beta_i^n) \times \gamma_i^N.
			\eqnd
		Then all the algebra and proofs from the previous section go through mutatis mutandis, and these operations define a functor of $\infty$-categories.
\end{itemize}
\end{defn}

\begin{lemma}\label{lemma.stabilization}
The diagram~(\ref{eqn.stabilization-criterion}) commutes.
\end{lemma}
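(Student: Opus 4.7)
The plan is to reduce the commutativity of~(\ref{eqn.stabilization-criterion}) to a bijection of moduli spaces of holomorphic disks, of exactly the same flavor as the bijections established in Lemmas~\ref{lemma.collapse} and~\ref{lemma.Xi^1+c}. Concretely, given $Y \in \lag^{\dd n}$ transverse to $M \times E^n$ and a tuple $(X_0,\ldots,X_k)$ of objects of $\fuk$, one must identify the $A_\infty$ module operations
\[
   CF^*(X_k \times \beta_k^n,\, Y) \tensor \cdots \tensor CF^*(X_0 \times \beta_0^n, X_1 \times \beta_1^n) \to CF^*(X_0 \times \beta_0^n,\, Y)
\]
with the stabilized operations
\[
   CF^*(X_k \times \beta_k^{n+1},\, Y \times E^\vee) \tensor \cdots \tensor CF^*(X_0 \times \beta_0^{n+1}, X_1 \times \beta_1^{n+1}) \to CF^*(X_0 \times \beta_0^{n+1},\, Y \times E^\vee).
\]
The same identification must then be carried out simplex-by-simplex for cobordisms, which will come almost for free once the key geometric input is in place.

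The main geometric step is as follows. Choose the almost complex structure on $M \times T^*E^{n+1}$ in the class $\cJ_E$ so that along the new $T^*E$ factor $J$ splits as $J_{M \times T^*E^n} \oplus J_{T^*E}$ on precisely the region where any holomorphic disk with the stabilized boundary conditions must land. By the boundary-stripping trick (Remark~\ref{remark.boundary-stripping}) applied to the projection $\tilde u \mapsto u_{n+1} := \pi_{T^*E_{n+1}} \circ \tilde u$, and using that each $\beta_i$ is a staircase curve with all incoming Hamiltonian chords on $X_i \times \beta_i^{n+1}$ having $q_{n+1} \geq w$ (while $Y \times E^\vee$ contributes the vertical fiber as the $(n+1)$th factor), one deduces that $u_{n+1}$ lands in the region $R$ of Definition~\ref{defn.R}. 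By Lemma~\ref{lemma.fibers}~\ref{item.1}, the component $u_{n+1}$ is then uniquely determined by the conformal structure on the domain. Hence the assignment $\tilde u \mapsto (u_M, u_1,\ldots,u_n)$ defines a diffeomorphism between the moduli spaces computing the stabilized and unstabilized module operations, matching perturbation data in the $M \times T^*E^n$ factor on the nose. Signs and orientations are compatible because they arise tautologically from Yoneda.

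The extension to higher simplices reduces to the same input. A cobordism $Y \subset M \times T^*E^n \times T^*F^N$ stabilizes to $Y \times E^\vee \subset M \times T^*E^{n+1} \times T^*F^N$, and the $B$-construction commutes with stabilization: $B(Y \times E^\vee) = B(Y) \times E^\vee$ as branes, since $B$ acts only on the $T^*F^N$ factors. The boundary conditions for the Floer problem defining $\Xi_{Y \times E^\vee}$ (branes $(X_i \times \beta_i^{n+1})\times \gamma_i^N$ paired against $B(Y) \times E^\vee$) are obtained from those defining $\Xi_Y$ simply by taking the extra direct product with $\beta_i$ and with $E^\vee$. Applying the argument of the previous paragraph to the new $T^*E$ coordinate---independently of and in parallel with the existing arguments in $T^*F^N$---produces a bijection of moduli spaces that matches the operations $\Xi_Y^{d}$ with $\Xi_{Y\times E^\vee}^{d}$ and, for each $K \subset [N]$ containing $0$, $\Xi_{Y_K}$ with $\Xi_{(Y\times E^\vee)_K}$. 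This gives an equality of simplices in $N(\fuk\Mod)$, which is exactly the commutativity asserted.

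The main obstacle, and the step requiring the most care, is arranging the perturbation data on $M \times T^*E^{n+1} \times T^*F^N$ so that (i) it satisfies the class $\cJ_E$ conditions needed to force the new $T^*E$ direction to split off via boundary-stripping, while (ii) it simultaneously restricts to perturbation data on $M \times T^*E^n \times T^*F^N$ that is admissible and generic for the unstabilized problem, and (iii) the choices remain compatible as $Y$ ranges over the simplices of a face-compatible family. This is a standard but delicate bookkeeping exercise in choosing universal and consistent perturbation data, similar to Section~\ref{section.compact-cube}, where the translation-invariance in the $E^n \times F^N$ directions and the prescribed splitting of $J$ near infinity are used to inductively lift perturbation data from the unstabilized to the stabilized setting; the resulting degeneracy issue (only $s_0$ is strictly respected) is absorbed by~\cite{tanaka-non-strict} exactly as in Section~\ref{section.degeneracy}.
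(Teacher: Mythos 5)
Your proposal is correct and follows essentially the same route as the paper: the paper's proof likewise observes that the boundary conditions $E^\vee, \beta_0,\ldots,\beta_k$ reproduce the region-$R$ configuration of case~\ref{item.1} of Lemma~\ref{lemma.fibers}, and then invokes the bijection of moduli spaces from Lemma~\ref{lemma.collapse} with $M' = M \times T^*E^n$ to identify the stabilized and unstabilized operations. You simply spell out in more detail the boundary-stripping, the splitting of $J$, and the perturbation-data bookkeeping that the paper leaves implicit.
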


\begin{proof}
We must show a natural equivalence of modules
	\eqnn
		CF^*(-, Y \times E^\vee) \simeq CF^*(-,Y).
	\eqnd
This follows from the exact same method as in Lemma~\ref{lemma.collapse}: The curves $E^\vee, \beta_0,\ldots,\beta_k$  form the same boundary conditions as found in region $R$ (see 1. of Figure~\ref{figure.staircase-disks}). As we showed in the proof of Lemma~\ref{lemma.collapse}, holomorphic disks in $(M' \times T^*E) \times T^*F^N)$ with boundary on product branes, and with image in region $R \subset T^*E$, are in bijection with  holomorphic disks in $M' \times T^*F^N$. Setting $M' = M \times T^*E^n$, this completes the proof that (\ref{eqn.stabilization-criterion}) commutes.
\end{proof}
 

\section{Appendix: The $B$ construction's faces}
\subsection{Proof of Lemma~\ref{lemma.B-faces}}
Throughout, we follow the conventions and notation of Section~\ref{section.cube-notation}. For utility, consider the map
	\eqnn
	T^*\RR_N
	\to
	T^*\RR_i
	\qquad
	(q_N,p_N) \mapsto (q_N + w_i, p_N).
	\eqnd
This induces a symplectic embedding
	\eqn\label{eqn.alpha}
	\alpha_{N,i} : M \times T^*\RR^{[N] \setminus\{0,i,N\}}
	\times T^*[0,i]_{q_N}
	\to
	M \times T^*\RR^{[N] \setminus\{0,i,N\}} \times T^*[w_i,w_i+i]_{q_i}.
	\eqnd
Here's the main use of this map: When studying the $N$th front face of $B_i(Y)$, note that $\Phi$ precisely has the effect of sending 
	\eqnn
	(\del_i^{\back} Y)|_{[0,i]_{q_N}}
	\eqnd
to the set
	\eqnn
	\alpha_{N,i} (\del_i^{\back} Y)|_{[0,i]_{q_N}}). 
	\eqnd

\begin{lemma}\label{lemma.B1}
Assume $Y$ is collared. Then
	\enum
		\item For all $k < N$, 
			\eqnn
				\del_k^{\front} B'_i(Y)
				\cong
				\begin{cases}
					B'_i(\del_k^{\front} Y) & i \neq k\\
					\del_i^{\front} Y & i = k
				\end{cases}
			\eqnd
		\item For all $k < N$, 
			\eqnn
				\del_k^{\back} B'_i(Y)
				\cong
				\begin{cases}
					B'_i(\del_k^{\back} Y) & i \neq k\\
					(\del_i^{\back} Y)|_{[i,N]_{q_N}} & i = k
				\end{cases}
			\eqnd
		\item 
			\eqnn
				\del_N B'_i(Y) \cong
				\del_N Y \bigcup \alpha_{N,i} \left( (\del_i^{\back} Y)|_{[0,i]_{q_N}}\right)
			\eqnd
		\item
			\eqnn
				\del_N^{\back} B'_i (Y)
				\cong
				\del_N^{\back} Y.
			\eqnd
	\enumd
Here, the isomorphism symbol means that there is a collared re-parametrization of the $q$ coordinates taking one manifold to the other.
\end{lemma}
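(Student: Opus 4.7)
The proof is a direct, piece-by-piece verification using the explicit five-fold decomposition of $B'_i(Y)$ from equations~(\ref{eqn.B_i})--(\ref{eqn.B4}). For each face, I would first determine which of the five pieces has nonempty intersection with the face, then compute the restriction of each contributing piece, and finally check that the union matches the claimed formula up to the collared reparametrization of $q$-coordinates allowed by ``$\cong$''. The assumption that $Y$ is collared is used throughout to ensure that $\del_i Y$ itself sits inside $B'_i(Y)$ in the standardized product form prescribed by pieces~(\ref{eqn.B1})--(\ref{eqn.B4}).

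For cases~(1) and~(2) with $i \neq k$ the face map $\del_k^{\bullet}$ acts only on the $q_k$-coordinate, which is distinct from $q_i$ and $q_N$. Every piece of~(\ref{eqn.B_i})--(\ref{eqn.B4}) is built from $Y$ or $\del_i Y$ together with geometry purely in $(q_i, q_N)$, so the restriction commutes with the construction. Using that collaring in distinct coordinates commutes (giving $\del_i \del_k^{\bullet} Y = \del_k^{\bullet} \del_i Y$), one concludes $\del_k^{\bullet} B'_i(Y) = B'_i(\del_k^{\bullet} Y)$. For the remaining ``simple'' cases I would tabulate which pieces reach the face. At $q_i = 0$: pieces~(\ref{eqn.B1})--(\ref{eqn.B4}) all lie in $q_i > 0$ (by definition of $V_i^{small}, V_i^{big} \subset (w_i, w_i + i)$, by condition~\ref{item.B1} on $\phi$, and because piece~(\ref{eqn.B4}) has $q_i \geq w_i$), so only piece~(\ref{eqn.B_i}) contributes, immediately yielding $\del_i^{\front} Y$. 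At $q_i = w_i + i$: only piece~(\ref{eqn.B4}) reaches, giving $(\del_i^{\back} Y)|_{[i,N]_{q_N}}$. At $q_N = N$: piece~(\ref{eqn.B_i}) gives $\del_N^{\back} Y$ over $q_i \in [0, w_i]$, and piece~(\ref{eqn.B4}) extends this by the collared product $[w_i, w_i + i]_{q_i} \times \del_i \del_N^{\back} Y$; the collaring of $Y$ near $q_i = w_i$ makes this extension equivalent to $\del_N^{\back} Y$ after a collared reparametrization of $q_i$.

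The substantive case is~(3). Here three pieces contribute to $q_N = 0$: piece~(\ref{eqn.B_i}) gives $\del_N Y$ over $q_i \in [0, w_i]$ (the first summand); pieces~(\ref{eqn.B3}) and~(\ref{eqn.B4}) do not contribute because their $q_N$-range lies in $[i, N]$; the remaining contributions come from piece~(\ref{eqn.B1}) in a corner near $(w_i, 0)$, and from the boundary of piece~(\ref{eqn.B2}) along $\alpha \to 2$. Using condition~\ref{item.B1} on $\phi$, as $\alpha \to 2$ we have $\phi(\alpha, r) \to (w_i + r + \epsilon, 0)$, so the closure of $\Phi$'s image traces out the curve $q_i = w_i + r + \epsilon,\ q_N = 0$ for $r \in (0, i - 2\epsilon)$. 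The parameter $r$ thus plays the role of the $q_N$-coordinate on $\del_i^{\back} Y$, shifted by $w_i + \epsilon$ into the $q_i$-direction---precisely the content of the map $\alpha_{N,i}$ from~(\ref{eqn.alpha}). Piece~(\ref{eqn.B1}) fills in the collar $q_i \in [w_i, w_i + \epsilon]$, and the boundary of~(\ref{eqn.B2}) covers $[w_i + \epsilon, w_i + i - \epsilon]$, with the collaring near the endpoints of $\phi$ ensuring a smooth gluing to both $\del_N Y$ at $q_i = w_i$ and to the trivial end at $q_i = w_i + i$.

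The main obstacle is case~(3): confirming that the three contributing pieces~(\ref{eqn.B_i}),~(\ref{eqn.B1}), and (the $\alpha \to 2$ boundary of)~(\ref{eqn.B2}) glue smoothly along their common boundaries, and that after absorbing the collar widths into a reparametrization of $q_i$, the resulting face is genuinely diffeomorphic to $\del_N Y \cup \alpha_{N,i}((\del_i^{\back} Y)|_{[0, i]_{q_N}})$. This reduces to a careful application of condition~\ref{item.B1} on $\phi$, which ensures that $\phi$ is the identity (up to translation) along its collaring regions. Everything else is bookkeeping and reduces to the commutativity of $\del_i$ with $\del_k$ for distinct indices.
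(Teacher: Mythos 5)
Your approach is the same as the paper's: a direct, face-by-face verification from the five-term definition of $B_i(Y)$, with the $i \neq k$ cases handled by observing that the construction is oblivious to the $q_k$-coordinate, and with case (3) obtained by recognizing the $\alpha \to 2$ boundary of $\Phi$ as the source of $\alpha_{N,i}\bigl((\del_i^{\back} Y)|_{[0,i]_{q_N}}\bigr)$. All four of your final formulas agree with the paper's.

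There is, however, a recurring bookkeeping error in where you place the piece (\ref{eqn.B3}), namely $V_i^{big} \times (\del_i Y)|_{q_N = i}$. You assert its $q_N$-range is $[i,N]$, but $V_i^{big}$ is by construction a region of the square $(w_i, w_i+i)_{q_i} \times (0,i)_{q_N}$; the restriction $|_{q_N=i}$ applies to the internal $q_N$-coordinate of $\del_i Y$ (producing essentially the vertex $Y_i$), and the $(q_i,q_N)$-coordinates of the product are then supplied by $V_i^{big}$. Consequently (\ref{eqn.B3}) contributes in two places you omit. First, it supplies the entire back face $q_i = w_i + i$ over $q_N \in [0,i]$, as the constant slice $[0,i]_{q_N} \times (\del_i Y)|_{q_N=i}$ glued to the (\ref{eqn.B4}) contribution over $q_N \in [i,N]$; if only (\ref{eqn.B4}) reached that face, as you claim, the face in case (2) with $i=k$ would be undefined over $q_N \in [0,i]$ and could not be a collared face of the prism at all. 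Second, the closure of $V_i^{big}$ meets the edge $q_N = 0$ along $q_i \in [w_i + i - \epsilon,\, w_i + i]$ (to the right of where the band $\phi(U_i)$ exits), so (\ref{eqn.B3}) also supplies the collar at the $q_i = w_i + i$ end of the face in case (3) --- the ``trivial end'' you mention but do not account for. In both instances the missed material is a constant collar that the reparametrization absorbs, so your stated formulas survive; but the tabulation of which pieces reach which faces should be corrected before the argument can be called complete.
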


\begin{proof}[Proof of Lemma~\ref{lemma.B1}.]
1. Recall 
	\eqnn
	\del_k B'_i(Y) = (B'_i Y)|_{q_k = 0}.
	\eqnd
Assume $i \neq k< N$.
Then by the formulas for the $B'_i$---which are oblivious to the value of $q_k$---we see that this does indeed equal $B'_i(\del_k^{\front} Y)$. Specifically, restricting the definition of $\Phi$ (\ref{eqn.Phi}) to $q_k = 0$ is the same as performing $\Phi$, then restricting to the locus $q_k = 0$ since $\Phi$ acts by the identity on the $q_k$ coordinate. Likewise, in (\ref{eqn.B_i}), one could replace every instance of $Y$ on the righthand side with $\del_k Y$; this is exactly what collars $B_i(Y)$ along the face $q_k = 0$.

Now assume $i=k < N$. The construction of $B_k(Y)$ leaves unaffected the portion of $Y$ with $q_k$-coordinate close to 0. (Indeed, every term on the righthand side of (\ref{eqn.B_i}) except $Y$ lives in a region where $q_i \geq w_i$.) So $\del_i B_i(Y) = \del_i Y$.

2. By the same reasoning as above, the claim for $i \neq k <N$ follows. When $i=k$, we look to the definition of $B_i$ (\ref{eqn.B_i}). By construction, the portion of $B_i(Y)$ that lives above the face $q_k = w_k + k$ is given by the terms in (\ref{eqn.B3}) and (\ref{eqn.B4}). The term $V_i^{big} \times (\del_i Y)|_{q_N = i}$ is by definition, along $q_k = w_k + k$, just a copy of $[0,k]_{q_N} \times (\del_i Y)|_{q_N = i}.$ While the second term (\ref{eqn.B4}) is, by definition, collared at $q_k = w_k +k$ by $(\del_i Y)|_{[i,N]_{q_N}}$. Gluing these together, one can reparametrize the interval where $q_N \leq i$---coming from (\ref{eqn.B3})---to obtain the desired result. Put another way, the union of (\ref{eqn.B3}) and (\ref{eqn.B4}) along the $i$th face is just given by extending $(\ref{eqn.B4})$ to the interval $[0,i]_{q_N}$ by the zero section, so the result follows.

3. Let us break up the face $\del_N B_i(Y)$ into the portion with $q_i \leq w_i$ and $q_i \geq w_i$. The former is, as in the previous case, given by the first line in the definition of $B_N$ (\ref{eqn.B_i}). Hence it is equal to $\del_N Y$. Meanwhile, the portion with $q_i \geq w_i$ is just an extension by the zero section of the portion (\ref{eqn.B2}). By construction, $\Phi$ sends the $N$th coordinate of $\del_i Y$ to the $i$th coordinate, translated by $w_i$. Hence one concludes that the $q_i \geq w_i$ portion of $\del_N B_i(Y)$ is given by
	\eqnn
		\alpha_{N,i} \left( (\del_i^{\back} Y)|_{[0,i]_{q_N}}\right).
	\eqnd 

4. In the definition of $B_i$, the only portions that contribute to the face $q_N = N$ are (\ref{eqn.B_i}) and (\ref{eqn.B4}). The former is collared by $\del_N Y$ at $q_N = N$ by definition, and the latter likewise extends $\del_i Y$ by the zero section in the $q_i$ direction. This concludes the proof.
\end{proof}

\begin{corollary}
Assume $Y$ is collared. Then
	\enum
		\item For all $k < N$, 
			\eqn\label{eqn.cor-front}
				d_k^{\front} B'_i(Y)
				\cong
				\begin{cases}
					B'_{i-1}(d_k^{\front} Y) & i > k\\
					d_i^{\front} Y & i = k \\
					B'_{i}(d_k^{\front} Y) & i <k
				\end{cases}
			\eqnd
		\item For all $k < N$, 
			\eqn\label{eqn.cor-back}
				d_k^{\back} B'_i(Y)
				=
				\begin{cases}
					B'_{i-1}(d_k^{\back} Y) & i \neq k\\
					(d_i^{\back} Y)|_{[i,N]_{q_N}} & i = k
				\end{cases}
			\eqnd
		\item 
			\eqnn
				d_N B'_i(Y) =
				d_N Y \bigcup \alpha_{N,i} \left( (d_i^{\back} Y)|_{[0,i]_{q_N}}\right)
			\eqnd
		\item
			\eqnn
				d_N^{\back} B'_i (Y)
				=
				d_N^{\back} Y.
			\eqnd
	\enumd
\end{corollary}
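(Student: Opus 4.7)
The plan is to derive this corollary directly from Lemma~\ref{lemma.B1} by carefully tracking how the coordinate swap implicit in the $d$-operation interacts with the $B_i$ construction. Recall that $d_k^{\front/\back} Y$ is obtained from $\partial_k^{\front/\back} Y$ by relabeling: the coordinate that was $q_N$ on $\partial_k Y$ becomes the new $q_k$-slot coordinate (with all other $q_j$, $j \neq k$, preserved). So the proof will consist of applying Lemma~\ref{lemma.B1} and then composing with this coordinate swap, verifying in each case that the result matches the indices on the right-hand side.

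For parts 1 and 2, begin with Lemma~\ref{lemma.B1} applied to the relevant face. In the case $i \neq k$, the $B_i$ construction acts in the $(q_i, q_N)$-plane only, and is the identity in the remaining $q_j$-coordinates. Performing the $d_k$-relabeling on $B_i(\partial_k^{\front/\back} Y)$ renames the original $q_N$ coordinate into position $k$; after this renaming, the two coordinates in which $B$ acts are $q_i$ (possibly shifted) and the new $q_k$-slot. When $k < i$, every index above $k$ drops by one, so the $B$-construction is now indexed by $i-1$, giving $B_{i-1}(d_k^{\front/\back} Y)$; when $k > i$, indices below $k$ are unchanged, giving $B_{i}(d_k^{\front/\back} Y)$. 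When $k = i$, Lemma~\ref{lemma.B1} reduces the face to $\partial_i^{\front} Y$ (resp.\ $(\partial_i^{\back} Y)|_{[i,N]_{q_N}}$), and the relabeling converts this directly to $d_i^{\front} Y$ (resp.\ $(d_i^{\back} Y)|_{[i,N]_{q_N}}$), matching the stated formula.

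For parts 3 and 4, since $k = N$ is already the top coordinate, the $d_N$-relabeling is the identity on the remaining indices, so the formulas from Lemma~\ref{lemma.B1} transfer verbatim; the only point to check is that the symplectic embedding $\alpha_{N,i}$, which acts by translation in the $(q_i, q_N)$-plane, is compatible with the trivial relabeling in part 3. This is immediate from the explicit form of $\alpha_{N,i}$ in \eqref{eqn.alpha}.

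The main bookkeeping obstacle is ensuring that the index shift $i \mapsto i-1$ versus $i \mapsto i$ is applied consistently with the convention by which $d_k$ fills the vacated $k$th slot with the renamed $q_N$-coordinate; once this is spelled out at the level of the underlying permutation of $\{1,\ldots,N\}$, each case becomes a one-line verification. I expect no further geometric input beyond Lemma~\ref{lemma.B1} is needed, as the corollary is essentially a combinatorial reindexing.
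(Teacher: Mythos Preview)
Your approach is correct and matches the paper's: the corollary is stated there without proof, as an immediate reindexing of Lemma~\ref{lemma.B1} under the passage from $\del_k$ to $d_k$. Your outline identifies exactly this.

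One caution on the bookkeeping. You correctly recall that the paper's $d_k$ sends the old $q_N$ to the new $q_k$-slot while preserving every other $q_j$ with $j \leq N-1$; this is a \emph{swap}, not the usual simplicial shift. But your next sentence (``when $k<i$, every index above $k$ drops by one'') reverts to the shift picture, and under the swap convention the $B_i$-operation on $\del_k Y$ lands on the new coordinate pair $(q_i, q_k)$, not on $(q_{i-1}, q_{N-1})$. To reach the stated form $B'_{i-1}(d_k Y)$ you must invoke a further collared reparametrization of the $q$-coordinates --- which is permitted, since the corollary asserts only ``$\cong$'' in the sense of Lemma~\ref{lemma.B1} --- but this step should be made explicit rather than absorbed into an index-shift heuristic. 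The same remark applies to part~2, where the paper writes $B'_{i-1}$ uniformly for $i\neq k$, and you should check that your case split is consistent with that. Parts~3 and~4 are, as you say, immediate since the $d_N$-relabeling is trivial.
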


\begin{lemma}
	\enum
		\item For $0 < k < N$, 
			\eqnn
				d_k B'(Y) = B'(d_k Y).
			\eqnd
	\enumd
\end{lemma}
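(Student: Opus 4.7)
The plan is to iterate the three cases of the preceding Corollary to commute $d_k^{\front}$ past the iterated $B$-construction $B'(Y) = B_{N-1} \circ B_{N-2} \circ \cdots \circ B_1(Y)$, one factor at a time from the outside in. The whole argument is formal once the Corollary is in hand, and the proof breaks cleanly into three phases corresponding to the three branches in the Corollary's formula for $d_k^{\front} B'_i(Y)$.

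First, for each $i$ in the range $k < i \leq N-1$, the first line of the Corollary gives $d_k^{\front} B_i(-) = B_{i-1}(d_k^{\front}(-))$. Applying this iteratively from $i = N-1$ down to $i = k+1$ shifts each outer index down by one and transports $d_k^{\front}$ past the outer $N-1-k$ factors, rewriting
\[ d_k^{\front} B'(Y) = B_{N-2}(B_{N-3}(\cdots B_k(d_k^{\front}(B_k(B_{k-1}(\cdots B_1(Y))))) \cdots)). \]
Next, at $i = k$, the second line of the Corollary gives $d_k^{\front} B_k(-) = d_k^{\front}(-)$, eliminating the middle factor $B_k$ inside. Finally, for each $i$ in the range $1 \leq i \leq k-1$, the third line gives $d_k^{\front} B_i(-) = B_i(d_k^{\front}(-))$, allowing $d_k^{\front}$ to commute past the inner factors without any index shift. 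Combining the three phases yields
\[ d_k^{\front} B'(Y) = B_{N-2} \circ B_{N-3} \circ \cdots \circ B_k \circ B_{k-1} \circ \cdots \circ B_1(d_k^{\front} Y), \]
whose right-hand side is, by definition of $B'$ applied to the $(N-1)$-simplex $d_k^{\front} Y$, equal to $B'(d_k^{\front} Y)$.

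The heart of the proof is therefore a routine iteration of the Corollary, and the main (minor) obstacle is purely combinatorial index bookkeeping: one must check that the telescoping chain of shifts $N-1 \mapsto N-2, \ldots, k+1 \mapsto k$ from phase one, together with the deletion at $i=k$ and the preservation of indices for $i < k$, collectively reproduce the sequence $B_{N-2}, B_{N-3}, \ldots, B_1$ that defines $B'$ on a simplex of one lower dimension. The Lagrangian collaring parameters $w_i$ for the $B$-constructions applied after $d_k^{\front}$ inherit automatically from those of $Y$ via the coordinate reshuffling prescribed by $d_k^{\front}$, so no further choice or adjustment of parameters is needed, and no geometric or analytic input beyond the Corollary is required.
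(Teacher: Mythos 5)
Your proposal is correct and follows essentially the same route as the paper: both arguments telescope $d_k$ through $B_{N-1}\circ\cdots\circ B_1$ using the three cases of the preceding corollary (index shift for $i>k$, deletion of the $B_k$ factor at $i=k$, index preservation for $i<k$), landing on $B_{N-2}\circ\cdots\circ B_1(d_kY)=B'(d_kY)$. No gaps.
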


\begin{proof}

	\begin{align}
		d_k B'(Y)
		&= d_k \circ B_{N-1} \circ \ldots \circ B_1 (Y) \nonumber \\
		&\cong B_{N-2} \circ \ldots \circ B_k (d_k \circ B_k \circ B_{k-1} \circ \ldots \circ B_1 (Y))  \nonumber\\
		&\cong B_{N-2} \circ \ldots \circ B_k (d_k \circ B_{k-1} \circ \ldots \circ B_1 (Y))  \nonumber\\
		&\cong B_{N-2} \circ \ldots \circ B_1 (d_k Y) \nonumber \\
		&\cong B' (d_k Y).\nonumber
	\end{align}
Every line except the last line is a consequence of (\ref{eqn.cor-front}). (The last line is the definition of $B'$.)
\end{proof}

\bibliographystyle{amsalpha}
\bibliography{biblio}

\end{document}